\newcommand{\lt}[1][2]{t$^{#1}$}
\title{Arc-wise analytic t-stratifications}
\author[Pablo Cubides Kovascics]{Pablo Cubides Kovacsics}
\address{Pablo Cubides Kovacsics, Mathematisches Institut der Heinrich-Heine-Universit\"at D\"usseldorf, 
Universit\"atsstr. 1, 40225 D\"usseldorf, Germany. }
\author[Immanuel Halupczok]{Immanuel Halupczok}
\address{Immanuel Halupczok, Mathematisches Institut der Heinrich-Heine-Universit\"at D\"usseldorf, 
Universit\"atsstr. 1, 40225 D\"usseldorf, Germany. }
\thanks{This paper was funded by the individual research grant No.~426488848 ``\emph{Archimedische und nicht-archimedische Stratifizierungen höherer Ordnung}'' of the second author, funded by the Deutsche Forschungsgemeinschaft (DFG, German Research Foundation). The first author has been fully financed by this grant.
In addition, both authors were part of the research training group
\emph{GRK 2240: Algebro-Geometric Methods in Algebra, Arithmetic and Topology}, also funded by the Deutsche Forschungsgemeinschaft.
}
\newcommand{\valring}{\mathcal{O}} 
\newcommand{\maxid}{\mathcal{M}} 
\newcommand{\affdir}{\operatorname{affdir}}
\newcommand{\eq}{^{\mathrm{eq}}}
\newcommand{\1}{^{-1}}
\newcommand{\family}{_{\bullet,\bullet}}
\newbox\removebox
\long\def\bigsout#1{%
\par\setbox\removebox=\vbox{#1}%
\vbox{%
\vbox to0pt{\hbox{\tikz\draw[thick] (0,0) -- (\wd\removebox,-\ht\removebox)  (\wd\removebox,0) -- (0,-\ht\removebox);}}
\box\removebox
}
}
 \theoremstyle{plain}
 \newtheorem{thm}{Theorem}[subsection]
 \newtheorem{cor}[thm]{Corollary}
 \newtheorem{lem}[thm]{Lemma}
 \newtheorem{fact}[thm]{Fact}
 \newtheorem{prop}[thm]{Proposition}
  \newtheorem{lemdef}[thm]{Lemma-Definition}
 \newtheorem{cla}[thm]{Claim}
\theoremstyle{definition}
 \newtheorem{defn}[thm]{Definition}
\theoremstyle{remark}
 \newtheorem{rem}[thm]{Remark}
 \newtheorem{ques}[thm]{Question}
 \newtheorem{nota}[thm]{Notation}
 \newtheorem{conv}[thm]{Convention}
 \newtheorem{exam}[thm]{Example}
 \numberwithin{equation}{section}
\newcommand{\C}{\mathbb{C}}
\newcommand{\N}{\mathbb{N}}
\newcommand{\Q}{\mathbb{Q}}
\newcommand{\R}{\mathbb{R}}
\DeclareMathOperator{\RV}{RV}
 \DeclareMathOperator{\id}{id}
 \DeclareMathOperator{\ac}{\overline{ac}}
 \DeclareMathOperator{\tsp}{tsp}
\DeclareMathOperator{\GL}{GL}
\DeclareMathOperator{\res}{res}  
\def\XXint#1#2#3{{\setbox0=\hbox{$#1{#2#3}{\int}$}
\vcenter{\hbox{$#2#3$}}\kern-.5\wd0}}
\newcommand{\dd}{{\mathbf{d}}}
\newcommand{\rado}{\operatorname{rad}_{\mathrm{o}}}
\newcommand{\radc}{\operatorname{rad}_{\mathrm{c}}}
\newcommand{\lenew}{\leqslant^*}
\newcommand{\lsnew}{<^*}
\renewcommand{\phi}{\varphi_{\text{replace phi by varphi}}}
\newcommand{\Lx}{\mathcal{L}}
\newcommand{\Lval}{\Lx_{\mathrm{val}}}
\newcommand{\Lomin}{\Lx_{\mathrm{orig}}}
\DeclareMathOperator{\rv}{rv}
\DeclareMathOperator{\val}{v}
\DeclareMathOperator{\Span}{Span}
\DeclareMathOperator{\dir}{dir}
\DeclareMathOperator{\Arc}{Arc}
\DeclareMathOperator{\arc}{arc}
\DeclareMathOperator{\Crit}{crit}
\DeclareMathOperator{\Tub}{Tub}
\DeclareMathOperator{\cS}{{\mathcal{S}}}
\newbox\gnBoxA
\newdimen\gnCornerHgt
\newdimen\gnArgHgt
\def\code #1{%
	\setbox\gnBoxA=\hbox{$#1$}%
	\gnArgHgt=\ht\gnBoxA%
	\ifnum \gnArgHgt<\gnCornerHgt
		\gnArgHgt=0pt%
	\else
		\advance \gnArgHgt by -\gnCornerHgt%
	\fi
	\raise\gnArgHgt\hbox{$\ulcorner$} \box\gnBoxA %
		\raise\gnArgHgt\hbox{$\urcorner$}}
\definecolor{private}{RGB}{120,0,120}
\long\def\private#1{
\bgroup
\color{private}
\par
Private Remark: #1
\par
\egroup
}
\begin{document}

\keywords{Lipschitz stratifications, t-stratifications, resolution of singularities, model theory of valued fields, Nash-Semple conjecture.}

\subjclass[2020]{Primary 12J25, 14B05, 14J17, 03C98, 14E15, Secondary 03H05, 03C60, 32S25.}

\maketitle

\begin{abstract}
We introduce two new notions of stratifications in valued fields: \lt-stratifications and arc-wise analytic t-stratifications. We show the existence of arc-wise analytic t-stratifications in algebraically closed valued fields with analytic structure in the sense of R.~Cluckers and L.~Lipshitz. We prove that arc-wise analytic t-stratifications are \lt-stratifications and, moreover, that \lt-stratifications are valuative Lipschitz stratifications as defined by the second author and Y.~Yin (the latter ones being closely related to Lipschitz stratifications in the sense of Mostowski). Finally, we introduce a combinatorial invariant associated to a t-stratification which we call the \emph{critical value function}. We explain how the critical value function of arc-wise analytic t-stratifications can be used to formulate programatic conjectural bounds for the Nash-Semple conjecture. 
\end{abstract}

\setcounter{tocdepth}{1}
{
  \hypersetup{linkcolor=black}
  \tableofcontents
}

\section{Introduction}

Stratifications are a classical tool to control singularities of certain (e.g.\ algebraic) subsets of $\R^n$ or $\C^n$. Recently, similar notions have been developed over valued fields and an interesting interplay between the classical and the valued fields variants emerged. In \cite{halupczok2014a}, the second author introduced the notion of \emph{t-stratifications} over valued fields. He proved their existence over various classes of valued fields and showed how they can be used to induce Whitney stratifications over $\R$ and $\C$, by working in non-standard models (of $\R$ and $\C$). In a similar vein, a valuative analogue of Mostowski's Lipschitz stratifications \cite{mostowski} was introduced in \cite{halup-yin-18} by the second author and Y. Yin. Analogously, they established their existence in a variety of valued fields and showed how they yield classical Lipschitz stratifications.

The aim of this paper is to further improve and develop stratifications in valued fields, exploiting the additional possibilities provided by the valuation, with the idea in mind that this then yields corresponding notions and results over $\R$ and $\C$. Our main motivation is that
current classical notions (e.g.\ Whitney, Verdier, Lipschitz) do not seem to be robust and/or strong enough to deal with certain problems in singularity theory, and Lipschitz stratifications (one of the strongest classical notions) are considerably technical to define, making it difficult to further strengthen that notion using classical methods. By working in valued fields instead, we find a quite simple and natural notion of stratifications (which we call \emph{\lt-stratifications}) which is a common refinement of the t-stratifications and the valuative Lipschitz stratifications mentioned in the first paragraph. In particular, if one is willing to work in valued fields, they can serve as a much more accessible replacement for classical Lipschitz stratifications. Figure \ref{f.strat} shows the known logical connections between these notions of stratifications (and some more).

\begin{figure}
\begin{tikzpicture}[
strat/.style={draw,align=center,fill=white,thick},
nstrat/.style={strat,fill=blue!20,ultra thick},
imply/.style={->,double equal sign distance,>=Implies},
y=1.3cm,
corr/.style={<-,snake=snake,line before snake=2mm,segment amplitude=.5mm}]


\def\xal{-.5}
\def\xar{2.5}
\def\xnl{6.5}
\def\xnr{10}

\node[strat] at (\xal,4) (aPP) {arc-wise\\analytic\ stratification};
\node[strat] at (\xar,2.5) (aL) {Lipschitz\\stratification};
\node[strat] at (\xal,1.5) (av) {Verdier\\stratification};
\node[strat] at (\xal,0) (aw) {Whitney\\stratification};

\draw[imply] (aPP) -- (av); 
\draw[imply] (av) -- (aw); \draw[imply] (aL) -- (av);


\node[strat,label={270:\footnotesize}] at (\xnl,2.5) (nL) {valuative\\Lipschitz\\ stratification};

\node[nstrat,label={290:\footnotesize }] at (\xnr,4) (nti) {arc-wise\\analytic t-stratification};
\node[nstrat,label={290:\footnotesize}] at (\xnr,2.5) (nt2) {\lt-stratification};
\node[strat,label={270:\footnotesize}] at (\xnr,1) (nt1) {t-stratification};

\draw[imply] (nti) -- (nt2);
\draw[imply] (nt2) -- (nt1);

\draw[imply,->] (nt2) -- (nL);


\draw[corr] (aw) -- (nt1);
\draw[corr] (aL) -- (nL);

\node at (1,5) {Stratifications over $\R$ and $\C$};
\node at (8,5) {Stratifications over valued fields};
\draw (4.6,-.5) -- (4.6,5.2);

\end{tikzpicture}
\caption{$A \Rightarrow B$ means: Every $A$-stratification is in particular a $B$-stratification.
$A \rightsquigarrow B$ means: an $A$-stratification in a nonstandard model of $\R$ or $\C$ induces a $B$-stratification in $\R$ or $\C$. The bold blue boxes are newly introduced in this paper.}
\label{f.strat}
\end{figure}
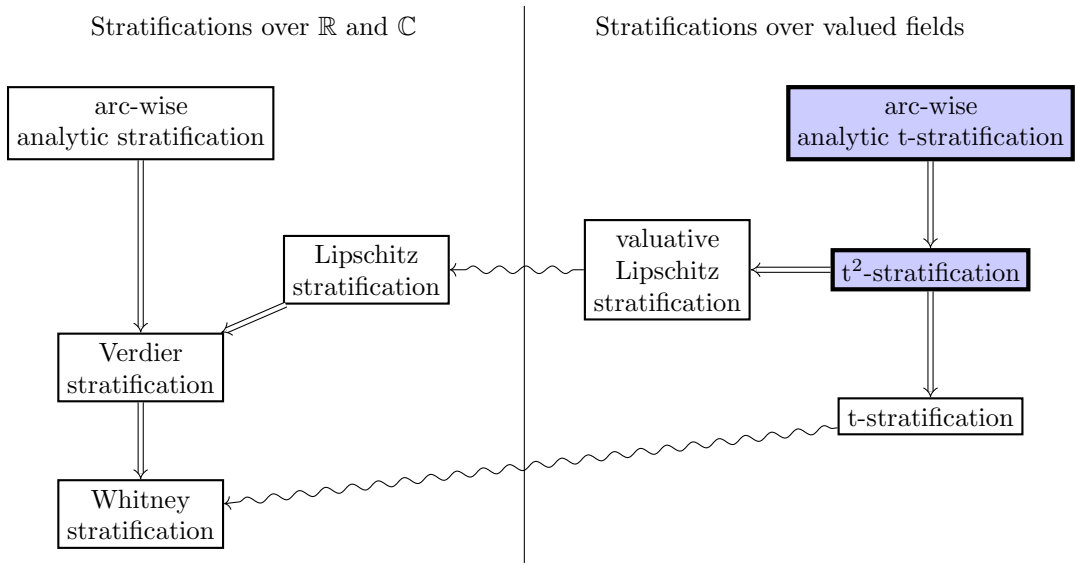

Having these \lt-stratifications at hand, it becomes very easy and natural to come up with further strengthenings; we do so by introducing \emph{arc-wise analytic t-stratifications}. We prove their existence in algebraically closed valued fields (but not yet e.g. in non-standard models of $\R$).
Our hope is that those have the desired robustness properties to which we alluded above. Concretely, we hope that arc-wise analytic t-stratifications can be helpful to prove the Nash-Semple Conjecture (apparently first suggested in \cite{semple1954} by J. G. Semple, see also \cite{perez-teissier, spivakovsky}). That conjecture
predicts that there is a very simple and natural way to construct a resolution of singularities of an arbitrary algebraic variety over $\C$, namely as a finite number of Nash modifications (which is a variant of a blow-up). The Nash-Semple conjecture is widely open, but it has been predicted by Parusinski that one should be able to prove it using a notion of stratification which 
is a common refinement of (a) Lipschitz stratifications and (b) the (archimedean) \emph{arc-wise analytic stratifications} introduced in \cite{PP.arcZariski};
The authors of the present article share this belief. The newly introduced arc-wise analytic t-stratifications do satisfy (a) (when applied in non-standard models of $\C$). We do not know whether they formally satisfy (b), since our notion of arc-wise analyticity is inherently a valued field notion, and it is not clear at all whether it yields some form of arc-wise analyticity in the archimedean sense (over $\C$). However, arc-wise analytic t-stratifications might nevertheless be a suitable ingredient for the proof of the Nash-Semple conjecture, namely if one is willing to carry out the entire proof within valued fields.

Although at this stage this is merely speculative, we present in Section~\ref{sec:nash}
some ideas on how such a proof might work. A potential key ingredient is a combinatorial invariant which we associate to any t-stratification in Section \ref{sec:exponent} (the \emph{critical value function}), which simultaneously captures various kinds of asymptotic distances (e.g.\ between two strata) near all singularities. We prove that even though the information it captures is rather general, it consists of the graphs of only finitely many $\Q$-monomial functions. Our hope is that the number of Nash modifications needed to resolve the singularities of an algebraic variety over $\C$ can be bounded in terms of the (finitely many) rational numbers describing the critical value function of an arc-wise analytic t-stratification of the variety; we formulate this as a conjecture in Section \ref{sec:nash}.
In any case, the critical value function seems to be of independent interest. In particular, it seems to be related to known invariants such as (certain) Newton polygons and probably also to the poles of motivic Poincaré series (though verifying this is left to follow-up work).

\subsection{Summary of the main new concepts and results}

In the entire paper, we work in a valued field $K$ of equicharacteristic $0$, considered as a structure in a suitable first order language.
With the Nash-Semple conjecture in mind, one might mostly be interested in algebraic subsets of $K^n$ and in the case where $K$ is algebraically closed. However, even then, our notions of stratifications require that one also considers more general sets, namely sets definable in the valued field language (i.e., the ring language together with a predicate for the valuation ring). Moreover, we believe that our stratifications will also have other applications, e.g.\ to geometry in valued fields, so large parts of the paper are formulated in rather big generality, namely allowing the valued field $K$ to be henselian, and for arbitrary languages satisfying a tameness property which is natural in this context, namely the notion of Hensel minimality from \cite{clu-hal-rid}. (To be precise, we use $1$-h-minimality.)

The central newly introduced objects in the paper are the following:
\begin{enumerate}
    \item \emph{\lt-stratifications} (Definition~\ref{def:sts}):
    This is a slight strengthening of the definition of t-stratifications from \cite{halupczok2014a}: As for t-stratifications, the only regularity condition we impose is that on any valuative ball $B$, the stratification is ``roughly translation invariant'' in the direction of the lowest dimensional stratum intersecting $B$. For t-stratifications, the notion of ``roughly translation invariant'' allows for a linear error term. For \lt-stratifications, we impose a quadratic error term along arcs. (This point of view in terms of the error term is made more precise in Section~\ref{sec:hierarchy}, where we speculatively also discuss t$^r$-stratifications for arbitrary integers $r \geqslant 1$.) 
    \item \emph{Arc-wise analytic t-stratifications} (Definition~\ref{def:arc-wise-strats}):
    Again, the definition builds on the notion of t-stratifications, but this time, we impose that the arcs are analytic. There is no notion of analyticity in arbitrary Hensel minimal fields $K$, so in this part of the paper, we restrict to the (still quite general) notion of \emph{field with analytic structure} from \cite{CLip}. Moreover, even though analytic structures exist on henselian valued fields $K$, our definition of arc-wise analytic t-stratification works properly only under the additional assumption that $K$ is algebraically closed.
    \item\label{it:cvf} The \emph{critical value function} (Definition~\ref{def:critpt}):
    The most basic information captured by the critical value function is the asymptotic distance between two strata $X_1$ and $X_2$ near a singularity $y$: if $x_1$ runs along $X_1$, then
    the critical value function specifies 
    the (valuative) distance of $x_1$ to $X_2$ as a function of the (valuative) distance of $x_1$ to $y$. The full notion of critical value function improves upon this in the following ways: the ``distance from $x_1$ to another stratum'' is replaced by a more general concept which we call \emph{critical value at $x_1$} and which also captures e.g.\ the diameter of $X_1$ near $x_1$ if $X_1$ is a kind of cone as in Figure~\ref{fig:trumpet} on p.~\pageref{fig:trumpet}; instead of expressing these critical values only in terms of the distance from $x_1$ to some fixed singularity $y$, it is expressed in terms of the distances from $x_1$ to all (lower-dimensional) strata.
\end{enumerate}
Large parts of the paper are devoted to setting up a whole toolbox to work with those notions of stratifications, building on and improving results from \cite{halupczok2014a} about t-stratifications. The main results we obtain (using that toolbox) are the following:
\begin{enumerate}[resume]
 \item
 \emph{Every \lt-stratification is in particular a valuative Lipschitz stratification in the sense of \cite{halup-yin-18}}  (Theorem~\ref{thm:sts-to-Lipstrats}). More precisely, the notion of valuative Lipschitz stratifications only makes sense in a suitable non-standard setting and when moreover the stratification is definable in a language not involving the valuation. The above implication holds under this assumption.
 
 In particular, this means that if one takes the method (from \cite{halupczok2014a}) which turns a t-stratification into a Whitney stratification, and applies it to a
 \lt-stratification, then one obtains a Lipschitz stratification in the original sense of Mostowksi; this is explained in Remark \ref{rem:t2-to-lip}.
  \item\label{it:arc-existence} \emph{In algebraically closed valued fields, arc-wise analytic t-stratifications exist} (Theorem~\ref{thm:arc-existence}).
 We prove this not only for the pure valued field structure on $K$, but more generally for any analytic structure in the sense of \cite{CLip}, i.e., given a set $X \subseteq K^n$ definable in such a structure, there exists a stratification of $K^n$ ``\emph{reflecting}'' $X$. Note that just asking $X$ to be a union of connected components of the strata would be a very weak condition, given that $K$ is totally disconnected. The notion of reflection (introduced in \cite{halupczok2014a} for t-stratifications) fixes this problem in a natural way.
 \item \emph{Every arc-wise analytic t-stratification is in particular a \lt-stratification} (Theorem~\ref{thm:aats-to-lipts}). We do not give a separate existence proof of \lt-stratifications, so their existence only follows indirectly from this implication and Result (\ref{it:arc-existence}) above (Corollary \ref{cor:t2-exist}). In particular, this means that we currently know the existence only in algebraically closed valued fields. (We believe that they exist in arbitrary $1$-h-minimal henselian valued fields.)

\item\label{it:cvf2} The critical value function ``consists of finitely many monimial functions'' (Theorem~\ref{thm:crit} and Corollary~\ref{cor:crit}). More precisely, picking up notation from (\ref{it:cvf}) if we consider all points $x_1$ at given fixed valuative distances $\lambda_0,\dots, \lambda_{d}$ to each of the (lower-dimensional) strata and take all critical values at all those points, we only obtain finitely many different values. Moreover, those values are piecewise monomial in the $\lambda_i$, i.e., of the form $\lambda_0^{r_0}\cdots\lambda_d^{r_d}$ for some $r_i \in \Q$. In addition, the pieces themselves are defined by finitely many monimial conditions, so that in total, the critical value function can be described by finitely many rational numbers.
\end{enumerate}

\subsection{Structure of the paper}
In Section \ref{sec:setting}, we set up the framework of valued fields we will work with, and prove a variety of preparatory lemmas needed throughout. In particular, Subsection~\ref{sec:val-la} is dedicated to interaction of linear algebra with valuations, and Subsection~\ref{sec:modth} fixes the model theoretic setting and conventions.

In Section \ref{sec:tsts} we recall the definition of a t-stratification and revisit some of its main properties. In particular, there is an entire toolbox of little results that are useful in this context and that will be needed later.

Section \ref{sec:lt} focuses on the new notion of \lt-stratification. They are introduced in Subsection \ref{sec:SUT} (the main result of that section is Proposition~\ref{prop:SU-equivalence}, which gives an alternative characterization), and in Section \ref{sec:lipschitz} we show that they are valuative Lipschitz stratifications (Theorem \ref{thm:sts-to-Lipstrats}). We also recall the relation between valuative Lipschitz stratification and (classical) Lipschitz stratifications (Propositions~\ref{prop:liplip} and \ref{prop:liplipacf}) and explain how to put all those things together so that \lt-stratifications can serve as a replacement for Lipschitz stratifications (Remark~\ref{rem:t2-to-lip}).

Section~\ref{sec:analytic} is devoted to the even stronger notion of arc-wise analytic t-stratitications. We start by fixing the model theoretic context for this, in particular imposing that $K$ is algebraically closed and recalling the notion of field with analytic structure from \cite{CLip} (Section~\ref{sec:an_struct}), we define arc-wise analytic t-stratitications and prove their existence (Theorem \ref{thm:arc-existence}) in Section~\ref{sec:an_exist}, and we conclude by showing that they are in particular \lt-stratifications (Theorem \ref{thm:aats-to-lipts} in Section~\ref{sec:aats-to-lipts}).

The next section (Section~\ref{sec:exponent}) is independent from Sections~\ref{sec:lt} and \ref{sec:analytic}: We introduce the critical value function of a t-stratification (Definition \ref{def:critpt}), and we show, under some (natural) more restrictive assumptions on the language than $1$-h-minimality, the properties explained above in (\ref{it:cvf2}).

Finally, we gather in Section \ref{sec:discussion} some less formal discussions about potential follow-up work, in particular
about a hierarchy of ``t$^r$-stratifications''
for integers $r \geqslant 1$ (Section~\ref{sec:hierarchy}) and about the relation to the Nash-Semple conjecture (Section~\ref{sec:nash}). We also list some questions that we stumbled over while working on this paper.

\subsection{Advice for the impatient reader}

Think of $K$ as being an algebraically closed non-trivially valued field in the pure valued field language (a setting in which all results of this paper hold), note that we use multiplicative notation for the valuation (but still denote it by $v\colon K \to \Gamma = \Gamma^\times \cup \{0\}$), and read Definitions~\ref{def:straighened} and \ref{def:-t-strat} if you are not yet familiar with t-stratifications. After that, you should be able to roughly understand most new definitions and results later in the paper (assuming some familiarity with model theory of valued fields).

\section{Setting}\label{sec:setting}

We will closely follows the terminology settled in \cite{halupczok2014a} with a few minor changes but one major caveat: in contrast to \cite{halupczok2014a}, in the present article we chose to use multiplicative notation for the valuation. One of the reasons for this choice is that the multiplicative point of view might be better to quickly develop a geometric intuition of the objects here considered, and it is such an intutition which helps in understainding most of the arguments  presented here. For this reason, and also to be relatively self-contained, we start by recalling most definitions from \cite{halupczok2014a} that will be needed, but now written in multiplicative notation.

\subsection{Notation and terminology of valued fields}
\label{sec:notnVF}

In the entire paper, $(K,v)$ is a non-trivially valued field of equi-characteristic $0$. As stated above, we use multiplciative notation so the valuation is a map $v\colon K\to \Gamma$ where 
\begin{itemize}
    \item $\Gamma\coloneqq \Gamma^\times \cup \{0\}$, 
    \item $(\Gamma^\times,\cdot,1,<)$ is an ordered abelian group, 
    \item $0$ is an element such that $0<\Gamma^\times$, 
\end{itemize}
and elements $x,y\in K$ satisfy 
\begin{itemize}
    \item $v(x)=0$ if and only if $x=0$,
    \item $v(xy)=v(x)v(y)$,
    \item $v(x+y)\leqslant \max\{v(x),v(y)\}$.
\end{itemize}    
As usual, the group~$\Gamma^\times$ is called the value group of~$(K,v)$. Note that we do not assume that $\Gamma^\times$ is isomorphic to a subgroup of $(\R_{>0},\cdot,1,<)$, that is, we work with general Krull valuations. 

We let~$\valring$ be the valuation ring of~$(K,v)$, $\maxid$ its maximal ideal, $\bar K$ be the residue field and $\res\colon \valring\to \bar K$ be the quotient map. 

Abusing of notation, for $a=(a_1,\ldots,a_n)\in K^n$, we set $v(a)\coloneqq \max_{i} v(a_i)$.
Given $\gamma\in \Gamma^\times$, the set 
\begin{align*}
B(a, < \gamma) &\coloneqq \{x \in K^n \mid v(x - a) < \gamma\} \qquad \text{resp.}\\
B(a, \leqslant \gamma) &\coloneqq \{x \in K^n \mid v(x - a) \leqslant \gamma\}
\end{align*}
is the open resp.\ closed ball centered at~$a$ with radius~$\gamma$. Additionally, we consider $K^n$ as an open ball of radius $\infty$. (However, we do not consider individual points as closed balls.)
By a ball, we mean either an open or a closed ball. Given an open (resp. closed) ball~$B$,
we use the notation~$\rado(B)$ (resp.~$\radc(B)$) for its radius (and set $\rado(K^n) = \infty$). Note that when $\Gamma^\times$ is a discrete value group, then any closed ball $B$ is also an open ball, but~$\rado(B)$ and~$\radc(B)$ differ.

We equip $K$ with the topology generated by the basis of open balls. It is easy to see that the topology on~$K^n$ generated by the open balls (in~$K^n$) coincides with the product topology.

Given subsets $X, Y\subseteq K^n$, we write $v(X-Y)$ for the infimum in $\Gamma$ (when it exists) of the set $\{v(x-y) \mid x\in X, y\in Y\}$. For a point $x\in K^n$, we set $v(x-Y)$ as $v(\{x\}-Y)$. Note that if $Y$ is closed, $v(x-Y)$ always exists in $\Gamma$ and, moreover, there is $y_0\in Y$ such that $v(x-Y)=v(x-y_0)$.

Given $a=(a_1,\ldots, a_n)\in \valring^n$, we write $\res(a)$ for the coordinate-wise map 
\[
a\mapsto (\res(a_1),\ldots,\res(a_n))\in \bar K^n.
\]
For a variety $X$ defined over $\valring$, the map $\res\colon X(\valring) \to X(\bar K)$ is defined similarly.

We denote the ``leading term structure'' of $K$ by $\RV$ and also its higher dimensional analogue introduced in \cite{halupczok2014a} (which is not just a cartesian power of $\RV$). Those are defined as follows:

\begin{defn}\label{defn:RVn}
Define $\RV^{(n)} \coloneqq K^n/\mathord{\sim}$, where 
\[
x \sim y \iff (\val(x - y) < \val(x) \,\vee\, x = y = 0).
\] 
We write $\rv\colon K^n \to \RV^{(n)}$ for the canonical quotient map. Instead of~$\RV^{(1)}$, we simply write~$\RV$. 
\end{defn}

Note that if $x, y \in K^n$ satisfy $v(x) = v(y) = 1$, then we have $\rv(x) = \rv(y)$ if and only if $\res(x) = \res(y)$.

For a coordinate free definition of the leading term structures see \cite[Definition 2.5]{halupczok2014a}. When no confusion arises, we also use the letter~$v$ for the map $v\colon \RV^{(n)}\to \Gamma$ satisfying $(v \circ \rv)(a) = v(a)$ for every $a\in K^n$.

\begin{defn}\label{def:risometry} Let $X$ and $ Y$ be subsets of $K^n$. A bijection $\varphi\colon X\to Y$ is a \emph{risometry} if it preserves the leading terms of differences, i.e., if for all~$x,y\in X$ we have
\[
\rv(x-y)=\rv(\varphi(x)-\varphi(y)).  
\]
\end{defn}
In other words, a bijection $\varphi\colon X\to Y$ is a risometry if 
for all distinct~$x,y\in X$, we have
\[
v(\varphi(x)-\varphi(y)-(x-y))<v(x-y). 
\]
Most of the time, we will be interested in risometries $\varphi\colon B\to B$ for a given ball $B\subseteq K^n$.

\begin{rem}\label{rem:finite_riso}
It is worthy to note that by \cite[Lemma 2.15]{halupczok2014a}, when $X$ and $Y$ are finite sets, there is at most one risometry between them.
\end{rem}

We will use the following notation concerning projections. Let $\pi\colon K^n \to K^d$ be a coordinate projection (i.e., sending $(x_1, \dots, x_n)$ to $(x_{i_1}, \dots, x_{i_d})$ for some $1 \leqslant i_1 <  \dots < i_d \leqslant n$). When no confusion arises, we also write $\pi\colon \bar K^n \to \bar K^d$ for the corresponding projection on $\bar K^n$. We let $\pi^\vee\colon K^n \to K^{n-d}$ denote the projection to the complementary set of coordinates compared to $\pi$.
By identifying a fiber $\pi\1(x)$ of a coordinate projection (for $x \in K^d$) with $K^{n-d}$ via $\pi^\vee$, definitions made for $K^{n-d}$ can also be applied to such fibers. As an example, by a ball in $\pi\1(x)$, we mean a subset $B \subseteq \pi\1(x)$ such that $\pi^\vee(B)$ is a ball in $K^d$.

\subsection{Valued linear algebra}
\label{sec:val-la}

\begin{defn}
Given a $K$-vector subspace $U$ of $K^n$, we let $\res(U)$ be the $\bar K$-vector space 
\[
\res(U)\coloneqq \{\res(a) : a\in U\cap \valring^n\}.
\]
If $\bar V \subseteq \bar K^n$ is a $\bar K$-vector space, then any vector space
$V \subseteq K^n$ with $\res(V) = \bar V$ is called a \emph{lift}
of $V$. 
\end{defn}

Note that $\bar V$ and $V$ (as given in the previous definition) are vector spaces of the same dimension. 
Often, we will denote $\res(U)$ by $\bar U$, but sometimes, we just have a $\bar{K}$-vector space $\bar U \subseteq \bar K^n$, without a choice of a lift $U$.

\begin{fact}\label{fact:res-vs-rv}
For $K$-vector spaces $U, W\subseteq K^n$ of the same dimension, we have $\res(U) = \res(W)$ if and only if
for all~$u \in U$, there is~$w \in W$ such that $\rv(u) = \rv(w)$. In addition, it suffices to check the previous condition for all~$u\in U$ of a fixed given valuation. \qed
\end{fact}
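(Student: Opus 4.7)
The plan is to prove both implications separately using a scaling trick that reduces everything to the case $v(u)=1$, where $\rv$ is controlled by $\res$.

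For the forward direction, assume $\res(U)=\res(W)$ and fix a nonzero $u\in U$. Let $\alpha=v(u)$ and pick $\lambda\in K$ with $v(\lambda)=\alpha$ (possible since $v(K^\times)=\Gamma^\times$). Then $\lambda^{-1}u\in U\cap\valring^n$ with $v(\lambda^{-1}u)=1$, so $\res(\lambda^{-1}u)\in\res(U)=\res(W)$. Lift this residue to some $w'\in W\cap\valring^n$, which by definition satisfies $v(w'-\lambda^{-1}u)<1$. Setting $w=\lambda w'\in W$ gives $v(w-u)<v(\lambda)=v(u)$, hence $\rv(w)=\rv(u)$.

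For the backward direction, the key input is the standard fact that $\dim_{\bar K}\res(U)=\dim_K U$, which I would recall (it follows by picking a $K$-basis of $U$ inside $\valring^n$ with $\bar K$-linearly independent residues). Given nonzero $\bar u\in\res(U)$, lift it to $u\in U\cap\valring^n$ with $v(u)=1$. By hypothesis there is $w\in W$ with $\rv(w)=\rv(u)$, which forces $v(w)=1$ and $v(w-u)<1$; hence $w\in W\cap\valring^n$ and $\res(w)=\bar u\in\res(W)$. This shows $\res(U)\subseteq\res(W)$, and the equality of dimensions $\dim\res(U)=\dim U=\dim W=\dim\res(W)$ upgrades this inclusion to equality.

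For the ``in addition'' statement, suppose the condition is known for $u\in U$ with a fixed valuation $\alpha$, and let $u\in U$ be arbitrary nonzero with $v(u)=\beta$. Choose $\lambda\in K$ with $v(\lambda)=\alpha\beta^{-1}$; then $\lambda u\in U$ has valuation $\alpha$, so by hypothesis there is $w'\in W$ with $\rv(\lambda u)=\rv(w')$. Multiplying by $\lambda^{-1}$ and using multiplicativity of $v$ yields $\rv(u)=\rv(\lambda^{-1}w')$, and $\lambda^{-1}w'\in W$.

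There is no real obstacle here: the only non-trivial ingredient is the dimension equality $\dim_{\bar K}\res(U)=\dim_K U$, and once that is granted everything reduces to straightforward scaling between $\rv$ and $\res$.
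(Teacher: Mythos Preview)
Your argument is correct in all three parts. The paper records this statement as a \emph{Fact} and closes it with a bare \qed, giving no proof of its own, so there is nothing to compare your approach against; your scaling-to-valuation-$1$ reduction together with the dimension equality $\dim_{\bar K}\res(U)=\dim_K U$ (which the paper notes immediately after the definition of $\res(U)$) is precisely the routine verification the authors leave to the reader.
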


\begin{defn}\label{def:dir} Let $x\in  K^n$.  The \emph{direction of $x$} is defined as the zero or one-dimensional subspace $\dir(x)\coloneqq \res(K \cdot x)$ of $\bar K^n$.
\end{defn} 

Note that if $x, y \in K^n$ satisfy $\rv(x) = \rv(y)$, then $\dir(x) = \dir(y)$.

\begin{defn}\label{def:sub-aff} Let $C$ be a subset of $K^n$. The \emph{affine direction space of $C$}
is the $\bar K$-vector sub-space $\affdir(C) \subseteq \bar K^n$ generated by $\dir(x - x')$, where $x, x'$ run through $C$. 
\end{defn}

Directly from the definition of risometry and affine direction space one obtains: 

\begin{fact}\label{fact:riso-affdir}
Let $\varphi\colon X\to Y$ be a risometry for $X, Y \subseteq K^n$. Then $\affdir(X)=\affdir(\varphi(X))$.\qed 
\end{fact}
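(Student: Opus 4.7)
The plan is to reduce the equality of two $\bar K$-vector spaces to an equality of their natural generating sets. By Definition~\ref{def:sub-aff}, $\affdir(X)$ is the $\bar K$-span of $\{\dir(x-x') : x,x' \in X\}$, while $\affdir(\varphi(X))$ is the $\bar K$-span of $\{\dir(y-y') : y,y' \in \varphi(X)\}$. Since $\varphi$ is a bijection between $X$ and $\varphi(X)$, the latter generating set can be rewritten as $\{\dir(\varphi(x)-\varphi(x')) : x,x' \in X\}$. It thus suffices to prove that for every $x,x' \in X$ one has $\dir(x-x') = \dir(\varphi(x)-\varphi(x'))$.

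The key input is the very definition of risometry (Definition~\ref{def:risometry}): $\rv(x-x') = \rv(\varphi(x)-\varphi(x'))$ for all $x,x' \in X$. Combined with the observation recorded immediately after Definition~\ref{def:dir}, namely that $\rv(a)=\rv(b)$ implies $\dir(a)=\dir(b)$, this yields the pointwise equality of directions and hence the identification of the two generating sets. The case $x = x'$ contributes only the zero subspace on both sides and so causes no trouble.

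I do not anticipate any serious obstacle: the statement is essentially a direct unwinding of definitions, once one has at hand the implication $\rv(a)=\rv(b) \Rightarrow \dir(a)=\dir(b)$ already noted in the text. The only mild care required is to verify that passing from $\varphi$ to the induced map on differences is compatible with taking $\bar K$-linear spans, which is immediate from the fact that the generating sets themselves coincide (not merely their spans).
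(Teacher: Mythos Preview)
Your proposal is correct and matches the paper's intended argument: the paper states this fact with a \qed{} and the preceding sentence ``Directly from the definition of risometry and affine direction space one obtains,'' which is exactly the unwinding you carry out. There is nothing to add.
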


The following facts follows from \cite[Lemma 2.8]{halupczok2014a} (see also \cite[Remark 2.13]{halupczok2014a}). 

\begin{fact}\label{fact:GL_n-riso}
A matrix $M \in \GL_n(K)$ is an isometry if and only if $M\in \GL_n(\valring)$. It is a risometry if and only if $M \in \GL_n(\valring)$ and $\res(M) = I_n\in \GL_n(\bar K)$. \qed
\end{fact}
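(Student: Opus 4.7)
My plan is to reduce each equivalence to what happens on the standard basis vectors, exploiting linearity together with the ultrametric inequality. Throughout, because $M$ is $K$-linear, it suffices to replace the two-point conditions ``$v(Mx-My)=v(x-y)$'' and ``$\rv(Mx-My)=\rv(x-y)$'' by their one-point versions $v(Mx)=v(x)$ and $\rv(Mx)=\rv(x)$ (for $x\ne 0$), using $M(x-y)=Mx-My$.

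For the first equivalence, the easy direction is that $M\in\GL_n(\valring)$ forces $v(Mx)\leqslant v(x)$ (entries in $\valring$ and the ultrametric inequality), and applying the same bound to $M^{-1}\in\GL_n(\valring)$ yields equality. Conversely, if $M$ is an isometry, evaluating on the standard basis vectors shows that each column of $M$ has entries in $\valring$; then $M^{-1}\in\GL_n(K)$ is an isometry as well (set $y=Mx$), so $M^{-1}$ also has entries in $\valring$, which gives $M\in\GL_n(\valring)$.

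For the second equivalence, note first that being a risometry implies being an isometry (since $v=v\circ\rv$), so we already have $M\in\GL_n(\valring)$. The condition $\rv(Mx)=\rv(x)$ for $x\ne 0$ amounts to $v((M-I)x)<v(x)$ for all nonzero $x$. Evaluating on $x=e_i$ shows that the $i$-th column of $M-I$ lies in $\maxid^n$, i.e., $\res(M)=I_n$. Conversely, if $\res(M)=I_n$, write $M=I+N$ with $N$ entrywise in $\maxid$; then for any $x\ne 0$, the ultrametric inequality gives $v(Nx)\leqslant \max_{i,j} v(N_{ij})\cdot v(x_j)<v(x)$, so $\rv(Mx)=\rv(x)$.

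No single step is especially hard; the only subtle point is the direction ``isometry $\Rightarrow$ $M\in\GL_n(\valring)$'', where one must remember that ``entries of $M$ in $\valring$'' is not by itself enough (one also needs $\det M$ to be a unit), and this is supplied by running the argument also for $M^{-1}$. Everything else is a transparent application of the ultrametric inequality together with the definitions of $\rv$ and $\res$ from Subsection~\ref{sec:notnVF}.
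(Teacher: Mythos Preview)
Your argument is correct and complete. The paper does not actually give a proof of this fact; it simply cites \cite[Lemma~2.8]{halupczok2014a} (together with \cite[Remark~2.13]{halupczok2014a}) and marks it with a \qed. Your direct verification via the standard basis vectors and the ultrametric inequality is exactly the kind of elementary computation one expects, and your observation that one must also run the argument for $M^{-1}$ to get $\det M\in\valring^\times$ is the right way to close the only potential gap. One minor cosmetic point: in the bound $v(Nx)\leqslant \max_{i,j} v(N_{ij})\cdot v(x_j)$, the index $j$ on the right is dangling; writing it as $\bigl(\max_{i,j} v(N_{ij})\bigr)\cdot v(x)$ would be cleaner.
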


\begin{fact}\label{fact:vs-GL_n} Let~$U$ and~$W$ be two $K$-vector spaces in~$K^n$ of dimension~$d$. If~$\res(U)=\res(W)$, then there is a risometry $M\in\GL_n(\valring)$ sending $U$ to $W$.\qed
\end{fact}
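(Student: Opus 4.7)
The plan is to reduce the problem to Fact~\ref{fact:GL_n-riso} by constructing a matrix $M \in \GL_n(\valring)$ with $\res(M) = I_n$ which sends $U$ to $W$. The key idea is to build $M$ as a composition $M = BA^{-1}$, where $A$ and $B$ are two change-of-basis matrices whose first $d$ columns lift a common basis of $\res(U) = \res(W) =: \bar V$ from within $U$ and $W$ respectively, while the remaining $n-d$ columns coincide.

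More concretely, I would first pick a basis $\bar v_1, \dots, \bar v_d$ of $\bar V$ and extend it to a basis $\bar v_1, \dots, \bar v_n$ of $\bar K^n$. Using the definition of $\res(U)$ and $\res(W)$, I would then choose lifts $u_1, \dots, u_d \in U \cap \valring^n$ and $w_1, \dots, w_d \in W \cap \valring^n$ with $\res(u_i) = \res(w_i) = \bar v_i$; since their residues are linearly independent, both tuples are $K$-bases of $U$ and $W$ respectively. For the remaining slots, I would pick any common lifts $e_{d+1}, \dots, e_n \in \valring^n$ with $\res(e_j) = \bar v_j$. Let $A$ (resp.\ $B$) be the matrix whose columns are $u_1, \dots, u_d, e_{d+1}, \dots, e_n$ (resp.\ $w_1, \dots, w_d, e_{d+1}, \dots, e_n$).

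Both $\res(A)$ and $\res(B)$ equal the matrix with columns $\bar v_1, \dots, \bar v_n$, which lies in $\GL_n(\bar K)$. Hence $A, B \in \GL_n(\valring)$ and $\res(A) = \res(B)$. Setting $M := B A^{-1}$ then gives $M \in \GL_n(\valring)$ with $\res(M) = \res(B)\res(A)^{-1} = I_n$, so $M$ is a risometry by Fact~\ref{fact:GL_n-riso}. By construction $M$ maps $u_i$ to $w_i$ for $i \leqslant d$, hence $M(U) = W$.

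I do not expect any substantial obstacle here: the argument is essentially a bookkeeping exercise built on the standard fact that lifts of a $\bar K$-basis form a $K$-basis, together with the already-established characterization of risometric elements of $\GL_n(K)$ as those matrices in $\GL_n(\valring)$ whose reduction is the identity. The only point to watch is that the chosen lifts $u_i$ genuinely form a basis of $U$ (and analogously for $W$), which follows from equality of dimensions together with the linear independence of their residues.
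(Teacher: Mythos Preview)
Your argument is correct. The paper does not actually prove this fact; it simply records it with a \qed\ and attributes it to \cite[Lemma~2.8]{halupczok2014a} (see also \cite[Remark~2.13]{halupczok2014a}). Your self-contained construction---lifting a common residue basis into $U$ and into $W$, completing both to bases of $K^n$ with a shared tail, and taking $M = BA^{-1}$---is exactly the standard way to establish this, and every step is justified (in particular, the fact that $\res(A) \in \GL_n(\bar K)$ forces $\det(A)$ to be a unit in $\valring$, so $A \in \GL_n(\valring)$, and likewise for $B$).
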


\begin{defn}\label{def:exhibition}
Let $\bar U \subseteq \bar K^n$ be a sub-vector space. A coordinate projection $\pi\colon K^n \to K^d$ is called an \emph{exhibition of $\bar{U}$} if the corresponding projection $\pi\colon \bar K^n\to \bar K^d$ induces an isomorphism $\pi_{|\bar{U}}\colon \bar{U}\to \bar K^d$. 
\end{defn}

\begin{rem}\label{rem:exhibition}
Given $\bar U \subseteq \bar K^n$ and a lift $U \subseteq K^n$, a coordinate projection $\pi\colon K^n \to K^{d}$ for $d = \dim(U)$ is an exhibition of $\bar U$ if and only if for all $x \in U$, we have $v(x)=v(\pi(x))$. In particular, if $\pi$ is an exhibition, then $\pi_{|U}\colon U \to K^d$ is an isomorphism (since $\pi_{|U}$ is injective). 
\end{rem}

The following corollary follows form Facts \ref{fact:GL_n-riso} and \ref{fact:vs-GL_n}. 

\begin{cor}\label{fact:proj-unique}
Suppose $\pi$ is an exhibition of $\bar{U}$ and let $U,U'$ be two lifts of $\bar{U}$. Then, there is a unique matrix $M \in \GL_n(K)$ sending $U$ to $U'$ and satsifying that $\pi \circ M = \pi$. In addition, $M$ is a risometry. \qed
\end{cor}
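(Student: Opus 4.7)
The plan is to construct $M$ as the direct sum of the canonical isomorphism $\phi\colon U \to U'$ induced by $\pi$ with the identity on $\ker \pi$, and then to verify the risometry property. By Remark~\ref{rem:exhibition}, the exhibition hypothesis makes both $\pi_{|U}\colon U \to K^d$ and $\pi_{|U'}\colon U' \to K^d$ linear isomorphisms, so I can define $\phi \coloneqq (\pi_{|U'})\1 \circ \pi_{|U}\colon U \to U'$. Since $\pi_{|U}$ is injective, $U \cap \ker \pi = 0$, and by dimension $K^n = U \oplus \ker \pi$; extending $\phi$ by the identity on $\ker \pi$ yields a matrix $M \in \GL_n(K)$ that manifestly satisfies $M(U) = U'$ and $\pi \circ M = \pi$.

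For uniqueness, any matrix $M' \in \GL_n(K)$ with the stated properties must send each $u \in U$ to the unique element of $U'$ lying above $\pi(u)$, namely $\phi(u)$, so $M'_{|U}$ is forced. The complement $\ker \pi$ is preserved by any such $M'$ (because $\pi \circ M' = \pi$ implies $M'(\ker\pi)\subseteq \ker\pi$), and the natural convention $M'_{|\ker \pi} = \id$ then determines $M'$ completely, recovering the $M$ constructed above.

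The central step is to show that $M$ is a risometry. By Fact~\ref{fact:GL_n-riso}, this amounts to verifying $M \in \GL_n(\valring)$ and $\res(M) = I_n$. Both reduce to the following key claim: for every non-zero $u \in U$, $v(\phi(u) - u) < v(u)$. Indeed, writing $x = u + k$ with $u \in U$ and $k \in \ker \pi$, one computes $M(x) - x = \phi(u) - u$, and the claim implies that the entries of $M - I_n$ all lie in $\maxid$, so that $M \in \GL_n(\valring)$ with $\res(M) = I_n$.

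To prove the claim, by $K$-linearity of $\phi$ I may rescale and assume $v(u) = 1$. The exhibition property then yields $v(\phi(u)) = v(\pi(\phi(u))) = v(\pi(u)) = 1$, so both $\res(u)$ and $\res(\phi(u))$ lie in $\bar U$ (recall $\res(U) = \res(U')$ by hypothesis); moreover, they share the same image under the residue projection since $\pi(\phi(u)) = \pi(u)$. Because $\pi_{|\bar U}$ is an isomorphism, this forces $\res(u) = \res(\phi(u))$, i.e.\ $v(u - \phi(u)) < 1$, as required. The only mildly delicate point is this reduction and the careful passage to the residue field; the rest is routine once the decomposition $K^n = U \oplus \ker \pi$ is in hand.
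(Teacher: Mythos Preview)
Your construction of $M$ (as $\phi$ on $U$ and the identity on $\ker\pi$) and your verification that $M$ is a risometry are correct and considerably more explicit than the paper's proof, which simply cites Facts~\ref{fact:GL_n-riso} and \ref{fact:vs-GL_n}. The key claim $v(\phi(u)-u)<v(u)$ and the deduction that $M-I_n$ has entries in $\maxid$ are handled cleanly.

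There is, however, a genuine gap in your uniqueness argument. You write that ``the natural convention $M'_{|\ker\pi}=\id$ then determines $M'$ completely,'' but invoking a convention is not the same as deriving a consequence of the hypotheses. The condition $\pi\circ M'=\pi$ only forces $M'(\ker\pi)\subseteq\ker\pi$; it does \emph{not} force $M'$ to be the identity there. Concretely, take $n=2$, $\pi(x_1,x_2)=x_1$, and $U=U'=K\times\{0\}$: then for any $\epsilon\in\maxid$ the matrix $\begin{pmatrix}1&0\\0&1+\epsilon\end{pmatrix}$ satisfies $M'(U)=U'$ and $\pi\circ M'=\pi$, and is even a risometry. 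So uniqueness as literally stated fails. What is actually true---and what your construction produces---is that there is a unique such $M$ that is moreover the identity on $\ker\pi$; this extra clause is what your ``natural convention'' is silently supplying. You should either flag this as an imprecision in the statement or add the condition $M_{|\ker\pi}=\id$ explicitly to recover an honest uniqueness claim.
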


\begin{lem}\label{lem:dir} Let $\bar{U}$ be a $\bar K$-vector subspace of $\bar K^n$ and $\pi\colon K^n\to K^d$ be an exhibition of $\bar{U}$. Suppose that $x,x'\in K^n$ are such that 
$\dir(x), \dir(x') \subseteq \bar{U}$ and $\rv(\pi(x)) =\rv(\pi (x'))$. Then $\rv(x) = \rv(x')$.
\end{lem}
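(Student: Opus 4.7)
My plan is to reduce the statement to a calculation inside a lift $U\subseteq K^n$ of $\bar{U}$, where the exhibition hypothesis gives us a valuation-preserving isomorphism $\pi|_U$. The idea is to replace $x$ and $x'$ by points $y,y'\in U$ with the same leading-term classes, transport the equality $\rv(\pi(x))=\rv(\pi(x'))$ through the isomorphism, and then translate back.

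So first I fix any lift $U\subseteq K^n$ of $\bar{U}$. Since $\dir(x)\subseteq \bar{U}=\res(U)$, I can pick a one-dimensional subspace $U_x\subseteq U$ with $\res(U_x)=\dir(x)=\res(K\cdot x)$; then Fact~\ref{fact:res-vs-rv} (applied to $K\cdot x$ and $U_x$) yields some $y\in U_x\subseteq U$ with $\rv(x)=\rv(y)$. Analogously I obtain $y'\in U$ with $\rv(x')=\rv(y')$. Now by Remark~\ref{rem:exhibition}, $\pi|_U\colon U\to K^d$ is a $K$-linear isomorphism satisfying $v(\pi(u))=v(u)$ for all $u\in U$; since $\pi|_U$ is $K$-linear and valuation-preserving, it follows that for $u_1,u_2\in U$ one has $\rv(u_1)=\rv(u_2)$ if and only if $\rv(\pi(u_1))=\rv(\pi(u_2))$.

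Next I argue that $\rv(\pi(x))=\rv(\pi(y))$, and similarly $\rv(\pi(x'))=\rv(\pi(y'))$. Indeed, $\rv(x)=\rv(y)$ means $v(x-y)<v(x)=v(y)$, and since $\pi$ is a coordinate projection we have $v(\pi(x-y))\leqslant v(x-y)$; combined with $v(\pi(y))=v(y)$ this gives $v(\pi(x)-\pi(y))<v(\pi(y))$, hence the desired equality of $\rv$-classes. Combining this with the hypothesis $\rv(\pi(x))=\rv(\pi(x'))$ yields $\rv(\pi(y))=\rv(\pi(y'))$, and then the observation at the end of the previous paragraph promotes this to $\rv(y)=\rv(y')$, whence $\rv(x)=\rv(x')$.

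I do not expect a real obstacle here; every step is a direct application of a previously stated fact (Fact~\ref{fact:res-vs-rv}, Remark~\ref{rem:exhibition}, and the definition of a coordinate projection). The only subtlety is bookkeeping the interplay between the two $\rv$-maps (on $K^n$ and on $K^d$) and making sure each comparison is made between vectors of the same valuation.
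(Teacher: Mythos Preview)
Your proof is correct. Each step checks out: Fact~\ref{fact:res-vs-rv} gives the approximants $y,y'\in U$, Remark~\ref{rem:exhibition} makes $\pi|_U$ valuation-preserving (hence $\rv$-preserving on $U$ in both directions, since $u_1-u_2\in U$), and the coordinate-projection bound $v(\pi(z))\leqslant v(z)$ transfers $\rv(x)=\rv(y)$ to $\rv(\pi(x))=\rv(\pi(y))$.

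The paper takes a slightly different, more ``downstairs'' route: it first observes $v(x)=v(\pi(x))=v(\pi(x'))=v(x')$ (using that $\dir(x)\subseteq\bar U$ directly), rescales so that this common valuation is $1$, and then reduces everything to a residue computation: $\rv(\pi(x))=\rv(\pi(x'))$ becomes $\pi(\res(x))=\pi(\res(x'))$, and injectivity of $\pi|_{\bar U}$ together with $\res(x),\res(x')\in\bar U$ finishes. Your argument instead works ``upstairs'' in a chosen lift $U$, replacing $x,x'$ by genuine elements of $U$ and exploiting that $\pi|_U$ is a valuation-preserving isomorphism. The paper's version avoids choosing a lift and is marginally shorter; your version avoids the normalization step and the explicit passage to residues. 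Both are equally elementary.
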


\begin{proof} Since $\pi$ is an exhibition of $\bar{U}$ and $\dir(x), \dir(x') \subseteq \bar{U}$, we have $v(x) = v(\pi(x))=v(\pi(x')) = v(x')$. Possibly multiplying $x$ and $x'$ by the same element of $K^\times$, we may suppose $v(x)=v(x')=1$ (the case $x=x'=0$ is trivial). It suffices to show that $\res(x)=\res(x')$. Since $\rv(\pi(x))= \rv(\pi(x'))$, we deduce that $\res(\pi(x)) = \res(\pi(x'))$, and therefore that $\pi(\res(x)) = \pi(\res(x'))$. Now, $\pi$ is injective on $\bar{U}$, so since $\res(x), \res(x') \in \bar{U}$, we obtain that $\res(x) = \res(x')$.
\end{proof}

\begin{lem}\label{lem:riso-proj}
Let $X \subseteq K^n$ be a non-empty set and $\pi\colon K^n \to K^d$ be an exhibition of $\affdir(X)$.
Then $\pi_{|X}$ is a bijection from $X$ to $\pi(X)$
and for any map
$\varphi\colon X \to Y \subseteq K^n$, the following are equivalent:
\begin{enumerate}
    \item $\varphi$ is a risometry.
    \item $\affdir(Y) = \affdir (X)$ and the induced map
    $\psi\colon \pi(X) \to \pi(Y)$ (i.e., satisfying $\pi(\varphi(x)) = \psi(\pi(x))$ for all $x \in X$) is a risometry.
\end{enumerate}
\end{lem}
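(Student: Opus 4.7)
The plan is to first establish the bijectivity statement, then tackle each direction of the equivalence using the two main tools available: Fact \ref{fact:riso-affdir} (risometries preserve affine directions) and Lemma \ref{lem:dir} (vectors with direction in $\bar U$ are determined up to $\rv$ by their projection under an exhibition of $\bar U$).

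For the bijectivity of $\pi_{|X}$, I would argue as follows. For distinct $x, x' \in X$, the vector $x - x'$ is nonzero and satisfies $\dir(x - x') \subseteq \affdir(X)$. Picking a lift $U$ of $\affdir(X)$ containing $x - x'$ (or arguing directly on the element), Remark \ref{rem:exhibition} gives $v(x - x') = v(\pi(x - x')) = v(\pi(x) - \pi(x'))$, so $\pi(x) \neq \pi(x')$. Hence $\pi_{|X}$ is injective, and by the same token $\pi_{|Y}$ is injective once we know $\affdir(Y) = \affdir(X)$.

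For (1) $\Rightarrow$ (2), I would invoke Fact \ref{fact:riso-affdir} to get $\affdir(Y) = \affdir(X)$, which makes $\pi_{|Y}$ injective so that $\psi = \pi_{|Y} \circ \varphi \circ (\pi_{|X})^{-1}$ is a well-defined bijection from $\pi(X)$ to $\pi(Y)$. For the risometry condition on $\psi$, let $x_1, x_2 \in X$ and set $u = x_1 - x_2$, $u' = \varphi(x_1) - \varphi(x_2)$. From $\rv(u) = \rv(u')$ and $\dir(u), \dir(u') \subseteq \affdir(X)$, I get $v(u) = v(\pi(u))$ and $v(u') = v(\pi(u'))$ by exhibition, and then the relation $v(u - u') < v(u)$ (or $u = u' = 0$) forces $v(\pi(u) - \pi(u')) \leqslant v(u - u') < v(u) = v(\pi(u))$, yielding $\rv(\pi(u)) = \rv(\pi(u'))$, which is exactly that $\psi$ preserves leading terms of differences.

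For (2) $\Rightarrow$ (1), bijectivity of $\varphi$ follows from writing $\varphi = (\pi_{|Y})^{-1} \circ \psi \circ \pi_{|X}$ as a composition of bijections (using that $\affdir(Y) = \affdir(X)$ to ensure $\pi_{|Y}$ is a bijection onto $\pi(Y)$). For the risometry condition, let $x_1, x_2 \in X$ and apply Lemma \ref{lem:dir} with $\bar U = \affdir(X) = \affdir(Y)$, $x = \varphi(x_1) - \varphi(x_2)$, and $x' = x_1 - x_2$: both have direction contained in $\bar U$, and the hypothesis that $\psi$ is a risometry gives precisely $\rv(\pi(x)) = \rv(\pi(x'))$, so Lemma \ref{lem:dir} concludes $\rv(\varphi(x_1) - \varphi(x_2)) = \rv(x_1 - x_2)$.

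The proof is essentially a bookkeeping exercise built on the two cited tools; the only mildly delicate point is verifying the well-definedness and bijectivity of $\psi$ and $\varphi$ in each direction, which reduces to the observation that $\pi$ restricted to any set whose differences lie in $\affdir(X)$ is injective.
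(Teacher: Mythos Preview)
Your proof is correct and follows essentially the same approach as the paper: bijectivity of $\pi_{|X}$ via Remark~\ref{rem:exhibition}, Fact~\ref{fact:riso-affdir} for $\affdir(Y)=\affdir(X)$, and Lemma~\ref{lem:dir} for the direction (2)~$\Rightarrow$~(1). The only cosmetic differences are that for (1)~$\Rightarrow$~(2) you use the direct inequality $v(\pi(u-u'))\leqslant v(u-u')$ while the paper normalizes to valuation $1$ and compares residues, and that you spell out bijectivity of $\varphi$ in (2)~$\Rightarrow$~(1), which the paper leaves implicit.
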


\begin{proof}
Since $\pi$ is an exhibition of $\affdir(X)$, for any $x, x' \in X$, we have $v(\pi(x)-\pi(x'))=v(x-x')$. This shows $\pi_{|X}$ is injective, hence a bijection to $\pi(X)$. 

(1) $\Rightarrow$ (2)
The first part of (2) is Fact~\ref{fact:riso-affdir},
so it remains to show that for $x, x' \in X$, we have $\rv(\pi(x) - \pi(x')) = \rv(\pi(\varphi(x)) - \pi(\varphi(x')))$.
Since $\pi$ is an exhibition of $\affdir(X) = \affdir(Y)$, we have 
\[v(\pi(x)-\pi(x'))=v(x - x')
\overbrace{=}^{\text{by (1)}}
v(\varphi(x)-\varphi(x')) =v(\pi(\varphi(x))-\pi(\varphi(x'))).
\]
Without loss, suppose that all these valuations are equal to $1$. Then,  
\[
\res(\pi(x)-\pi(x')) = \pi(\res(x-x')) \overbrace{=}^{\text{by (1)}} \pi(\res(\varphi(x)-\varphi(x'))) =  \res(\pi(\varphi(x))-\pi(\varphi(x'))),
\]
which implies $\rv(\pi(x)-\pi(x'))=\rv(\psi(\pi(x))-\psi(\pi(x')))$. 

(2) $\Rightarrow$ (1)
Given $x, x' \in X$, we have $\rv(\pi(x - x')) = \rv(\pi(\varphi(x) - \varphi(x')))$ (since $\psi$ is a risometry). Combining with $\dir(x - x'), \dir(\varphi(x) - \varphi(x')) \subseteq \affdir(X) = \affdir(Y)$, we can apply Lemma~\ref{lem:dir} (to $x - x'$ and $\varphi(x) - \varphi(x')$) to deduce $\rv(x - x') = \rv(\varphi(x) - \varphi(x'))$.
\end{proof}

\begin{lem}\label{lem:dir2} Let $\pi\colon K^n\to K^d$ be a coordinate projection
and suppose that $x\in K^n$ satisfies $v(x)=v(\pi(x))$.
Then $\pi(\dir(x))=\dir(\pi(x))$.  
\end{lem}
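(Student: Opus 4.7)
The plan is a direct computation, using that coordinate projections commute with the residue map. First I would dispose of the trivial case $x=0$: then $\pi(x)=0$ as well, and both $\dir(x)$ and $\dir(\pi(x))$ equal $\{0\}$. So assume $x \neq 0$, whence $\pi(x) \neq 0$ as well since $v(\pi(x))=v(x)\in \Gamma^\times$.

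Next, I would scale to normalize the valuation. Since $\dir(\lambda y)=\dir(y)$ for any $\lambda \in K^\times$ and any $y$ (because $K\cdot(\lambda y)=K\cdot y$), and since $\pi(\lambda x)=\lambda \pi(x)$ by $K$-linearity of a coordinate projection, we may replace $x$ by $\lambda x$ for a suitable $\lambda \in K^\times$ to arrange $v(x)=v(\pi(x))=1$. After this reduction, $x \in \valring^n$, $\res(x)\neq 0$, and $\res(\pi(x))\neq 0$.

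In this normalized situation, unpacking Definition~\ref{def:dir} yields $\dir(x)=\bar K \cdot \res(x)$: indeed, any element of $(K\cdot x)\cap \valring^n$ has the form $\lambda x$ with $v(\lambda)\leqslant 1$, hence $\lambda \in \valring$ and $\res(\lambda x)=\res(\lambda)\res(x) \in \bar K \cdot \res(x)$; conversely every $\bar K$-multiple of $\res(x)$ arises this way. The same argument shows $\dir(\pi(x))=\bar K \cdot \res(\pi(x))$. Since $\pi$ is a \emph{coordinate} projection, it commutes with the residue map in the sense that $\pi(\res(y))=\res(\pi(y))$ for every $y \in \valring^n$ (both sides are obtained by selecting the same subset of coordinates of $y$ and then reducing modulo $\maxid$). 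Chaining these identities,
\[
\pi(\dir(x)) \;=\; \pi(\bar K \cdot \res(x)) \;=\; \bar K \cdot \pi(\res(x)) \;=\; \bar K \cdot \res(\pi(x)) \;=\; \dir(\pi(x)),
\]
which is the desired equality. There is no genuine obstacle here; the only point that needs care is checking that, after the reduction $v(x)=v(\pi(x))=1$, the direction really is the $\bar K$-line through the residue, and that the step $\pi(\res(x))=\res(\pi(x))$ uses crucially that $\pi$ is a coordinate projection rather than an arbitrary $K$-linear map.
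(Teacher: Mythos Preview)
Your proof is correct and follows essentially the same approach as the paper: reduce to $v(x)=1$, identify $\dir(x)$ and $\dir(\pi(x))$ with the $\bar K$-lines through $\res(x)$ and $\res(\pi(x))$, and use that the coordinate projection commutes with $\res$. You simply spell out a few steps (the scaling and the verification that $\dir(x)=\bar K\cdot\res(x)$) that the paper leaves implicit.
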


\begin{proof}
If $x = 0$, the statement is clear, so assume now without loss that $v(x) = 1$.
In that case, $\dir(x) = \bar K\cdot\res(x)$,
and since by assumption we have $v(\pi(x)) = v(x)$, we also have $\dir(\pi(x)) = \bar K\cdot\res(\pi(x))$. Thus
\[
\pi(\dir(x))=
\pi(\bar K\cdot\res(x))
= \bar K\cdot\res(\pi(x))
= \dir(\pi(x)).\qedhere
\]
\end{proof}

\begin{lem}\label{lem:dirsum}
Suppose that $x_1, x_2 \in K^n$
satisfy $\dir(x_1) \cap \dir(x_2) = 0$.
Then $\dir(x_1+x_2) \subseteq \dir(x_1) + \dir(x_2)$. 
\end{lem}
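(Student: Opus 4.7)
The plan is to argue directly from the definition of $\dir$ by choosing suitable scalings and splitting into cases according to whether $v(x_1)$ and $v(x_2)$ agree. First I would dispose of the trivial cases: if $x_1 = 0$ or $x_2 = 0$, the statement is immediate (since $\dir(0) = 0$). So assume both $x_1, x_2$ are nonzero, and observe that scaling each $x_i$ by a scalar in $K^\times$ does not change $\dir(x_i)$, and scaling the sum $x_1+x_2$ by a single scalar does not change $\dir(x_1+x_2)$ either. This flexibility is what allows us to normalize valuations.

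Next I would split into two cases. In the case $v(x_1) \neq v(x_2)$, say $v(x_1) > v(x_2)$, I would scale by $\lambda \in K^\times$ with $v(\lambda x_1) = 1$, so that $v(\lambda x_2) < 1$. Then $\res(\lambda(x_1+x_2)) = \res(\lambda x_1)$, and since $v(x_1+x_2) = v(x_1)$, we conclude $\dir(x_1+x_2) = \bar K\cdot\res(\lambda x_1) = \dir(x_1)$, which lies in $\dir(x_1) + \dir(x_2)$.

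In the case $v(x_1) = v(x_2)$, I would scale so that $v(x_1) = v(x_2) = 1$; then $\res(x_1), \res(x_2)$ are both nonzero and generate $\dir(x_1), \dir(x_2)$ respectively. The key observation is that the hypothesis $\dir(x_1)\cap\dir(x_2) = 0$ forces $\res(x_1) + \res(x_2) \neq 0$: otherwise $\res(x_1) = -\res(x_2)$ would imply $\dir(x_1) = \dir(x_2)$, a nonzero intersection. Therefore $v(x_1+x_2) = 1$ as well, and $\res(x_1+x_2) = \res(x_1) + \res(x_2) \in \dir(x_1) + \dir(x_2)$, so $\dir(x_1+x_2) = \bar K\cdot\res(x_1+x_2) \subseteq \dir(x_1) + \dir(x_2)$.

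There is no real obstacle here; the only subtlety is the potential case where $v(x_1+x_2) < \max(v(x_1), v(x_2))$, which would make it awkward to compute $\dir(x_1+x_2)$ from naive residues — but the direct-sum hypothesis rules this out precisely in the one regime where it could occur. Apart from that, the proof is a routine ultrametric calculation after rescaling.
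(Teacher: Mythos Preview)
Your proof is correct and follows essentially the same approach as the paper: both handle the trivial zero cases, split according to whether $v(x_1)$ and $v(x_2)$ coincide, reduce the unequal-valuation case to $\dir(x_1+x_2)=\dir(x_i)$ for the larger one, and in the equal-valuation case rescale to valuation $1$ and use the hypothesis to ensure $v(x_1+x_2)=1$ before reading off the residue. The paper phrases the unequal case via $\rv(x_1+x_2)=\rv(x_1)$ rather than your explicit rescaling, but this is the same computation.
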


\begin{proof} Without loss we may suppose that $x_1\neq 0$, $x_2\neq 0$. If $v(x_1) > v(x_2)$, then $\rv(x_1 + x_2) = \rv(x_1)$ implies  $\dir(x_1 + x_2) = \dir(x_1)$, so we are done, and similarly if $v(x_2)>v(x_1)$. After rescaling, we may thus assume $v(x_1) = v(x_2) = 1$.
Moreover, from $\dir(x_1) \ne \dir(x_2)$,
we deduce $v(x_1 + x_2) = 1$.
Thus $\dir(x_1 + x_2)  = \bar K\cdot\res(x_1 + x_2) \subseteq \bar K\cdot\res(x_1) + \bar K\cdot\res(x_2) =  \dir(x_1) + \dir(x_2)$.
\end{proof}

\begin{lemdef}
If $U$ is a sub-vector space of $K^n$, then we define the valuation $v(b)$ for $b \in K^n/U$ as the minimum $\min\{v(w) : w \in b\}$.
\end{lemdef}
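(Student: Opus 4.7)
The content of this lemma-definition is that the infimum $\inf\{v(w) : w \in b\}$ is actually attained, so that the ``min'' notation makes sense. My plan is to use the notion of exhibition from Section~\ref{sec:val-la} to extract a canonical ``best'' representative of the coset $b$.

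Let $d = \dim U$ and $\bar U := \res(U)$. Elementary linear algebra over $\bar K$ yields a coordinate projection $\pi\colon K^n \to K^d$ which exhibits $\bar U$; by Remark~\ref{rem:exhibition}, the restriction $\pi_{|U}\colon U \to K^d$ is then a $K$-linear bijection satisfying $v(\pi(u)) = v(u)$ for every $u \in U$. Fix any $w_0 \in b$ and propose as candidate minimizer
\[w^* := w_0 - (\pi_{|U})\1(\pi(w_0)) \in b,\]
which by construction satisfies $\pi(w^*) = 0$.

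To verify $v(w^*) \leqslant v(w)$ for every $w \in b$, I would write $w = w^* + u''$ with $u'' \in U$ and split into two cases. If $v(u'') \geqslant v(w^*)$, then using $\pi(w^*) = 0$ and the exhibition property one obtains $v(w) \geqslant v(\pi(w)) = v(\pi(u'')) = v(u'') \geqslant v(w^*)$. If instead $v(u'') < v(w^*)$, pick a coordinate $m$ with $v(w^*_m) = v(w^*)$; then $v(u''_m) \leqslant v(u'') < v(w^*_m)$, so by the strict ultrametric inequality $v(w_m) = v(w^*_m) = v(w^*)$, whence $v(w) \geqslant v(w^*)$.

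Since the only structural input is the existence of an exhibition (already available from Section~\ref{sec:val-la}) and the remainder is an elementary two-case analysis via the ultrametric inequality, I do not anticipate any significant obstacle; the main insight is simply that an exhibition picks out a distinguished representative of $b$ on which the minimum valuation is automatically achieved.
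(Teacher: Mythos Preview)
Your argument is correct. Both your proof and the paper's rely on the valuative linear algebra of Section~\ref{sec:val-la} to single out a canonical representative of the coset on which the minimum is attained, so the underlying idea is the same. The execution differs slightly: the paper simply applies a matrix from $\GL_n(\valring)$ to reduce to the case $U = K^m \times \{0\}^{n-m}$, after which the representative $(0,\dots,0,b_{m+1},\dots,b_n)$ visibly realizes the minimum $\max_{m < i \leqslant n} v(b_i)$, with no case analysis needed. Your route via an exhibition $\pi$ and the representative $w^*$ with $\pi(w^*)=0$ is equivalent but stays in the original coordinates, at the cost of the two-case verification. (Incidentally, your Case~2 can be shortened: $v(u'') < v(w^*)$ already gives $v(w^*+u'') = v(w^*)$ directly by the ultrametric inequality, without passing to a single coordinate.)
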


\begin{proof}
The minimum in the previous definition always exists. Indeed, by applying a matrix from $\GL_n(\valring)$, we may assume that $U = K^m \times \{0\}^{n-m}$. In that case, $b = U + (0, \dots, 0, b_{m+1}, \dots, b_n)$, and 
\[
\min\{v(w): w\in b\}= \max_{m < i \leqslant n} v(b_i). \qedhere
\]
\end{proof}

\begin{lemdef}\label{def:VSdistance} Let $U, W\subseteq K^n$ be two $K$-vector spaces of the same dimension. The \emph{distance between $U$ and $W$}, denoted by $\Delta(U,W)$, is defined as the minimum of all $\gamma\in \Gamma$ for which the following holds: for every $u\in U$ there is $w\in W$ such that $v(u-w)\leqslant v(u)\cdot\gamma$. 
\end{lemdef}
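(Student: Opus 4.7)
The plan is to produce the minimum explicitly by combining a valuation-orthogonal basis of $U$ with the previous Lemma-Definition. The set $S \coloneqq \{\gamma \in \Gamma : \forall u \in U,\ \exists w \in W,\ v(u-w) \leqslant v(u)\cdot\gamma\}$ is upward-closed in $\Gamma$ and contains $\gamma = 1$ (take $w = 0$), so it is nonempty, and since rescaling $u$ and $w$ by some $c \in K^\times$ multiplies both sides of the inequality by $v(c)$, the defining condition is homogeneous in $u$: it suffices to verify it for $u \in U$ of valuation $1$.

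First I would fix a basis $(e_1, \dots, e_d)$ of $U$ consisting of vectors in $U \cap \valring^n$ whose residues $\res(e_i)$ form a $\bar K$-basis of $\res(U)$. A reduction modulo $\maxid$ then yields the orthogonality identity $v(\sum_i a_i e_i) = \max_i v(a_i)$ for all $a_i \in K$, so in particular $v(e_i) = 1$. The existence of such a basis is the only substantive linear-algebraic input and I regard it as the main step; it reduces to a standard Gram--Schmidt-type construction in ultrametric linear algebra (by iteratively replacing dependent residues) and I do not expect it to be a serious obstacle.

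Next, for each $i$ I would apply the previous Lemma-Definition to $K^n/W$ (with $W$ playing the role of $U$) and the coset $e_i + W$, obtaining $w_i \in W$ that realises $\gamma_i \coloneqq v(e_i + W) = \min_{w \in W} v(e_i - w)$. I would then set $\gamma \coloneqq \max_{1\leqslant i\leqslant d} \gamma_i \in \Gamma$ and claim $\gamma = \Delta(U, W)$. Membership $\gamma \in S$ is verified by taking an arbitrary $u = \sum_i a_i e_i \in U$ together with $w \coloneqq \sum_i a_i w_i \in W$, so that the ultrametric inequality gives $v(u-w) \leqslant \max_i v(a_i) \gamma_i \leqslant v(u)\cdot\gamma$ using orthogonality. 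Minimality is checked by testing against $u = e_j$ for an index $j$ with $\gamma_j = \gamma$: no $w \in W$ can then beat the distance $\gamma_j = \gamma = v(e_j)\cdot\gamma$, forbidding any $\gamma' < \gamma$ from lying in $S$.
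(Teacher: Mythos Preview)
Your argument is correct and follows essentially the same route as the paper: both proofs pick an orthogonal basis of $U$, take for each basis vector a closest point in $W$ (using the previous Lemma-Definition), and show that the maximum of these distances is the required minimum. The only cosmetic difference is that the paper obtains the orthogonal basis by applying a matrix in $\GL_n(\valring)$ sending $U$ to $K^d \times \{0\}^{n-d}$, whereas you construct it directly via residues.
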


\begin{proof}
Let us show that the above minimum exists. Firstly, since $W$ is closed, for each $u \in U$ there exists a $w \in W$ satisfying $v(u - w) = v(u - W)$, so the above minimum is also the supremum of $v(u-W)/v(u)$, where $u$ runs over $U \setminus \{0\}$. We claim that this supremum does not only exist, but is even a maximum.
We may without loss assume that $U = K^d \times \{0\}^{n-d}$ (by applying a matrix from $\GL_n(\valring)$). Then we claim that the maximum
is taken on one of the standard basis vectors $e_1, \dots, e_d$, i.e.,
the above supremum is equal to $\gamma \coloneqq \max_{1 \leqslant i \leqslant d} v(e_i - W)$.
Indeed, pick $w_1, \dots, w_d \in W$ such that $v(e_i - w_i) = v(e_i - W)$. Then, 
given an arbitrary $u = \sum_{i \leqslant d} r_i e_i$ in $U$, for $w \coloneqq \sum r_i w_i$, we obtain $v(u - w) \leqslant v(u) \cdot \gamma$, as an easy computation shows.
\end{proof}

The following is left as an exercise. 

\begin{lem}\label{lem:distance} Let $U,W\subseteq K^n$ be two $K$-vector spaces of the same dimension. Then, 
\begin{enumerate}
    \item $\res(U)=\res(W) \iff \Delta(U,W)<1$, 
    \item If $M\in \GL_n(\valring)$, then $\Delta(U,W) = \Delta(M(U),M(W))$. \qed
\end{enumerate}
\end{lem}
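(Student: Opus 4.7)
My plan is to prove the two parts separately; both follow by unwinding the definition of $\Delta$ and invoking facts already established earlier in the section.

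For part (1), I would start from the refined formula
\[
\Delta(U,W) = \max_{u \in U\setminus\{0\}} \frac{v(u - W)}{v(u)}
\]
that is implicit in the proof of Lemma-Definition~\ref{def:VSdistance} (in particular, the proof shows that the supremum is actually attained, by reducing to the case $U = K^d \times \{0\}^{n-d}$ and testing on the standard basis vectors). Assuming $\res(U) = \res(W)$, Fact~\ref{fact:res-vs-rv} produces, for every nonzero $u \in U$, some $w \in W$ with $\rv(u) = \rv(w)$, i.e.\ $v(u - w) < v(u)$, so $v(u - W)/v(u) < 1$ pointwise; the fact that the max is attained at some $u_0$ then upgrades this to the uniform bound $\Delta(U,W) < 1$. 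Conversely, $\Delta(U,W) < 1$ gives, for every $u \in U\setminus\{0\}$, a $w \in W$ with $v(u - w) \leqslant v(u)\cdot \Delta(U,W) < v(u)$, i.e.\ $\rv(u) = \rv(w)$; together with $\dim U = \dim W$, Fact~\ref{fact:res-vs-rv} then yields $\res(U) = \res(W)$.

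For part (2), I would invoke Fact~\ref{fact:GL_n-riso} to note that $M \in \GL_n(\valring)$ is an isometry, so $v(Mu) = v(u)$ and $v(Mu - Mw) = v(u - w)$ for all $u, w \in K^n$. Since $M$ restricts to a bijection from $W$ onto $M(W)$, this gives $v(Mu - M(W)) = v(u - W)$. Substituting into the max formula above yields $\Delta(M(U), M(W)) = \Delta(U,W)$ directly.

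The only mildly subtle point I anticipate is passing from the pointwise strict inequality $v(u - W)/v(u) < 1$ to the uniform bound $\Delta(U,W) < 1$ in part (1); this relies on the ``max is attained'' content of Lemma-Definition~\ref{def:VSdistance}, which may deserve to be recorded as an explicit displayed formula at the start of the proof. Everything else is a direct translation between the definitions and the facts already at hand.
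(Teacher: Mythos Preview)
Your proposal is correct. Note that the paper itself does not supply a proof: the lemma is marked with a \qed\ and preceded by ``The following is left as an exercise.'' So there is no argument to compare against; your write-up is precisely the intended exercise, carried out cleanly via the max formula extracted from Lemma-Definition~\ref{def:VSdistance}, Fact~\ref{fact:res-vs-rv}, and Fact~\ref{fact:GL_n-riso}.
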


\begin{lem}\label{lem:nice-dist} Let $\{e_1,\ldots,e_n\}$ be the standard basis of $K^n$ and let $W = \Span_K(w_1, \dots, w_d)$ and $W' = \Span_K(w'_1, \dots, w'_d)$ be two sub-vector spaces of $K^n$ of dimension $d$ satisfying
\[
w_{j}, w'_{j} \in e_j +
(\{0\}^{d} \times \maxid^{n-d})
\]
for $j = 1, \dots, d$ (recall that $\maxid$ is the maximal ideal). Then $\Delta(W, W') = \max_j\{v(w_j - w'_j)\}$.
\end{lem}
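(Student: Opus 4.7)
Set $\gamma = \max_j v(w_j - w'_j)$. Since each $w_j - w'_j$ lies in $\{0\}^d \times \maxid^{n-d}$, we have $\gamma < 1$. We prove the two inequalities separately.

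\emph{Upper bound} $\Delta(W,W') \leqslant \gamma$: Given $u \in W$, write $u = \sum_j r_j w_j$ and choose as candidate $u' = \sum_j r_j w'_j \in W'$. First observe that the hypothesis on the $w_j$ gives $\pi(u) = \sum_j r_j e_j$ (where $\pi$ projects onto the first $d$ coordinates), while the remaining coordinates of $u$ are of the form $\sum_j r_j \pi^\vee(u_j)$ with $\pi^\vee(u_j) \in \maxid^{n-d}$; hence $v(u) = \max_j v(r_j)$. Now the ultrametric inequality yields
\[
v(u - u') = v\Bigl(\sum_j r_j(w_j - w'_j)\Bigr) \leqslant \max_j v(r_j) \cdot \gamma = v(u) \cdot \gamma,
\]
which gives $\Delta(W,W') \leqslant \gamma$.

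\emph{Lower bound} $\Delta(W,W') \geqslant \gamma$: Pick $j_0$ with $v(w_{j_0} - w'_{j_0}) = \gamma$ and take $u = w_{j_0}$, so $v(u) = 1$. I will show that for every $w' = \sum_j r_j w'_j \in W'$, one has $v(w_{j_0} - w') \geqslant \gamma$. Either some coefficient is ``large'', i.e., $v(1-r_{j_0}) \geqslant \gamma$ or $v(r_j) \geqslant \gamma$ for some $j \neq j_0$, in which case projecting to the first $d$ coordinates gives $v(\pi(w_{j_0}-w')) \geqslant \gamma$ and hence $v(w_{j_0}-w') \geqslant \gamma$; or all these quantities are strictly less than $\gamma$, in which case one rewrites
\[
w_{j_0} - w' = (w_{j_0} - w'_{j_0}) + (1-r_{j_0})\, w'_{j_0} - \sum_{j \neq j_0} r_j\, w'_j.
\]
Since $v(w'_j) = 1$ for all $j$, the last two summands have valuation strictly below $\gamma$, while the first has valuation exactly $\gamma$, so the strict version of the ultrametric inequality forces $v(w_{j_0} - w') = \gamma$.

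\emph{Main obstacle:} there is no real one here; the only subtlety is verifying the identity $v(u) = \max_j v(r_j)$ in the upper bound and isolating the dominant term in the lower bound, both of which rely on the hypothesis that the $w_j$ are ``graph-like'' perturbations of the standard basis, so that the first $d$ coordinates control the valuation.
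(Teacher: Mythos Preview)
Your proof is correct and follows essentially the same idea as the paper's. The paper is more concise: it first applies a matrix from $\GL_n(\valring)$ to reduce to $w_j = e_j$ (using that $\Delta$ is invariant under such maps), then observes that $w'_j$ realizes $v(e_j - W')$ and invokes the computation already done in the proof of Lemma--Definition~\ref{def:VSdistance}; your argument unpacks both of these steps explicitly without the reduction, which is perfectly fine.
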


\begin{proof}
Without loss, $w_j = e_j$ (apply a suitable matrix from $\GL_n(\valring)$). Then, for each $j$, $w'_j$ is an element of $W'$ minimizing $v(w_j-W')$. The result is obtained as in Lemma-Definition \ref{def:VSdistance}. 
\end{proof}

We finish this section with the following lemma. 

\begin{lem}\label{lem:average} Let $x_1,\ldots,x_m, x_1',\ldots,x'_m\in K^n$ and $x,x'\in K^n$ be their correponding averages, i.e.,
\[
x=1/m\sum_{i=1}^m x_i \text{ and } x'=1/m\sum_{i=1}^m x_i'.
\]
If there exists a $\xi \in \RV^{(n)}$ such that
$\rv(x_i-x_i') = \xi$ for all $1\leqslant i\leqslant n$, then we also have $\rv(x-x')=\xi$. 
\end{lem}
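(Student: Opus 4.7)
The plan is to reduce to a statement about the differences $y_i \coloneqq x_i - x_i'$ and a fixed representative of $\xi$, using the ultrametric inequality together with the equicharacteristic $0$ assumption (which ensures the denominator $m$ is a unit).

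First I would observe that $x - x' = \frac{1}{m}\sum_{i=1}^{m} y_i$, so the assertion becomes $\rv\bigl(\frac{1}{m}\sum y_i\bigr) = \xi$ under the hypothesis $\rv(y_i) = \xi$ for all $i$. If $\xi$ is the class of $0$, then each $y_i = 0$, the sum is $0$, and the claim is trivial. So from now on I may assume $\xi \neq 0$ and pick any representative $y \in K^n$ with $\rv(y) = \xi$; write $\gamma \coloneqq v(y) \in \Gamma^\times$. Then, unpacking Definition~\ref{defn:RVn}, the equation $\rv(y_i) = \xi$ is equivalent to $v(y_i - y) < \gamma$ (and in particular $v(y_i) = \gamma$).

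The key step is the ultrametric estimate on the average. Since each coordinate valuation satisfies the ultrametric inequality and the $n$-dimensional valuation is the coordinate-wise maximum, we get
\[
v\Bigl(\sum_{i=1}^m (y_i - y)\Bigr) \;\leqslant\; \max_{1 \leqslant i \leqslant m} v(y_i - y) \;<\; \gamma.
\]
Because $K$ is of equicharacteristic~$0$, the integer $m$ is a unit in $\valring$, so $v(1/m) = 1$. Multiplying the previous inequality by $v(1/m)$ gives
\[
v\Bigl(\tfrac{1}{m}\sum_{i=1}^m y_i - y\Bigr) \;=\; v\Bigl(\tfrac{1}{m}\sum_{i=1}^m (y_i - y)\Bigr) \;<\; \gamma \;=\; v(y).
\]
By Definition~\ref{defn:RVn} this exactly says $\rv\bigl(\frac{1}{m}\sum y_i\bigr) = \rv(y) = \xi$, as required.

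There is no real obstacle here: the only point that might need a moment's pause is making sure the factor $1/m$ does not spoil the estimate, which is precisely where the equicharacteristic $0$ hypothesis on $K$ (imposed at the start of Section~\ref{sec:notnVF}) enters.
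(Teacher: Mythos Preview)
Your proof is correct and follows essentially the same approach as the paper's: both reduce to the ultrametric estimate $v\bigl(\sum_i (y_i - y)\bigr) < v(y)$ for a representative $y$ of $\xi$, using $v(m)=1$ from the equicharacteristic~$0$ assumption. The only cosmetic difference is that the paper takes $y = x_1 - x_1'$ specifically and clears the denominator by multiplying by $m$, whereas you pick an arbitrary representative and divide by $m$; the underlying computation is identical.
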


\begin{proof} 
We show that $v(x_1-x_1'-(x-x'))<v(x_1-x_1'))$: 
\begin{align*}
    v(x_1-x_1' - (x-x'))     & = v(x_1-x_1' - 1/m\sum_{i=1}^m (x_i-x_i')) & \\ 
                            & = v(m(x_1-x_1') - \sum_{i=1}^m (x_i-x_i')) & \\
                            & = v(\sum_{i=1}^m ((x_1-x_1')-(x_i-x_i'))) & \\
                            & \leqslant \max_i v((x_1-x_1')-(x_i-x_i')) & \text{ (by the ultrametric inequality) } \\
                            & < v(x_1-x_1') & \text{ (since $\rv(x_1-x_1')=\rv(x_i-x_i')$).} 
 \end{align*}
\end{proof}

\subsection{Model-theoretic setting}
\label{sec:modth}

We now fix a language on our valued field $K$.
The language we are mainly interested in is the pure valued field language:

\begin{defn}\label{defn:LHen}
Let~$\Lval$ be the language consisting of the ring language and a predicate for the valuation ring. 
\end{defn}

However, most of this article also works in various ``well-behaved'' expansions of $\Lx_0 \supseteq \Lval$, namely under the hensel minimality assumtion introduced in \cite{clu-hal-rid}. More precisely, for the entire paper, we fix the following.

\begin{defn}\label{defn:L0}
Let~$\Lx_0$ be an expansion of the valued field language $\Lval$ on $K$ such that the $\Lx_0$-theory of $K$ is $1$-h-minimal in the sense of
\cite[Definition~2.3.3]{clu-hal-rid}.
\end{defn}

Here are some examples of languages on $K$ satisfying this condition (by \cite[Section~6]{clu-hal-rid}):
\begin{enumerate}
    \item $\Lval$ itself;
    \item $T$-convex languages obtained from power-bounded o-minimal theories as in \cite{DL.Tcon1}; see Section~\ref{sec:lipschitz} for details;
    \item languages obtained from analytic structures on $K$ in the sense of \cite{CLip}; see Section~\ref{sec:analytic} for details.
\end{enumerate}

To simplify things, we add some imaginary sorts to the language:

\begin{defn}\label{defn:L}
Let $\Lx$ be the following expansion of $\Lx_0$: Firstly, add the sort $\RV$ together with the canonical quotient map $\rv\colon K \to \RV$. Secondly add each sort of the form 
$X/\mathord{\sim}$, where
$X \subseteq \RV^k$ is $\emptyset$-definable and~$\sim$ is a $\emptyset$-definable equivalence relation on~$X$; also add the canonical quotient maps $X \twoheadrightarrow X/\mathord{\sim}$ to the language.
\end{defn}

Note that $\bar K$, $\Gamma$, and $\RV^{(n)}$ for all $n$ are sorts of $\Lx$.

\begin{nota}
We denote the collection of all newly added sorts of $\Lx$ (i.e., all sorts except $K$) by $\RV\eq$.
As in \cite{halupczok2014a}, we will often write ``$\chi \colon K^n \to \RV\eq$'' to we mean a map whose target is an arbitrary auxiliary sort in $\RV\eq$, and similarly for definable subsets $Q \subseteq \RV\eq$.
\end{nota}

\begin{conv}
If $\Lx'$ is some language on $K$, then by $\Lx'$-definable, we mean definable in $\Lx'$ without parameters; if we want to allow parameters, we instead write $\Lx'(K)$-definable.
In the entire paper, if we just write ``definable'', we mean $\Lx(K)$-definable, where for the moment, $\Lx$ is as in Definition~\ref{defn:L}, but in some sections, we will put more restrictions on $\Lx$.
\end{conv}

The $1$-h-minimality assumption in particular implies the following:
\begin{enumerate}
    \item (Stable emeddedness) The sort~$\RV$ is stably embedded, i.e., every definable subset of~$\RV^{n}$ is already definable with parameters from~$\RV$.  
    \item (Definably spherically complete) For every definable family $(B_q)_{q \in Q}$ of balls $B_q \subseteq K$ which form a chain with respect to inclusion, the intersection $\bigcap_{q \in Q} B_q$ is non-empty.
    \item (Dimension of definable sets) The topological dimension is well-behaved on definable subsets of $K^n$ (for every $n>0$). Recall that for a subset $X\subseteq K^n$, the topological dimension of $X$, denoted $\dim(X)$, is the largest integer $0\leqslant k\leqslant n$ for which there is a coordinate projection $\pi\colon K^n\to K^k$ such that $\pi(X)$ has non-empty interior. Here, by ``well-behaved'' we mean that it satisfies standard properties of dimension as listed in \cite[Proposition 5.3.4]{clu-hal-rid}. In particular, the valued field sort eliminates the $\exists^\infty$-quanfier.  
\end{enumerate}

For a definable set $X$, we let~$\code{X}$ denote any element (possibly imaginary) fixed by precisely those automorphisms (of a sufficiently saturated and sufficiently homogeneous elementary extension of~$K$) fixing~$X$ globally. Although~$\code{X}$ is not uniquely defined, any two such elements are interdefinable. For a definable family~$(X_q)_{q \in Q}$, one may find and $\Lx\eq$-definable function $g_X\colon Q\to K\eq$ such that $g_X(q)$ is fixed precisely by the automorphisms fixing~$X_q$ globally. Abusing of notation we let~$\code{X_q}$ denote $g_X(q)$, noting that $\code{X_q}$ does not only depend on the set $X_q$, but is uniformly defined on all fibers $X_q$. 
A consequence of condition (1) above is that if $X_q$ lives in $\RV\eq$, then~$\code{X_q} = g_X(q)$ can be chosen in~$\RV\eq$ as well.

\begin{lem}[Banach fixed point for product of balls]\label{lem:banach} Let $E$ be a product of balls in $K^n$. Let $f\colon E\to E$ be a definable contracting function (i.e., for any $x_1$, $x_2 \in E$ with $x_1\neq x_2$, $v(f(x_1) - f(x_2))<v(x_1-x_2)$. Then $f$ has (exactly) one fixed point.   
\end{lem}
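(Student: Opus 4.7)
The plan is to prove uniqueness immediately from the contracting property, and existence by constructing a definable chain of $f$-invariant products of balls $\tilde B_x \subseteq E$ and applying definable spherical completeness coordinate-wise.

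For uniqueness, if $f(x_1)=x_1$ and $f(x_2)=x_2$ with $x_1\neq x_2$, then the contracting hypothesis immediately forces $v(x_1-x_2)=v(f(x_1)-f(x_2))<v(x_1-x_2)$, which is absurd.

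For existence, for each $x\in E$ I would set $\gamma_x := v(f(x)-x)$; if $\gamma_x = 0$ for some $x$ then $x$ is already a fixed point, so I may assume $\gamma_x > 0$ throughout and let $\tilde B_x := B(x,\leqslant \gamma_x)\cap E$. A routine ultrametric computation should show that $\tilde B_x$ is a non-empty product of balls (one factor per coordinate of $E$), contains both $x$ and $f(x)$, and satisfies $f(\tilde B_x)\subseteq \tilde B_x$. The key claim is that the definable family $(\tilde B_x)_{x\in E}$ is a chain under inclusion: if some $\tilde B_x$ and $\tilde B_y$ were disjoint, then the ultrametric inequality would force $v(x-y)>\max(\gamma_x,\gamma_y)$; rewriting
\[ f(x)-f(y)=(f(x)-x)+(x-y)-(f(y)-y), \]
ultrametric strict dominance would then give $v(f(x)-f(y))=v(x-y)$, contradicting the contracting hypothesis. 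Hence any two $\tilde B_x, \tilde B_y$ must intersect, and being products of balls, one is contained in the other.

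Definable spherical completeness applied coordinate by coordinate (each coordinate projection of $(\tilde B_x)_{x\in E}$ is a definable chain of balls in $K$) will then yield a point $z\in \bigcap_{x\in E}\tilde B_x \subseteq E$. To finish, suppose $f(z)\neq z$; then both $\tilde B_z$ and $\tilde B_{f(z)}$ belong to the chain, with $\tilde B_{f(z)}\subsetneq \tilde B_z$ because $\gamma_{f(z)}<\gamma_z$ by strict contraction. Since $z\in \bigcap\subseteq \tilde B_{f(z)}$, this forces $v(z-f(z))\leqslant v(f(f(z))-f(z))<\gamma_z=v(z-f(z))$, a contradiction.

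The step most likely to require care is handling the product-of-balls structure of $E$ when applying definable spherical completeness, since the version stated in the excerpt is for chains of balls in $K$: one has to verify that the coordinate chains all satisfy the hypothesis and that the product of the coordinate intersections still lies in $E$. This reduces to the observation that each $\tilde B_x$ really is a product of balls—a ball in each coordinate of $E$—and that the chain order is preserved by each projection.
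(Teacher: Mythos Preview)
Your argument is correct and follows essentially the same route as the paper: define $\tilde B_x = B(x,\leqslant v(x-f(x)))\cap E$, show these form a chain, apply definable spherical completeness to each coordinate projection, and derive a contradiction from $z\in\tilde B_{f(z)}$. The paper's own proof is more terse, deferring the chain argument and the final contradiction to \cite[Lemma~2.32]{halupczok2014a}, whereas you spell out both steps; but the strategy is identical.
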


\begin{proof}
We proceed almost exactly as in \cite[Lemma 2.32]{halupczok2014a}. Suppose that $f(x) \ne x$ for all $x\in E$. For $x\in E$ define $B_x \coloneqq B(x, \leqslant v(x - f(x))) \cap E$. As in \cite[Lemma 2.32]{halupczok2014a}, one obtains that those $B_x$ form a chain under inclusion. Applying \cite[Lemma 2.31]{halupczok2014a} to the projection of the chain to each coordinate, one finds that the intersection of the entire chain is non-empty.
\end{proof}

\section{t-Stratifications revisited}\label{sec:tsts}

In this section we recall the notion of t-stratification introduced in \cite{halupczok2014a}. Its definition and main properties are given in Subsection \ref{sec:defprop}. We will also revisit a variant of the origianl definition in Subsection \ref{sec:translaters}. In the entire section, we work in an arbitrary $1$-h-minimal language (more precisely, in a language $\Lx$ as in Definition~\ref{defn:L}).

In this section, unless otherwise stated, we
fix a natural number $n \geqslant 1$ and an open ball
$\mathbf{B} \subseteq K^n$, which will serve as the ambient space of the t-stratifications. Recall that we allow $\mathbf{B}=K^n$ as an open ball.

\subsection{Definition and main properties}\label{sec:defprop}
Informally, a $t$-stratification of $\mathbf{B}$ is a partition of $\mathbf{B}$ into definable subsets $S_0,\ldots, S_n$, called the \emph{strata}, such that (if non-empty) the strata $S_d$ has dimension $d$ and on a neighbourhood of a point of $S_d$ things are ``roughly translation invariant in $d$-dimensions''. Let us now give precise definitions of all these concepts.

\begin{defn}\label{def:straighened} Let $\chi\colon \mathbf{B} \to \RV\eq$ be a definable function, $B\subseteq \mathbf{B}$ be a ball, $\varphi\colon B\to B$ be a risometry and $U\subseteq K^n$ be a $K$-vector space. 
\begin{enumerate}
    \item The map $\chi$ is \emph{$U$-translation invariant on $B$} if for all $x,y\in B$ satisfying $x-y\in U$, we have $\chi(x)=\chi(y)$ (or, in other words, if $\chi$ factors over the canonical map $B \to B/U$). 
    \item
    A definable risometry $\varphi\colon B\to B$
    \emph{$U$-trivializes $\chi$ (on $B$)} if $\chi\circ\varphi\colon B \to \RV\eq$ is $U$-translation invariant.
    Given a sub-vector space $\bar U \subseteq \bar K^n$, we also say that \emph{$\varphi$ $\bar U$-trivializes $\chi$}, if it $U$-trivializes $\chi$ for some lift $U$ of $\bar U$.
    \item 
    We say that $\chi$ is \emph{$\bar U$-trivial} on $B$, for some sub-vector space $\bar U \subseteq \bar K^n$, if there exists a $\varphi\colon B \to B$ $\bar U$-trivializing $\chi$ on $B$.
    We write $\tsp_B(\chi)$ for the maximal sub-vector space of $\bar K^n$ such that $\chi$ is $\tsp_B(\chi)$-trivial. Note that such a maximal sub-vector space exists by \cite[Lemma~3.3]{halupczok2014a}.
    \item Given $d\leqslant n$, we say that $\chi$ is \emph{$d$-trivial on $B$} if $\dim( \tsp_B(\chi))\geqslant d$, i.e., if $\chi$ is $\bar U$-trivial for some sub-vector space $\bar U \subseteq \bar K^n$ of dimension $d$.
\item
Given a tuple $(X_1, \dots, X_{m'}, \chi_1,\ldots, \chi_{m})$ of sets $X_i \subseteq \mathbf B$ and maps $\chi_i\colon \mathbf{B}\to \RV\eq$, we say that $(X_1, \dots, X_{m'},\chi_1,\ldots, \chi_m)$ is $U$-translation invariant (resp.\ $U$-trivialized by $\varphi$, $\bar{U}$-trivialized by $\varphi$, $\bar{U}$-trivial on $B$, or $d$-trivial on $B$ if the map
$x \mapsto (\chi_{X_1}(x), \dots, \chi_{X_{m'}}(x), \chi_1(x), \dots, \chi_m(x))$ is, where $\chi_{X_i}\colon \mathbf B \to \RV$ is the characteristic function of $X_i$ (sending $x$ to $1\in \RV$ if $x\in X$ and to $0\in \RV$ if $x\notin X$).
We define $\tsp_B(X_1, \dots, X_{m'}, \chi_1,\ldots, \chi_{m})$ accordingly.
\item
Let $\pi_U\colon K^n \to K^n/U$ be the quotient map and let $B/U$ be the image of $B$ under $\pi_U$.
Given $b \in B/U$, we define
\[
\Arc_b(U,\varphi)\coloneqq \{x\in B : \pi_U(\varphi^{-1}(x))=b\}. 
\]
We call such a set a $(U,\varphi)$-arc.  
\end{enumerate}
\end{defn}

Note that the $(U,\varphi)$-arcs form a partition of $B$. In addition, a map $\chi$ is $U$-trivialized by $\varphi$ if it is constant on each $(U,\varphi)$-arc, and a set $X$ is $U$-trivialized by $\varphi$ if each $(U,\varphi)$-arc is either contained in $X$ or disjoint from $X$.

\begin{rem}
While it is true that a tuple $(\chi_1, \dots, \chi_m)$ is $U$-trivialized by $\varphi$ if and only if each $\chi_i$ is, note that each individual $\chi_i$ being $d$-trivial on $B$ is a weaker condition than the entire tuple being $d$-trivial on $B$, since different maps $\varphi$ might be needed for different $\chi_i$.
\end{rem}

\begin{exam}\label{exam:square} Let $K$ be a real closed valued field and $X\subseteq K^2$ be the graph of $x^2=y$. Let $a\in K$ be a positive element with $v(a)>1$.
\begin{enumerate}
    \item 
Let $B\subseteq K^2$ be the ball $B((a,a^2),< v(a))$ and consider the function $\varphi\colon B\to B$ sending $(x,y)\in B$ to 
$(x-a+\sqrt{y}, y)$. We let the reader verify that $\varphi$ is a risometry on $B$. Moreover, letting $\bar{U}=\{0\}\times \bar{K}$ and $U=0\times K$ be a lift of $\bar{U}$, we have that $X$ is $U$-trivialized by $\varphi$.
\item Now let $B' \supset B$ be the ball $B((a, a^2), <v(a^2)) = B((0, a^2), <v(a^2))$. We can extend the above $\varphi$ to a risometry $U$-trivializing $X$ on $B'$ using a case distinction on $\rv(x)$: In the case $\rv(x) = \rv(a)$, use the same definition as in (1), in the case $\rv(x) = \rv(-a)$ do something similar for the negative branch, and otherwise, just let $\varphi$ be the identity  (see Figure \ref{fig:riso}).
\end{enumerate}

\begin{figure}
    \centering
    \includegraphics[scale =0.8]{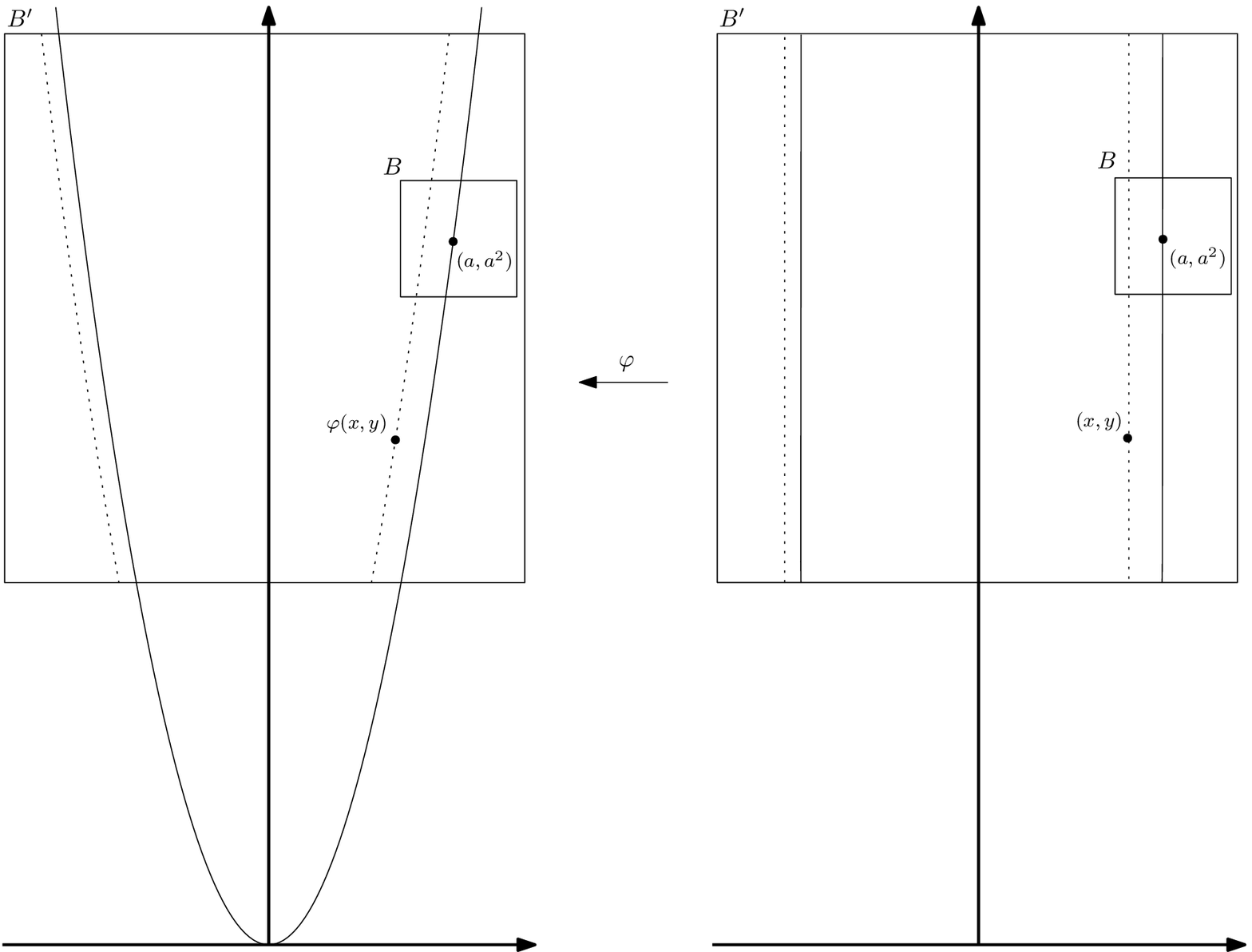}
    \caption{}
    \label{fig:riso}
\end{figure}
\end{exam}

\begin{defn}\label{def:exhibition-respect} Let $U\subseteq K^n$ be a $K$-vector space and suppose $\pi\colon K^n\to K^n$ is an exhibition of $\bar{U} \coloneqq \res(U)$. We say that a map \emph{$\varphi\colon B \to B$ respects $\pi$} (for $B \subseteq K^n$) if $\pi\circ\varphi=\pi$.
\end{defn}

\begin{lem}\label{lem:respect-fibers} Let $B\subseteq \mathbf{B}$ be a ball, $U\subseteq K^n$ be a $K$-vector space, $\pi\colon K^n \to K^d$ an exhibition of $\bar U := \res(U)$, and $\varphi\colon B \to B$ a definable risometry. Then there exists a definable risometry $\varphi'\colon B \to B$ such that the $(U,\varphi')$-arcs are the same as the $(U,\varphi)$-arcs, and moreover, 
$\varphi'$ respects $\pi$. 
In particular, given a definable map $\chi\colon B\to \RV\eq$,
$\varphi$ $U$-trivializes $\chi$ if and only if $\varphi'$ does.
\end{lem}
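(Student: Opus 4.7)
The plan is to construct $\varphi'$ in the form $\varphi' = \varphi \circ \psi$ for a definable bijection $\psi\colon B \to B$ which shifts each point by an element of $U$ (so $\psi(x) - x \in U$ for all $x \in B$) and satisfies $\pi \circ \varphi \circ \psi = \pi$. Since $\psi(x) - x \in U$, the map $\psi$ restricts to a bijection on each set $(b+U) \cap B$, and therefore the $(U,\varphi')$-arc $\varphi'((b+U) \cap B) = \varphi(\psi((b+U) \cap B))$ coincides with the $(U,\varphi)$-arc $\varphi((b+U) \cap B)$. The ``in particular'' statement about $\chi$ will then follow immediately, since $U$-trivialization of $\chi$ means constancy of $\chi$ on the (common) arcs.

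To build $\psi$, I exploit that by Remark~\ref{rem:exhibition} the restriction $\pi_U\colon U \to K^d$ of $\pi$ to $U$ is an isometric isomorphism, under which $(B-x) \cap U$ transports to the ball $\pi(B) - \pi(x) \subseteq K^d$, hence to a product of balls. For $x \in B$, solving $\pi(\varphi(x+u)) = \pi(x)$ for $u \in (B-x) \cap U$ is equivalent to finding a fixed point of
\[
G_x(u) \coloneqq u - \pi_U\1\bigl(\pi(\varphi(x+u)) - \pi(x)\bigr).
\]
A short computation, using that $\varphi$ is a risometry and that $v(\pi(z)) \leqslant v(z)$ for all $z \in K^n$, shows that $G_x$ sends $(B-x) \cap U$ into itself and satisfies $v(G_x(u_1) - G_x(u_2)) < v(u_1 - u_2)$ for distinct $u_1,u_2$. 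Lemma~\ref{lem:banach} then supplies a unique fixed point $u_x$; setting $\psi(x) \coloneqq x + u_x$ yields a definable map with $\psi(x) - x \in U$ and $\pi \circ \varphi \circ \psi = \pi$.

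What remains is to check bijectivity of $\psi$ and the risometry property of $\varphi' = \varphi \circ \psi$. Injectivity of $\psi$ follows from $U \cap \ker \pi = 0$ (which holds since $\pi$ is an exhibition of $\bar U$), using that $\psi(x_1) = \psi(x_2)$ forces both $x_1 - x_2 \in U$ and $\pi(x_1) = \pi(x_2)$. Surjectivity is obtained by exhibiting an explicit preimage of $z \in B$ via the ansatz $x = z - \pi_U\1(\pi(z - \varphi(z)))$. For the risometry property of $\varphi'$, the key estimate is $v(u_{x_1} - u_{x_2}) < v(x_1 - x_2)$ for $x_1 \ne x_2$; I would derive this by expanding $\varphi(\psi(x_1)) - \varphi(\psi(x_2))$ via the risometry property of $\varphi$, applying $\pi$, using $\pi \circ \varphi \circ \psi = \pi$ to isolate $\pi(u_{x_1} - u_{x_2})$, and transferring the resulting bound back through the isometry $\pi_U$.

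The main obstacle is precisely this last risometry estimate: although formally parallel to the contraction estimate for a single $G_x$, it concerns how the fixed points $u_x$ vary with the parameter $x$ and so has to be extracted globally rather than fibre-by-fibre. Once it is in place, everything else reduces to routine bookkeeping with the ultrametric inequality and the exhibition property $v(\pi_U(u)) = v(u)$ for $u \in U$.
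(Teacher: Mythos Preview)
Your argument is correct and complete; the Banach fixed-point construction of $\psi$ with $\psi(x)-x\in U$ and $\pi\circ\varphi\circ\psi=\pi$ is exactly the standard route, and your derivation of the key estimate $v(u_{x_1}-u_{x_2})<v(x_1-x_2)$ (via $\pi(u_{x_1}-u_{x_2})=-\pi(e')$ and a short case split on the maximum) goes through cleanly. The paper itself does not give a proof but defers to \cite[Lemma~3.6, Part~1]{halupczok2014a}, whose argument is essentially the one you have reconstructed; one small caution is that your symbol $\pi_U$ for the restriction $\pi|_U\colon U\to K^d$ clashes with the paper's use of $\pi_U$ for the quotient map $K^n\to K^n/U$, so you should rename it when writing this up.
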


\begin{proof}
This is more or less 
\cite[Part 1 of Lemma 3.6]{halupczok2014a}. More precisely, the statement of that lemma of \cite{halupczok2014a} is weaker, but the proof in reality yields the above, stronger statement. 
\end{proof}

\begin{fact}\label{fact:exhibition_val}
Using Lemma~\ref{lem:respect-fibers}, one easily obtains that in that situation, for any arc $X \coloneqq \Arc_b(U, \varphi)$,
$\pi$ induces a bijection $\pi_{|X}\colon X \to \pi(B)$. Moreover, the assumption that $\pi$ is an exhibition of $\bar U$ implies that for $x, x' \in X$, we have $v(x - x') = v(\pi(x) - \pi(x'))$. \qed
\end{fact}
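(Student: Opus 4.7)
The strategy is to first normalize $\varphi$ so that it respects $\pi$, and then to transport the claim across $\varphi$ to the simpler situation on the preimage $\varphi\1(X)$, which is just a coset of $U$ intersected with $B$.

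First, I would invoke Lemma~\ref{lem:respect-fibers} to replace $\varphi$ by a definable risometry $\varphi'\colon B \to B$ which has exactly the same $(U,\varphi)$-arcs as $\varphi$ but additionally satisfies $\pi \circ \varphi' = \pi$.  Since both conclusions of the fact depend on $\varphi$ only through its $(U,\varphi)$-arcs, this reduction is harmless, and I may henceforth assume that $\varphi$ itself respects $\pi$.

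Next, assume $X = \Arc_b(U,\varphi)$ is non-empty (so that the claimed bijection is not vacuously false) and pick an $x_0 \in X' \coloneqq \varphi\1(X) = \pi_U\1(b) \cap B$.  Because $B$ is a ball, $X' = \{x_0 + u : u \in U,\, v(u) \diamond \rho\}$, where $\rho$ is the radius of $B$ and $\diamond$ is $<$ or $\leqslant$ according as $B$ is open or closed.  By Remark~\ref{rem:exhibition}, the fact that $\pi$ exhibits $\bar U$ implies that $\pi|_U\colon U \to K^d$ is a $K$-linear isomorphism with $v(\pi(u)) = v(u)$ for every $u \in U$.  Hence $\pi|_{X'}$ maps $X'$ bijectively onto $\{\pi(x_0) + w : w \in K^d,\, v(w) \diamond \rho\}$, and this latter set is exactly $\pi(B)$.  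The relation $\pi \circ \varphi = \pi$ together with the bijection $\varphi|_{X'}\colon X' \to X$ then yield $\pi|_X = \pi|_{X'} \circ (\varphi|_{X'})\1$, which is therefore itself a bijection onto $\pi(B)$.

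Finally, for the distance equality, given $x, x' \in X$, set $y \coloneqq \varphi\1(x)$ and $y' \coloneqq \varphi\1(x')$, so that $y - y' \in U$.  I would then chain
\[
v(\pi(x) - \pi(x')) = v(\pi(y) - \pi(y')) = v(y - y') = v(x - x'),
\]
using $\pi \circ \varphi = \pi$ for the first equality, Remark~\ref{rem:exhibition} applied to $y - y' \in U$ for the second, and the risometry property of $\varphi$ for the third.  The only mildly subtle step is the surjectivity of $\pi|_{X'}$ onto $\pi(B)$ in the previous paragraph, which however reduces immediately to the observation that, by Remark~\ref{rem:exhibition}, $\pi|_U$ is valuation-preserving and hence carries the ``ball of radius $\rho$'' in the coset $x_0 + U$ onto the ball of radius $\rho$ in $K^d$ around $\pi(x_0)$.
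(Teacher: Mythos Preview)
Your proof is correct and matches the paper's intended approach: the paper itself gives no detailed argument, merely gesturing at Lemma~\ref{lem:respect-fibers} and ending with \qed, so your write-up is exactly the kind of routine verification the authors had in mind.
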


\begin{rem}\label{rem:lift-choice}
Definition \ref{def:straighened} is independent of the choice of the lift $U$ of $\bar U$ in the following sense. Given $\varphi$, $\bar{U}$ and $U$ as in Definition \ref{def:straighened}, $U'$ another lift of $\bar U$, choose an exhibition $\pi$ of $\bar U$ and let $M\in \GL_n(\valring)$ be the unique matrix (which is a risometry) sending $U$ to $U'$ and satisfying $\pi \circ M = \pi$ (which exists by Fact~\ref{fact:proj-unique}). Let $\psi'\colon B \to B$ be a risometry obtained by composing $M$ with a suitable translation (sending $M(B)$ to $B$), and define $\varphi' \coloneqq \psi \circ \varphi$. Then, the set of $(U,\varphi)$-arcs is equal to the set of $(U',\varphi')$-arcs. In addition, if $\varphi$ respects $\pi$, then so does $\varphi'$.
\end{rem}

\begin{rem}\label{rem:fibers_riso} Let $B, B'\subseteq K^n$ be two balls and $\chi\colon B\to \RV\eq$, $\chi'\colon B'\to \RV\eq$ be two definable maps.
Suppose that $\chi$ is $U$-trivialized on $B$ (by some risometry $\psi\colon B \to B$) for some sub-vector space $U \subseteq K^n$ and let $\pi\colon K^n \to K^{\dim(U)}$ be an exhibition of $\bar{U}\coloneqq\res(U)$ Assume that $F \coloneqq \pi^{-1}(t) \cap B$ and $F' \coloneqq \pi^{-1}(t') \cap B'$ are $\pi$-fibers, for some $t \in \pi(B), t' \in \pi(B')$. Then the following are equivalent:
\begin{enumerate}
    \item There exists a risometry $\varphi\colon B\to B'$
    such that $\chi = \chi' \circ \varphi$
    \item
    $\chi'$ is $U$-trivialized on $B$ (by some risometry $\psi'\colon B' \to B'$) and
    there exists a risometry $\theta\colon F \to F'$ 
    such that $\chi_{|F} = \chi'_{|F'} \circ \psi$.
\end{enumerate}
This implication ``(1) $\Rightarrow$ (2)'' is essentially Part (2) of \cite[Lemma 3.6]{halupczok2014a}. For the other direction, note that by Lemma~\ref{lem:respect-fibers} we may assume $\psi$ and $\psi'$ to preserve $\pi$-fibers, so that the risometry $\theta\colon F \to F'$
induces a risometry
$\theta'\colon \psi^{-1}(F) \to (\psi')^{-1}(F')$
sending $(\chi\circ \psi)_{|\psi^{-1}(F)}$ to $(\chi'\circ \psi')_{|(\psi')^{-1}(F)}$; by $U$-triviality we may extend $\theta'$ to a risometry $\theta'\colon B \to B'$ sending $\chi\circ \psi$ to $\chi'\circ \psi'$.
Now set $\varphi \coloneqq \psi'\circ\theta'\circ\psi\1$.
\end{rem}

\begin{nota}\label{nota:strat}
Given a partition $\mathcal{S}=\{S_0,\ldots,S_n\}$ of $\mathbf B$ into definable sets $S_i$ satisfying $\dim(S_i)\leqslant i$ for each $i\in\{0,\ldots,n\}$, we set 
\[
S_{<m}\coloneqq \bigcup_{i=0}^{m-1} S_i, 
\]
and $S_{<0} \coloneqq \emptyset$. We call the partition $A$-definable, for some parameter set $A$, if each $S_i$ is $A$-definable.
\end{nota}

We are ready to define t-stratifications. 

\begin{defn}\label{def:-t-strat}
A partition $\mathcal{S}=(S_i)_{0 \leqslant i\leqslant n}$ of $\mathbf{B}$ is called a \emph{t-stratification} if each $S_i$ is definable of dimension at most $i$, and for each $d\leqslant n$ and each ball $B \subseteq S_{\geqslant d}$ (open or closed) $(S_0,\ldots, S_n)$ is $d$-trivial on $B$.

Given a definable map $\chi\colon \mathbf{B}\to \RV\eq$, we say that a t-stratification $\mathcal{S}$ \emph{reflects $\chi$} if for each $d\leqslant n$ and each ball $B \subseteq S_{\geqslant d}$ (open or closed)
$(S_0,\ldots, S_n, \chi)$ is $d$-trivial on $B$.
\end{defn}

\begin{rem}\label{rem:refl}
If $\cS$ reflects $\chi$ and $B \subseteq S_{\geqslant d}$, then by Lemma~\ref{lem:refl-charac} below, we do not only have that $(S_0,\ldots, S_n, \chi)$ is $d$-trivial on $B$, but every definable risometry $U$-trivializing $\cS$ on $B$ also $U$-trivializes $\chi$.
This will be useful when later in this paper, we will introduce stronger notions of stratifications by imposing additional conditions (C) on $\varphi$: If $\cS$ can be trivialized by $\varphi$ satisfying (C), then automatically, for any $\chi$ reflected by $\cS$, also $(\cS, \chi)$ can be trivilized by some $\varphi$ satisfying (C) (namely, the same one).
\end{rem}

Recall that t-stratifications exist in our context (by \cite[Theorem 5.5.3]{clu-hal-rid}). More precisely, by 
\cite[Lemma 5.5.4]{clu-hal-rid}, \cite[Hypothesis 2.21]{halupczok2014a} is satisfied, so all results from \cite{halupczok2014a} apply. In particular, we have:

\begin{thm}[{\cite[Theorem 4.12]{halupczok2014a}}]
Let $\chi \colon \mathbf{B} \to \RV\eq$ be $\emptyset$-definable.
Then there exists a $\emptyset$-definable t-stratification $(S_i)_{i \leqslant n}$ reflecting $\chi$.
\end{thm}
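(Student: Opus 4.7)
The plan is to follow the inductive strategy of \cite[Section~4]{halupczok2014a}, proceeding by induction on $n = \dim(\mathbf{B})$. The base case $n = 0$ is trivial: $\mathbf{B}$ is a point, $S_0 = \mathbf{B}$ is forced, and the $d$-triviality condition is vacuous. For the inductive step, the strategy is to single out the top stratum $S_n$ first and then build the lower strata by a refinement of the induction hypothesis applied to smaller-dimensional pieces.

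First I would define $S_n \subseteq \mathbf{B}$ as the set of those $x \in \mathbf{B}$ admitting an open ball neighborhood on which $\chi$ is $n$-trivial (equivalently, locally constant up to a risometry). Using the $1$-h-minimality hypothesis, in particular stable embeddedness of $\RV$ and the well-behaved dimension theory from Definition~\ref{defn:L}, one checks that $S_n$ is $\emptyset$-definable and open, and that $\mathbf{B} \setminus S_n$ has dimension strictly less than $n$. The difficulty at this point is that $\mathbf{B} \setminus S_n$ is not itself an open ball, so one cannot directly recurse on it. The workaround is to enrich $\chi$ to a new $\RV\eq$-valued $\emptyset$-definable map $\chi'$ that at each $x$ also encodes the maximal sub-vector space $\bar U \subseteq \bar K^n$ for which $\chi$ is $\bar U$-trivial on small balls around $x$; such an encoding is available by stable embeddedness of $\RV$. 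One then applies the induction hypothesis fibre-wise with respect to coordinate projections exhibiting these directions, using Lemma~\ref{lem:respect-fibers} to choose the trivializing risometries so that they respect the projection and can therefore be glued across fibres to yield the remaining strata $S_{n-1}, \ldots, S_0$.

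The main obstacle, and the most technical part of the argument, is verifying the global $d$-triviality of $(S_0, \ldots, S_n, \chi)$ on every ball $B \subseteq S_{\geqslant d}$: the definition of the $S_i$ only guarantees local and pointwise trivializations, whereas one needs a single risometry $\varphi\colon B \to B$ simultaneously trivializing the whole tuple. The way to produce $\varphi$ is via a limit construction, assembling successively better approximations $\varphi_k$ whose mutual errors $\varphi_{k+1}\circ\varphi_k^{-1}$ are controlled in valuation. The incompatibilities between scales can be organised as a contracting self-map on a suitable product of balls, so that the Banach fixed-point theorem (Lemma~\ref{lem:banach}) delivers the desired global $\varphi$. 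Definability is preserved throughout because all constructions reduce to $\RV\eq$-valued data, on which stable embeddedness of $\RV$ guarantees that any set picked out by the construction remains definable without extra $K$-parameters.
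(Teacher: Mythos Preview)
The paper does not prove this theorem at all: it is stated as a citation of \cite[Theorem~4.12]{halupczok2014a}, and the sentence preceding it merely explains why that result applies here (namely, $1$-h-minimality gives \cite[Hypothesis~2.21]{halupczok2014a} via \cite[Lemma~5.5.4]{clu-hal-rid}, so all of \cite{halupczok2014a} is available). There is nothing to compare your proof against in the present paper.

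What you have written is a rough sketch of the strategy actually used in \cite{halupczok2014a}. The broad outline---induction on $n$, identifying $S_n$ as the locus of local $n$-triviality, enriching $\chi$ to record $\tsp$-data, and descending to fibres---is correct in spirit. However, several steps are too loose to stand as a proof. The passage from local triviality on small balls to $d$-triviality on \emph{every} ball $B \subseteq S_{\geqslant d}$ is the heart of the matter, and your description (``assembling successively better approximations $\varphi_k$'' and invoking Banach) does not match how \cite{halupczok2014a} actually does it: there, one works ball by ball using translaters and a careful analysis of how triviality on sub-balls forces triviality on the whole ball, rather than a single global limit of risometries. Also, ``applying the induction hypothesis fibre-wise'' glosses over the genuine issue that the lower strata must be defined on all of $\mathbf{B}$, not on individual fibres, and must be $\emptyset$-definable; the mechanism in \cite{halupczok2014a} for achieving this is more delicate than gluing. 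If you intend to reproduce the proof rather than cite it, you would need to fill in these steps substantially.
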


Note that even when we require a t-stratification $\cS$ to be $\emptyset$-definable, the risometries $\varphi\colon B \to B$ trivializing $\cS$ on balls $B$ are usually not even $\code{B}$-definable; see Section~\ref{sec:translaters} for a remedy for this.

A crucial (but not difficult to prove) property of t-stratifications is that they restrict well to fibers of exhibitions:

\begin{fact}[{\cite[Lemma 3.16]{halupczok2014a}}]\label{fact:induced-tst}
Let $\cS$ be a t-stratification of $\mathbf{B}$,  $B \subseteq \mathbf{B}$ be a ball and $\pi\colon B\to K^d$ be an exhibition of $\bar{U}=\tsp_B(\cS)$. Let $\mathbf{F}\coloneqq \pi\1(z)\cap B$ be a $\pi$-fiber for some $z\in \pi(B)$. Set $\cS'=(S_i')_{0\leqslant i\leqslant n-d}$ where $S_i' \coloneqq S_{i+d} \cap \mathbf{F}$. Then 
\begin{enumerate}
\item  $\cS'$ is a t-stratification of $\mathbf{F}$;
\item the restriction of $\rho_{\cS}$ to $\mathbf{F}$ is $\rho_{\cS'}$ (\cite[Lemma 4.21]{halupczok2014a});
\item if $\cS$ reflects a definable map $\chi\colon \mathbf{B}\to \RV\eq$, then $\cS'$ reflects $\chi_{|\mathbf F}$. 
\end{enumerate}
\end{fact}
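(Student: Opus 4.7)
The plan is to leverage the $\bar U$-triviality of $\cS$ on $B$ to transfer stratification information to the fiber $\mathbf{F}$. By Lemma~\ref{lem:respect-fibers}, fix a risometry $\varphi\colon B\to B$ that $U$-trivializes $\cS$ for some lift $U$ of $\bar U=\tsp_B(\cS)$ and respects $\pi$. Since $\pi$ is an exhibition of $\bar U$, we have $K^n=U\oplus\ker(\pi)$, so $\varphi$ preserves $\mathbf{F}$ and $\mathbf{F}$ is transverse to $U$. For the dimension bound in (1), the set $\varphi\1(S_{i+d}\cap B)$ is $U$-translation invariant of dimension $\leqslant i+d$, so its intersection with the transversal $\mathbf{F}$ has dimension $\leqslant i$; this intersection equals $\varphi\1(S'_i)$, and since $\varphi$ restricts to a bijection on $\mathbf{F}$, we get $\dim S'_i\leqslant i$.

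For the triviality condition of (1), let $d'\leqslant n-d$ and let $B'\subseteq \mathbf{F}$ be a ball of radius $r$ contained in $S'_{\geqslant d'}$. Choose $y^*\in\varphi\1(B')$ and set $\tilde B^*\coloneqq B(y^*,\leqslant r)\subseteq K^n$; using the splitting $K^n=U\oplus\ker(\pi)$, one checks that $\tilde B^*\subseteq \varphi\1(B')+U$. By $U$-translation invariance of $\varphi\1(S_{\geqslant d'+d}\cap B)$ (which contains $\varphi\1(B')$), we get $\tilde B^*\subseteq \varphi\1(S_{\geqslant d'+d}\cap B)$. Then $\tilde B\coloneqq\varphi(\tilde B^*)$ is a ball of radius $r$ in $B$ with $\tilde B\subseteq S_{\geqslant d'+d}$, and since $\varphi$ respects $\pi$, $\tilde B\cap\mathbf{F}=B'$. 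By the t-stratification property, $\cS$ is $(d+d')$-trivial on $\tilde B$. A standard restriction argument (composing $\varphi\1$ with a translation $\tilde B^*\to \tilde B$) shows that $\cS$ is also $\bar U$-trivial on $\tilde B$, so $\tsp_{\tilde B}(\cS)$ contains $\bar U$ and has dimension $\geqslant d+d'$. Pick $\tilde U\subseteq\tsp_{\tilde B}(\cS)$ of dimension $d+d'$ containing $\bar U$ and write $\tilde U=\bar U\oplus\bar W$.

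The main obstacle is to produce a $\tilde U$-trivializing risometry of $\tilde B$ that \emph{also} respects $\pi$, so that it restricts to a risometry of $B'=\mathbf{F}\cap\tilde B$. One starts from any $\tilde U$-trivializing risometry of $\tilde B$ and modifies it via a refinement of Lemma~\ref{lem:respect-fibers} so that the $\bar U$-component is absorbed by fiber preservation, leaving only the $\bar W$-component acting across $\pi$-fibers. Restricting the result to $B'$ then gives a risometry $\bar W$-trivializing $\cS'$ on $B'$, where $\bar W$ is identified with a $d'$-dimensional subspace of the ambient $\bar K^{n-d}$ of $\mathbf{F}$ via the canonical isomorphism $\bar K^n/\bar U\cong \ker(\pi)\cong\bar K^{n-d}$. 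This establishes the required $d'$-triviality, finishing (1).

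Finally, (2) follows directly from $S'_i=S_{i+d}\cap \mathbf{F}$ together with the intrinsic definition of $\rho_\cS$ at each point (which measures a stratum-dimension-based quantity that is invariant under the shift $i\mapsto i+d$); compare \cite[Lemma 4.21]{halupczok2014a}. For (3), the entire argument applied to the tuple $(S_0,\dots,S_n,\chi)$ in place of $(S_0,\dots,S_n)$ shows that when $\cS$ reflects $\chi$, the trivializing risometry on $\tilde B$, and hence its restriction to $B'$, also trivializes $\chi$, so $\cS'$ reflects $\chi_{|\mathbf{F}}$.
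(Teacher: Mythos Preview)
The paper does not give its own proof of this Fact; it simply cites \cite[Lemma~3.16]{halupczok2014a} (and \cite[Lemma~4.21]{halupczok2014a} for part (2)). So there is nothing to compare against directly, but let me assess your sketch on its own terms.

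Your strategy for (1) is the right one: thicken a ball $B'\subseteq\mathbf F$ to a ball $\tilde B\subseteq B$ contained in $S_{\geqslant d+d'}$, use the t-stratification property there, and restrict back. The construction of $\tilde B$ and the verification $\tilde B\cap\mathbf F=B'$ are fine. However, the step you flag as the ``main obstacle'' is genuinely incomplete. You invoke an unspecified ``refinement of Lemma~\ref{lem:respect-fibers}'' to produce a $\tilde U$-trivializing risometry respecting $\pi$, and then assert that its restriction to $B'$ is a $\bar W$-trivializing risometry. Neither step is automatic. For the first, Lemma~\ref{lem:respect-fibers} as stated lets you make a trivializing risometry respect an exhibition \emph{of the same} $\tilde U$, not of the smaller $\bar U$. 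What you need is to choose an exhibition $\tilde\pi\colon K^n\to K^{d+d'}$ of $\tilde U$ such that $\pi$ factors through $\tilde\pi$ (possible since $\bar U\subseteq\tilde U$), apply Lemma~\ref{lem:respect-fibers} with $\tilde\pi$, and observe that respecting $\tilde\pi$ implies respecting $\pi$. For the second step, even once $\psi$ respects $\pi$, you must check that the $(\tilde V,\psi)$-arcs intersect $B'$ in a family that is exactly the set of $(W,\psi|_{B'})$-arcs for a suitable lift $W$ of $\bar W$; this is where the translater viewpoint (or an argument in the spirit of Remark~\ref{rem:fibers_riso}) is cleaner than working directly with $\psi$.

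Your treatment of (2) is not a proof. The rainbow $\rho_{\cS}(x)$ records $\rv(x-S_i)$ for the \emph{full} strata $S_i$, while $\rho_{\cS'}(x)$ only sees $S_{i+d}\cap\mathbf F$; these are different data, and the claim is that they nevertheless have the same fibers on $\mathbf F$. Saying this ``follows directly'' from the shift $i\mapsto i+d$ is not correct; the actual argument (in \cite[Lemma~4.21]{halupczok2014a}) uses the recursive description of rainbow fibers analogous to Fact~\ref{fact:rfiber_induction}. Your handling of (3) via the same argument as (1) applied to $(\cS,\chi)$ is fine.
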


The above fact will mostly be applied to maximal balls $B \subseteq S_{\geqslant d}$ for some $d$; such balls indeed exist:

\begin{fact}[{\cite[Lemma 3.17~(1)]{halupczok2014a}}]\label{fact:max-B}
Given a t-stratification $\cS$ of $\mathbf{B}$ and a point $x \in S_{\geqslant d}$ (for some $d$), there exists a maximal open ball $B \subseteq S_{\geqslant d}$ containing $x$.
\end{fact}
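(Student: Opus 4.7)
The plan is to exhibit the maximal ball explicitly as $B(x, <\gamma_x)$, where $\gamma_x$ is the infimum of valuative distances from $x$ to the complement of $S_{\geqslant d}$ in $\mathbf{B}$. Accordingly, the proof splits into two steps: (i) show that at least one open ball around $x$ lies in $S_{\geqslant d}$ (so that the collection is non-empty), and (ii) show that the supremum radius of such balls is attained, realizing the maximum.

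For (i), I would argue that $S_{<d} = \mathbf{B} \setminus S_{\geqslant d}$ is closed in $\mathbf{B}$, i.e.\ no point $y \in S_{\geqslant d}$ accumulates $S_{<d}$. The strict dimension inequality $\dim S_{<d} < d \leqslant n$ alone is not enough (definable sets of low dimension need not be closed), so the argument must exploit the t-stratification condition. The idea is that if $y \in S_e$ (with $e \geqslant d$) were an accumulation point of $S_{<d}$, then on any ball contained in $S_{\geqslant d}$ near $y$ the $d$-triviality would, after restriction to a $\pi$-fiber as in Fact~\ref{fact:induced-tst}, force an induced stratum of dimension $<d$ to accumulate on a strictly higher-dimensional one, contradicting dimension conservation. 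This yields that $S_{\geqslant d}$ is open in $\mathbf{B}$ and consequently contains an open ball around $x$.

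For (ii), set
\[
\gamma_x \coloneqq \inf\{v(x - z) : z \in \mathbf{B} \setminus S_{\geqslant d}\},
\]
with the convention $\gamma_x = \infty$ when $\mathbf{B} \subseteq S_{\geqslant d}$. A direct check shows that for any $\gamma \in \Gamma^\times \cup \{\infty\}$ the open ball $B(x, <\gamma)$ is contained in $S_{\geqslant d}$ if and only if $\gamma \leqslant \gamma_x$. By stable embeddedness of $\RV$ the defining set of valuations is a definable subset of $\Gamma^\times$, and $1$-h-minimality ensures that such subsets are finite unions of intervals with endpoints in $\Gamma^\times \cup \{0, \infty\}$; hence $\gamma_x \in \Gamma^\times \cup \{0, \infty\}$, and by step~(i), $\gamma_x > 0$. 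If $\gamma_x = \infty$, then $\mathbf{B}$ itself is the maximal ball. Otherwise $\gamma_x \in \Gamma^\times$ and $B(x, <\gamma_x)$ is the maximum: it lies in $S_{\geqslant d}$ by the equivalence just noted, and any strictly larger open ball around $x$ has radius $\gamma > \gamma_x$, whence by definition of infimum it contains some $z \in \mathbf{B} \setminus S_{\geqslant d}$.

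The main obstacle is step (i): the closure of $S_{<d}$ in $\mathbf{B}$ is a genuine regularity property of t-stratifications, not a formal consequence of the dimension bound, and the careful dimension-counting (or downward induction on $d$) needed to extract it from the $d$-triviality condition is the technical heart of the argument. Once that is in place, step (ii) reduces to a routine exploitation of the o-minimal-like tameness of $\Gamma^\times$ under $1$-h-minimality.
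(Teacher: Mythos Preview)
The paper does not supply a proof here; the statement is simply quoted from \cite[Lemma~3.17~(1)]{halupczok2014a}. Your two-step plan (closedness of $S_{<d}$, then explicit radius $\gamma_x$) is the natural shape of the argument, but both justifications have genuine gaps.

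In step~(i) your sketch is circular: to contradict that some $y\in S_e$ (with $e\geqslant d$) accumulates $S_{<d}$, you invoke $d$-triviality ``on any ball contained in $S_{\geqslant d}$ near $y$'', but if $y$ accumulates $S_{<d}$ then \emph{no} ball around $y$ lies in $S_{\geqslant d}$, so the hypothesis for $d$-triviality is unavailable precisely where you need it. The correct move is to take an arbitrary ball $B\ni y$, apply $d'$-triviality for $d'\coloneqq\min\{i:S_i\cap B\ne\emptyset\}<d$, and observe that $U$-translation-invariance of $\varphi^{-1}(S_{<d})$ forces $\varphi^{-1}(y)$ into the closure of $\varphi^{-1}(S_{<d})\cap F$ for the $\pi$-fiber $F$ through $\varphi^{-1}(y)$; one then inducts on the ambient dimension $n$.

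In step~(ii), the assertion that $1$-h-minimality forces definable subsets of $\Gamma^\times$ to be finite unions of intervals is false. Hensel minimality is a condition on definable subsets of the valued-field sort $K$ and places no such o-minimal constraint on $\Gamma$: already for $K=\C(\!(t)\!)$ in $\Lval$ the induced structure on $\Gamma\cong\Z$ is Presburger arithmetic, whose definable sets include arithmetic progressions rather than just intervals. The existence of $\gamma_x\in\Gamma^\times$ therefore cannot be read off from abstract tameness of $\Gamma$. One again exploits the t-stratification itself: fix any $z_0\in S_{<d}$, apply $d'$-triviality on the closed ball $B(x,\leqslant v(x-z_0))$, pass to a fiber $F$ via Fact~\ref{fact:induced-tst}, and by induction on $n$ obtain a maximal open ball in $F$ around $\varphi^{-1}(x)$ disjoint from $\varphi^{-1}(S_{<d})\cap F$; translation-invariance and the risometry property show its radius is the required $\gamma_x$. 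The base case $n=1$ is immediate since $S_{<1}=S_0$ is finite.
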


\subsection{Rainbows}

In this section, we recall the notion of the rainbow associated to a t-stratification $\cS$ of $\mathbf B$. The definition looks somewhat technical, but it yields a natural characterisation of which maps are reflected by $\cS$ (Lemma~\ref{lem:refl-charac}) and also its fibers have a natural characterization (Lemma~\ref{lem:rainbow-charac}).

\begin{defn}\label{def:rainbow} Let $\mathcal{S}=(S_i)_i$ be a t-stratification of $\mathbf{B}$. The \emph{rainbow of $\mathcal{S}$} is the map $\rho_\mathcal{S}\colon \mathbf{B}\to \RV\eq$ given by 
\[
x\mapsto (\code{\rv(x-S_0)},\code{\rv(x-S_{1})}, \ldots, \code{\rv(x-S_n))})
\] 
where $\rv(x-S_i)\coloneqq\{\rv(x-y): y \in S_i\}$. By a \emph{rainbow fiber}, we mean a fiber of the rainbow $\rho_\mathcal S$.
\end{defn}

The rainbow $\rho_\mathcal{S}$ is determined up to a choice of the elements in $\RV\eq$ we choose as codes of the sets $\rv(x - S_d)$. (But recall that we assume those elements to depend definably on $x$.) Abusing of terminology, we will fix some codomain for $\rho_\mathcal{S}$ and speak about \emph{the} rainbow of $\mathcal{S}$.

\begin{rem}\label{rem:triv-rain}
It is not difficult to check that for any ball $B \subseteq \mathbf B$, any definable risometry $\varphi\colon B \to B$ and any sub-vector space $U \subseteq K^n$ the following are equivalent:
$\varphi$ $U$-trivializes $\mathcal{S}$ on $B$; 
$\varphi$ $U$-trivializes $\rho_\mathcal{S}$ on $B$; $\varphi$ $U$-trivializes $(\mathcal{S}, \rho_\mathcal{S})$ on $B$. This in particular implies that $\tsp_B(\mathcal S) = \tsp_B(\rho_{\mathcal S})$, and that
another t-stratification $\mathcal{S}'$ reflects $\mathcal{S}$ if and only if it reflects $\rho_\mathcal{S}$.
\end{rem}

\begin{lem}\label{lem:refl-charac}
For a t-stratification $\cS$ and a definable map $\chi\colon \mathbf B \to \RV\eq$, the following are equivalent:
\begin{enumerate}
    \item $\cS$ reflects $\chi$.
    \item $\chi$ factors over the rainbow, i.e., $\chi = h \circ \rho_{\cS}$ for some (necessarily definable) map $h$.
    \item For every definable risometry
    $\varphi\colon \mathbf B \to \mathbf B$ fixing each set $S_i$ setwise, we have $\chi = \chi \circ \varphi$.
    \item For every ball $B \subseteq \mathbf B$ and every sub-vector space $U \subseteq K^n$, if $\varphi\colon B \to B$ is a definable risometry $U$-trivializing $\cS$, then $\varphi$ also $U$-trivializes $(\cS, \chi)$.
\end{enumerate}
\end{lem}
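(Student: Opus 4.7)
My plan is to prove the four conditions equivalent by establishing the cycle $(2) \Rightarrow (3) \Rightarrow (4) \Rightarrow (1) \Rightarrow (2)$. The first three implications admit fairly direct arguments, while the last one, $(1) \Rightarrow (2)$, is where I expect the main obstacle, since it requires converting the purely local (ball-by-ball) reflection property into the global statement that $\chi$ factors through $\rho_\cS$.

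For $(2) \Rightarrow (3)$, the key observation is that any definable risometry $\varphi\colon \mathbf{B} \to \mathbf{B}$ fixing each $S_i$ setwise automatically preserves the rainbow: using $\rv(\varphi(x) - \varphi(y)) = \rv(x - y)$ together with $\varphi(S_i) = S_i$, one computes $\rv(\varphi(x) - S_i) = \{\rv(\varphi(x) - \varphi(y)) \mid y \in S_i\} = \rv(x - S_i)$ for all $i$, so $\rho_\cS \circ \varphi = \rho_\cS$ and hence $\chi \circ \varphi = h \circ \rho_\cS \circ \varphi = \chi$. For $(4) \Rightarrow (1)$: given $B \subseteq S_{\geqslant d}$, the t-stratification property of $\cS$ produces a $U$-trivializer $\varphi$ on $B$ with $\dim \bar U = d$, and $(4)$ guarantees that the same $\varphi$ also $U$-trivializes $\chi$, yielding the reflection condition.

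For the key technical step $(3) \Rightarrow (4)$, let $\varphi\colon B \to B$ be a definable risometry $U$-trivializing $\cS$ and let $y_1, y_2 \in B$ with $y_1 - y_2 \in U$. I define
\[ \tilde\varphi(x) \coloneqq \begin{cases} \varphi(\varphi\1(x) + (y_2 - y_1)) & \text{if } x \in B, \\ x & \text{otherwise.} \end{cases} \]
Since $v(y_2 - y_1)$ is bounded by the radius of $B$, translation by $y_2 - y_1$ preserves $B$, so $\tilde\varphi$ is well defined. The identity $v(\tilde\varphi(x) - x) = v(y_2 - y_1)$ for $x \in B$, combined with the strict inequality $v(x - x') > v(y_2 - y_1)$ whenever $x \in B$ and $x' \notin B$, shows via the ultrametric inequality that $\tilde\varphi$ is a risometry on $\mathbf{B}$. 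Moreover, $\tilde\varphi$ fixes each $S_i$ setwise: inside $B$, because $y_2 - y_1 \in U$, it permutes points within each $(U, \varphi)$-arc, and by the trivialization hypothesis each arc lies in a single $S_i$; outside $B$ this is trivial. Applying $(3)$ to $\tilde\varphi$ and evaluating at $x = \varphi(y_1)$ yields $\chi(\varphi(y_1)) = \chi(\tilde\varphi(\varphi(y_1))) = \chi(\varphi(y_2))$, which is exactly the $U$-translation invariance of $\chi \circ \varphi$ that was needed.

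The main obstacle, $(1) \Rightarrow (2)$, amounts to showing that $\rho_\cS(x) = \rho_\cS(x')$ forces $\chi(x) = \chi(x')$. My plan is to invoke the characterization of rainbow fibers in the forthcoming Lemma~\ref{lem:rainbow-charac}: such a pair $(x, x')$ is related by some definable risometry $\sigma\colon \mathbf{B} \to \mathbf{B}$ fixing each $S_i$ setwise. The conclusion $\chi(x) = \chi(\sigma(x)) = \chi(x')$ then follows by running an argument along the lines of $(3) \Rightarrow (4)$ locally inside each ball where $\sigma$ acts nontrivially, using the reflection hypothesis $(1)$ to produce there a $U$-trivializer of $(\cS, \chi)$ compatible with $\sigma$. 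The delicate point is to avoid circularity should Lemma~\ref{lem:rainbow-charac} itself rely on the present result; in that case I would instead prove $(1) \Rightarrow (4)$ directly, by exploiting the equality $\tsp_B(\cS, \chi) = \tsp_B(\cS)$ forced by reflection and combining it with Remark~\ref{rem:lift-choice} to transfer a given trivializer of $(\cS, \chi)$ onto the same lift $U$ and the same arcs as the hypothesized $\varphi$, which then reduces to a comparison of two $U$-trivializations of $\cS$ on the same ball.
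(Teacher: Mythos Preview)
Your implications $(2)\Rightarrow(3)$, $(3)\Rightarrow(4)$ and $(4)\Rightarrow(1)$ are correct. The paper takes a different route: it simply cites \cite[Proposition~4.17]{halupczok2014a} for the equivalence of (1), (2), (3), notes that $(4)\Rightarrow(1)$ is trivial, and closes the loop with a very short $(2)\Rightarrow(4)$ via Remark~\ref{rem:triv-rain}: if $\varphi$ $U$-trivializes $\cS$ then it $U$-trivializes $\rho_\cS$, hence also $\chi=h\circ\rho_\cS$. Your $(3)\Rightarrow(4)$ via extending a local trivializer to a global $\cS$-preserving risometry is a pleasant self-contained alternative to this, and your $(2)\Rightarrow(3)$ unpacks what the paper leaves inside the citation.

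Your $(1)\Rightarrow(2)$, however, has a real gap. Invoking Lemma~\ref{lem:rainbow-charac} gives a risometry $\sigma$ fixing each $S_i$ and sending $x$ to $x'$, but you then need $\chi\circ\sigma=\chi$, which is statement (3) for this particular $\sigma$ --- and you only have (1). Your phrase ``produce a $U$-trivializer of $(\cS,\chi)$ compatible with $\sigma$'' does not work as written: the $\sigma$ built in Lemma~\ref{lem:rainbow-charac} is constructed from \emph{specific} trivializers of $\cS$, and there is no mechanism to retroactively align a new $(\cS,\chi)$-trivializer with the arcs that $\sigma$ already uses. The correct fix is to rerun the inductive construction in Lemma~\ref{lem:rainbow-charac} itself, but at each step choosing a trivializer of $(\cS,\chi)$ (which exists by (1), and has the same $\tsp$) instead of merely one for $\cS$; the resulting risometry then preserves $\chi$ by construction. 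Your fallback sketch of a direct $(1)\Rightarrow(4)$ is also incomplete: knowing $\tsp_B(\cS,\chi)=\tsp_B(\cS)$ and adjusting lifts via Remark~\ref{rem:lift-choice} does not force two $U$-trivializers of $\cS$ to have the same arcs, so the ``comparison'' you allude to does not go through without further input. Note that the paper does not prove $(1)\Rightarrow(2)$ either --- it lives inside the external citation.
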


\begin{proof}
The equivalence of (1), (2) and (3) is \cite[Proposition~4.17]{halupczok2014a}, and the implication (4) $\Rightarrow$ (1) is trivial.

To finish, we prove (2) $\Rightarrow$ (4):
Suppose that $\varphi\colon B \to B$ $U$-trivializes $\cS$. Then it $U$-trivializes $\rho_{\cS}$ (by Remark~\ref{rem:triv-rain}), so it also trivializes $\chi = h \circ \rho_{\cS}$.
\end{proof}

The following fact describes rainbow fibers rather precisely in a recursive way; it is very useful for inductive proofs:

\begin{fact}[{By \cite[Lemma 4.21]{halupczok2014a}}]\label{fact:rfiber_induction}
Let $\cS$ be a t-stratification on $\mathbf{B}$ and $C$ be a rainbow fiber. Then either $C$ consists of a single element of $S_0$ or it is entirely contained in a ball $B \subseteq S_{\geqslant 1}$.
In the second case, we moreover have that, for any exhibition $\pi$ of $\tsp_B(\cS)$, for any $z \in \pi(B)$, and for $\mathbf{F} = \pi\1(z) \cap B$ and $\cS'$ as in Fact~\ref{fact:induced-tst}, 
$C \cap \mathbf F$ is a $\rho_{\cS'}$-fiber.
\end{fact}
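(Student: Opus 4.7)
The statement decomposes into two parts, which I would handle separately. For Part~2, once Part~1 is established, the result follows immediately from Fact~\ref{fact:induced-tst}(2): picking $y_0 \in C \cap \mathbf F$, we have $C \cap \mathbf F = \{y \in \mathbf F : \rho_\cS(y) = \rho_\cS(y_0)\}$, and since $\rho_\cS|_{\mathbf F}$ and $\rho_{\cS'}$ have the same fibers on $\mathbf F$, this set is a $\rho_{\cS'}$-fiber.

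For Part~1, I would case-split on whether $C$ meets $S_0$. If $x_0 \in C \cap S_0$, then for any $x' \in C$ the equality $\rv(x' - S_0) = \rv(x_0 - S_0) \ni 0$ forces $x' \in S_0$. To conclude $C = \{x_0\}$, one uses that $S_0$ is finite (by dimension theory in the $1$-h-minimal setting) together with the fact that distinct points of $S_0$ carry distinct rainbow values: for $x \ne x'$ in $S_0$, the sets $\rv(x - S_0)$ and $\rv(x' - S_0)$ would need to reconcile despite containing respectively $\rv(x - x')$ and $\rv(-(x - x'))$, with $\rv(u) \ne \rv(-u)$ in equicharacteristic~$0$; a careful ultrametric case analysis rules out the potential matchings. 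In the other case $C \cap S_0 = \emptyset$, fix $x \in C$: the quantity $r := v(x - S_0) > 0$ is recoverable from $\rv(x - S_0)$ as the minimum valuation of its elements, hence $v(x' - S_0) = r$ for every $x' \in C$, giving $C \subseteq S_{\geqslant 1}$. Letting $B$ be the maximal open ball around $x$ with $B \subseteq S_{\geqslant 1}$ (Fact~\ref{fact:max-B}, of radius $r$), I would show $C \subseteq B$ by a parallel argument: if $v(x - x') \geqslant r$ for some $x' \in C$, matching a closest point $y \in S_0$ to $x$ with a corresponding $y' \in S_0$ at distance $r$ from $x'$ (via $\rv(x' - y') = \rv(x - y)$) yields a configuration whose $\rv$-data cannot simultaneously appear in both $\rv(x - S_0)$ and $\rv(x' - S_0)$ by the same equicharacteristic~$0$ obstruction.

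The main obstacle is the ultrametric combinatorial core of Part~1, which is essentially the content of \cite[Lemma~4.21]{halupczok2014a} (from which the present statement is derived). The delicate point is that for larger $S_0$ the required equalities between $\rv$-sets can in principle be satisfied by non-trivial pairings of $\rv$-differences, and fully ruling these out requires exploiting the simultaneous constraints imposed by all the other components $\rv(x - S_d)$ ($d \geqslant 1$) of the rainbow; the equicharacteristic~$0$ assumption is essential throughout.
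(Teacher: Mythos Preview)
The paper does not supply its own proof of this statement: it is recorded as a Fact, citing \cite[Lemma~4.21]{halupczok2014a}. So there is no in-paper argument to compare against; the question is only whether your outline is sound.

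Your Part~2 is correct and is exactly the intended mechanism: Fact~\ref{fact:induced-tst}(2) says $\rho_\cS|_{\mathbf F}$ and $\rho_{\cS'}$ have the same fibers, which gives the claim. One small gap: you silently pick $y_0 \in C \cap \mathbf F$, but the statement ranges over \emph{all} $z \in \pi(B)$, so you must check $C \cap \mathbf F \ne \emptyset$ for every fiber. This is immediate once you note (Remark~\ref{rem:triv-rain}) that any risometry $\bar U$-trivializing $\cS$ on $B$ also $\bar U$-trivializes $\rho_\cS$, so $C$ is a union of $(U,\varphi)$-arcs, each of which surjects onto $\pi(B)$ (Fact~\ref{fact:exhibition_val}).

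Your Part~1 sketch is in the right direction and you honestly flag that the combinatorial core is the content of the cited lemma rather than something you reprove. Two remarks. First, your speculation that ``fully ruling these out requires exploiting the simultaneous constraints imposed by all the other components $\rv(x - S_d)$ ($d \geqslant 1$)'' is likely off: the argument in \cite[Lemma~4.21]{halupczok2014a} should go through using only the $S_0$-component of the rainbow. The key tool is Remark~\ref{rem:finite_riso} (uniqueness of risometries between finite sets): from $\rv(x - S_0) = \rv(x' - S_0)$ one extracts, for each $y \in S_0$, a unique $y' \in S_0$ with $\rv(x-y)=\rv(x'-y')$, and the resulting bijection $y \mapsto y'$ is forced to interact with the translation by $x'-x$ in a way that is incompatible with $v(x-x') \geqslant v(x - S_0)$ (or with $x \ne x'$ in the $S_0$ case). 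Second, your handling of the case $S_0 = \emptyset$ is implicit; it is worth noting that then $B = \mathbf B$ works trivially.
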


\begin{rem}
In the case $C \subseteq B \subseteq S_{\geqslant 1}$ of the above fact, $B$ is necessarily maximal among the balls contained in $S_{\geqslant 1}$, since otherwise, we would get a contradiction to $C$ being $1$-trivial on a ball $B' \subseteq S_{\geqslant 1}$ properly containing $B$.
\end{rem}

The next fact contains some more properties of the rainbow $\rho_\mathcal{S}$ (see \cite[Lemma 4.22]{halupczok2014a})

\begin{fact}\label{fact:rainbow} Let $\mathcal{S}=(S_i)_i$ be a t-stratification of $\mathbf{B}$ and $C\subseteq S_d$ be a $\rho_{\mathcal{S}}$-fiber. Then 
\begin{enumerate}
    \item $\dim(\affdir(C))=d$;
    \item If $B$ is a ball such that $B\subseteq S_{\geqslant d}$ and $B\cap C\neq\emptyset$, then $\affdir(C) = \tsp_B(\rho_{\mathcal{S}})$.
\end{enumerate}
\end{fact}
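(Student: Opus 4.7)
The plan is to prove both parts simultaneously by induction on the ambient dimension $n$, descending along Fact \ref{fact:rfiber_induction} to induced stratifications on $\pi$-fibers.

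For the base case $d=0$, Fact \ref{fact:rfiber_induction} forces $C$ to be a single point, so $\affdir(C) = \{0\}$. For part (2), if $\dim \tsp_B(\rho_\cS) \geqslant 1$ for some ball $B$ meeting $C$, then any $(U,\varphi)$-arc containing the point of $C \cap B$ (where $\varphi$ $U$-trivializes $\rho_\cS$, which exists by Remark \ref{rem:triv-rain}) would have positive dimension and be entirely contained in $C$, contradicting $|C| = 1$; hence $\tsp_B(\rho_\cS) = \{0\} = \affdir(C)$.

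For the inductive step with $d \geqslant 1$, I first handle part (1). By Fact \ref{fact:rfiber_induction}, fix a maximal ball $B_0 \subseteq S_{\geqslant 1}$ containing $C$. Let $\bar U_0 = \tsp_{B_0}(\rho_\cS) = \tsp_{B_0}(\cS)$ (Remark \ref{rem:triv-rain}), $e_0 = \dim \bar U_0$, fix an exhibition $\pi$ of $\bar U_0$, and a $\bar U_0$-trivializing risometry $\varphi_0$ respecting $\pi$ (Lemma \ref{lem:respect-fibers}). Each $(U_0,\varphi_0)$-arc has $\affdir = \bar U_0$ (Fact \ref{fact:riso-affdir}) and projects bijectively onto $\pi(B_0)$ (Fact \ref{fact:exhibition_val}); since $\varphi_0$ $\bar U_0$-trivializes $\rho_\cS$, $C$ is a union of such arcs. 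For any $\pi$-fiber $\mathbf F = \pi\1(z) \cap B_0$, Fact \ref{fact:induced-tst} yields an induced t-stratification $\cS' = (S_{i+e_0} \cap \mathbf F)_i$ of $\mathbf F$, and Fact \ref{fact:rfiber_induction} shows $C \cap \mathbf F$ is a non-empty $\rho_{\cS'}$-fiber contained in $S'_{d-e_0}$, so $e_0 \leqslant d$; by the inductive hypothesis, $\dim \affdir(C \cap \mathbf F) = d - e_0$. I then claim $\affdir(C) = \bar U_0 + \affdir(C \cap \mathbf F)$: the inclusion ``$\supseteq$'' uses arcs inside $C$ for $\bar U_0$ and $C \cap \mathbf F \subseteq C$ for the other summand, while for ``$\subseteq$'', given $x, x' \in C$ I pick the unique $x''$ in the arc through $x'$ with $\pi(x'') = \pi(x)$, so that $x'' \in C \cap \pi\1(\pi(x))$, and applying Lemma \ref{lem:dirsum} to the decomposition $x - x' = (x - x'') + (x'' - x')$ (the two directions being disjoint because one lies in $\ker \pi$ and the other in $\bar U_0$, with $\bar U_0 \cap \ker\pi = 0$) yields $\dir(x - x') \subseteq \affdir(C \cap \mathbf F) + \bar U_0$. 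Since the sum is direct with dimension $e_0 + (d - e_0) = d$, part (1) follows.

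For part (2), given $B \subseteq S_{\geqslant d}$ with $B \cap C \neq \emptyset$, set $\bar U = \tsp_B(\rho_\cS)$ and $e = \dim \bar U$. The same fiber analysis gives $e \leqslant d$, while $d$-triviality of $\cS$ on $B$ gives $e \geqslant d$, hence $e = d$. Then $C \cap \mathbf F \subseteq S'_0$ is a singleton by the base case, so two distinct arcs inside $C \cap B$ would yield two points in $C \cap \mathbf F$; thus $C \cap B$ is a single arc with $\affdir(C \cap B) = \bar U$. Combining $\bar U \subseteq \affdir(C)$ with $\dim \affdir(C) = d = \dim \bar U$ from part (1) yields $\affdir(C) = \bar U$. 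The main obstacle in executing this plan is the decomposition argument for part (1): verifying $\bar U_0 \cap \affdir(C \cap \mathbf F) = 0$ via the exhibition $\pi$, and ensuring the auxiliary point $x''$ actually lies in $C$ (which uses that $C$ is a union of whole arcs).
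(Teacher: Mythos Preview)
The paper does not give its own proof of this fact; it simply records it as a citation to \cite[Lemma~4.22]{halupczok2014a}. Your inductive argument, descending along Fact~\ref{fact:rfiber_induction} and Fact~\ref{fact:induced-tst} to the induced stratification on a $\pi$-fiber, is correct and is essentially the approach of the cited reference (which likewise exploits the recursive description of rainbow fibers).

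One small step should be made explicit. In the ``$\subseteq$'' half of part~(1), the auxiliary point $x''$ lies in the fiber $\pi^{-1}(\pi(x)) \cap B_0$, which need not be the particular $\mathbf{F}$ you fixed to invoke the inductive hypothesis; you are tacitly using that $\affdir(C \cap \mathbf{F}_z)$ is the same $\bar K$-subspace for every $z \in \pi(B_0)$. This follows because a $\bar U_0$-translater on $B_0$ (Lemma~\ref{lem:translater}) reflects $\rho_{\cS}$ and hence restricts to risometries $C \cap \mathbf{F}_z \to C \cap \mathbf{F}_{z'}$, and risometries preserve $\affdir$ (Fact~\ref{fact:riso-affdir}). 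Similarly, in part~(2) the phrase ``same fiber analysis'' should invoke Fact~\ref{fact:induced-tst}~(2) directly---that fact applies to any ball $B$ with $\pi$ an exhibition of $\tsp_B(\cS)$, not only to the maximal ball of Fact~\ref{fact:rfiber_induction}---to conclude that $C \cap \mathbf{F}$ is a $\rho_{\cS'}$-fiber before applying the $d'=0$ case. With these two clarifications, the argument is complete.
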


The following lemma gives yet another description of rainbow fibers.

\begin{lem}\label{lem:liso} Let $\mathcal{S}=(S_i)_i$ be a t-stratification of $\mathbf{B}$. For every $\rho_{\mathcal{S}}$-fiber $C$, there exists a map $\psi\colon C \to R \subseteq K^n$, where $R$ is a product of open balls and singletons and such that $\psi$ is a composition of finitely definable maps $\psi_i$ between subsets of $K^n$
each of which is either a risometry or lies in $\GL_n(\valring)$.
\end{lem}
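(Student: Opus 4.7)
I will prove the result by induction on $n$, regarding the statement as a family of statements indexed by the ambient dimension. The base case $n = 0$ and the degenerate case where $C$ is a single point in $S_0$ are trivial: take $\psi = \mathrm{id}$ and $R = C$. In the remaining case of the inductive step, Fact~\ref{fact:rfiber_induction} gives a ball $B \subseteq S_{\geqslant 1}$ containing $C$; set $\bar U := \tsp_B(\cS)$, of dimension $d \geqslant 1$, choose an exhibition $\pi\colon K^n \to K^d$ of $\bar U$ and a point $z_0 \in \pi(B)$, and let $\mathbf F := \pi^{-1}(z_0) \cap B$. Then $C \cap \mathbf F$ is a rainbow fiber of the induced t-stratification $\cS'$ on $\mathbf F$, which (via $\pi^\vee$) lives in an open ball in $K^{n-d}$. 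By the inductive hypothesis applied to $\cS'$, I obtain a composition $\tilde\psi\colon \pi^\vee(C \cap \mathbf F) \to R'$ of risometries and $\GL_{n-d}(\valring)$-maps on subsets of $K^{n-d}$, with $R'$ a product of open balls and singletons.

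After permuting coordinates in $K^n$ (a risometry lying in $\GL_n(\valring)$), I may assume that $\pi$ is the projection onto the first $d$ coordinates. Then $\bar U$ is the graph of some linear map $\bar L\colon \bar K^d \to \bar K^{n-d}$; lifting $\bar L$ to a $K$-linear map $L$ with coefficients in $\valring$ yields the lift $U := \{(w, Lw) : w \in K^d\}$ of $\bar U$, and the matrix $M := \begin{pmatrix} I_d & 0 \\ -L & I_{n-d} \end{pmatrix}$ lies in $\GL_n(\valring)$ and satisfies $M(U) = K^d \times \{0\}^{n-d}$. By the very definition of $\tsp_B(\cS)$, combined with Lemma~\ref{lem:respect-fibers}, there exists a definable risometry $\varphi\colon B \to B$ that $U$-trivializes $\cS$ on $B$ (hence $\rho_\cS$, by Remark~\ref{rem:triv-rain}) and respects $\pi$. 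In particular, $C^* := \varphi^{-1}(C)$ is $U$-translation invariant inside $B$.

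I now assemble $\psi$ as a composition of maps, each of which is either a definable risometry or a member of $\GL_n(\valring)$: first $\varphi^{-1}\colon B \to B$; then a translation (taking $z_0 := \pi(\text{center of } B)$) centering $B$ at the origin; then $M \in \GL_n(\valring)$. Because a $(K^d \times \{0\})$-translation invariant subset of a product of two balls of the same radius $B_1 \times B_2$ must factor as $B_1 \times S$ for some $S \subseteq B_2$, the image of $C$ after these three steps takes the form $B_1 \times C_0'$, where $B_1 \subseteq K^d$ is an open ball and $C_0' \subseteq K^{n-d}$. A further translation (affecting only the last $n-d$ coordinates) brings the $K^{n-d}$-factor back to $\pi^\vee(\mathbf F)$, mapping $C_0'$ to $\bar\varphi^{-1}(\pi^\vee(C \cap \mathbf F))$, where $\bar\varphi := \pi^\vee \circ \varphi|_\mathbf F \circ (\pi^\vee)^{-1}$ is the risometry on $\pi^\vee(\mathbf F) \subseteq K^{n-d}$ induced by $\varphi|_\mathbf F$. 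Next apply $\mathrm{id}_{K^d} \times \bar\varphi$, a risometry, yielding $B_1 \times \pi^\vee(C \cap \mathbf F)$. Finally apply $\mathrm{id}_{K^d} \times \tilde\psi$: each piece of $\tilde\psi$ that is a risometry on $K^{n-d}$ extends to a product risometry on $K^n$, while each piece in $\GL_{n-d}(\valring)$ extends to the block-diagonal matrix with $I_d$ in the first block, which lies in $\GL_n(\valring)$. The end result is $B_1 \times R'$, a product of open balls and singletons.

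The main subtlety is that $M$ is merely an isometry and not a risometry in general, so it cannot be used to ``straighten coordinates'' before working with the t-stratification: acting on $\cS$ by $M$ would typically destroy its t-stratification structure. Luckily, the lemma does not require any intermediate object to remain a t-stratification, only that $\psi$ itself be a composition of the allowed maps. With this in mind, the computations to be verified are routine: the product decomposition after applying $M$ follows from the $U$-translation invariance of $C^*$, and the identification of the risometry $\bar\varphi$ with the restriction of $\varphi$ on the fiber $\mathbf F$ follows from the fact that $\varphi$ respects $\pi$.
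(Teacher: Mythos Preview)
Your argument is correct and follows essentially the same inductive strategy as the paper: pull back by the trivializing risometry $\varphi^{-1}$, use a $\GL_n(\valring)$-map to split the $U$-translation-invariant set as a product of an open ball with a fiber, and apply the inductive hypothesis to the induced t-stratification on that fiber. The only slip is terminological: a nontrivial coordinate permutation lies in $\GL_n(\valring)$ but is \emph{not} a risometry (its residue is not the identity, cf.\ Fact~\ref{fact:GL_n-riso}); this is harmless for the lemma since $\GL_n(\valring)$-maps are among the allowed building blocks.
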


\begin{rem}\label{rem:composition_riso} One can easily check that any such finite composition $\psi$ can be written as the composition of a single risometry and a single map from $\GL_n(\valring)$. (But we will not use this.)
\end{rem}

\begin{proof}[Proof of Lemma \ref{lem:liso}] By Fact \ref{fact:rfiber_induction}, either $C$ consists of a single element in $S_0$ or it is entirely contained in a maximal ball $B\subseteq S_{\geqslant 1}$. If the former holds, there is nothing to prove, so suppose $C \subseteq B$ and let $\pi\colon K^n\to K^d$ be an exhibition of $\tsp_B(\mathcal{S})$. Let $\varphi\colon B\to B$ be a definably risometry
$U$-trivializing $\mathcal{S}$ (and hence also $\rho_{\mathcal{S}}$) on $B$ for some $d$-dimensional $U \subseteq K^n$. Using Lemma~\ref{lem:respect-fibers}, we further assume that $\varphi$ respects $\pi$.

Fix some $q\in \pi(B)$ and consider the corresponding $\pi$-fiber $F_q \coloneqq \pi\1(q) \cap B$. By induction, we have a map $\tilde\psi\colon F_q \to K^{n-d}$ which is a desired kind of composition such that $\tilde\psi(\varphi^{-1}(C) \cap F_q)$ is equal to a product $\tilde R \subseteq K^{n-d}$ of open balls and singletons. Setting $R\coloneqq \tilde{R} \times \pi(B) \subseteq K^n$, the map $\tilde\psi$ induces a map 
\[
\tilde\psi \times \id\colon
(\varphi^{-1}(C) \cap F_q) \times \pi(B)\to R 
\]
of the desired form. Composing this map with a map from $\GL_n(\valring)$, we obtain a map $\varphi^{-1}(C) \to R$, which composed with $\varphi\1$ gives us the desired map $\psi
\colon C \to R$.
\end{proof}

\begin{lem} \label{lem:surjectivity} Let $\mathcal{S}=(S_i)_i$ be a t-stratification of $\mathbf{B}$ and $C$ be a $\rho_{\mathcal{S}}$-fiber. Let $\varphi\colon C\to X$ be a definable risometry, for some $X \subseteq C$.
Then $X = C$. Moreover, if $\pi$ is an exhibition of $\affdir C$, then the same applies for $C$ replaced by $\pi(C)$, namely, if $X\subseteq \pi(C)$ and $\psi\colon \pi(C)\to X$ is a risometry, then $\pi(C)=X$.
\end{lem}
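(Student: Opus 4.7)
The plan is to reduce both statements to a Banach fixed point argument on a product of open balls. For the main assertion, I would first invoke Lemma~\ref{lem:liso} to obtain a bijection $\psi\colon C \to R$, where $R \subseteq K^n$ is a product of open balls and singletons and $\psi$ is a finite composition of maps each of which is either a risometry or lies in $\GL_n(\valring)$. The key point to establish is that conjugation by either type of map preserves the property of being a risometry: for risometries this is immediate from the definition, and for $M \in \GL_n(\valring)$ it follows from $M$ being an isometry (Fact~\ref{fact:GL_n-riso}) together with a short verification that $M$ preserves the relation ``$\rv(u) = \rv(w)$''. Consequently, $\varphi' \coloneqq \psi \circ \varphi \circ \psi\1 \colon R \to \psi(X) \subseteq R$ is again a definable risometry, and it remains to prove surjectivity in this reduced setup. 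This transport step is arguably the main obstacle; the rest of the argument is then routine.

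To show surjectivity of $\varphi'$ on $R$, I would fix $y \in R$ and consider $\Psi\colon R \to K^n$ given by $\Psi(x) \coloneqq x - \varphi'(x) + y$. Writing $R = B_1 \times \cdots \times B_n$ with each $B_i$ either an open ball or a singleton, the singleton coordinates are automatically fixed (both $x_i$ and $\varphi'(x)_i$ equal $y_i$ there), while at each open-ball coordinate $i$ one has $\Psi(x)_i - y_i = x_i - \varphi'(x)_i$; since $x_i$ and $\varphi'(x)_i$ both lie in the open ball $B_i$, this difference has valuation strictly less than the radius of $B_i$, and thus $\Psi(x) \in R$. The risometry property of $\varphi'$ makes $\Psi$ a definable contraction, since for distinct $x, x' \in R$,
\[
v(\Psi(x)-\Psi(x')) = v\bigl((x-x')-(\varphi'(x)-\varphi'(x'))\bigr) < v(x-x').
\]
Lemma~\ref{lem:banach} then yields a fixed point $x^* \in R$ with $\varphi'(x^*) = y$, proving that $\varphi'$ is surjective and hence that $X = C$.

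For the ``moreover'' part, the strategy is to lift the risometry $\psi\colon \pi(C) \to X \subseteq \pi(C)$ to $C$ and apply the first part. Since $\pi_{|C}\colon C \to \pi(C)$ is a bijection (by Lemma~\ref{lem:riso-proj}), I would set $\varphi \coloneqq \pi_{|C}\1 \circ \psi \circ \pi_{|C}\colon C \to \pi_{|C}\1(X) \subseteq C$. To verify that $\varphi$ is a risometry, note that for $x_1, x_2 \in C$ both $\dir(x_1-x_2)$ and $\dir(\varphi(x_1)-\varphi(x_2))$ lie in $\affdir(C)$; since $\pi$ is an exhibition of $\affdir(C)$ and $\psi$ is a risometry, Lemma~\ref{lem:dir} gives $\rv(\varphi(x_1)-\varphi(x_2))=\rv(x_1-x_2)$. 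Applying the main part then yields $\pi_{|C}\1(X)=C$, and therefore $X = \pi(C)$.
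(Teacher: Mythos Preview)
Your proof is correct and follows essentially the same approach as the paper's: reduce via Lemma~\ref{lem:liso} to a product of balls (and singletons), then apply the Banach fixed point argument of Lemma~\ref{lem:banach} to $x \mapsto x - \varphi'(x) + y$, and for the ``moreover'' part lift $\psi$ to $C$ and invoke the first part. Your write-up is in fact slightly more explicit than the paper's in checking that conjugation by $\GL_n(\valring)$ preserves risometries and that $\Psi$ really lands in $R$; the only cosmetic point is that Lemma~\ref{lem:banach} is stated for products of balls, so the singleton coordinates should be projected away before applying it (which is harmless, as you implicitly observe).
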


\begin{proof}
Fistly, note that if $\psi\colon C \to C'$ is either a risometry or lies in $\GL_n(\valring)$,
then $\varphi' \coloneqq \psi\1 \circ \varphi \circ \psi$
is still a risometry. By applying this repeatedly and using Lemma~\ref{lem:liso}, we may assume without loss that $C$ is a product of balls. 

Now proceed exactly as in \cite[Lemma 2.33]{halupczok2014a}, namely: To prove that some given $x_0\in C$ lies in the image of $\varphi$, define $f\colon C \to C$ by $f(x) \coloneqq x + x_0 - \varphi(x)$. This map is contracting and therefore, by the Banach Fixed Point Theorem (Lemma \ref{lem:banach}), $f$ admits a fixed point, which is a preimage of $x_0$ under $\varphi$.  

The last assertion follows by noting that the risometry $\psi\colon \pi(C) \to X$ lifts to a risometry $\psi'\colon C \to \pi^{-1}(X) \cap C$ (by Lemma~\ref{lem:riso-proj}) and by applying the first part to $\psi'$.
\end{proof}

We finish this section by providing the following alternative characterization of rainbow fibers of a t-stratification $\cS$ as orbits under the group of definable risometries fixing $\cS$. 

\begin{lem}\label{lem:rainbow-charac} Let $\cS$ be a t-stratification. The following are equivalent:
\begin{enumerate}
    \item $x_1, x_2\in \mathbf{B}$ lie in the same rainbow fiber 
    \item there is a definable risometry from $\mathbf{B}$ to itself sending $x_1$ to $x_2$ and fixing each $S_i$ setwise.  
\end{enumerate}
\end{lem}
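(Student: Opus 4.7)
The plan is to prove the two implications separately.

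Direction $(2) \Rightarrow (1)$ is immediate from the toolbox already in place: since $\cS$ reflects $\rho_{\cS}$ by Remark~\ref{rem:triv-rain}, the equivalence of (1) and (3) in Lemma~\ref{lem:refl-charac} applied to $\chi = \rho_{\cS}$ gives $\rho_{\cS}(x_2) = \rho_{\cS}(\varphi(x_1)) = \rho_{\cS}(x_1)$, so $x_1$ and $x_2$ lie in the same rainbow fiber.

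For $(1) \Rightarrow (2)$ I would induct on $n$. Let $C$ be the common rainbow fiber of $x_1, x_2$ and let $d$ satisfy $C \subseteq S_d$. When $d = 0$, Fact~\ref{fact:rfiber_induction} forces $C$ to be a single point, so $x_1 = x_2$ and $\varphi = \id_{\mathbf B}$ works. When $d \geqslant 1$, Fact~\ref{fact:rfiber_induction} places $C$ in a maximal open ball $B \subseteq S_{\geqslant d}$, and by Fact~\ref{fact:rainbow}~(2) we have $\affdir(C) = \tsp_B(\cS)$. Fix a lift $U \subseteq K^n$ of $\affdir(C)$, an exhibition $\pi$ of $\affdir(C)$, and a definable risometry $\varphi_B \colon B \to B$ that $U$-trivializes $\cS$ on $B$ and respects $\pi$ (via Lemma~\ref{lem:respect-fibers}). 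Set $\mathbf F_1 \coloneqq \pi^{-1}(\pi(x_1)) \cap B$; then $\varphi_B(\mathbf F_1) = \mathbf F_1$.

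The construction on $B$ proceeds in three pieces. First, let $u_0 \in U$ be the unique element with $\pi(u_0) = \pi(x_2) - \pi(x_1)$; the exhibition property gives $v(u_0) = v(\pi(u_0)) < \rado(B)$. Setting $y \coloneqq \varphi_B(\varphi_B^{-1}(x_2) - u_0)$ and using that $\varphi_B^{-1}(C)$ is $U$-invariant within $B$, one checks $y \in C \cap \mathbf F_1$, and the map $\tau(x) \coloneqq \varphi_B(\varphi_B^{-1}(x) + u_0)$ is a definable risometry of $B$ preserving each $S_i \cap B$ setwise with $\tau(y) = x_2$. Second, by Fact~\ref{fact:induced-tst} the family $(S_i \cap \mathbf F_1)_i$ is a t-stratification of $\mathbf F_1$ (identified via $\pi^\vee$ with a ball in $K^{n-d}$), and by Fact~\ref{fact:rfiber_induction} the set $C \cap \mathbf F_1$ is a single rainbow fiber of this induced stratification, containing both $x_1$ and $y$; the inductive hypothesis then produces a definable risometry $\psi \colon \mathbf F_1 \to \mathbf F_1$ preserving each induced stratum setwise with $\psi(x_1) = y$. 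Third, I extend $\psi$ $U$-invariantly in $\varphi_B$-coordinates to a definable risometry $\tilde\Psi \colon B \to B$ preserving $\cS \cap B$: using the decomposition $B = \mathbf F_1 + (U \cap B(0, <\rado(B)))$, set $\tilde\Psi^{\varphi}(f + u) \coloneqq \psi^{\varphi}(f) + u$, where $\psi^{\varphi}$ is the conjugate of $\psi$ by $\varphi_B|_{\mathbf F_1}$, and then conjugate back by $\varphi_B$. Finally, glue: define $\varphi \coloneqq \tau \circ \tilde\Psi$ on $B$ and $\varphi \coloneqq \id$ on $\mathbf B \setminus B$; because $B$ is clopen and for $x \in B, x' \notin B$ we have $v(x - x') \geqslant \rado(B) > v(\varphi(x) - x)$, the glued map is a definable risometry of $\mathbf B$ preserving each $S_i$ setwise with $\varphi(x_1) = x_2$.

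I expect the main obstacle to be verifying that $\tilde\Psi$ is a risometry on all of $B$. The key calculation uses the direct-sum decomposition $K^n = U \oplus \ker(\pi)$ together with the exhibition property $v(u) = v(\pi(u))$ for $u \in U$: writing $x_1', x_2' \in B$ as $x_i' = f_i + u_i$ with $f_i \in \mathbf F_1, u_i \in U$, one shows $v(f_1 - f_2) \leqslant v(x_1' - x_2')$, which combined with the risometry property of $\psi$ on $\mathbf F_1$ yields the required bound $v(\tilde\Psi(x_1') - \tilde\Psi(x_2') - (x_1' - x_2')) < v(x_1' - x_2')$. A secondary technical point is independence from the choice of lift $U$ (Remark~\ref{rem:lift-choice}) and keeping careful track of definability parameters through the recursion so that the final $\varphi$ is genuinely definable.
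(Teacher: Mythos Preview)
Your argument for $(2)\Rightarrow(1)$ is fine, and the overall architecture of $(1)\Rightarrow(2)$ --- shift along $U$ in $\varphi_B$-coordinates to bring $x_2$ into the fiber $\mathbf F_1$, recurse on that fiber, extend $U$-invariantly, then glue with the identity outside $B$ --- is essentially the paper's proof. There is, however, a real gap in your choice of $B$.

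Fact~\ref{fact:rfiber_induction} only yields a maximal ball $B\subseteq S_{\geqslant 1}$ containing $C$; it does \emph{not} place $C$ inside a maximal ball in $S_{\geqslant d}$, and in general no such ball exists. For a concrete failure, take $n=3$ with $S_0=\{0\}$, $S_1$ the $x$-axis minus the origin, $S_2$ the $xy$-plane minus the $x$-axis, $S_3$ the rest, and pick $\varepsilon,\delta\in\maxid\setminus\{0\}$ with $v(\varepsilon)<v(\delta)$. One checks directly that $c=(1,\varepsilon,0)$ and $c'=(1+\delta,\varepsilon,0)$ lie in the same rainbow fiber $C\subseteq S_2$, yet $v(c-c')=v(\delta)>v(\varepsilon)=\lambda_C$, so no ball contained in $S_{\geqslant 2}$ holds both points. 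Thus your $x_1,x_2$ need not lie in a common $B\subseteq S_{\geqslant d}$, and consequently your appeal to Fact~\ref{fact:rainbow}~(2) for $\affdir(C)=\tsp_B(\cS)$ breaks down.

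The repair is exactly what the paper does: take $B$ maximal in $S_{\geqslant 1}$, set $\bar U\coloneqq\tsp_B(\cS)$ (of some dimension $d'$ with $1\leqslant d'\leqslant d$), and run your construction with this $U$. The fiber $\mathbf F_1$ is then $(n-d')$-dimensional, the induced stratification is $(S_{i+d'}\cap\mathbf F_1)_{0\leqslant i\leqslant n-d'}$ (not $(S_i\cap\mathbf F_1)_i$), and $C\cap\mathbf F_1$ is a rainbow fiber there of dimension $d-d'<d$; your induction (on $n$ or, as the paper prefers, on $\dim C$) then goes through unchanged.
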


\begin{proof}
The implication (2) $\Rightarrow$ (1) is easy: Let $\varphi\colon \mathbf{B}\to\mathbf{B}$ be a definable risometry sending $x_1$ to $x_2$ and fixing $\cS$ setwise. Then, for every $0\leqslant d\leqslant n$, $\rv(x_1-S_d)=\rv(\varphi(x_1)-\varphi(S_d))=\rv(x_2-S_d)$.

We now prove (1) $\Rightarrow$ (2).
Let $C$ be the rainbow fiber containing $x_1, x_2$. We proceed by induction on $\dim(C)$. If 
$\dim(C) = 0$, then $C$ is a singleton and the result is trivial (take $\varphi$ to be the identity). Otherwise, by Fact \ref{fact:rfiber_induction}, $C$ is entirely contained in a maximal ball $B\subseteq S_{\geqslant 1}$.

Set $\bar{U} \coloneqq \tsp_B(\cS)$, let $U \subseteq K^n$ be a lift, $\pi$ be an exhibition of $\bar{U}$, and $\varphi \colon B \to B$ be a risometry $U$-trivializing $\rho_{\cS}$ on $B$. Let $\mathbf{F}_i \subseteq B$ be the $\pi$-fiber containing $\varphi^{-1}(x_i)$ (for $i=1,2$) and let $\psi\colon B \to B$ be the translation in direction $U$ sending $\mathbf{F}_1$ to $\mathbf{F}_2$. Set $x_1' \coloneqq \psi(\varphi^{-1}(x_1)) \in \mathbf F_2$ and let $\cS_2$ be the induced stratification on $\mathbf{F}_2$. Since $x_1'$ and $\varphi\1(x_2)$ lie in the same rainbow fiber with respect to $\cS_2$, by induction, there is a definable risometry $\psi'\colon \mathbf{F}_2\to \mathbf{F}_2$ sending $x'_1$ to $\varphi^{-1}(x_2)$ and fixing $\cS_2$ setwise. Extend $\psi'$ to $B$ ``trivially along $U$'', i.e., write any $y \in B$ as $y' + v$ for some $y' \in \mathbf{F}_2$ and $v \in U$ and set $\psi'(y) \coloneqq \psi(y') + v$. It follows that $\psi'\circ\psi\colon B \to B$ is a risometry preserving $\varphi^{-1}(\cS_{|B})$ setwise and sending $\varphi^{-1}(x_1)$ to $\varphi^{-1}(x_2)$. Thus $\varphi \circ \psi'\circ\psi \circ \varphi^{-1}\colon B \to B$ is a definable risometry preserving $\cS_{|B}$ setwise and sending $x_1$ to $x_2$. Extending such a risometry by the idendity outside of $B$ completes the construction of the desired risoemtry $\mathbf B \to \mathbf B$.
\end{proof}

\subsection{Translaters}\label{sec:translaters}

We will now recall the notion of translater from \cite{halupczok2014a}. Given a definable map $\chi\colon \mathbf B \to \RV\eq$, a ball $B \subseteq \mathbf B$ and a sub-vector space $\bar U \subseteq \bar K^n$, translaters provide a more canonical way to characterize $\bar U$-triviality of $\chi$ on $B$ than via definable risometries $\varphi\colon B \to B$ trivializing $\chi$. Indeed, the definition of such a $\varphi$ often inherently needs additional parameters, whereas for translaters, at least in the case $\bar U = \tsp_B(\chi)$, one can deduce from \cite[Proposition~3.19~(3')]{halupczok2014a} that if $\chi$ and $B$ are $A$-definable for some parameter set $A$, also a translater witnessing $\bar U$-triviality can be chosen $A$-definable.
Since we will not use this canonicity result explicitly, we do not give the details of this argument, but implicitly, this canonicity plays a key role in the proof of Proposition~\ref{prop:an-t-to-arc-t}.

Note that the notion of translater also has a drawback (compared to trivializing maps): It depends on a choice of an exhibition $\pi\colon K^n\to K^d$ of $\bar{U}$, so \emph{a priori}, 
being $\bar U$-trivial in the sense of translaters would not even be a coordinate independent notion.

The definition of translater below slightly differs from \cite[Definition~3.8]{halupczok2014a}: Firstly, we define it more generally than just for ball $B$; and secondly, we left out a condition which is unnecessary (as we shall see below).

\begin{defn}\label{def:translater}
Let $E \subseteq \mathbf{B}$ be a definable set, $\chi\colon \mathbf{B}\to \RV\eq$ be a definable map, $\bar U\subseteq \bar K^n$ be a $\bar K$-vector space and $\pi\colon K^n\to K^d$ be an exhibition of $\bar{U}$. For $q\in\pi(E)$, let $E_q$ denote the fiber $E\cap \pi\1(q)$. A definable family of risometries $\alpha\family = (\alpha_{q,q'})_{q,q'\in \pi(E)}$ with $\alpha_{q,q'}\colon E_q\to  E_{q'}$ is called a \emph{$\bar{U}$-translater on $E$ reflecting $\chi$} if it has the following properties, where $q, q', q''$ run over $\pi(E)$:
\begin{enumerate}
\item $\chi_{|E_{q'}}\circ \alpha_{q,q'} = \chi_{|E_{q}}$;
\item  $\alpha_{q',q''}\circ \alpha_{q,q'} = \alpha_{q,q''}$;
\item $\dir(\alpha_{q,q'}(z)-z)\subseteq \bar{U}$.
\end{enumerate}
(If we do not specify that a translater should reflect a map $\chi$, then Condition (1) is dropped.)
An \emph{$\alpha\family$-arc} is a set of the form
$\{\alpha_{\pi(x),q}(x) \mid q \in \pi(E)\}$
for $x \in E$.
\end{defn}

Note that by (2), the arcs of $\alpha\family$ form a partition of $E$ and that Condition (1) could also be stated as: $\chi$ is constant on each arc.

Also note that the notion of $\bar U$-translater does make sense for $\bar U = 0$, but it becomes trivial, namely: it consists of a single map $\alpha\colon E \to E$ which is the identity.

The following lemma corresponds to \cite[Lemma 3.7]{halupczok2014a}.

\begin{lem}\label{lem:translater} Let $B \subseteq \mathbf B$ be a ball, let $\bar{U}\subseteq \bar K^n$ be a $\bar K$-vector space, let $U \subseteq K^n$ be a lift of $\bar U$, and let $\pi\colon K^n\to K^d$ be an exhibition of $\bar{U}$. Then for any partition of $B$, the following are equivalent:
\begin{enumerate}
    \item There exists a definably risometry $\varphi\colon B\to B$ such that the partition is exactly the partition of $B$ into $(U,\varphi)$-arcs.
    \item There exists a $\bar{U}$-translater $\alpha\family$ on $B$ such that the partition of $B$ is exactly the partition into the $\alpha\family$-arcs.
\end{enumerate}
In particular, a definable map $\chi\colon \mathbf B \to \RV\eq$ is $\bar U$-trivial on $B$ if and only if there exists a $\bar{U}$-translater on $B$ reflecting $\chi$. 
\end{lem}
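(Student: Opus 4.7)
The plan is to construct a translater from a risometry and conversely, so that both describe the same partition of $B$; the ``in particular'' clause then follows immediately.

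For $(1) \Rightarrow (2)$, first apply Lemma~\ref{lem:respect-fibers} to replace $\varphi$ by a risometry with the same $(U,\varphi)$-arcs respecting $\pi$, so that each fiber $B_q \coloneqq \pi\1(q) \cap B$ is preserved by $\varphi$. For $q, q' \in \pi(B)$, let $u_{q,q'}$ be the unique element of $U$ with $\pi(u_{q,q'}) = q'-q$ (using that $\pi_{|U}$ is an isomorphism, by Remark~\ref{rem:exhibition}), and set $\alpha_{q,q'}(x) \coloneqq \varphi(\varphi\1(x) + u_{q,q'})$. A short ball computation, using that $B$ decomposes as a product of balls in the coordinates adapted to $\pi$ and that $v(u_{q,q'}) = v(q-q')$, shows $\varphi\1(x) + u_{q,q'} \in B$, so $\alpha_{q,q'}\colon B_q \to B_{q'}$ is a well-defined risometry (being a composition of $\varphi\1$, a translation, and $\varphi$). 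Property (2) of Definition~\ref{def:translater} is the identity $u_{q,q'} + u_{q',q''} = u_{q,q''}$; property (3) follows from $\rv(\alpha_{q,q'}(x)-x) = \rv(u_{q,q'})$, whose direction lies in $\bar U$. The two partitions coincide because $x, x'$ lie in the same $(U,\varphi)$-arc iff $\varphi\1(x') - \varphi\1(x) \in U$, which, using $\pi(\varphi\1(x')-\varphi\1(x)) = \pi(x')-\pi(x)$ and uniqueness of $u_{\pi(x),\pi(x')}$, is equivalent to $x' = \alpha_{\pi(x),\pi(x')}(x)$.

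For $(2) \Rightarrow (1)$, fix $q_0 \in \pi(B)$. Every $y \in B$ decomposes uniquely as $y = z + u_{q_0,\pi(y)}$ with $z \in B_{q_0}$; define $\varphi(y) \coloneqq \alpha_{q_0,\pi(y)}(z)$. That $\varphi$ is a bijection and that the $(U,\varphi)$-arcs coincide with the $\alpha\family$-arcs follows directly from the cocycle and injectivity of the $\alpha_{q,q'}$. The crux is to show $\varphi$ is a risometry. Combining the direction hypothesis with $\pi(\alpha_{q_0,q}(z)-z) = q - q_0 = \pi(u_{q_0,q})$, Lemma~\ref{lem:dir} gives $\rv(\alpha_{q_0,q}(z) - z) = \rv(u_{q_0,q})$. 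Writing $U$ as the graph of a linear map (with entries in $\valring$) in the coordinates adapted to $\pi$, this translates to $\alpha_{q_0,q}(z) = z + u_{q_0,q} + (0,\eta_{q,z})$ with $v(\eta_{q,z}) < v(q-q_0)$, so that $\varphi(y) = y + (0,\eta_{\pi(y),z})$. A direct coordinate calculation gives $v(y-y') = \max(v(q-q'),v(z-z'))$, so being a risometry reduces to the estimate $v(\eta_{q,z} - \eta_{q',z'}) < \max(v(q-q'),v(z-z'))$ for $(q,z) \ne (q',z')$.

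This last estimate is the main technical step, and the one I expect to require the most care. Split $\eta_{q,z} - \eta_{q',z'} = (\eta_{q,z} - \eta_{q,z'}) + (\eta_{q,z'} - \eta_{q',z'})$. The first piece satisfies $v(\eta_{q,z} - \eta_{q,z'}) < v(z-z')$ because $\alpha_{q_0,q}$ is itself a risometry applied to $z,z' \in B_{q_0}$. For the second, the cocycle $\alpha_{q_0,q}(z') = \alpha_{q',q}(\tilde z)$ with $\tilde z \coloneqq \alpha_{q_0,q'}(z')$ reduces the estimate to $\alpha_{q',q}(\tilde z) - \tilde z$; applying Lemma~\ref{lem:dir} once more yields $v(\eta_{q,z'} - \eta_{q',z'}) < v(q-q')$. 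A short case analysis on whether $q = q'$ or $z = z'$ then gives the bound in all cases. The ``in particular'' assertion is then immediate: the above constructions exchange ``$\varphi$ $U$-trivializes $\chi$'' with ``$\chi$ is constant on every $\alpha\family$-arc'', which is exactly the condition $\chi \circ \alpha_{q,q'} = \chi$.
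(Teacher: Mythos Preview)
Your proof is correct and follows essentially the same route as the paper. The paper organizes the $(2)\Rightarrow(1)$ direction slightly differently: it first isolates an intermediate ``condition~(4)'' (that for each fixed $\delta$ the map $x\mapsto \alpha_{\pi(x),\pi(x)+\delta}(x)$ is a risometry), shows this follows from (1)--(3) via the same two-piece splitting with the cocycle identity and Lemma~\ref{lem:dir}, and then defers to \cite[Lemma~3.7]{halupczok2014a} for the construction of $\varphi$; you instead build $\varphi$ directly and verify the risometry property in one go, but the substantive estimates are the same.
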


\begin{proof}
The proof is essentially the same as the proof of \cite[Lemma 3.19]{halupczok2014a}, with two differences:

Firstly, note that \cite[Lemma 3.19]{halupczok2014a} only claims the ``in particular'' part of the above lemma, and does not make the statement about the partition into arcs. However, the proof of \cite[Lemma 3.19]{halupczok2014a} really yields the equality of the partitions.

And secondly, to get from a translater to a risometry,
the proof of \cite[Lemma 3.19]{halupczok2014a}) uses a fourth compatibility condition which is not part of Definition~\ref{def:translater}. This fourth condition states the following: 
\begin{enumerate}
    \item[(4)] for every $\delta \in \pi(B-B)$ (where $B-B$ is the translate of $B$ containing the origin), the map $B \to B, x \mapsto \alpha_{\pi(x),\pi(x)+\delta}(x)$ is a risometry. 
\end{enumerate}
We will show that (4) is actually unnecessary, i.e., that it follows from Definition \ref{def:translater} (1)--(3). Fix some $\delta\in\pi(B-B)$ and let $x_1,x_2\in B$ be given. Set $q_i=\pi(x_i)$ and $q_i'=q_i+\delta$. We need to show that 
\begin{equation*}\label{eq:rv}
\rv(x_1-x_2)=\rv(\alpha_{q_1,q_1'}(x_1)-\alpha_{q_2,q_2'}(x_2)).     
\end{equation*}
Condition (2) of Definition \ref{def:translater} implies that $\alpha_{q,q}$ is the identity on $B\cap\pi\1(q)$ for every $q\in \pi(B)$ (indeed, for $q,q'$, $\alpha_{q,q'}(\alpha_{q,q}(x))=\alpha_{q,q'}(x)$, and since $\alpha_{q,q'}$ is injective, $\alpha_{q,q}(x)=x$). In particular, if $\delta=0$, \eqref{eq:rv} holds trivially. So suppose $\delta\neq 0$.

After adding and substracting $\alpha_{q_2,q_1'}(x_2)-\alpha_{q_2,q_1}(x_2)$ to $\alpha_{q_1,q_1'}(x_1)-\alpha_{q_2,q_2'}(x_2) - (x_1-x_2)$, the ultrametric inequality, yields 
\[
v(\alpha_{q_1,q_1'}(x_1)-\alpha_{q_2,q_2'}(x_2) - (x_1-x_2))\leqslant \max
\begin{cases}
v(\alpha_{q_1,q_1'}(x_1)-\alpha_{q_2,q_1'}(x_2) - (x_1-\alpha_{q_2,q_1}(x_2)))&\\ v(\alpha_{q_2,q_1'}(x_2)-\alpha_{q_2,q_2'}(x_2) - (\alpha_{q_2,q_1}(x_2))-x_2))).& 
\end{cases}
\]
Hence, it suffices thus to show that each member of the above maximum is smaller than $v(x_1-x_2)$. For the first term, since $\alpha_{q_1,q_1'}$ is a risometry, we have
\begin{align*}
v(\alpha_{q_1,q_1'}(x_1)-\alpha_{q_2,q_1'}(x_2) - (x_1-\alpha_{q_2,q_1}(x_2))) & =     v(\alpha_{q_1,q_1'}(x_1)-\alpha_{q_1,q_1'}(\alpha_{q_2,q_1}(x_2)) - (x_1-\alpha_{q_2,q_1}(x_2))) \\
    & <v(x_1-\alpha_{q_2,q_1}(x_2))\leqslant v(x_1-x_2),  
\end{align*}
where the last inequality holds by the triangle inequality, and using $\pi(x_1)=\pi(\alpha_{q_2,q_1}(x_2))$ and
$v(\alpha_{q_2,q_1}(x_2) - x_2) = v(q_1 - q_2)$.

For the second term, set $x=\alpha_{q_2,q_1'}(x_2)-\alpha_{q_2,q_2'}(x_2)$ and 
$x'=\alpha_{q_2,q_1}(x_2)-x_2$. Condition (3) of Definition \ref{def:translater} implies that $\dir(x),\dir(x')\subseteq \bar{U}$. Moreover, $\pi(x)=\pi(x')=\delta$. Therefore, by Lemma 
\ref{lem:dir}, we obtain that $\rv(x)=\rv(x')$, which, unravelling definitions yields
\[
v(\alpha_{q_2,q_1'}(x_2)-\alpha_{q_2,q_2'}(x_2) - (\alpha_{q_2,q_1}(x_2))-x_2)))<v(\alpha_{q_2,q_1}(x_2))-x_2))\leqslant v(x_1-x_2). \qedhere
\]
\end{proof}

We finish this subsection by proving that each rainbow fiber $C$ has a natural tubular neighbourhood $\Tub_C$ on which there exists an $\mathrm{affdir}(C)$-translater reflecting the rainbow $\rho_\mathcal S$. More precisely, this is the content of Proposition~\ref{prop:EC-translater}; before that, we introduce the necessary notation and a more technical ingredient.

\begin{defn}\label{def:EC}
Let $\mathcal{S}=(S_i)_i$ be a t-stratification of~$\mathbf{B}$, and $\rho_\mathcal{S}$ be the rainbow of $\mathcal{S}$. Given any 
$\rho_\mathcal{S}$-fiber $C\subseteq S_{d}$
for $0 \leqslant d\leqslant n$, we set
\begin{equation*}
\lambda_C\coloneqq
\begin{cases}
v(C - S_{<d}) & \text{if } S_{<d} \ne \emptyset\\
\rado(\mathbf B) & \text{if } S_{<d} = \emptyset
\end{cases}
\end{equation*}
and
\begin{equation*}
\Tub_C=\{x\in K^n: v(x-C)<\lambda_C\}.
\end{equation*}
We call $\Tub_C$ the \emph{natural tubular neighbourhood of $C$}. 
\end{defn}

\begin{rem}
Note that $\lambda_C$ is a well-defined element of $\Gamma^\times$ (or equal to $\infty$, if $S_{<d} = \emptyset$ and $\mathbf B = K^n$). More precisely, if $S_{<d} \ne \emptyset$, then since $S_{< d}$ is closed, $v(x - S_{<d})$ is well-defined for every $x \in C$ and, by definition of $\rho_\mathcal{S}$, for any two points $x_1,x_2\in C$, $v(x_1 - S_{<d})=v(x_2 - S_{<d})$.
In other words, for every $x \in C$,
$\lambda_C$ is the maximal element of $\Gamma^\times \cup \{\infty\}$ satisfying $B(x,<\lambda_C) \subseteq S_{\geqslant d}$. (This is true even if $S_{<d} = \emptyset$.)
\end{rem}

\begin{rem}\label{rem:Tub_C}
Pick any exhibition $\pi\colon K^n \to K^d$ of $\mathrm{affdir}(C)$. Then the reader may easily verify that
\begin{equation*}
\Tub_C =\bigcup_{q\in \pi(C)} \pi\1(q)\cap B(c_q,<\lambda_C), 
\end{equation*}
where, for every $q\in\pi(C)$, $c_q$ is the unique element in $\pi\1(q)\cap C$ (uniqueness follows since $\pi$ is an exhibition of $\affdir(C)$). In particular, $\pi(\Tub_C) = \pi(C)$.
\end{rem}

The following technical lemma is only a step in the proof of Proposition~\ref{prop:EC-translater}.

\begin{lem}\label{lem:moving_translater}
Let $\mathcal{S}=(S_i)_i$ be a t-stratification of $\mathbf{B}$ and $C\subseteq S_{d}$ be a $\rho_\mathcal{S}$-fiber for some $d\leqslant n$. Let $\bar{U}=\mathrm{affdir}(C)$,  $\pi\colon K^n\to K^d$ be an exhibition of $\bar{U}$,
let $B \subseteq \mathbf{B}$ be any ball containing $C$ and let $\varphi\colon B\to B$ be any definable risometry. Suppose there exists a translater $(\beta_{q,q'})_{q,q' \in \pi(\varphi^{-1}(\Tub_C))}$ on $\varphi^{-1}(\Tub_C)$ reflecting $\rho_{\mathcal{S}} \circ \varphi$. Then, there also exists a translater $(\alpha_{q,q'})_{q,q' \in \pi(\Tub_C)}$ on $\Tub_C$ reflecting $\rho_{\mathcal{S}}$.
\end{lem}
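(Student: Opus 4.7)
The plan is to transport the given translater $\beta$ through $\varphi$. Concretely, for $q, q' \in \pi(\Tub_C) = \pi(C)$ and $x \in \Tub_C \cap \pi\1(q)$, I set $y \coloneqq \varphi\1(x)$, $r \coloneqq \pi(y)$, consider the $\beta$-arc $A_y \coloneqq \{\beta_{r,s}(y) : s \in \pi(\varphi\1(\Tub_C))\}$, and define $\alpha_{q,q'}(x)$ to be the unique point of $\varphi(A_y)$ projecting to $q'$ via $\pi$.

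The core verification is that $\pi$ restricts to a bijection from $\varphi(A_y)$ onto $\pi(C) = \pi(\Tub_C)$. Injectivity is quick: differences within $A_y$ have direction in $\bar U$ by the third translater axiom, the risometry $\varphi$ preserves this direction property, and $\pi$ preserves the leading terms of $\bar U$-directed vectors, being an exhibition of $\bar U$ (cf.\ Lemma~\ref{lem:dir}). For surjectivity, let $C' \subseteq \Tub_C$ be the (unique) rainbow fiber containing $\varphi(A_y)$, which exists by the reflecting property of $\beta$ for $\rho_{\mathcal{S}} \circ \varphi$. The key observation is that $\varphi\1(C')$ is a union of $\beta$-arcs and each such arc is a section of $\pi$ restricted to $\varphi\1(\Tub_C)$, hence $\pi(\varphi\1(C')) = \pi(\varphi\1(\Tub_C))$. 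The transport map $\tau_y\colon \pi(\varphi\1(\Tub_C)) \to \pi(C')$, $s \mapsto \pi(\varphi(\beta_{r,s}(y)))$, is a risometry onto its image by the same kind of argument as for injectivity. In parallel, the map $\sigma\colon \pi(C) \to \pi(\varphi\1(C)) = \pi(\varphi\1(\Tub_C))$, $q \mapsto \pi(\varphi\1(c_q))$ (where $c_q$ is the unique point of $C \cap \pi\1(q)$), is also a risometry, since it factors as the composition of the risometry $(\pi|_C)\1$, the risometry $\varphi\1|_C$, and the risometry $\pi|_{\varphi\1(C)}$ (Lemma~\ref{lem:riso-proj} combined with $\affdir(\varphi\1(C)) = \affdir(C) = \bar U$ via Fact~\ref{fact:riso-affdir}). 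The composite $\tau_y \circ \sigma\colon \pi(C) \to \pi(C') \subseteq \pi(C)$ is then a risometry into $\pi(C)$, and Lemma~\ref{lem:surjectivity} forces it to be surjective onto $\pi(C)$; therefore $\tau_y$ itself is surjective onto $\pi(C)$.

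With $\alpha_{q,q'}$ thus defined, I expect the translater axioms to follow routinely. Composability $\alpha_{q',q''} \circ \alpha_{q,q'} = \alpha_{q,q''}$ will fall out of the analogous identity for $\beta$ together with the uniqueness of section points in $\pi$-fibers; the direction condition $\dir(\alpha_{q,q'}(x) - x) \subseteq \bar U$ holds because $\varphi$ is a risometry and the corresponding axiom holds for $\beta$; and the reflecting property with respect to $\rho_{\mathcal{S}}$ follows directly from the fact that $\rho_{\mathcal{S}} \circ \varphi$ is constant on $\beta$-arcs. The main obstacle is the surjectivity step above, which hinges on the identification $\pi(\varphi\1(C')) = \pi(\varphi\1(\Tub_C))$ and the invocation of Lemma~\ref{lem:surjectivity} via the composite of risometries; once this is in hand, the rest is essentially a bookkeeping exercise unwinding the definitions.
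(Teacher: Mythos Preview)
Your proposal is correct and follows essentially the same route as the paper: both define $\alpha_{q,q'}(x)$ as the unique point of $\varphi(A_y)$ in the $\pi$-fiber over $q'$, and both establish surjectivity by composing the two induced risometries $\pi(C) \to \pi(\varphi\1(C))$ and $\pi(\varphi\1(\Tub_C)) \to \pi(\varphi(A_y))$ (your $\sigma$ and $\tau_y$, the paper's $\varphi_C\1$ and $\varphi_D$) and invoking Lemma~\ref{lem:surjectivity}. Your detour through the auxiliary rainbow fiber $C'$ is harmless but unnecessary---the paper works directly with $D \coloneqq \varphi(A_y)$ and uses only that $\pi(D) \subseteq \pi(\Tub_C) = \pi(C)$; the verification of the three translater axioms is likewise routine along the lines you sketch, with condition~(3) following from $\affdir(\varphi(A_y)) = \affdir(A_y) = \bar U$ (Fact~\ref{fact:riso-affdir}).
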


\begin{proof}
Given $q,q' \in \pi(\Tub_C)$ and $x \in \pi^{-1}(q) \cap \Tub_C$, we need to define $\alpha_{q,q'}(x)$. Set $\tilde C \coloneqq \varphi^{-1}(C)$, $\tilde x \coloneqq \varphi^{-1}(x)$, $\tilde q \coloneqq \pi(\tilde x)$ and $\beta\family$-arc
\[
\tilde D \coloneqq \{\beta_{\tilde q, \tilde q''}(\tilde x) \mid \tilde q'' \in \pi(\tilde C)\}.
\]
\begin{figure}
    \centering
    \includegraphics[scale =0.8]{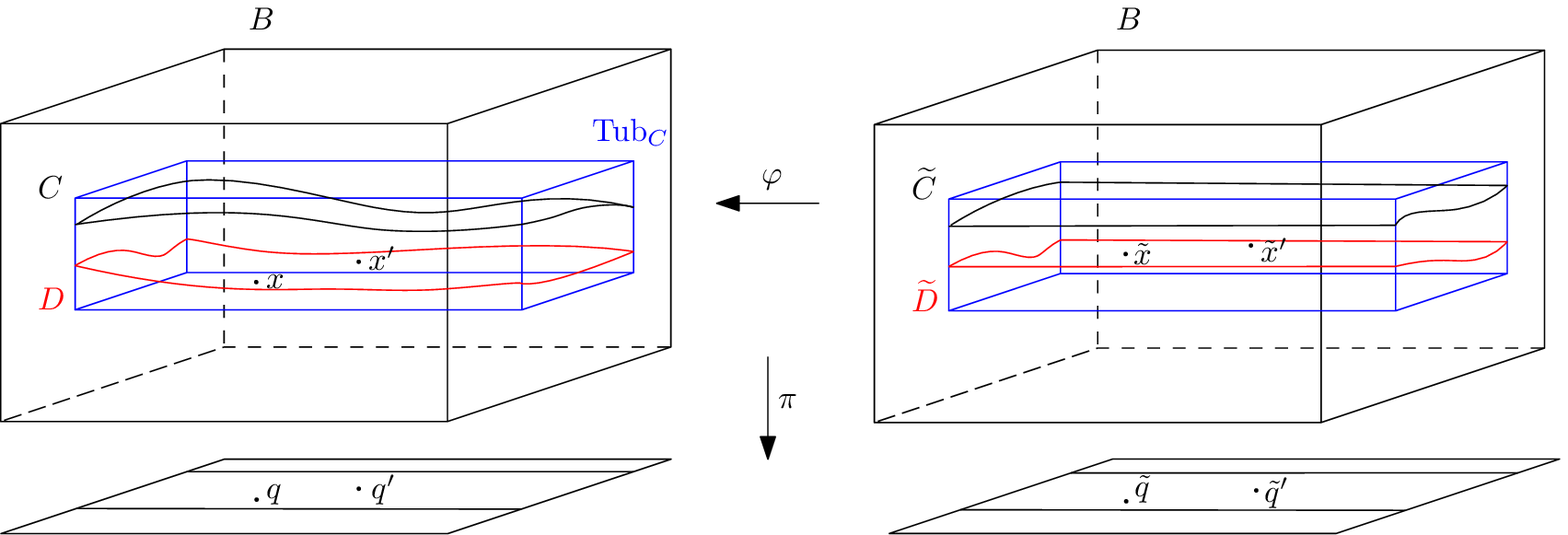}
    \caption{}
    \label{fig:translater}
\end{figure}
By the Fact \ref{fact:riso-affdir}, $\affdir(\tilde C) = \affdir(C) =\bar{U}$, so by the definition of translater, we also have $\affdir(\tilde D)= \bar{U}$.
Using that $\pi$ is an exhibition of $\bar U$, we deduce that the restriction $\varphi_{|\tilde D}\colon \tilde D \to \varphi(\tilde D) \eqqcolon D$ induces (using Lemma~\ref{lem:riso-proj}) a risometry $\varphi_D\colon \pi(\tilde D)\to \pi(D)$. Also, the restriction $\varphi_{|\tilde C}\colon \tilde C \to C $ induces (again by Lemma~\ref{lem:riso-proj}) a risometry $\varphi_C\colon \pi(\tilde C)\to \pi(C)$. Since $\pi(\tilde D) = \pi(\tilde C)$, these two maps can be composed to a
risometry $\psi \coloneqq \varphi_D\circ \varphi_C^{-1}\colon\pi(C) \to \pi(D)$. Note that we have $\pi(D) \subseteq \pi(\Tub_C) = \pi(C)$, so
by Lemma \ref{lem:surjectivity}, the image of $\psi$ is equal to $\pi(C)$. 

Now we can define $\alpha_{q,q'}(x)$, namely as the (unique) element of $\pi^{-1}(q') \cap D$. Setting $\tilde{q}'\coloneqq \varphi_D\1(q')$ (see Figure \ref{fig:translater}), $\alpha_{q,q'}$ is nothing else but 
\[
\alpha_{q,q'}(x) \coloneqq \varphi(\beta_{\tilde q,\tilde{q}' }(\tilde x)). 
\]
It remains to show $\alpha_{q,q'}$ is indeed satisfies the three conditions of translaters; this is a routine computation:

For property (1), since $\beta\family$ reflects $\rho_{\mathcal{S}}\circ\varphi$, we obtain that 
\[
\rho_{\mathcal{S}}\circ \alpha_{q,q'}(x) =  \rho_{\mathcal{S}}(\varphi(\beta_{\tilde q,\tilde{q}' }(\tilde x))) = \rho_{\mathcal{S}}(\varphi(\tilde x)) = \rho_{\mathcal{S}}(\varphi(\varphi\1(x))) = \rho_{\mathcal{S}}(x).  
\]

For property (2), given $q,q',q''\in\pi(C)$ and letting $\tilde{q},\tilde{q}'$ and $\tilde{q}''$ be as above (with $\tilde{q}''\coloneqq \varphi_D(q'')$) we have
\begin{align*}
\alpha_{q',q''}(x)\circ\alpha_{q,q'}(x) & = \varphi(\beta_{\tilde q',\tilde{q}'' }(\varphi\1(\alpha_{q,q'}(x)))) & \\
& = \varphi(\beta_{\tilde q',\tilde{q}'' }(\varphi\1(\varphi(\beta_{\tilde q,\tilde{q}'}(\tilde x))))) & \\
& = \varphi(\beta_{\tilde q',\tilde{q}'' }(\beta_{\tilde q,\tilde{q}'}(\tilde x))) & \text{(since $\beta\family$ is a translater)} \\
& = \varphi(\beta_{\tilde q,\tilde{q}'' }(\tilde x)) = \alpha_{q,q''}(x) & \\
\end{align*}

Property (3) follows from
$\affdir(D) = \affdir(\tilde D) = \bar U$ (where the first equality uses Fact \ref{fact:riso-affdir} once more).
\end{proof}

\begin{prop}\label{prop:EC-translater}
Let $\mathcal{S}=(S_i)_i$ be a t-stratification of $\mathbf{B}$ and $\rho_\mathcal{S}$ be its rainbow. Let $C\subseteq S_{d}$ be a $\rho_\mathcal{S}$-fiber for some $d\leqslant n$. Let $\bar{U}=\mathrm{affdir}(C)$ (so $\dim(\bar{U})=d$) and $\pi\colon K^n\to K^d$ be an exhibition of $\bar{U}$. Then there exists a $\bar U$-translater on $\Tub_C$ reflecting $\rho_\mathcal{S}$. 
\end{prop}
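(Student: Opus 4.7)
My plan is to prove Proposition~\ref{prop:EC-translater} by induction on $d$. The base case $d=0$ is immediate: Fact~\ref{fact:rfiber_induction} gives that $C$ is a singleton, so $\bar U = 0$ and $\Tub_C = \mathbf B$; axioms (2) and (3) of Definition~\ref{def:translater} then force any $\{0\}$-translater to be the identity map, which trivially reflects $\rho_{\mathcal S}$. The core of the argument is the induction step $d \geqslant 1$, which I plan to attack via Lemma~\ref{lem:moving_translater}: I straighten $C$ using a $\bar V$-trivializing risometry (where $\bar V = \tsp_B(\mathcal S)$ for a well-chosen ball $B$), build the translater on the straightened side using the inductive hypothesis on each $\pi'$-fiber, and then transport back.

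For the induction step, by Fact~\ref{fact:rfiber_induction} all of $C$ lies in a ball $B \subseteq S_{\geqslant 1}$; I take $B$ to be the maximal such ball containing a chosen $c_0 \in C$, which has radius $\rho_B = v(c_0 - S_0)$. Since $S_0 \subseteq S_{<d}$, this yields $\lambda_C \leqslant \rho_B$, and a short ultrametric calculation shows $\Tub_C \subseteq B$. Let $\bar V = \tsp_B(\mathcal S)$ with $k = \dim \bar V \geqslant 1$. I first check $\bar V \subseteq \bar U$: for any $\bar V$-trivializing risometry $\varphi\colon B \to B$, the set $\tilde C \coloneqq \varphi^{-1}(C)$ is a union of $V$-cosets (since $\rho_{\mathcal S}\circ\varphi$ is $V$-translation invariant), so $\affdir(\tilde C) \supseteq \bar V$; Fact~\ref{fact:riso-affdir} gives $\affdir(\tilde C) = \bar U$, proving the inclusion.

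Next I perform a $\GL_n(\valring)$-change of coordinates, which is a risometry by Fact~\ref{fact:GL_n-riso}, to assume $\bar V = \bar K^k \times \{0\}^{n-k}$ and $\bar U = \bar K^d \times \{0\}^{n-d}$; then $\pi$ splits as $(\pi', \tilde\pi)$, with $\pi'$ exhibiting $\bar V$ and $\tilde\pi$ exhibiting (on each fiber) $\affdir(\tilde C \cap \mathbf F)$, which is the same for every fiber $\mathbf F = {\pi'}^{-1}(z) \cap B$ by $V$-invariance of $\tilde C$. Choose $\varphi\colon B \to B$ a definable risometry $V$-trivializing $\mathcal S$ and respecting $\pi'$ (Lemma~\ref{lem:respect-fibers}); by Lemma~\ref{lem:moving_translater}, it then suffices to construct a $\bar U$-translater on $\varphi^{-1}(\Tub_C)$ reflecting $\rho_{\mathcal S}\circ\varphi$. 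The transformed partition $\tilde{\mathcal S} \coloneqq (\varphi^{-1}(S_i))_i$ is itself a t-stratification of $B$ with rainbow $\rho_{\mathcal S}\circ\varphi$, and on each fiber $\mathbf F$ its induced stratification has $\tilde C \cap \mathbf F$ as a rainbow fiber in $\tilde S'_{d-k}$ (by Fact~\ref{fact:rfiber_induction} applied to $\tilde{\mathcal S}$ together with Fact~\ref{fact:induced-tst}). Since $d-k < d$, the inductive hypothesis gives an $\affdir(\tilde C \cap \mathbf F)$-translater $\gamma^z\family$ on $\Tub_{\tilde C \cap \mathbf F}$ reflecting the restricted rainbow; fixing a reference $z_0 \in \pi'(B)$, I replace $\gamma^z$ by the conjugate $\tau_{z_0 \to z} \circ \gamma^{z_0} \circ \tau_{z_0 \to z}^{-1}$, where $\tau_{z \to z'}$ is the $V$-translation sending $\mathbf F_z$ onto $\mathbf F_{z'}$, to ensure the family is $V$-equivariant (it still reflects the correct rainbow because $\rho_{\mathcal S}\circ\varphi$ is $V$-invariant).

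Finally I define the combined translater by
\[
\beta_{q_1, q_2}(x) \coloneqq \tau_{z_1 \to z_2}\bigl(\gamma^{z_1}_{r_1, r_2}(x)\bigr)
\]
for $q_i = (z_i, r_i) \in \pi(\varphi^{-1}(\Tub_C))$, and verify the three translater axioms: (1) follows from $V$-invariance of $\rho_{\mathcal S}\circ\varphi$ together with the inductive reflection property of $\gamma^{z_1}$; (3) follows from Lemma~\ref{lem:dirsum} applied to the decomposition $\bar U = \bar V \oplus \affdir(\tilde C \cap \mathbf F)$, whose summands contain the directions of $\tau$ and $\gamma^{z_1}$ respectively; (2), the cocycle condition, reduces via the $V$-equivariance $\gamma^{z_2} = \tau_{z_1 \to z_2} \circ \gamma^{z_1} \circ \tau_{z_1 \to z_2}^{-1}$ to the separate cocycle identities for $\tau$ and $\gamma^{z_0}$. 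A final application of Lemma~\ref{lem:moving_translater} to $\beta\family$ produces the desired translater on $\Tub_C$ reflecting $\rho_{\mathcal S}$. The main obstacle I anticipate is verifying axiom~(2) together with the coordinate bookkeeping required to align $\pi, \pi'$ with the decomposition $\bar U = \bar V \oplus \affdir(\tilde C \cap \mathbf F)$; both rely critically on the $V$-equivariance enforced by the conjugation trick.
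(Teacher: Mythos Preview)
Your proposal is correct and follows essentially the same strategy as the paper's proof: induction on $d$, locate $C$ in the maximal ball $B\subseteq S_{\geqslant 1}$, establish $\tsp_B(\mathcal S)\subseteq\affdir(C)$, reduce via Lemma~\ref{lem:moving_translater} to building a translater on $\varphi^{-1}(\Tub_C)$, apply the inductive hypothesis on a $\pi'$-fiber, and combine with translation in the $V$-direction. The only cosmetic differences are that you use an explicit $\GL_n(\valring)$ coordinate change (the paper instead chooses $\pi'$ factoring through $\pi$) and you phrase the extension as conjugation by $\tau_{z_0\to z}$ to enforce $V$-equivariance (the paper works on a single fixed fiber and ``extends trivially in the direction of $U'$''); these are the same construction.
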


\begin{proof}
We proceed by induction on $d$.
The case $d = 0$ is trivial, so suppose that $d \geqslant 1$.
By Fact \ref{fact:rfiber_induction}, $C$ then is entirely contained in a maximal ball $B\subseteq S_{\geqslant 1}$.
Set $\bar{U}' \coloneqq \tsp_B(\cS)$, $d' \coloneqq \dim(\bar{U}')$, choose an exhibition $\pi'\colon K^n \to K^{d'}$, and let $U'$ be a lift of $\bar{U}'$.   

We have $\bar{U}' \subseteq \bar{U}$. Indeed, for any one-dimensional $W \subseteq \bar{U}'$, we can find $c, c' \in C$ satisfying $W = \dir(c - c')$ by picking $c \in C$ arbitrarily, setting $r \coloneqq \pi'(c)$, choosing some $r' \in \pi'(B)$ satisfying $\dir(r - r') = \pi'(W)$ and then setting $c' \coloneqq \beta_{r,r'}(c)$, where
$\beta\family$ is a $\bar U'$-translater obtained by applying Lemma~\ref{lem:translater} to $B$.

We deduce that $d' \leqslant d$ and that we can pick our exhibition $\pi'$ of $\bar{U}'$ in such a way that it satisfies $\pi' = \theta\circ \pi\colon K^n \to K^{d'}$ for some coordinate projection $\theta\colon K^d \to K^{d'}$; from now on, we assume that $\pi'$ is picked like this.

Note that $C \subseteq B$ implies $\Tub_C \subseteq B$.
Indeed, if $S_{<d'}$ is empty, then $B = \mathbf B$ so that this is trivial, and otherwise, $\rado(B) = v(C, S_{<d'}) \geqslant v(C, S_{<d})  = \lambda_C$. (As a side remark, note that if $d'=d$, then $\Tub_C=B$ and the result follows from Lemma~\ref{lem:translater}.)

Let $\varphi\colon B \to B$ be a definable risometry respecting $\pi'$ and $U'$-trivializing $\rho_{\cS}$ (use Lemma~\ref{lem:respect-fibers}). Fix any $t \in \pi'(B)$ once and for all (for the remainder of the proof) and consider the fiber $\mathbf F \coloneqq (\pi')\1(t)\cap B$. The t-stratification $\mathcal{S}$ induces a t-stratification $\tilde{\mathcal{S}}$ on $\mathbf{F}$, namely $(\varphi^{-1}(S_{i+d'} \cap \mathbf{F}))_{0 \leqslant i \leqslant n-d'}$ (by Fact~\ref{fact:induced-tst}). By Fact \ref{fact:rfiber_induction}, $\tilde C \coloneqq \varphi^{-1}(C \cap \mathbf{F})$ is a rainbow fiber of $\tilde{\mathcal{S}}$. We apply induction to it (note that $\dim( \tilde C) = \dim(C) - d' < \dim(C)$) and obtain a $\bar{V}$-translater $\tilde{\alpha}_{q,q'}$ reflecting $\rho_{\tilde{\mathcal{S}}} = (\rho_{\mathcal S} \circ \varphi)_{|\mathbf{F}}$, where $q, q'$ run over $\pi(\tilde{C})$, and $\bar{V} = \affdir(\tilde{C}) = \bar U \cap \ker \bar\pi'$.

By Lemma \ref{lem:moving_translater}, it suffices to show there is a translater $(\alpha_{q,q'})_{q,q'\in \pi(\varphi\1(\Tub_C))}$ on $\varphi\1(\Tub_C)$ respecting $\rho_{\mathcal{S}}\circ \varphi$. We define such a translater by ``extending
$\tilde{\alpha}_{\tilde q,\tilde q'}$
trivially in the direction of $U'$'', i.e., $\alpha\family$ is the unique translater satisfying
$\alpha_{q,q'} = \tilde{\alpha}_{q,q'}$ if $q, q' \in \pi(\tilde{C})$ and $\alpha_{\pi(x),\pi(x')}(x) = x'$ if $x - x' \in \varphi^{-1}(C) \cap U'$.

To make this formal, we introduce some (abuse of) notation:
Given  $t',t''\in \pi'(B)$ and $x'\in B \cap (\pi')\1(t')$, we write $x' + (t''-t')$ for the
unique element $x''\in B \cap (\pi')\1(t'')$ satisfying $x'' - x' \in U'$, and we also write $\pi(x') + (t''-t')$ for $\pi(x'')$.
In other words, we identify $K^{d'}$ with $U'$ or with $\pi(U')$.

Consider $q',q'' \in \pi(\varphi\1(\Tub_C))$ and $x\in \pi\1(q')\cap \varphi(\Tub_{C})$, let $t'=\theta(q')$ and $t''=\theta(q'')$. We defne 
\[
\alpha_{q', q''}(x) \coloneqq \tilde{\alpha}_{q'+(t-t'),q''+(t-t'')}(x+(t-t')) + (t''-t).
\]
and we claim that this is a $\bar U$-translater reflecting $\rho_\mathcal{S}\circ\varphi$. For property (1) of Definition \ref{def:translater}, 
\begin{align*}
    \rho_\mathcal{S}\circ\varphi \circ\alpha_{q', q''}(x) & = \rho_\mathcal{S}\circ  \varphi (\tilde{\alpha}_{q'+(t-t'),q''+(t-t'')}(x+(t-t')) + (t''-t)). &  \\
    & = \rho_\mathcal{S}\circ  \varphi(\tilde{\alpha}_{q'+(t-t'),q''+(t-t'')}(x+(t-t'))) & \text{($\varphi$ $U'$-trivializes $\rho_\mathcal{S}$)} \\ 
    & = \rho_{\tilde{\mathcal{S}}}(\tilde{\alpha}_{q'+(t-t'),q''+(t-t'')}(x+(t-t'))) &  \\
    & = \rho_{\tilde{\mathcal{S}}}(x+(t-t'))    
    & \text{($\tilde{\alpha}\family$ is a $\bar{V}$-translater)} \\
    & = \rho_\mathcal{S}\circ\varphi(x) & \text{($\varphi$ $U'$-trivializes $\rho_\mathcal{S}$).}
\end{align*}

Condition (2) is an easy computation and is left to the reader. 

For (3), take distinct $q',q''\in
\pi(C)$ and $x\in \pi\1(q')\cap \varphi(\Tub_{C})$.

Set $x_1 \coloneqq x + (t - t')$ and $x_2 \coloneqq \tilde\alpha_{q'+(t-t'),q''+(t-t'')}(x_1)$,
so that $\alpha_{q',q''}(x) = x_3 \coloneqq x_2 + (t''-t)$.
Then we have
$x_1 - x \in U'$, $x_3 - x_2 \in U'$ and
$x_2 - x_1 \in V$. Hence $\dir(x_1 -x + x_3-x_2) \subseteq \bar{U}'$ and $\dir(x_2 - x_1) \subseteq \bar{V}$. By Lemma~\ref{lem:dirsum}, using that $\bar V \cap \bar U' = 0$, it follows that $\dir(x_3 - x) = \dir((x_1 -x + x_3-x_2) + (x_2 -x_3)) \subseteq \bar{U}$.
\end{proof}

\section{\texorpdfstring{\lt}{t²}-Stratifications}
\label{sec:lt}

In this section, we introduce a strengthening of the notion of t-stratifications which we call \lt-stratifications.
The name refers to the error term allowed in the triviality condition: While, in some sense, $d$-triviality allows for error terms of linear order,  \lt-stratifications impose error terms of quadratic order along arcs.
In Section~\ref{sec:hierarchy}, we will provide some examples suggesting that one can define an entire hierarchy of \lt[r]-stratifications for $r \in \N$.

Our main motivation to consider \lt-stratifications is that they are tightly related to the Lipschitz stratifications introduced by Mostowski \cite{mostowski} and to their valuative variant introduced in \cite{halup-yin-18}. Indeed, we will show in Section \ref{sec:lipschitz} that every \lt-stratification is a valuative Lipschitz stratification as defined in \cite{halup-yin-18}. The main benefit of working with \lt-stratifications is that they are significantly less technical than the definition of (classical or valuative) Lipschitz stratifications, in particular completely avoiding the concept of ``chains'' needed there. Our hope is that this new approach will in this way provide a much better understanding of Lipschitz stratifications, for example opening the road towards further strengthenings of that notion, as we indeed do in Section~\ref{sec:analytic}. The existence of \lt-stratifications will be proven in Section~\ref{sec:analytic}. 

\subsection{\lt-stratifications}\label{sec:SUT}

We continue working in a $1$-h-minimal language as in Definition~\ref{defn:L}. Let us start by fixing what exactly we mean by a definable (embedded) manifold. Those work best if one uses strict differentiability, a notion we recall.

\begin{defn}[Strict differentiability and $C^1$-bijection]
A map $\psi\colon B \to K^m$ (for some set $B \subseteq K^n$) is called \emph{strictly differentiable} at a point $a$ in the interior of $B$, with total derivative $L \in\mathrm{Mat}_{m\times n}(K)$, if
\[
\lim_{\substack{x, x' \to a,\\x\ne x'}}
\frac{v(\psi(x) - \psi(x') - L(x-x'))}{v(x-x')} = 0.
\]
We denote the total derivative of $\psi$ at $a$ by $D\psi(a)$.
We call a map $\psi\colon B \to B'$ between open sets $B, B'$ \emph{strictly $C^1$}, if it is strictly differentiable at every point of $B$. We say it is a \emph{strictly $C^1$-bijection} if it is a bijection and both, $\psi$ and $\psi\1$ are strictly $C^1$.
\end{defn}

\begin{defn}[Definable manifold and tangent space]\label{def:manifold} For integers $0\leqslant d \leqslant n$,
\begin{enumerate}
    \item a \emph{$d$-dimensional definable $C^1$-submanifold of $K^n$} is a definable set $X \subseteq K^n$ such that each $a \in X$ lies in a definable chart, i.e., there exists an open set $B \subseteq K^n$ containing $a$ and a definable strictly $C^1$-bijection $\psi\colon B \to B' \subseteq K^n$ such that $\psi(X \cap B) = (K^d \times \{0\}^{n-d}) \cap B'$;
\item if $X \subseteq K^n$ is a definable $C^1$-manifold, then 
the \emph{tangent space to $X$ at $a \in X$}
is defined as $T_aX \coloneqq D\psi\1(K^d \times \{0\}^{n-d}) \subseteq K^n$ for some chart $\psi\colon B \to B'$ around $a$.
\end{enumerate}
\end{defn}

\begin{rem}\label{rem:manifold}
\begin{enumerate}
    \item 
As usual, one verifies that the tangent space is well-defined, i.e., does not depend on the choice of the chart.
\item
Any $C^1$-manifold $X$ is locally equal to the graph of a strictly $C^1$-function: Indeed, for $a \in X$, consider the composition $\psi' \coloneqq (D\psi(a))\1 \circ \psi$, which has total derivative $D\psi'(a) = I_n$.
Using that this is a strict derivative, one deduces that after sufficiently shrinking the domain $B$ of $\psi'$, we have $v(\psi(x) - \psi(x') - (x - x')) < v(x - x')$ for all $x, x' \in B$. In other words, $\psi'$ is a risometry, and its inverse $U$-trivializes $X$ on $B$ for $U \coloneqq (D\psi(a))\1(K^d \times \{0\}^{n-d}) = T_aX$.
(Here, we assume that $B$ is a ball.)
Using Section~\ref{sec:tsts} (e.g. Lemma~\ref{lem:respect-fibers}), one obtains that $X$ is indeed the graph of a function on $\pi(B)$, for any exhibition $\pi$ of $\res(U)$.
\item
We can now deduce that we even have a uniformly definable family of charts covering $X$, namely: For each $a \in X$, pick an exhibition of $\res(T_aX)$; let $B$ be the largest convex set\footnote{meaning that if $B$ contains some $x_1, x_2$, then it contains all of $B(x_1, \leqslant v(x_1 - x_2))$.} containing $a$ such that $X\cap B'$ is the graph of a function $f\colon \pi(B) \to \pi^\vee(B)$; and take $\psi\colon B \to \pi(B) \times K^{n-d}, x \mapsto (\pi(x), \pi^\vee(x) - f(\pi(x))$ as a chart.
\end{enumerate}
\end{rem}

For large parts of this section, we fix the following:
\begin{itemize}
    \item 
Fix a ball $B \subseteq K^n$ ($B$ can be either open or closed; later, we will do a case distinction on this; see below Convention~\ref{conv:balls}.) We will often divide by the radius of $B$. In the entire section, we will use the convention that if $B = K^n$, then $\frac{\beta}{\rado(B)} = \frac1{\infty} \coloneqq 0$ for any $\beta \in \Gamma$.
\item
Fix a natural number $d \leqslant n$, a vector space $\bar{U}\subseteq \bar K^n$ of dimension $d$, a lift $U \subseteq K^n$ of $\bar U$, and an exhibition $\pi\colon K^n\to K^d$ of $\bar U$.
We write $\pi_U\colon K^n\to K^n/U$ for the quotient map and $B/U$ is the image of $B$ under $\pi_U$. Recall also that $\pi^\vee\colon K^n\to K^{n-d}$ denotes the complement projection of $\pi$.
\item Finally, fix a definable risometry $\varphi\colon B\to B$.  
\end{itemize}
From now on, we will have to treat the case where $B$ is closed and the case where $B$ is open slightly differently: strict inequalities become weak and vice versa. We will write down everything in the version for closed balls; the following convention explains how to obtain the corresponding version for open balls:

\begin{conv}\label{conv:balls}
Either we assume that the ball $B$ is closed; in that case, the notation closed$^*$, $\lsnew$, $\lenew$, $\radc^*(\cdot)$ just means the usual closed, $<$, $\leqslant$ and $\radc(\cdot)$. Or we assume that $B$ is open; then by closed$^*$, $\lsnew$, $\lenew$, $\radc^*(\cdot)$ we mean open, $\leqslant$, $<$, and $\rado(\cdot)$.
\end{conv}
For example, in both cases, $B$ is a closed$^*$ ball, and for $a \in B$, we have $B = B(a, \leqslant^* \radc^*(B))$.

\begin{defn}[2-Taylor]\label{def:STSU2} The risometry $\varphi$ is \emph{2-Taylor along $U$} if for every $b_0\in B/U$, the set $\Arc_{b_0}(U,\varphi)$ is a definable $C^1$-submanifold of $B$,
and for every $x_0\in \Arc_{b_0}(U,\varphi)$, every $b \in B/U$ and every $x_1,x_2\in \Arc_b(U,\varphi)$ with $x_1 \ne x_2$ there exists $w\in T_{x_0}(\Arc_{b_0}(U,\varphi))$ such that 
\begin{equation}\label{eq:SUT2}
v(x_1-x_2-w)\lsnew\frac{v(x_1 - x_2)\cdot\max\{v(x_0-x_1), v(x_0-x_2)\}}{\radc^*(B)}. 
\end{equation}
\end{defn}

Note that by our conventions, in the case $B = K^n$, \eqref{eq:SUT2} becomes ``$v(x_1 - x_2 - w ) = 0$''.

\begin{defn}\label{def:Lip-trivial}
Let $\mathbf{B}$ be an open ball of $K^n$. Let $\chi\colon \mathbf{B} \to \RV\eq$ be a definable map and $B\subseteq \mathbf{B}$ be a ball. The map $\chi$ is said to be \emph{2-Taylor $d$-trivial on $B$} if there exist a vector space $\bar{U}\subseteq \bar K^n$ of dimension $d$, a lift $U$ of $\bar{U}$ and a definable risometry $\varphi\colon B\to B$ such that
\begin{enumerate}
    \item $\chi$ is $U$-trivialized by $\varphi$ and 
    \item $\varphi$ is 2-Taylor along $U$. 
\end{enumerate}
More generally, we also apply the above definition to tuples of sets $X_i \subseteq \mathbf B$ and maps $\chi_i\colon K^n\to \RV\eq$: 
$(X_1, \dots, X_{m'}, \chi_1, \dots, \chi_m)$ is \emph{2-Taylor $d$-trivial on $B$} if there exists a single $\varphi$ as above which is 2-Taylor along $U$ and which $U$-trivializes the entire tuple.
\end{defn}

\begin{defn}[\lt-stratification]\label{def:sts} Let $\mathbf{B}$ be an open ball of $K^n$. A partition $\mathcal{S}=\{S_0,\ldots,S_n\}$ of $\mathbf{B}$ is a \emph{\lt-stratitication} if for every $d\in \{0,\ldots, n\}$ and every ball $B \subseteq S_{\geqslant d}$, $(S_0,\ldots,S_n)$ is 2-Taylor $d$-trivial on $B$.  
\end{defn}

\begin{lem}\label{lem:manifold} 
If $\mathcal{S}=\{S_0,\ldots,S_n\}$ is a \lt-stratitication of $\mathbf{B}$, then each $S_d$ is a definable $C^1$-manifold. 
\end{lem}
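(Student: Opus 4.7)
My plan is to construct, for each $x_0 \in S_d$, a chart of $S_d$ around $x_0$ from the 2-Taylor $d$-triviality data at a ball containing $x_0$. The point will be that $S_d$ locally coincides with a single arc of the trivializing risometry, which is already a $C^1$-submanifold by the definition of 2-Taylor, so Remark~\ref{rem:manifold} supplies the chart.

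Assume $S_d \ne \emptyset$ and fix $x_0 \in S_d$. By Fact~\ref{fact:max-B} there is an open ball $B \subseteq S_{\geqslant d}$ containing $x_0$, and Definition~\ref{def:sts} then provides a $d$-dimensional lift $U \subseteq K^n$ of some $\bar U \subseteq \bar K^n$ together with a definable risometry $\varphi\colon B \to B$ that $U$-trivializes $(S_0,\ldots,S_n)$ and is 2-Taylor along $U$. Let $A_0 \coloneqq \Arc_{b_0}(U,\varphi)$ be the arc containing $x_0$, where $b_0 = \pi_U(\varphi\1(x_0))$. Since $\varphi$ trivializes the characteristic function of $S_d$, the entire arc $A_0$ is contained in $S_d$; by Definition~\ref{def:STSU2}, $A_0$ is moreover a $d$-dimensional definable $C^1$-submanifold of $B$. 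It therefore suffices to find an open ball $B' \ni x_0$ for which $S_d \cap B' = A_0 \cap B'$.

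For this I appeal to a dimension count. Let $\tilde X \coloneqq \varphi\1(S_d \cap B)$. As $\varphi$ is a definable bijection, $\dim(\tilde X) = \dim(S_d \cap B) \leqslant d$; as $\varphi$ $U$-trivializes $S_d$, $\tilde X$ is saturated under the foliation of $B$ by $U$-cosets, so every non-empty fibre of $\pi_U|_{\tilde X}$ is of the form $(u+U) \cap B$ and has dimension $d$. The fibre-dimension formula from $1$-h-minimal dimension theory (\cite[Proposition~5.3.4]{clu-hal-rid}) then yields $\dim(\pi_U(\tilde X)) \leqslant 0$, and elimination of $\exists^\infty$ for the valued field sort makes $\pi_U(\tilde X)$ finite (after identifying $B/U$ with an open subset of $K^{n-d}$ via a complementary coordinate projection). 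So $b_0$ is isolated in $\pi_U(\tilde X)$: shrinking to an open ball $\tilde B' \ni \varphi\1(x_0)$ with $\pi_U(\tilde B') \cap \pi_U(\tilde X) = \{b_0\}$ and setting $B' \coloneqq \varphi(\tilde B')$ (still an open ball, since risometries preserve radii) gives $S_d \cap B' = A_0 \cap B'$ as required.

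The main subtlety I expect to clarify is the dimension computation itself: one has to verify that $\pi_U$ is well-behaved for the available dimension theory and that the fibre-dimension identity $\dim(\tilde X) = d + \dim(\pi_U(\tilde X))$ genuinely applies here. Once $B/U$ is identified with a product of balls in $K^{n-d}$ via an exhibition of $\bar U$ and the complementary coordinate projection, this should reduce to the dimension axioms of \cite[Proposition~5.3.4]{clu-hal-rid}; the corner cases $d=0$ (where $S_0$ is forced to be discrete) and $d=n$ (where $A_0$ is already open in $B$) fit into the same framework without extra work.
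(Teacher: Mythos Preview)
Your proof is correct and follows the paper's approach: $S_d$ is locally an arc of the 2-Taylor trivialization, and such arcs are $C^1$-submanifolds by Definition~\ref{def:STSU2}. The paper compresses this to a single sentence (``$S_d$ is covered by definable $C^1$-submanifolds of dimension $d$ of the form $\Arc_b(U,\varphi)$''); your dimension argument showing that $S_d$ locally \emph{coincides} with a single arc makes explicit a step the paper leaves to the reader.
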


\begin{proof}
Indeed $S_d$ is convered by definable $C^1$-submanifolds of dimension $d$ of the form $\Arc_{b}(U,\varphi)$, for suitable $b$, $U$ and $\varphi$. 
\end{proof}

A \lt-stratification is in particular a t-stratification, so we have a notion of when it reflects a map $\chi\colon \mathbf B \to \RV\eq$, namely, if for every $d$ and every $B \subseteq S_{\geqslant d}$, $(\cS, \chi)$ is $d$-trivial on $B$. It might seem more natural in this case to impose that $(\cS, \chi)$ is 2-Taylor $d$-trivial on $B$; note that indeed this is automatic by Remark~\ref{rem:refl}.

\

In the remainder of Section \ref{sec:SUT} we show an alternative characterisation of being 2-Taylor along $U$, which is seemlingly coordinate dependent (and hence less suitable as the definition) but more handy to work with. To state the alternative characterization, it will be handy to consider the $(U,\varphi)$-arcs as graphs of definable functions and prove a couple of supplementary lemmas about them.

\begin{defn}\label{def:g_b} Given $b \in B/U$, we define the function $\arc_b\colon \pi(B)\to K^{n-d}$ as the map sending $x\in \pi(B)$ to the image under $\pi^\vee$ of the unique element in $\Arc_b(U,\varphi) \cap \pi\1(x)$.
\end{defn}

\begin{lem}\label{lem:S_3U-unique} Suppose $\varphi$ is 2-Taylor along $U$. Let $b_0,b, x_0, x_1, x_2$ be as in Definition \ref{def:STSU2}. Then the unique $w\in T_{x_0}(\Arc_{b_0}(U,\varphi))$ such that $\pi(x_1-x_2)=\pi(w)$ satisfies \eqref{eq:SUT2}.
\end{lem}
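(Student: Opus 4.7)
The plan is first to establish that $\pi$ restricts to an isomorphism on $T_{x_0}(\Arc_{b_0}(U,\varphi))$, giving both existence and uniqueness of the $w$ in the statement, and then to deduce \eqref{eq:SUT2} for this $w$ from the $w_0$ supplied by the 2-Taylor hypothesis via a short ultrametric comparison.

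For the first step, the main task is to show $\res(T_{x_0}(\Arc_{b_0}(U,\varphi))) = \bar{U}$. On the one hand, $\varphi\1(\Arc_{b_0}(U,\varphi)) = \pi_U\1(b_0) \cap B$ is the intersection of $B$ with a translate of $U$, hence has $\affdir = \bar U$; since $\varphi$ is a risometry, Fact~\ref{fact:riso-affdir} yields $\affdir(\Arc_{b_0}(U,\varphi)) = \bar U$, and the same argument applied to the preimage of a sufficiently small ball $B' \ni x_0$ gives $\affdir(\Arc_{b_0}(U,\varphi) \cap B') = \bar U$. On the other hand, by Remark~\ref{rem:manifold}(2), on a small enough $B'$ there is a risometry sending $\Arc_{b_0}(U,\varphi) \cap B'$ onto (an intersection with a ball of) a translate of $T_{x_0}(\Arc_{b_0}(U,\varphi))$; a second application of Fact~\ref{fact:riso-affdir}, combined with the identity $\affdir(V) = \res(V)$ valid for every $K$-linear subspace $V \subseteq K^n$, then yields $\res(T_{x_0}(\Arc_{b_0}(U,\varphi))) = \bar U$. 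Since $\pi$ is an exhibition of $\bar U$ and $T_{x_0}(\Arc_{b_0}(U,\varphi))$ is a $d$-dimensional lift of $\bar U$, Remark~\ref{rem:exhibition} implies that $\pi$ restricts to an isomorphism $T_{x_0}(\Arc_{b_0}(U,\varphi)) \to K^d$, producing the unique $w$ with $\pi(w) = \pi(x_1 - x_2)$.

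For the second step, let $w_0 \in T_{x_0}(\Arc_{b_0}(U,\varphi))$ be any element satisfying \eqref{eq:SUT2}, as furnished by the 2-Taylor assumption, and denote by $\beta$ the right-hand side of \eqref{eq:SUT2}. Since $w - w_0$ lies in $T_{x_0}(\Arc_{b_0}(U,\varphi))$, which is a lift of $\bar U$, Remark~\ref{rem:exhibition} gives
\[
v(w - w_0) = v(\pi(w - w_0)) = v(\pi(x_1 - x_2 - w_0)) \leqslant v(x_1 - x_2 - w_0) \lsnew \beta,
\]
where the last inequality is \eqref{eq:SUT2} applied to $w_0$. The ultrametric inequality then delivers
\[
v(x_1 - x_2 - w) \leqslant \max\{v(x_1 - x_2 - w_0),\, v(w - w_0)\} \lsnew \beta,
\]
which is exactly \eqref{eq:SUT2} for $w$.

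The main conceptual step is the identification $\res(T_{x_0}(\Arc_{b_0}(U,\varphi))) = \bar U$, which requires combining the risometry-invariance of $\affdir$ with the local picture of the manifold provided by Remark~\ref{rem:manifold}(2); once that is in place, everything else is a routine ultrametric calculation.
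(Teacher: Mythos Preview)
Your proof is correct and follows essentially the same approach as the paper: pick a $w_0$ satisfying \eqref{eq:SUT2}, use that $\pi$ is valuation-preserving on the tangent space to bound $v(w-w_0)$, and conclude via the ultrametric inequality. The only difference is in how the key fact $\res(T_{x_0}(\Arc_{b_0}(U,\varphi))) = \bar U$ is obtained: the paper handles it in one line by invoking Remark~\ref{rem:lift-choice} to assume without loss that $U = T_{x_0}(\Arc_{b_0}(U,\varphi))$, whereas you spell it out explicitly via $\affdir$ and Remark~\ref{rem:manifold}(2); your version is a bit longer but arguably more self-contained, since the paper's reduction implicitly relies on the same fact.
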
 

\begin{proof}
By Remark \ref{rem:lift-choice},
we may assume without loss that $U = T_{x_0}(\Arc(U,\varphi))$.

Let $w'\in T_{x_0}(\Arc(U,\varphi))$ satisfy (\ref{eq:SUT2}).
It suffices to show that
\[
v(w - w')\lsnew\frac{v(x_1 - x_2)\cdot \max\{v(x_0-x_1), v(x_0-x_2)\}}{\radc^*(B)}, \]
since the result then follows by the ultrametric inequality. And indeed:
\begin{align*}
v(w - w') &= v(\pi(w) - \pi(w')) &\text{(Remark \ref{rem:exhibition})}\\
&=
v(\pi(x_1-x_2)-\pi(w'))\\
& =v(\pi(x_1-x_2-w'))\\
					& \leqslant v(x_1-x_2-w')					\\
					&\lsnew\frac{v(x_1-x_2)\cdot\max\{v(x_0-x_1), v(x_0-x_2)\}}{\radc^*(B)}.
					&\text{(by (\ref{eq:SUT2}))}
\end{align*}
\end{proof}

\begin{prop}\label{prop:SU-equivalence} The following are equivalent:
\begin{enumerate}[label=(\roman*)]
\item for every $u_0,u_1,u_2\in \pi(B)$ with $u_1 \ne u_2$ and all $b_0,b\in B/U$, $\arc_{b_0}$ is strictly $C^1$ and we have 
\begin{equation}\label{eq:SUT1}
v(\arc_b(u_1)-\arc_b(u_2)-D\arc_{b_0}(u_0)(u_1-u_2))\lsnew\frac{v(u_1-u_2)\cdot\alpha}{\radc^*(B)},
\end{equation}
where $\alpha\coloneqq \max\{v(u_0-u_1), v(u_0 - u_2), v(b_0 - b)\}$; 
    \item same condition as in (i) but with $u_0=u_2$;
    \item $\varphi$ is 2-Taylor along $U$.
    \end{enumerate}
\end{prop}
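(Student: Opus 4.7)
Plan. The implication (i) $\Rightarrow$ (ii) is trivial, being a special case. The equivalence (i) $\Leftrightarrow$ (iii) is essentially a translation between the coordinate formulation of (i) and the geometric formulation of (iii); the substantive content is (ii) $\Rightarrow$ (i).

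I would first establish (iii) $\Leftrightarrow$ (i). Using Lemma~\ref{lem:respect-fibers}, assume without loss that $\varphi$ respects $\pi$; then, identifying $K^n = K^d \times K^{n-d}$ via $\pi$ and $\pi^\vee$, each arc $\Arc_b(U,\varphi)$ is the graph of $\arc_b$ over $\pi(B)$. The risometry property of $\varphi$ combined with $\pi \circ \varphi = \pi$ forces $v(\arc_b(u) - \arc_{b'}(u)) = v(b - b')$ (compute both sides as $v(\varphi((u,b)) - \varphi((u,b')))$). Consequently
\[
v(x_0 - x_i) = \max\{v(u_0 - u_i), v(b_0 - b)\}, \qquad v(x_1 - x_2) = v(u_1 - u_2),
\]
the latter by Fact~\ref{fact:exhibition_val}, so that the maximum appearing in Definition~\ref{def:STSU2} coincides with the $\alpha$ of (i). Strict differentiability of $\arc_{b_0}$ makes $\Arc_{b_0}$ a $C^1$-submanifold (cf.\ Remark~\ref{rem:manifold}) with $T_{x_0}(\Arc_{b_0})$ equal to the graph of $D\arc_{b_0}(u_0)$. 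Since $\pi$ exhibits $\res(T_{x_0}(\Arc_{b_0}))$ (one checks $D\arc_{b_0}(u_0) \in \mathrm{Mat}(\valring)$ from (ii)), Lemma~\ref{lem:S_3U-unique} forces the $w$ witnessing (iii) to satisfy $\pi(w) = u_1 - u_2$, hence $w = (u_1 - u_2,\, D\arc_{b_0}(u_0)(u_1 - u_2))$, for which
\[
v(x_1 - x_2 - w) = v(\arc_b(u_1) - \arc_b(u_2) - D\arc_{b_0}(u_0)(u_1 - u_2)).
\]
Thus (iii) and (i) say the same thing after this identification.

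For (ii) $\Rightarrow$ (i), I would decompose the left-hand side of (i) as $T + \Delta$, where
\[
T = \arc_b(u_1) - \arc_b(u_2) - D\arc_{b_0}(u_2)(u_1 - u_2), \quad \Delta = (D\arc_{b_0}(u_2) - D\arc_{b_0}(u_0))(u_1 - u_2).
\]
The term $v(T)$ is bounded directly by (ii) applied with $(y_1,y_2) = (u_1,u_2)$, giving $v(T) \lsnew v(u_1 - u_2)\,\alpha/\radc^*(B)$. For $\Delta$, one iterates (ii) with $b = b_0$ at the base-point pairs $(u_1, u_0)$, $(u_0, u_1)$, $(u_2, u_0)$, $(u_0, u_2)$ and combines them via the ultrametric inequality to reconstruct the derivative difference $D\arc_{b_0}(u_2) - D\arc_{b_0}(u_0)$, exploiting the strictness of the inequalities in (ii) to absorb error terms.

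The main obstacle is the ``coplanar'' configuration in which $v(u_0 - u_1) = v(u_0 - u_2) > v(u_1 - u_2)$: a naive combination of (ii) at the two base points $u_0$ and $u_2$ only yields $v(\Delta) \lsnew \max\{v(u_0-u_1), v(u_0-u_2)\}^2/\radc^*(B)$, which is too weak since $v(u_1-u_2)$ is strictly smaller than that maximum. To remedy this I would introduce the auxiliary point $u^* = u_0 + (u_1 - u_2) \in \pi(B)$ (which satisfies $v(u^* - u_0) = v(u_1 - u_2)$) and apply (ii) also at the pairs $(u_1, u^*)$ and $(u^*, u_0)$; comparing the resulting estimates isolates the derivative difference separately along the short vector $u_1 - u_2$ and the long vector $u_2 - u_0$, yielding the sharp bound $v(\Delta) \lsnew v(u_1 - u_2)\,\alpha/\radc^*(B)$ needed to close the argument.
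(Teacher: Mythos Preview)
Your plan tracks the paper's proof closely. For (i)$\Leftrightarrow$(iii) both arguments use Lemma~\ref{lem:respect-fibers} to assume $\varphi$ respects $\pi$, identify each $\Arc_b$ with the graph of $\arc_b$, invoke Remark~\ref{rem:SUT1} to match the two maxima, and use Lemma~\ref{lem:S_3U-unique} to pin down $w=(u_1-u_2,\,D\arc_{b_0}(u_0)(u_1-u_2))$; this part is correct and essentially identical to the paper. For (ii)$\Rightarrow$(i) the paper makes the same $T+\Delta$ split and bounds $T$ by \eqref{eq:step1.1}. For $\Delta$, the paper simply applies \eqref{eq:step1.1} to the pairs $(u_0,u_2)$ and $(u_2,u_0)$ and adds; note that this literally yields a bound on $(D\arc_{b_0}(u_0)-D\arc_{b_0}(u_2))(u_0-u_2)$ rather than on $(D\arc_{b_0}(u_2)-D\arc_{b_0}(u_0))(u_1-u_2)$, so the ``coplanar'' obstacle you isolate is a genuine subtlety that the paper's write-up does not spell out.

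That said, your proposed remedy via $u^{*}=u_0+(u_1-u_2)$ does not close the gap. Writing $w=u_1-u_2$, combining (ii) at $(u^{*},u_0)$ (base $u_0$) and at $(u_1,u_2)$ (base $u_2$) gives
\[
(D\arc_{b_0}(u_2)-D\arc_{b_0}(u_0))w \;=\; D + O\!\bigl(v(w)^{2}/\radc^{*}(B)\bigr),
\]
where $D=[\arc_{b_0}(u_1)-\arc_{b_0}(u_2)]-[\arc_{b_0}(u^{*})-\arc_{b_0}(u_0)]$ is the second-order difference over the parallelogram $\{u_0,u^{*},u_2,u_1\}$. Any further application of (ii) along the long sides --- in particular your pair $(u_1,u^{*})$ --- controls $D$ only up to $O(\alpha^{2}/\radc^{*}(B))$, not the required $O(v(w)\,\alpha/\radc^{*}(B))$, and the residual it leaves is again a derivative difference applied to the ``wrong'' vector, namely $(D\arc_{b_0}(u^{*})-D\arc_{b_0}(u_0))(u_2-u_0)$ with $v(u^{*}-u_0)=v(w)$ and $v(u_2-u_0)=\alpha$. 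This is the same mismatch you started with, just at shifted base points, so the argument is circular. What is actually needed is an operator-norm estimate $v(D\arc_{b_0}(u_0)-D\arc_{b_0}(u_2))\lsnew v(u_0-u_2)/\radc^{*}(B)$ valid for $d>1$, and your sketch does not show how to extract it from (ii) alone.
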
 

\begin{rem}\label{rem:SUT1}
For $b, b' \in B/U$ and $u, u' \in \pi(B)$, we have
\[
\max\{v(u- u'), v(b - b')\} = v((u, \arc_b(u)) - (u', \arc_{b'}(u')));
\]
so in particular, the $\alpha$ in (i) can also be written as
\[
\alpha = \max\{v((u_0, \arc_{b_0}(u_0)) - (u_1, \arc_{b}(u_1))), v((u_0, \arc_{b_0}(u_0)) - (u_2, \arc_{b}(u_2)))\}.
\]
\end{rem}

\begin{proof}[Proof of Proposition \ref{prop:SU-equivalence}] 
$(i) \to (ii)$ is trivial;
let us show $(i)\to (iii)\to (i)$ and $(ii) \to (i)$. By Lemma \ref{lem:respect-fibers} we may suppose that $\varphi$ respects $\pi$.  

$(i)\to(iii):$ Define $\psi\colon B \to \pi(B) \times K^{n-d}$ by 
\[
x \mapsto (\pi(x), \pi^\vee(x) - \arc_{b_0}(\pi(x)).
\]
This is a bijection from $B$
to $\pi(B) \times \pi^\vee(B - B)$
(where $B - B = \{b - b' \mid b, b' \in B\}$ is the translate of $B$ containing $0$), and
by (i), $\psi$ and its inverse are strictly $C^1$, so it is a chart, witnessing that $\Arc_{b_0}(U, \varphi)$ is a definable $C^1$-submanifold of $B$.
Now let $b_0,b\in B/U$, $x_0\in \Arc_{b_0}(U,\varphi)$ and $x_1,x_2\in \Arc_b(U,\varphi)$ be given. Then \eqref{eq:SUT2} follows from \eqref{eq:SUT1} applied to $u_i \coloneqq \pi(x_i)$ (for $i=0,1,2$), using Remark \ref{rem:SUT1} and choosing
\[
w \coloneqq
(u_1 - u_2, D\arc_{b_0}(u_0)(u_1-u_2)).
\]
Note that $w$ is indeed an element of $T_{x_0}(\Arc_{b_0}(U, \varphi))$,
since for $f(u) \coloneqq (u, \arc_{b_0}(u))$, the image of the composition $\varphi \circ f$ lies in $K^d \times \{0\}^{n-d}$, which implies that $w$, since it lies in the image of $D f(u_0)$, gets sent to $K^d \times \{0\}^{n-d}$ by $D\psi(x_0)$.

$(iii)\to(i):$ Fix $b_0 \in B/U$, $u_0 \in \pi(B)$ and set $x_0 \coloneqq (u_0,\arc_{b_0}(u_0))$. Since $\Arc_{b_0}(U,\varphi)$ is a strict $C^1$-manifold, as in Part (2) of Remark \ref{rem:manifold}, one obtains that $\arc_{b_0}$ is strictly $C^1$.
To show $\eqref{eq:SUT1}$, given $b\in B/U$ and $u_1,u_2\in \pi(B)$, let 
\[
w=((u_1-u_2),D{\arc_{b_0}}(u_0)(u_1-u_2))=M_{x_0}(u_1-u_2).
\]
Note that $w$ is the unique element
of $T_{x_0}\Arc_{b_0}(U,\varphi)$ satisfying $\pi(w) = \pi(u_1 - u_2)$. 
Thus, by Lemma \ref{lem:S_3U-unique} and Remark \ref{rem:SUT1}, \eqref{eq:SUT1} follows from \eqref{eq:SUT2} taking $x_i=(u_i,\arc_b(u_i))$ for $i=1,2$.  

\

$(ii)\to (i):$ Assume that $\eqref{eq:SUT1}$ holds in the case $u_2 = u_0$, i.e., that we have
\begin{equation}\label{eq:step1.1}
v(\arc_b(u_1)-\arc_b(u_2)-D\arc_{b_0}(u_2)(u_1-u_2))\lsnew\frac{v(u_1-u_2)\cdot\max\{v(u_1-u_2), v(b-b_0)\}}{\radc^*(B)}.   
\end{equation}
By the triangle inequality, Condition \eqref{eq:SUT1} for $u_0,u_1,u_2$ with $u_0\neq u_2$ follows from \eqref{eq:step1.1} and  
\begin{equation}\label{eq:step1.2}
v((D\arc_{b_0}(u_2)-D\arc_{b_0}(u_0))(u_1-u_2))\lsnew\frac{v(u_1-u_2)\max\{v(u_0-u_2), v(b-b_0)\}}{\radc^*(B)}.    
\end{equation}
To show \eqref{eq:step1.2} assuming $u_0\neq u_2$, apply $\eqref{eq:step1.1}$ once to the pair $(u_0,u_2)$ and once to the pair $(u_2,u_0)$ to obtain   
\begin{align*}
        v(\arc_b(u_0)-\arc_b(u_2)-D\arc_{b_0}(u_2)(u_0-u_2))\lsnew 
        \frac{v(u_0-u_2)\cdot \max\{v(u_0-u_2), v(b-b_0)\}}{\radc^*(B)}\\
        v(\arc_b(u_2)-\arc_b(u_0)-D\arc_{b_0}(u_0)(u_2-u_0))\lsnew \frac{v(u_2-u_0)\cdot \max\{v(u_2-u_0), v(b-b_0)\}}{\radc^*(B)}.
\end{align*}
The inequality \eqref{eq:step1.2} follows directly by applying the triangle inequality to the sum of the left-hand terms.  
\end{proof}

We finish this section with a technical lemma which will ne needed in Section \ref{sec:lipschitz}. Recall (Lemma-Definition~\ref{def:VSdistance}) that $\Delta(U, U')$ denotes a natural notion of distance between sub-vector spaces $V, V' \subseteq K^n$ of the same dimension.

\begin{lem}\label{lem:dist_tangentS2} Suppose $\varphi$ is 2-Taylor along $U$. Fix $b',b''\in B/U$ and let $x'\in \Arc_{b'}(U,\varphi)$, $x''\in \Arc_{b''}(U,\varphi)$. Then 
\begin{equation}\label{eq:dist}
\Delta\big(T_{x'}(\Arc_{b'}(U,\varphi)), T_{x''}(\Arc_{b''}(U,\varphi))\big) \lsnew \frac{v(x' - x'')}{ \radc^*(B)}.
\end{equation}
\end{lem}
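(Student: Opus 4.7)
The plan is to reduce the estimate \eqref{eq:dist} to one on the linear maps $A' \coloneqq D\arc_{b'}(u')$ and $A'' \coloneqq D\arc_{b''}(u'')$, where $u' \coloneqq \pi(x')$ and $u'' \coloneqq \pi(x'')$. Assuming $x' \ne x''$ (the degenerate case being either trivial or absorbed), and after applying a coordinate permutation (an element of $\GL_n(\valring)$, which preserves $\Delta$ by Lemma~\ref{lem:distance}(2)), I may assume that $\pi$ is the projection onto the first $d$ coordinates. By Remark~\ref{rem:manifold}, each arc is locally the graph of the strictly $C^1$-function $\arc_b$, so
\[
T_{x'}(\Arc_{b'}(U,\varphi)) = \{(v, A'v) : v \in K^d\}
\]
and analogously for $x''$.

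A preliminary step is to show that both $A'$ and $A''$ have entries in $\valring$. Proposition~\ref{prop:SU-equivalence}(i) applied with $u_0 = u_2 = u'$, $u_1 = u$, $b = b_0 = b'$ gives
\[
v(\arc_{b'}(u) - \arc_{b'}(u') - A'(u - u')) \lsnew v(u-u')^2/\radc^*(B),
\]
and since $\affdir(\Arc_{b'}(U,\varphi)) = \bar U$ with $\pi$ an exhibition of $\bar U$, we have $v(\arc_{b'}(u) - \arc_{b'}(u')) \leqslant v(u-u')$. The ultrametric inequality then yields $v(A'(u-u')) \leqslant v(u-u')$ for $u$ close to $u'$, and by linearity of $A'$ this extends to $v(A'w) \leqslant v(w)$ for all $w \in K^d$. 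In particular $v((v, A'v)) = v(v)$ for every $v$, and the same conclusions hold for $A''$.

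The heart of the proof is the estimate $v((A' - A'')w) \lsnew v(w)\cdot v(x'-x'')/\radc^*(B)$ for all $w \in K^d$, obtained via the decomposition $A' - A'' = (A' - D\arc_{b''}(u')) + (D\arc_{b''}(u') - A'')$. For the first summand, I apply Proposition~\ref{prop:SU-equivalence}(i) twice with $u_0 = u_2 = u'$ and $u_1 = u$ close to $u'$: once with $b_0 = b'$ and $b = b''$, once with $b_0 = b = b''$. Subtracting via the ultrametric inequality and choosing $v(u-u') < v(b' - b'')$ (the case $b' = b''$ being trivial since both summands are then equal), I obtain $v((A' - D\arc_{b''}(u'))(u-u')) \lsnew v(u-u')v(b'-b'')/\radc^*(B)$, which extends to all $w \in K^d$ by scale-invariance of the ratio $v((A' - D\arc_{b''}(u'))w)/v(w)$. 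For the second summand, I apply (i) with $u_0 = u''$, $u_1 = u$ close to $u'$, $u_2 = u'$, $b_0 = b = b''$, and subtract the version with $u_0 = u_2 = u'$, $b_0 = b = b''$; taking $v(u-u') < v(u' - u'')$ gives $v((D\arc_{b''}(u') - A'')w) \lsnew v(w) v(u' - u'')/\radc^*(B)$ for all $w \in K^d$. Combining via the ultrametric inequality and using $v(b' - b'') \leqslant v(\varphi^{-1}(x') - \varphi^{-1}(x'')) = v(x' - x'')$ (since $\pi_U$ does not increase valuation and $\varphi^{-1}$ is an isometry) together with $v(u' - u'') = v(\pi(x'-x'')) \leqslant v(x'-x'')$ yields the core estimate.

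To conclude, for any $u = (v, A'v) \in T_{x'}(\Arc_{b'}(U,\varphi))$, set $w^* \coloneqq (v, A''v) \in T_{x''}(\Arc_{b''}(U,\varphi))$. Then $v(u - w^*) = v((A' - A'')v) \lsnew v(v) v(x'-x'')/\radc^*(B) = v(u) v(x'-x'')/\radc^*(B)$ (using $v(u) = v(v)$ from the preliminary step), from which Lemma-Definition~\ref{def:VSdistance} gives \eqref{eq:dist}. The main technical obstacle is the scaling argument that promotes each derivative bound from arguments $u - u'$ of sufficiently small valuation to all of $K^d$ while correctly preserving the strict-versus-non-strict character of $\lsnew$ in both the closed-ball and the open-ball case; this works because the ratio $v((A'-A'')w)/v(w)$ is invariant under scaling $w$ by any non-zero element of $K$, so a bound valid on a neighborhood of $0$ transfers unchanged to all non-zero $w$.
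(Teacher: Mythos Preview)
Your proof is correct but takes a different route from the paper's. The paper works directly with the 2-Taylor condition in the form of Definition~\ref{def:STSU2} via Lemma~\ref{lem:S_3U-unique}: given $w' \in T_{x'}(\Arc_{b'}(U,\varphi))$ and the unique $w'' \in T_{x''}(\Arc_{b''}(U,\varphi))$ with $\pi(w') = \pi(w'')$, it assumes without loss $v(w') \leqslant v(x'-x'')$ and then picks an auxiliary point $x_2 \in \Arc_{b''}(U,\varphi)$ with $\pi(x''-x_2) = \pi(w')$. Two applications of Lemma~\ref{lem:S_3U-unique} (once with $x_0 = x'$, once with $x_0 = x''$, same $x_1 = x''$, $x_2$) bound $v(x''-x_2-w')$ and $v(x''-x_2-w'')$ separately, and the ultrametric inequality finishes.

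You instead pass through the coordinate description of Proposition~\ref{prop:SU-equivalence}(i), split $A' - A''$ into the piece where $b_0$ changes and the piece where $u_0$ changes, and bound each by subtracting two instances of \eqref{eq:SUT1}; this costs four applications rather than two and requires the scaling step to pass from small $u-u'$ to arbitrary $w$. Your route has the merit of making explicit that the estimate is really a bound on $D\arc_{b_0}(u_0)$ as a function of $(b_0,u_0)$ (indeed, your second-summand argument is essentially the derivative bound \eqref{eq:step1.2} already obtained inside the proof of Proposition~\ref{prop:SU-equivalence}). The paper's route is shorter because the reduction $v(w') \leqslant v(x'-x'')$ lets one stay with genuine points on the arc and avoid both the decomposition and the scaling argument.
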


\begin{proof}
Given any $w' \in T_{x'}(\Arc_{b'}(U,\varphi))$,
we let $w'' \in T_{x''}(\Arc_{b''}(U,\varphi))$ be the unique vector satisfying $\pi(w') = \pi(w'')$. To prove the lemma, it suffices to prove that
\[
v(w' - w'') \lsnew \frac{v(w') \cdot v(x' - x'')}{\radc^*(B)}
\]
Without loss, we may suppose that $v(w'), v(w'')\leqslant v(x'-x'')$. By Lemma \ref{lem:S_3U-unique} applied to $b_0=b'$, $b=b''$, $x_0=x'$, $x_1=x''$ and $x_2$ the unique element in $\Arc_{b''}(U,\varphi)$ such that $\pi(x''-x_2)=\pi(w')$, we have 
\[
v(x'' - x_2 - w') \lsnew \frac{v(x'' -x_2) \cdot \max\{v(x' - x''), v(x'-x_2)\}}{\radc^*(B)}.
\]
Since we have $v(x'' - x_2) = v(w') \leqslant v(x' - x'')$, the maximum is equal to $v(x' - x'')$.

Proceeding analogously with the same $b$, $x_1$ and $x_2$, but with $b_0 = b''$ and $x_0 = x''$, we obtain
\begin{align*}
v(x'' - x_2 - w'') &\lsnew \frac{v(x'' -x_2) \cdot \max\{v(x'' - x''), v(x''-x_2)\}}{\radc^*(B)}\\
&\leqslant \frac{v(x'' -x_2) \cdot v(x'-x'')}{\radc^*(B)}
\end{align*}
which shows the result. 
\end{proof}

\subsection{Relation to valuative Lipschitz stratifications}\label{sec:lipschitz}

In this section we describe the relation between \lt-stratifications and Mostowski's Lipschitz stratifications.
Instead of directly working with Mostowski's original definition, we use the notion of valuative Lipschitz stratifications from \cite{halup-yin-18}, which is closely related but formulated in terms of valued fields; the precise relation between those tho notions is recalled in Proposition~\ref{prop:liplip}.

The main result of this section (Theorem~\ref{thm:sts-to-Lipstrats}) essentially says that every \lt-stratifiation is in particular a valuative Lipschitz stratification.
However, the notion of valuative Lipschitz stratification assumes that the strata are definable in a language not involving the valuation, so this needs to be additionally imposed.\footnote{One could, in principle, define valuative Lipschitz stratifications in a language with valuation, but if the field is not definably connected, then this definition is not particularly useful.}

We start by fixing a setting where ``language without valuation'' makes sense. We will call such a language the ``original language'' and denote it by $\Lomin$. 

\begin{conv}\label{conv:lip_setting}
In Section~\ref{sec:lipschitz}, we assume that we are in one of the following two settings:
\begin{enumerate}
    \item[(ACF)] $K$ is an algebraically closed non-trivially valued field of equi-characteristic $0$, $K' \subseteq \valring$ is an algebraically closed subfield contained in the valuation ring, $\Lomin$ is the ring language on $K$ (without predicate for the valuation ring), possibly expanded by constants for elements of a subset of $K'$, and $\Lx_0$ is the expansion of $\Lomin$ by a predicate for the valuation ring. (So $\Lx_0$ is the language $\Lval$ from Definition~\ref{defn:LHen}, possibly expanded by some constants).
   \item[(OMIN)] $K$ is a real closed field, $\Lomin$ is a power-bounded o-minimal expansion of the ring language on $K$,
   $\valring$ is the convex closure of a proper elementary substructure $K' \precneqq_{\Lomin} K$ (we assume that such a $K'$ exists),
   and $\Lx_0$ is obtained from $\Lomin$ by adding a predicate for $\valring$.
\end{enumerate}
In both cases, let $\Lx$ be the language obtained from $\Lx_0$ by adding all the sorts $\RV\eq$, as explained in Definition~\ref{defn:L}.
\end{conv}

We need a notion of $\Lomin(K)$-definable $C^1$-manifold. In Setting~(ACF), we cannot expect to have $\Lomin(K)$-definable charts (in the sense of Definition~\ref{def:manifold}), since small neighbourhoods are not definable. Let us therefore use the following definition, which works in all settings:
\begin{defn}\label{def:manifold.v2}
By an \emph{$\Lomin(K)$-definable $C^1$-submanifold} of $K^n$, we mean an $\Lomin(K)$-definable subset of $K^n$ which is a definable $C^1$-manifold in the sense of Definition~\ref{def:manifold} for the language $\Lx$, i.e., such that every $x \in X$ lies in an $\Lx(K)$-definable chart.
\end{defn}
\begin{rem}\label{rem:C1=C1}
Note that this definition agrees with various other notions of manifold. In particular, in Setting (OMIN),
it agrees with the notion of $C^1$-manifold from \cite[Notation~1.1.9]{halup-yin-18}:
One can as well impose that the charts are $\Lomin(K)$-definable (since the order topology and the valuation topology coincide, and since definable maps are locally $\Lomin(K)$-definable); after that,
finitely many charts suffice to cover the manifold (using o-minimal cell decomposition).
\end{rem}

We do not recall what ``power-bounded'' means. Just note that (OMIN) is exactly the setting of \cite{halup-yin-18}, and also note that in both settings (ACF) and (OMIN), the $\Lx$-theory of $K$ is $1$-h-minimal (by \cite[Section~6]{clu-hal-rid}), so in particular, all of Sections~\ref{sec:setting}, \ref{sec:tsts} and \ref{sec:SUT} applies.

We will now recall the notion of valuative Lipschitz stratifications. This needs the following ingredient, which is 
\cite[Definition~1.6.1]{halup-yin-18} (though we do not distinguish between ``plain val-chains'' and ``augmented val-chains'').

\begin{defn}\label{def:valchain} Let $1\leqslant m\leqslant n$ be integers and let $\dd=(d_0,\ldots,d_{m})$ be a sequence of non-negative integers. Let $\mathcal{S}=\{S_0,\ldots,S_n\}$ be a 
partition of $K^n$ into $\Lomin(K)$-definable sets $S_i$ of dimension at most $i$. A sequence of points $(a_0,\ldots,a_{m})\in (K^n)^{m+1}$ is called a \emph{$\dd$-val-chain for $\mathcal{S}$} if 
\begin{enumerate}
\item $n\geqslant d_0 \geqslant d_1>d_2>\cdots>d_{m}$
\item $a_i\in S_{d_i}$, for $0\leqslant i\leqslant m$
\item $\lambda_i\coloneqq  v(a_0-a_i) <v(a_0-S_{<d_i})$ if $S_{<d_i} \ne \emptyset$, for $1\leqslant i\leqslant m$
\item
$v(a_0-a_i) = v(a_0-S_{<d_{i-1}})$ if $d_{i-1} > d_i$, for $1\leqslant i\leqslant m$
\end{enumerate}
We set $\lambda_{m+1}\coloneqq v(a_0-S_{<d_m})$, or $\lambda_{m+1} = \infty$ if $S_{<d_m} = \emptyset$. We call the sequence $(\lambda_1,\ldots, \lambda_{m+1})$ \emph{the distances of $(a_0,\ldots, a_m)$}. 
\end{defn}

Note that in (1) it is intentional that we do allow $d_1 = d_0$ but that we require $d_{i} < d_{i-1}$ for $i > 1$. In particular, the ``if $d_{i - 1} > d_i$'' in (4) can only fail for $i = 1$.

Instead of using the original definition of valuative Lipschitz stratification (\cite[Definition~1.6.5]{halup-yin-18}), we use the equivalent condition given in
\cite[Proposition~1.8.3]{halup-yin-18}.
Also note that whereas in \cite{halup-yin-18}, stratifications of closed definable subsets of $K^n$ are considered, here, we consider stratifications of the entire ambient space $K^n$.

\begin{defn}\label{def:Lip-strat} A \emph{valuative Lipschitz stratification}
of $K^n$ is a partition
$\mathcal{S}=\{S_0,\ldots,S_n\}$ of $K^n$
such that each $S_i$ is an $\Lomin(K)$-definable $C^1$-manifold of dimension $i$, and
for every $1\leqslant m\leqslant n$, for every sequence of non-negative integers $\dd=(d_0,\ldots,d_{m})$ and every $\dd$-val-chain $(a_0,\ldots,a_m)\in (K^n)^{m+1}$ with distances $(\lambda_1, \dots, \lambda_{m+1})$, there are $K$-sub-vector spaces $V_{i,j} \subseteq K^n$ for $0\leqslant i\leqslant j \leqslant m$ such that  

\begin{enumerate}
\item $V_{i,i}= T_{a_i} (S_{d_i})$
for $0 \leqslant i \leqslant m$;
\item $V_{i,j}\subseteq V_{i,j-1}$
for $0 \leqslant i < j \leqslant m$; 
\item $\dim(V_{i,j})=d_j$ for $0 \leqslant i \leqslant j \leqslant m$;
\item
$\Delta(V_{i,j}, V_{i+1,j})\leqslant \frac{\lambda_{i+1}}{\lambda_{j+1}}$
for $0 \leqslant i < j \leqslant m$,
with the convention that if $\lambda_{m+1} = \infty$, then $\frac{\lambda_{i+1}}{\lambda_{m+1}} \coloneqq 0$.
\end{enumerate}
\end{defn}

We can now state the main result of this subsection:

\begin{thm}\label{thm:sts-to-Lipstrats}
Suppose that $\cS=(S_0,\ldots, S_n)$ is a \lt-stratification of $K^n$ such that each $S_i$ is $\Lomin(K)$-definable (where $\Lomin$ is as in Convention~\ref{conv:lip_setting}). Then $\cS$ is also a valuative Lipschitz stratification. 
\end{thm}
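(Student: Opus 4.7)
Given a $\dd$-val-chain $(a_0,\dots,a_m)$ for $\cS$ with distances $(\lambda_1,\dots,\lambda_{m+1})$, the plan is to construct the required spaces $V_{i,j}$ as tangent spaces to arcs of 2-Taylor trivialisations of $\cS$ on a tower of nested balls around the val-chain. For each $j \in \{0,\dots,m\}$, let $B_j$ be the maximal open ball contained in $S_{\geqslant d_j}$ and containing $a_0$. A routine ultrametric computation from the val-chain conditions~(3) and~(4) of Definition~\ref{def:valchain} shows that $B_{j-1} \subseteq B_j$, that $B_j$ contains every $a_i$ with $i \leqslant j$, and that $\rado(B_j) = \lambda_{j+1}$ whenever $j \geqslant 1$ (with the convention $\lambda_{j+1}=\infty$, $B_j = K^n$ when $S_{<d_j}=\emptyset$). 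Definition~\ref{def:sts} then yields, on each $B_j$, a $d_j$-dimensional lift $U_{(j)} \subseteq K^n$ and a definable risometry $\varphi_{(j)}\colon B_j \to B_j$ that $U_{(j)}$-trivialises $(S_0,\dots,S_n)$ and is 2-Taylor along $U_{(j)}$. Writing $A_i^{(j)}$ for the $(U_{(j)},\varphi_{(j)})$-arc through $a_i$---a definable $C^1$-submanifold of $B_j$ of dimension $d_j$ contained in $S_{d_i}$, since the trivialisation preserves each stratum setwise---I set
\[
V_{i,j} \coloneqq T_{a_i}\bigl(A_i^{(j)}\bigr) \quad \text{for } i<j, \qquad V_{i,i} \coloneqq T_{a_i}\bigl(S_{d_i}\bigr),
\]
the two prescriptions agreeing at $j=i$ because $A_i^{(i)}$ is an open subset of the $d_i$-dimensional manifold $S_{d_i}$.

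Conditions~(1) and~(3) of Definition~\ref{def:Lip-strat} are immediate from the construction, and condition~(4) is delivered by Lemma~\ref{lem:dist_tangentS2}: applied with $x'=a_i$, $x''=a_{i+1}$ on the ball $B_j$, it gives
\[
\Delta(V_{i,j},V_{i+1,j}) \lsnew \frac{v(a_i - a_{i+1})}{\rado(B_j)} \leqslant \frac{\lambda_{i+1}}{\lambda_{j+1}},
\]
using $v(a_i - a_{i+1}) \leqslant \max(\lambda_i,\lambda_{i+1}) = \lambda_{i+1}$ and $\rado(B_j) = \lambda_{j+1}$.

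The main obstacle is condition~(2), the nesting $V_{i,j} \subseteq V_{i,j-1}$ for $i<j$. The case $j = i+1$ is immediate, since $A_i^{(i+1)} \subseteq S_{d_i}$ forces $V_{i,i+1} \subseteq T_{a_i}S_{d_i} = V_{i,i}$. When $j-1 > i$, however, $V_{i,j-1}$ and $V_{i,j}$ are tangents at $a_i$ of arcs coming from two a priori independent trivialisations on the nested balls $B_{j-1} \subsetneq B_j$, and inclusion is not automatic. My plan is to arrange the trivialisations compatibly, working from the outer ball inward. First, I would exploit the monotonicity $\tsp_{B_j}(\cS) \subseteq \tsp_{B_{j-1}}(\cS)$---which can be checked directly via the translater calculus of Subsection~\ref{sec:translaters} by restricting a $\bar U_{(j)}$-translater from $B_j$ to the smaller ball $B_{j-1}$, using translater property~(3) to keep orbits inside $B_{j-1}$---to pick the lifts nestedly as $U_{(m)} \subseteq U_{(m-1)} \subseteq \cdots \subseteq U_{(0)}$. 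Then, fixing an exhibition of $\bar U_{(j)}$ and applying Fact~\ref{fact:induced-tst}, the induced stratification on each fibre of the exhibition inside $B_{j-1}$ is itself amenable to 2-Taylor $(d_{j-1}-d_j)$-triviality, which combined with the ambient $\varphi_{(j)}$ on $B_{j-1}$ should produce a 2-Taylor $d_{j-1}$-trivialising risometry $\varphi_{(j-1)}$ whose arcs refine those of $\varphi_{(j)}$. Once the arcs nest, so do their tangent spaces at $a_i$. I expect the main technical difficulty to lie precisely in this compatible refinement: 2-Taylor-ness along one subspace does not automatically restrict to a smaller trivialising subspace nor to a smaller ball, so preserving it while gluing the fibrewise triviality into an ambient trivialisation on $B_{j-1}$ will require a delicate inductive argument that controls both the quadratic 2-Taylor error and the coherence of the arc structure on $B_{j-1}$ with that on $B_j$.
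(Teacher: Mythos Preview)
Your setup matches the paper's exactly: the balls $B_j$, the 2-Taylor trivialisations $\varphi_{(j)}$ along $U_{(j)}$, and the tangent spaces to the arcs through the $a_i$ are precisely the paper's $W_{i,j}$. You also obtain conditions~(1), (3), (4) just as the paper does (in its Claim~\ref{cla:W-prop}, with Lemma~\ref{lem:dist_tangentS2} doing the work for the distance bound).

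Where your proposal diverges is in the attack on the nesting condition~(2). You try to secure $V_{i,j}\subseteq V_{i,j-1}$ \emph{geometrically}, by refining the trivialisations so that the arcs on $B_{j-1}$ sit inside those on $B_j$; you correctly flag this as the delicate point, and indeed it is not clear that a 2-Taylor trivialisation on $B_j$ can be refined on $B_{j-1}$ to one that is simultaneously 2-Taylor and arc-compatible. The paper simply sidesteps this difficulty. It accepts that the $W_{i,j}$ need not nest and instead proves a purely linear-algebraic lemma (Lemma~\ref{lem:new vectors}): given any family $(W_{i,j})$ of sub-vector spaces of $K^n$ satisfying $\dim W_{i,j}=d_j$, $W_{i,j}\subseteq W_{i,i}$, and $\Delta(W_{i+1,j},W_{i,j})\leqslant \lambda_{i+1}/\lambda_{j+1}$, one can build a new family $(V_{i,j})$ with the same three properties, with $V_{i,i}=W_{i,i}$, and with $V_{i,j+1}\subseteq V_{i,j}$. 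The construction is elementary: after a $\GL_n(\valring)$-change putting $\res W_{i,j}$ into coordinate form, each $W_{i,j}$ has a well-adapted basis (vectors congruent to standard basis vectors modulo $\maxid$), and $V_{i,j}$ is spanned by carefully chosen blocks of basis vectors drawn from $W_{i,j},W_{i,j+1},\dots,W_{i,m}$.

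So the gap in your proposal is real but easily closed: drop the compatible-refinement programme entirely and replace it with this post-hoc linear-algebra repair. Everything else you wrote stands.
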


We believe that the converse of this result also holds, i.e., that for $\Lomin(K)$-definable partitions of $K^n$, being an \lt-stratification and being a valuative Lipschitz stratification are equivalent (see Question~\ref{q:converse}). 

\

Before proving Theorem~\ref{thm:sts-to-Lipstrats}, let us make precise the relation to
the original notion of Lipschitz stratifications by Mostowksi.
We first explain this in the (OMIN) setting, where this is a result from \cite{halup-yin-18}.

Originally, Mostowksi introduced Lipschitz stratifications over the complex numbers \cite{Mos.bilip}, but it also makes sense over the reals \cite{Par.lipSemi,Par.lipSub}, and more generally over arbitrary real closed fields with a suitable langauge;
see \cite[Definition~1.2.4]{halup-yin-18}. In particular, it does make sense in our setting (OMIN), for the language $\Lomin$.

(In the following proposition, we secretly use Remark~\ref{rem:C1=C1}.)

\begin{prop}[{\cite[Propositions~1.6.11 and 1.8.3]{halup-yin-18}}]\label{prop:liplip}
Suppose that we are in the setting (OMIN) from Convention~\ref{conv:lip_setting}.
Then an $\Lomin$-definable partition $\cS$ of $K^n$ is a Lipschitz stratification in the sense of \cite[Definition~1.2.4]{halup-yin-18} if and only if it is a valuative Lipschitz stratification in the sense of Definition~\ref{def:Lip-strat}.
\end{prop}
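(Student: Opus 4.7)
The proof rests on the correspondence between metric statements over the elementary substructure $K'$ and valuative statements over $K$. In setting (OMIN), the convex valuation ring $\valring$ is precisely the set of elements bounded in absolute value by some element of $K'$, so a metric bound of the form $|f(a)| \leqslant C \cdot |g(a)|$ with $C \in K'$ corresponds exactly to a valuative bound $v(f(a)) \leqslant v(g(a))$, and ratios of distances translate naturally between the two frameworks. The plan is to use this dictionary to translate Mostowski's chain condition into the val-chain condition and vice versa.

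For the direction ``classical Lipschitz stratification $\Rightarrow$ valuative Lipschitz stratification'', I would take an arbitrary $\dd$-val-chain $(a_0, \dots, a_m)$ in $K^n$ with distances $(\lambda_1, \dots, \lambda_{m+1})$ and, by applying an $\Lomin$-definable rescaling, reduce to the case where the $a_i$ and the ratios of distances are of order $1$ (i.e., lie in $\valring$ with residues in $K'$). The classical Lipschitz stratification condition, interpreted in $K$ by elementary equivalence with $K'$, then furnishes subspaces $V_{i,j} \subseteq K^n$ with $\Delta(V_{i,j}, V_{i+1,j}) \leqslant C \cdot \frac{|a_0-a_{i+1}|}{|a_0-S_{<d_j}|}$ for some classical constant $C \in K'$. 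Since $v(C) \leqslant 1$, this classical bound translates directly to condition (4) of Definition~\ref{def:Lip-strat}, while conditions (1)--(3) are immediate from the construction.

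For the converse direction, I would argue by contradiction using saturation. Suppose $\cS$ satisfies the val-chain condition but fails to be a classical Lipschitz stratification. Then for each positive $C \in K'$, there exists a classical chain in $K^n$ violating Mostowski's bound with constant $C$. By elementary equivalence we may assume such chains exist in $K^{\prime n}$. Passing to a sufficiently saturated elementary extension $K^*$ of $(K, \Lomin)$, $\aleph_1$-saturation yields a single chain in $K^{*n}$ violating the classical bound for every $C \in K'$ simultaneously. Reinterpreting this chain via the convex valuation induced by $K'$, the points that were ``infinitesimally close by metric standards'' become genuinely close in the valuation on $K^*$, producing a val-chain for which the estimate in (4) of Definition~\ref{def:Lip-strat} fails—contradicting the hypothesis.

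The main obstacle is matching the combinatorial data of the two kinds of chains: classical Mostowski chains impose their regularity conditions in terms of a fixed distance function, while val-chains have a more rigid index structure, in particular requiring $d_i > d_{i+1}$ for $i \geqslant 1$. One must check that any classical chain can be refined or pruned to a val-chain without weakening the estimate, using the fact that condition (4) becomes trivial (or immediately follows from the $C^1$-manifold hypothesis) whenever consecutive indices coincide. Since the full details of this matching are carried out in \cite[Sections~1.6--1.8]{halup-yin-18}, I would refer the reader there for the technical bookkeeping.
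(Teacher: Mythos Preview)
The paper gives no proof of this proposition at all: it is stated purely as a citation of \cite[Propositions~1.6.11 and 1.8.3]{halup-yin-18}, with no argument following. Your sketch is a reasonable outline of the ideas behind that cited result---the metric/valuative dictionary via the $T$-convex valuation, transfer by elementary equivalence, and a compactness/saturation argument for the converse---and you correctly defer to \cite{halup-yin-18} for the bookkeeping of matching the two chain notions. So there is nothing to compare; both you and the paper simply point to the same external reference.
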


Note that for this equivalence to hold, it is important that $\cS$ is definable without additional parameters from $K$. However, we can always add constants for elements from $K'$ to $\Lomin$.

In \cite{halup-yin-18}, only the setting (OMIN) is considered, but all of \cite[Section~1]{halup-yin-18} (and actually, probably all of \cite{halup-yin-18}) also works in the setting (ACF), as follows.

By passing to an elementary extension, we may suppose that our given algebraically closed valued field $K$ is the algebraic closure of a real closed valued field
$K_{\mathrm{rcf}}$ inducing the valuation on $K$ and satisfying the setting (OMIN), where as $\Lomin$, we take the ring-language expanded by constants for all elements of the elementary substructure $K_{\mathrm{rcf}}' \prec K_{\mathrm{rcf}}$ whose convex closure is the valuation ring.

To consider stratifications in the algebraic closure $K$ of $K_{\mathrm{rcf}}$, we interpret $K$ in $K_{\mathrm{rcf}}$. In this way, all the definitions and arguments from \cite[Section~1]{halup-yin-18} make sense in $K$, using the language $\Lomin$ on $K_{\mathrm{rcf}}$ everywhere, with two differences:
Firstly, the sets $S_i$ forming the stratification are definable in the ring-language on $K$ (though the charts witnessing that they are manifolds are definable in the bigger language, so that in the case $K = \C$, we obtain the usual notion of analytic manifolds). And secondly, when we consider sub-vector spaces of $K^n$, in particular in \cite[Propositions~1.8.3]{halup-yin-18}, we mean $K$-sub-vector spaces and not $K_{\mathrm{rcf}}$-sub-vector spaces. (The entire proof of \cite[Propositions~1.8.3]{halup-yin-18} is just ``valuative linear algebra'', without any model theory, so $K_{\mathrm{rcf}}$ plays no role here.) In this way, one obtains the following proposition.

\begin{prop}\label{prop:liplipacf}
Suppose that we are in the setting (ACF) from Convention~\ref{conv:lip_setting}.
Then an $\Lomin$-definable partition $\cS$ of $K^n$ is a Lipschitz stratification in the sense of \cite[Definition~1.2.4]{halup-yin-18} if and only if it is a valuative Lipschitz stratification in the sense of Definition~\ref{def:Lip-strat}.
\end{prop}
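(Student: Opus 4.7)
The plan is to reduce the (ACF) case to the (OMIN) case (Proposition~\ref{prop:liplip}) via an interpretation, and then to invoke the argument of \cite[Proposition~1.8.3]{halup-yin-18} essentially verbatim in this interpreted setting. First, I would pass to an elementary extension of $K$ in the language $\Lomin$ so that $K$ is the algebraic closure of a real closed valued field $K_{\mathrm{rcf}}$ whose valuation ring is the convex closure of a proper elementary substructure $K_{\mathrm{rcf}}' \precneqq_{\Lomin} K_{\mathrm{rcf}}$, with $K'$ equal to the algebraic closure of $K_{\mathrm{rcf}}'$. Both properties in the desired equivalence are preserved under $\Lomin$-elementary extensions, so this reduction is harmless.

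Via the identification $K \cong K_{\mathrm{rcf}}(\sqrt{-1}) \cong K_{\mathrm{rcf}}^2$, the pair $(K_{\mathrm{rcf}}, K_{\mathrm{rcf}}')$ satisfies setting (OMIN) from Convention~\ref{conv:lip_setting}. Under this identification, $\Lomin(K)$-definable subsets of $K^n$ correspond to $\Lomin(K_{\mathrm{rcf}})$-definable subsets of $K_{\mathrm{rcf}}^{2n}$, $\Lomin(K)$-definable $C^1$-submanifolds of $K^n$ in the sense of Definition~\ref{def:manifold.v2} correspond to $\Lomin(K_{\mathrm{rcf}})$-definable $C^1$-submanifolds of $K_{\mathrm{rcf}}^{2n}$, and the notions of $\dd$-val-chain and of the distances $\lambda_i$ translate without change. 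Thus both Definition~\ref{def:Lip-strat} and the classical definition \cite[Definition~1.2.4]{halup-yin-18} apply to $\cS$ in both structures simultaneously.

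I would then rerun the proof of \cite[Proposition~1.8.3]{halup-yin-18} directly in this setting. That proof is carried out entirely within ``valuative linear algebra''—it uses only val-chains, the distance function $\Delta$ from Lemma-Definition~\ref{def:VSdistance}, and strict differentiability of charts—without any input specific to real closed valued fields. Consequently, starting from a val-chain in $K^n$ the same construction produces the required system of subspaces $V_{i,j}$ with the necessary distance control, and the reverse implication is similarly unchanged.

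The main obstacle is a scalar-field issue. Definition~\ref{def:Lip-strat} demands that the $V_{i,j}$ be $K$-sub-vector spaces of $K^n$, while the transported argument would naively produce $K_{\mathrm{rcf}}$-sub-vector spaces of $K_{\mathrm{rcf}}^{2n}$. To fix this I would verify that the construction of the $V_{i,j}$ in \cite[Proposition~1.8.3]{halup-yin-18} preserves $K$-linearity as soon as its inputs are $K$-linear. In our situation the input tangent spaces $T_{a_i} S_{d_i}$ are $K$-linear because the $S_i$ are $\Lomin$-definable over $K$ and hence their tangent spaces, being defined through $K$-algebraic derivatives, carry a natural $K$-vector space structure. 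Conversely, every $K$-sub-vector space of $K^n$ is, viewed in $K_{\mathrm{rcf}}^{2n}$, a $K_{\mathrm{rcf}}$-sub-vector space, and the valuative distance $\Delta$ is invariant under this identification. Hence the $K$- and $K_{\mathrm{rcf}}$-vector space formulations of Definition~\ref{def:Lip-strat} agree, and the equivalence lifts from (OMIN) to (ACF).
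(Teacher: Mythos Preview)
Your proposal is essentially the paper's own argument: the paper also passes to an elementary extension realizing $K$ as the algebraic closure of a real closed valued field $K_{\mathrm{rcf}}$ in setting (OMIN), interprets $K$ in $K_{\mathrm{rcf}}$, and then observes that the proof of \cite[Proposition~1.8.3]{halup-yin-18} is pure valuative linear algebra which can be rerun verbatim with $K$-sub-vector spaces in place of $K_{\mathrm{rcf}}$-sub-vector spaces. One small remark: your final sentence (``the $K$- and $K_{\mathrm{rcf}}$-vector space formulations of Definition~\ref{def:Lip-strat} agree'') is a stronger claim than you actually need and is not what the paper asserts; the paper simply reruns the argument with $K$-vector spaces throughout rather than establishing any a posteriori equivalence between the two formulations.
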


\begin{rem}\label{rem:t2-to-lip}
Combining Theorem~\ref{thm:sts-to-Lipstrats} with Propositions~\ref{prop:liplip} and \ref{prop:liplipacf} shows, in any of the above settings:
Any $\Lomin$-definable \lt-stratification $\cS$ of $K^n$ is a Lipschitz stratification.
Since being a Lipschitz stratification is a first order property, it therefore also induces a Lipschitz stratification
$(S_0 \cap (K')^n, \dots, S_n \cap (K')^n)$ of $(K')^n$. In particular, if $K'$ is $\C$, this provides a new way to obtain Lipschitz stratifications in the original sense of Mostowski, namely as follows: Suppose that
$X' \subseteq \C^n$ is $\Lomin$-definable. Let $X \subseteq K^n$ be the set defined by interpreting the same formula in $K \succ \C$. By Corollary~\ref{cor:t2-exist} we find an $\Lx$-definable \lt-stratification $\cS$ reflecting (the characteristic function of) $X$. By proceeding as in the proof of \cite[Proposition~6.2]{halupczok2014a}, we may improve $\cS$ in such a way that $S_{\leqslant d}$ becomes Zariski closed for every $d$ and hence $\Lomin$-definable. (The idea of that proof is to replace $S_{\leqslant d}$ by its Zariski closure $S_{\leqslant d}^{\mathrm{Zar}}$, then repair the stratification using a \lt-stratification reflecting $(X, S_{\leqslant 0}^{\mathrm{Zar}}, \dots, S_{\leqslant n}^{\mathrm{Zar}})$, take the Zariski closures again, etc.; if this is done carefully, then $\dim( \bigcup_d(S_{\leqslant d}^{\mathrm{Zar}} \setminus S_{\leqslant d}))$ decreases in each step, until we have $S_{\leqslant d}^{\mathrm{Zar}} = S_{\leqslant d}$.) 
As explained above, $\cS' \coloneqq (S_0 \cap \C^n, \dots, S_n \cap \C^n)$ is a Lipschitz stratification of $\C^n$. Finally note that the fact that $\cS$ reflects $X$ implies that the set $X'$ we started with is a union of connected components of the $S'_i$, by (the easy) Part (2) of \cite[Theorem~7.11]{halupczok2014a}. 
\end{rem}

We now come to the proof of Theorem~\ref{thm:sts-to-Lipstrats}. 

\begin{proof}[Proof of Theorem~\ref{thm:sts-to-Lipstrats}] Let $\mathcal{S}=\{S_0,\ldots,S_n\}$ be a \lt-stratification of $K^n$.
By Lemma \ref{lem:manifold}, each $S_i$ is a definable $C^1$-manifold, so we just need to check the condition related to chains.
To this end, fix a $\dd$-val-chain
$(a_0,\ldots, a_m)$ for $\mathcal{S}$ with $\dd=(d_0,\ldots,d_m)$ and with distances $(\lambda_1, \dots, \lambda_{m+1})$. For each $j\in \{0,\ldots,m\}$, set $B_j \coloneqq B(a_0, {<\lambda_{j+1}})$. This ball contains $a_0, \dots, a_j$, and it is disjoint from
$S_{<d_j}$.
Note that if $S_{<d_m} = \emptyset$, then by our above conventions, we have $B_m = B(a_0, <\infty) = K^n$.

For each $j$, fix a lift $U_j \subseteq K^n$ of $\tsp_{B_j}(\cS)$ (note that $\dim(U_j) = d_j$) and a definable risometry $\varphi_j\colon B_j\to B_j$ trivializing $\cS$ which is 2-Taylor along $U_j$ (witnessing that $\cS$ is 2-Taylor $d_j$-trivial on $B_j$).

For each $0 \leqslant i\leqslant j\leqslant n$, set $b_{i,j} := \pi_{U_j}(\varphi_j^{-1}(a_i)) \in B/U_j$ (so that
$a_i \in \Arc_{b_{i,j}}(U_j,\varphi_j)$),
and denote the tangent space
$T_{a_i}(\Arc_{b_{i,j}}(U_j,\varphi_j))$ 
by $W_{i,j}$.

\begin{cla}\label{cla:W-prop}The following holds for $0 \leqslant i \leqslant j \leqslant n$:
\begin{enumerate}[label=(\roman*)]
    \item $\dim(W_{i,j}) = d_j$,
    \item $W_{i,i}=T_{a_i}(S_{d_i})$,
    \item $W_{i,j}\subseteq T_{a_i}(S_{d_i})$,
    \item $\Delta(W_{i+1,j}, W_{i,j})\leqslant \frac{\lambda_{i+1}}{\lambda_{j+1}}$ if $i < j$.
\end{enumerate}
\end{cla}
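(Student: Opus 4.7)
My plan is to verify properties (i), (iii), and (iv) directly, and obtain (ii) as a corollary of (i) and (iii). As a preliminary, I would observe that the distances of the val-chain are strictly increasing: $\lambda_1 < \lambda_2 < \dots < \lambda_{m+1}$. Indeed, for $i \geqslant 1$, condition (1) of Definition~\ref{def:valchain} gives $d_i > d_{i+1}$, hence condition~(4) yields $\lambda_{i+1} = v(a_0 - S_{<d_i})$, and condition~(3) gives $\lambda_i < v(a_0 - S_{<d_i}) = \lambda_{i+1}$. Combined with the ultrametric inequality, this yields $v(a_i - a_{i+1}) \leqslant \lambda_{i+1}$ for all $0 \leqslant i < m$. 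Note also that $\rado(B_j) = \lambda_{j+1}$ by construction.

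For (i), I would appeal directly to the 2-Taylor condition (Definition~\ref{def:STSU2}), which builds in that each $(U_j,\varphi_j)$-arc is a definable $C^1$-submanifold of $B_j$; its dimension is $\dim(U_j) = d_j$ since via $\varphi_j$ the arc is in bijection with a fiber of $\pi_{U_j}\colon K^n \to K^n/U_j$. For (iii), the key observation is that $\varphi_j$ $U_j$-trivializes $\cS$, so each $(U_j,\varphi_j)$-arc is contained in a single stratum; since $\Arc_{b_{i,j}}(U_j,\varphi_j)$ contains $a_i \in S_{d_i}$, the whole arc lies in $S_{d_i}$. I would then pick a chart $\psi\colon V \to V'$ of $S_{d_i}$ around $a_i$ in the sense of Definition~\ref{def:manifold}; since $\psi$ sends $S_{d_i} \cap V$ into $K^{d_i} \times \{0\}^{n-d_i}$, it sends the arc locally into $K^{d_i} \times \{0\}^{n-d_i}$ as well, so that $D\psi(a_i)$ maps $W_{i,j}$ into $K^{d_i} \times \{0\}^{n-d_i}$; pulling back yields $W_{i,j} \subseteq T_{a_i}(S_{d_i})$. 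Applying (iii) at $i=j$ gives $W_{i,i} \subseteq T_{a_i}(S_{d_i})$, and since by (i) both sides have dimension $d_i$, (ii) follows with equality.

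For (iv), I would invoke Lemma~\ref{lem:dist_tangentS2} with $B = B_j$, $U = U_j$, $\varphi = \varphi_j$, $x' = a_i$, $x'' = a_{i+1}$, $b' = b_{i,j}$, $b'' = b_{i+1,j}$. Since $B_j$ is open, Convention~\ref{conv:balls} reads $\radc^*(B_j) = \rado(B_j) = \lambda_{j+1}$ and $\lsnew$ as $\leqslant$, so the lemma gives
\[
\Delta\bigl(W_{i,j}, W_{i+1,j}\bigr) \leqslant \frac{v(a_i - a_{i+1})}{\lambda_{j+1}} \leqslant \frac{\lambda_{i+1}}{\lambda_{j+1}},
\]
the desired bound (with the convention $\lambda_{i+1}/\infty = 0$ when $\lambda_{m+1} = \infty$).

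The main obstacle I anticipate is the argument for (iii): although conceptually simple, one must carefully justify the interaction between the two manifold structures at play, namely the $d_j$-dimensional arc and the $d_i$-dimensional stratum $S_{d_i}$, and verify that a chart of $S_{d_i}$ defined on an open subset of $K^n$ (and not merely on $S_{d_i}$) really does transport $W_{i,j}$ into the model tangent space $K^{d_i} \times \{0\}^{n-d_i}$. Everything else is bookkeeping: the 2-Taylor hypothesis gives (i), and Lemma~\ref{lem:dist_tangentS2} combined with the distance estimates from the val-chain conditions gives (iv).
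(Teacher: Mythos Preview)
Your proposal is correct and follows essentially the same approach as the paper: (i) is immediate from the 2-Taylor condition, (iii) uses that the arc through $a_i$ lies in $S_{d_i}$ since $\varphi_j$ trivializes $\cS$, (ii) follows from (i) and (iii) by dimension, and (iv) is Lemma~\ref{lem:dist_tangentS2} applied to the open ball $B_j$. Your version is slightly more detailed than the paper's (you spell out the strict increase of the $\lambda_i$, the bound $v(a_i - a_{i+1}) \leqslant \lambda_{i+1}$, and the chart argument for (iii)), but the structure is identical.
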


The first property is clear.
Property $(ii)$ follows from $(iii)$, since both $K$-vector spaces have the same dimension. 

For (iii), note that since $S_{d_i}$ is $U_j$-trivialized by $\varphi_j$, from $a_i \in S_{d_i}$ we obtain that $S_{d_i}$ contains the entire $(U_j,\varphi_j)$-arc containing $a_i$, i.e.,
$\Arc_{b_{i,j}}(U_j, \varphi_j) \subseteq S_{d_i}$. This implies
\[
W_{i,j} = T_{a_i}(\Arc_{b_{i,j}}(U_j,\varphi_j))
 \subseteq T_{a_i}(S_{d_i}).
\]

Property $(iv)$ follows from Lemma \ref{lem:dist_tangentS2} applied to the open ball $B_{j}$; see Convention \ref{conv:balls}, and recall also
the convention $\frac{\beta}{\infty} \coloneqq 0$.
\qed(\ref{cla:W-prop})

To finish the proof of Theorem~\ref{thm:sts-to-Lipstrats},
it remains to show that, given vector spaces $W_{i,j}$ satisfying Claim~\ref{cla:W-prop}, we can find $K$-vector spaces $V_{i,j}$ satisfying the same properties and moreover such that $V_{i,j+1}\subseteq V_{i,j}$ for all $i\leqslant j$. This is the content of Lemma \ref{lem:new vectors}.
\end{proof}

\begin{lem}\label{lem:new vectors}
Let $m\leqslant n$. Suppose that we have elements $\lambda_1 < \dots < \lambda_{m+1}$ of the value group, where we allow $\lambda_{m+1} = \infty$, integers
$n \geqslant d_0 \geqslant \dots \geqslant d_m \geqslant 0$ and
$K$-vector spaces $W_{i,j} \subseteq K^n$ for $0 \leqslant i \leqslant j \leqslant m$ with the following properties:
\begin{enumerate}[label=(\roman*)]
 \item $\dim(W_{i,j}) = d_j$.
    \item $W_{i,j}\subseteq W_{i,i}$,
    \item If $i < j$, then $\Delta(W_{i+1,j}, W_{i,j})\le\frac{\lambda_{i+1}}{\lambda_{j+1}}$, with the convention $\frac{\lambda_{i+1}}{\infty} = 0$.
\end{enumerate}
Then there exist $K$-vector spaces $V_{i,j}$ satisfying (i)--(iii)
such that $V_{i,i} = W_{i,i}$, and
such that moreover, we have $V_{i,j+1} \subseteq V_{i,j}$ for every $0 \leqslant i \leqslant j \leqslant n-1$.
\end{lem}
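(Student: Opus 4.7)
The plan is to build, by reverse induction on $i$ from $m$ down to $0$, a basis $e^{(i)}_1,\dots,e^{(i)}_{d_i}$ of $W_{i,i}$ and then define $V_{i,j} \coloneqq \Span_K(e^{(i)}_1,\dots,e^{(i)}_{d_j})$ for $i \le j \le m$. Writing $j_k$ for the largest $j \in \{0,\dots,m\}$ with $k \le d_j$, I will aim to arrange that (a) $v(e^{(i)}_k) = 1$ and the residues $\res(e^{(i)}_k)$ are linearly independent in $\res(W_{i,i})$, (b) $e^{(i)}_k \in W_{i,j_k}$, and (c) for $i < m$ and each $k \le d_{i+1}$, $v(e^{(i+1)}_k - e^{(i)}_k) \le \lambda_{i+1}/\lambda_{j_k+1}$. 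Granting such a basis, the nesting $V_{i,j+1} \subseteq V_{i,j}$ is automatic from $d_{j+1} \le d_j$; property (a) yields $\dim V_{i,j} = d_j$ and also $V_{i,i} = W_{i,i}$, since the full basis then consists of $d_i = \dim W_{i,i}$ vectors of $W_{i,i}$ with linearly independent residues.

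Starting from any valuation-$1$ basis of $W_{m,m}$ with independent residues, the inductive step from $i+1$ to $i$ would go as follows. For $k \le d_{i+1}$ one has $j_k \ge i+1$, and property (b) at level $i+1$ places $e^{(i+1)}_k$ in $W_{i+1,j_k}$; since $i < j_k$, hypothesis (iii) for the $W$'s supplies some $f_k \in W_{i,j_k}$ with $v(e^{(i+1)}_k - f_k) \le \lambda_{i+1}/\lambda_{j_k+1}$, and I set $e^{(i)}_k \coloneqq f_k$. Because this difference has valuation strictly below $1$, the residue is preserved; applying (iii) with $j=i+1$ and (ii) one sees $\res(W_{i+1,i+1}) = \res(W_{i,i+1}) \subseteq \res(W_{i,i})$, placing these first $d_{i+1}$ residues in $\res(W_{i,i})$ where they remain linearly independent. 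For $d_{i+1} < k \le d_i$, where $j_k = i$, I complete the residues to a basis of $\res(W_{i,i})$ and lift the new ones to valuation-$1$ vectors of $W_{i,i}$; properties (a), (b), (c) are then clear at level $i$.

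The verification of the conclusion reduces to the distance bound in (iii). Fixing $i < j$, for each $k \le d_j$ one has $j_k \ge j$, so (c) gives $v(e^{(i+1)}_k - e^{(i)}_k) \le \lambda_{i+1}/\lambda_{j_k+1} \le \lambda_{i+1}/\lambda_{j+1}$. After applying a matrix from $\GL_n(\valring)$ (Fact~\ref{fact:vs-GL_n} together with Part~(2) of Lemma~\ref{lem:distance}) to put the basis of $V_{i+1,j}$ into the shape required by Lemma~\ref{lem:nice-dist}, that lemma delivers $\Delta(V_{i+1,j}, V_{i,j}) \le \lambda_{i+1}/\lambda_{j+1}$.

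The main obstacle is obtaining the sharp estimate in (c), and this is precisely where condition (b) pays off: a more naive construction that merely requires $e^{(i)}_k$ to be close to $e^{(i+1)}_k$ (without insisting $e^{(i)}_k \in W_{i,j_k}$) can only carry the inductively propagated bound of order $\lambda_{i+2}/\lambda_{j_k+1}$ to the next step, which is strictly coarser than the target $\lambda_{i+1}/\lambda_{j_k+1}$ since $\lambda_{i+1} < \lambda_{i+2}$. Keeping $e^{(i)}_k$ literally inside $W_{i,j_k}$ lets me invoke hypothesis~(iii) for the \emph{original} $W$'s afresh at every level, preserving the sharpness. The corner case $\lambda_{m+1} = \infty$ is automatic under the convention $\lambda_{i+1}/\infty = 0$: then (iii) at $j = m$ forces $W_{i,m} = W_{m,m}$ for every $i$, so $f_k \coloneqq e^{(i+1)}_k$ itself is a legitimate choice.
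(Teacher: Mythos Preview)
Your argument is correct and follows the same core idea as the paper: for each index $k$, place the $k$-th basis vector into the deepest available space $W_{i,j_k}$, then let $V_{i,j}$ be the span of the first $d_j$ such vectors; you package this as a reverse induction on $i$, whereas the paper first normalises all the residue spaces to $\Span(\bar e_1,\dots,\bar e_{d_j})$ via a single $\GL_n(\valring)$-matrix and then reads off a canonical basis of each $W_{i,j}$ directly. One small point: the matrix $M\in\GL_n(\valring)$ you invoke at the end can send the $e^{(i+1)}_k$ to the standard basis vectors, but then the images $M e^{(i)}_k$ only lie in $e_k+\maxid^n$, not in $e_k+\{0\}^{d_j}\times\maxid^{n-d_j}$ as Lemma~\ref{lem:nice-dist} literally requires; this is easily bypassed, since linear independence of the residues $\res(e^{(i+1)}_k)$ already yields $\max_k v(c_k)\le v(u)$ for any $u=\sum_k c_k e^{(i+1)}_k$, whence $w\coloneqq\sum_k c_k e^{(i)}_k\in V_{i,j}$ satisfies $v(u-w)\le v(u)\cdot\lambda_{i+1}/\lambda_{j+1}$ without appeal to that lemma.
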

\begin{proof}
By part (1) of Lemma \ref{lem:distance}, and since $\frac{\lambda_{i+1}}{\lambda_{j+1}}<1$, (iii) implies $\res(W_{i,j}) = \res(W_{i',j})$ for any $i, i', j$. Set $\bar W_j\coloneqq \res(W_{i,j})$ for some (any) $0\leqslant i\leqslant m$. Since $W_{j,j+1} \subseteq W_{j,j}$, we have 
\begin{equation}\label{eq:inclusion-W}
\bar W_m \subseteq \dots \subseteq \bar W_0.    
\end{equation}
Let $\{e_1,\ldots,e_n\}$ be the standard basis of $K^n$ and set $\bar{e}_j\coloneqq \res(e_j) \in \bar K^n$. 

\begin{cla}\label{cla:standard_basis} Without loss of generality, we may suppose that 
\[
\bar W_j = \Span_{\bar K}(\bar{e}_1,\ldots,\bar{e}_{d_j}),
\]
\end{cla}

Let $\{b_1,\ldots,b_{d_0}\}$ be a basis of $\bar{W}_0$. By \eqref{eq:inclusion-W}, we may assume that $\{b_1,\ldots,b_{d_j}\}$ is a basis for $\bar{W}_j$ for any $j\geqslant 0$. Choose $\bar{M}\in \GL_n(\bar K)$ sending $b_i$ to $\bar{e}_i$ and choose a lift $M\in \GL_n(\valring)$ of $\bar{M}$ (that is, $\res(M)=\bar{M}$). By Part (2) of Lemma \ref{lem:distance}, the $K$-vector spaces $W_{i,j}'\coloneqq M(W_{i,j})$ satisfy properties $(i)-(iii)$ and the statement in Claim~\ref{cla:standard_basis}. Assuming we can find $K$-vector spaces $V_{i,j}'$ as in the statement of the lemma with respect to the $K$-vector spaces $W_{i,j}'$, the reader may verify that $V_{i,j}\coloneqq M^{-1}(V_{i,j}')$ also satisfy the conclusion of the lemma with respect to the original $K$-vector spaces $W_{i,j}$. \qed(\ref{cla:standard_basis})

\begin{cla}\label{cla:even_more_standard_basis}
$W_{i,j}$ has a basis $(w_{i,j,k})_{k \leqslant d_j}$ satisfying
\[
w_{i,j,k} \in e_k +
(\{0\}^{d_j} \times \maxid^{n-d_j})
\]
\end{cla}

First, choose an arbitrary preimage $w'_k \in W_{i,j}$ of $\bar e_k \in \bar W_j$. Those $w'_k$ form a basis of $W_{i,j}$, and we have
$w'_k \in e_k + \maxid^{n}$.
Let $A$ be the $(d_j\times d_j)$-matrix consisting of the first $d_j$ rows of the matrix $B \coloneqq (w'_1 \mid \dots \mid w'_{d_j})$ (i.e., with columns $w'_k$). Then $v(A - I) < 1$ (where $I$ is the identity matrix); this implies that $A$ is invertible.
The columns of the matrix $BA^{-1}$ are the desired basis $w_{i,j,k}$.
\qed(\ref{cla:even_more_standard_basis})

\begin{cla}\label{cla:vector_distance}
$v(w_{i+1,j,k} - w_{i,j,k}) \leqslant \frac{\lambda_{i+1}}{\lambda_{j+i}}$.
\end{cla}
This follows from Lemma~\ref{lem:nice-dist}.
\qed(\ref{cla:vector_distance})

We partition the basis $\{w_{i,j,1}, \dots w_{i,j,d_j}\}$ of $W_{i,j}$ into sets
\[
A_{i,j,\ell} = \{w_{i,j,d_{\ell+1}+1}, \dots, w_{i,j,d_\ell}\}
\]
for $\ell = j, \dots, m$ (and where $d_{m+1} \coloneqq 0$) and we define, for each $i,j$, the desired vector space $V_{i,j}$ to be generated by
\[
A_{i,m,m} \cup A_{i,m-1,m-1} \cup \dots \cup A_{i,j,j}.
\]
Let us verify that these have all the desired properties.

By definition, we have $V_{i,j+1} \subseteq V_{i,j}$.

Since the first $d_j$ columns of the vectors generating $V_{i,j}$ form a matrix whose residue is the identity matrix, these vectors are in particular linearly independent, and we have $\dim(V_{i,j}) = d_j$.

We have $V_{i,j} \subseteq W_{i,i}$, since
$A_{i,k,k} \subseteq W_{i,k} \subseteq W_{i,i}$, and hence $V_{i,i} = W_{i,i}$ (since they have the same dimension).

Finally, $\Delta(V_{i+1,j}, V_{i,j})\le\frac{\lambda_{i+1}}{\lambda_{j+1}}$ follows from Claim~\ref{cla:vector_distance}, applied to the basis vectors of $V_{i+1,j}$ and $V_{i,j}$.
\end{proof}

\section{Arc-wise analytic t-stratifications}\label{sec:analytic}

In this section, we restrict to algebraically closed valued fields and introduce our strongest notion of stratifications: arc-wise analytic t-stratifications. After proving their existence (Section~\ref{sec:an_exist}), we show that they are in particular \lt-stratifications (Section~\ref{sec:aats-to-lipts}). We will see in Section \ref{sec:hierarchy} that arc-wise analytic t-stratifications probably lie at the top of a whole hierarchy of strenghtenings of \lt-stratifications.

For the notion of arc-wise analytic t-stratification to make sense, one needs a notion of ``analytic function''. We will therefore assume in the entire Section~\ref{sec:analytic} that our valued field $K$ is equipped with an analytic structure in the sense of \cite{CLip}; see Section~\ref{sec:an_struct}. In addition, we will assume that $K$ is algebraically closed. We believe that this last assumption is unnecessary, but the framework of analytic functions is much smoother in the algebraically closed case, and even formulating the ``right'' definition of arc-wise analytic t-stratifications gets more tricky in the non-algebraically closed case.

By passing through arc-wise analytic t-stratifications, we thus obtain that
\lt-stratifications exist in arbitrary algebraically closed valued fields of equi-characteristic $0$ with
analytic structure. (Note that this includes the ``trivial'' analytic structure, where the language ist just the pure valued field language.)
However, we do not only believe that the assumption of algebraically closedness is unnecessary, but more generally that (in contrast to arc-wise analytic t-stratifications) \lt-stratifications exist in arbitrary $1$-h-minimal valued fields of equi-characteristic $0$ (see Question \ref{q:hensel}).

\subsection{Fields with analytic structure}
\label{sec:an_struct}

When $K$ is a complete valued field, one can expand the language $\Lval$ to an ``analytic language'' by adding function symbols for some power series converging on
$\valring^n$. In an elementary extension $K'$ of $K$, although the interpretation of such function symbols does not necessarily yield a power series, they still share many properties that actual power series have. This idea (introduced by L. van den Dries and J. Denef in \cite{denef-vdd-88})  led R.~Cluckers and L.~Lipshitz in \cite{CLip}  to define an abstract notion of ``analytic structure'' on $K$, which yields an ``analytic language'' on $K$ expanding $\Lval$. As already mentioned above, in the entire Section~\ref{sec:analytic}, we fix such an analytic structure $\mathcal A$ on $K$ and we work in the corresponding analytic language.

The definition of analytic structure is quite general but rather technical, so we remit the reader to \cite{CLip} for definitions and details. Here, we only recall the features that we will need, and moreover only under the additional assumption that $K$ is algebraically closed.

\begin{itemize}
\item 
Certain definable subsets of $K^n$ are called \emph{domains} \cite[Definition~5.2.2]{CLip}. In particular, any product of balls $B_i \subsetneq K$ is a domain. There are also notions of \emph{open} and \emph{closed} domains. (A closed domain is essentially an abstract version of a rational domain in the sense of rigid geometry.)
\item If $X$ is either an open domain or a closed domain, then
we have a well-defined \emph{ring of analytic functions} $\mathbf{A}(X)$ consisting of certain definable (with parameters) maps from $X$ to $K$ \cite[Definition~5.2.2 and Corollary~5.2.14]{CLip}.
\item The analytic functions on $\valring^m$ are given as follows:
From the analytic structure $\mathcal{A}$ on the $K$, one obtains certain $K$-algebras of abstract power series $A^{\dagger}_{m,0}(K) \subseteq K\otimes_{\valring} \valring[[\xi_1,\ldots,\xi_m]]$ and injective\footnote{We use the remark above \cite[Definition~4.5.6]{CLip} and \cite[Theorem~4.5.4]{CLip} to assume without loss that the maps $\sigma_{m,n}$ are injective.} $K$-algebra homomorphisms $f\mapsto f^\sigma$ from $A^{\dagger}_{m,0}(K)$ to the ring of $K$-valued maps on $\valring^m$.
The ring $\mathbf{A}(\valring^m)$ is the image of $A^{\dagger}_{m,0}(K)$ under $\sigma$. (This corresponds to $\mathcal{O}_K^\sigma(\varphi)$ in \cite{CLip}, where $\varphi$ is a formula defining $\valring^m$.)
\item Similarly, the ring $\mathbf{A}(\maxid^m)$ of analytic functions on $\maxid^m$ is the image of a $K$-algebra $A^{\dagger}_{0,m}(K) \subseteq K\otimes_{\valring} \valring[[\rho_1,\ldots,\rho_m]]$ under a homomorphism $f \mapsto f^\sigma$.
\end{itemize}

It is worthwhile to note that every henselian field $K$ in the language $\Lval$ constitutes (up to interdefinability) already an example of field with analytic structure. Other interesting examples are gathered in \cite[Section 4.4]{CLip}. 

\

We now summarize some basic facts about analytic functions which are probably clear to anybody working with them, but since they are not written explicitly in \cite{CLip}, we sketch their proofs. (We only mention statements about $\mathbf{A}(\valring^n)$; variants of those could also be obtained for $\mathbf{A}(\maxid^n)$.)

\begin{lem}\label{lem:basic-analyticII} Let $g = f^\sigma\colon \valring^n\to K$ be in $\mathbf{A}(\valring^n)$, where $f=\sum_{i\in\mathbb{N}^n} a_i\xi^i\in A^\dagger_{n,0}(K)$.
Then we have the following:
\begin{enumerate}
\item
We have $\max \{v(g(x)) \mid x \in \valring^n\} = \max \{v(a_i) \mid i \in \N^n\}$; in particular, those maxima exist.
\item
We have good Taylor approximation: For every $r \in \N$, we have
\[
v(g(x) - \sum_{|i| < r} a_i x^i) \leqslant v(x)^r \cdot \max\{v(a_i) : |i| \geqslant r\}.
\]
\item 
For each $j \in \N^n$, the $j$-th partial derivative $g^{(j)}$ of $g$ is well-defined (in the sense that it exists and is independent of the order in which one takes the derivatives with resepct to the different variables),
and we have $a_j = g^{(j)}(0)/j!$.
\item For every $j \in \N^n$, the formal $j$-th partial derivative $f^{(j)}$ of the power series $f$
lies in $A^{\dagger}_{n,0}(K)$, and $(f^{(j)})^\sigma$ is equal to the actual 
$j$-th partial derivative $g^{(j)}$ of $g$.
\end{enumerate}
\end{lem}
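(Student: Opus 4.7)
The plan is to prove the four parts in order, with Parts (3) and (4) handled together at the end. For Part (1), I use that $A^\dagger_{n,0}(K) \subseteq K \otimes_{\valring} \valring[[\xi]]$ lets me write $f = c\tilde f$ with $c \in K^\times$ and $\tilde f \in \valring[[\xi]]$ chosen so that at least one coefficient of $\tilde f$ is a unit; then $\max_i v(a_i)$ exists and equals $v(c)$. The bound $v(g(x))\leq v(c)$ for $x\in \valring^n$ follows from the axiomatic fact that $\sigma$ sends $\valring[[\xi]]$ into $\valring$-valued functions on $\valring^n$. For the reverse inequality, strong noetherianity gives a decomposition $\tilde f = P + M$ with $P\in \valring[\xi]$ of nonzero reduction $\bar P$ and $M \in \maxid \cdot A^\dagger_{n,0}(K)$; since $K$ is algebraically closed, the residue field $\bar K$ is infinite, so $\bar P$ takes a nonzero value at some $\bar a \in \bar K^n$, and any lift $a \in \valring^n$ of $\bar a$ witnesses $v(g(a)) = v(c)$.

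For Part (2), I would write $f - \sum_{|i|<r} a_i \xi^i = \sum_{|j|=r} \xi^j h_j$ with each $h_j \in A^\dagger_{n,0}(K)$ having coefficients drawn from $\{a_i : |i|\geq r\}$; this decomposition exists inside $A^\dagger_{n,0}(K)$ by the closure axioms of analytic structure, which allow extracting quotients by a monomial once it formally divides the series. Applying $\sigma$, which is a $K$-algebra homomorphism compatible with polynomial evaluation, yields $g(x) - \sum_{|i|<r} a_i x^i = \sum_{|j|=r} x^j h_j^\sigma(x)$. Combining the ultrametric inequality with Part (1) applied to each $h_j$, and using $v(x^j) \leqslant v(x)^r$ for $|j| = r$, gives the stated bound.

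For Part (4), closure of $A^\dagger_{n,0}(K)$ under formal partial differentiation is built into the Cluckers-Lipshitz axioms, either as an explicit closure property or derivable from the substitution axiom by expanding $f(\xi_1,\dots,\xi_k+\zeta,\dots,\xi_n)\in A^\dagger_{n+1,0}(K)$ and reading off the coefficient of $\zeta$. For Part (3), I would apply Part (2) with $r=2$ to the shifted series $f(\xi + x_0)$ (which again lies in $A^\dagger_{n,0}(K)$ for any $x_0 \in \valring^n$, via the substitution axiom); writing this shifted series as $\sum b_i \xi^i$, Part (2) gives $v(g(x_0+y) - b_0 - \sum_k b_{e_k} y_k) \leqslant v(y)^2 \cdot C$, which is strict differentiability of $g$ at $x_0$ with partial derivatives $b_{e_k}$. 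Specializing to $x_0 = 0$ yields $a_{e_k} = g^{(e_k)}(0)$. Induction on $|j|$, using Part (4) to treat each $g^{(e_k)}$ as an analytic function with known power series $\sum_i i_k a_i \xi^{i-e_k}$, then gives $a_j = g^{(j)}(0)/j!$; the independence of the order of differentiation follows from commutativity of formal differentiation.

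The main obstacle will be verifying, within the axiomatic framework of \cite{CLip}, the closure properties of $A^\dagger_{n,0}(K)$ used throughout: closure under extraction of monomial quotients (for Part~(2)), under shifts $\xi \mapsto \xi + x_0$ with $x_0 \in \valring^n$ (for Part~(3)), and under formal partial differentiation (for Part~(4)). These are all expected features of any reasonable analytic structure, but the precise axiomatic verification requires tracing carefully through the definitions and closure axioms in \cite{CLip}.
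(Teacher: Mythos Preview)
Your approach is correct and follows essentially the same strategy as the paper: normalize so that the Gauss norm is $1$, use that finitely many monomials dominate to realize the maximum in Part~(1), decompose the tail as $\sum_{|j|=r}\xi^j h_j$ for Part~(2), and prove Parts~(3) and~(4) together by induction on~$|j|$.

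The one difference worth noting is in the choice of the underlying tool from \cite{CLip}. You invoke three separate closure properties of $A^\dagger_{n,0}(K)$ (monomial quotient extraction, substitution $\xi\mapsto\xi+x_0$, formal differentiation), and flag their verification as the main obstacle. The paper instead uses Weierstra\ss{} division as the single workhorse: for Part~(2), repeated Weierstra\ss{} division by the variables produces the decomposition $f=\sum_{|i|<r}a_i\xi^i+\sum_{|j|=r}\xi^j\tilde f_j$ directly; for Part~(4), dividing $f(x)-f(x_1+y,x_2,\dots,x_n)$ by $y$ yields $y\cdot f'(x,y)+f''(x)$ with $f^{(e_1)}(x)=f'(x,0)$, and applying $\sigma$ then letting $y\to 0$ (using continuity of $(f')^\sigma$, which follows from the already-proved $|j|=0$ case) gives $(f^{(e_1)})^\sigma=\partial g/\partial x_1$. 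This is a bit more economical, since Weierstra\ss{} division is an explicit axiom of analytic structures, whereas the closure properties you cite would each have to be derived from it anyway.
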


\begin{proof}[Sketch of proof]
(1) Existence of $\max \{v(a_i) \mid i \in \N^n\}$ is by definition of 
$A^{\dagger}_{m,0}(K)$ and \cite[Remark~4.1.10]{CLip}. After rescaling, we may assume that it is equal to $1$. Then $g(\valring^n) \subseteq \valring$ (by definition again), so we already obtained ``$\leqslant$''.
Now set $I \coloneqq \{i \in \N^n : v(a_i) = 1\}$ and
$f_0 \coloneqq \sum_{i \in I}a_i\xi^i$. Then by the ``$\leqslant$'' part applied to $f - f_0$, we get $\res(f^\sigma(x)) = \res(f_0^\sigma(x))$ for all $x$. Since $f_0$ is a polynomial (by definition of $A^{\dagger}_{n,0}(K)$), we have
$\res(f_0^\sigma(x)) = \sum_{i \in I}\res(a_i)\res(x)^i$. Since not all $\res(a_i)$ are $0$, there exists an $x \in \valring^n$ where this is not $0$, and hence such that $v(g(x)) = 1$.

(2) Using Weierstraß division repeatedly, we can write $f = \sum_{|i| < r} a_i\xi^i + \sum_{|j|=r} \xi^j\tilde f_j$ for some $\tilde f_j \in A^{\dagger}_{n,0}(K)$, and where each coefficient of each of the power series $\tilde f_j$ is equal to some $a_i$ with $|i| \geqslant r$.
By applying $\sigma$, we obtain that in the Taylor approximation, the error term is bounded by $v(x)^r\cdot \max \{v(\tilde f^\sigma_j(x)) : |j| = r\}$.
Now apply (1) to each $\tilde f_j$ to get the desired bound.

(3)+(4) By a common induction on $|j|$, it suffies to prove (3) and (4) when $|j| \leqslant 1$. In that case, (3) follows 
directly from (2) (using $r = |j|+1$).
The first part of (4) holds since $A^\dagger_{n,0}(K)$ is closed under Weierstraß division, namely (without loss considering $j = (1, 0, \dots, 0)$), we have
\begin{equation}\label{eq:weidiff}
    f(x) - f(x_1 + y, x_2, \dots, x_n) = y\cdot f'(x,y) + f''(x)
\end{equation}
for some $f' \in A^\dagger_{n+1,0}(K)$ and some
$f'' \in A^\dagger_{n,0}(K)$, and $f^{(j)}(x) = f'(x, 0)$. Applying $\sigma$ to the entire Equation (\ref{eq:weidiff}) and then letting $y$ go to $0$ yields the second part of (4)
(using that $(f')^\sigma \in \mathbf{A}(\valring^{n+1})$ is continuous, which follows from the $|j|=1$ case of (3)).
\end{proof}

\begin{rem}\label{rem:an-to-strict}
Note that Lemma~\ref{lem:basic-analyticII} implies that any $g \in \mathbf{A}(\valring^n)$ is strictly $C^1$, namely using that the derivative of $g$ is continuous and that the convergence of the limit defining the derivative is uniform on small balls (by Lemma~\ref{lem:basic-analyticII} (2)).
From this, one deduces that analytic functions $g\colon X \to K$ on arbitrary open or closed domains $X$ are strictly $C^1$, e.g. by composing $g$ with an affine linear map sending $\valring^n$ to a small neighbourhood of any $x \in X$.
\end{rem}

We now list two easy consequences of the above facts, for future reference:

\begin{lem}\label{lem:our5.8-multi}
Let $g\colon \valring^n\to\valring^d$ be an analytic function (i.e., each coordinate is analytic).
\begin{enumerate}
    \item If $\rv(g(x))$ is constant on $\valring^n$, then $v(Dg(0)) < v(g(0))$.
    \item
      If, for any $x, x' \in \valring^n$, $v(Dg(x) - Dg(x')) < 1$, then
    $v(g(x)-g(0)-Dg(0)(x)) < v(x)^2$. 
\end{enumerate}
\end{lem}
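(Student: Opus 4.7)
The plan is to rephrase both hypotheses as conditions on the Taylor coefficients of $g$ and then invoke Lemma~\ref{lem:basic-analyticII}. Write $g=(g_1,\dots,g_d)$ with each $g_i = f_i^\sigma$ for a power series $f_i = \sum_{j\in \N^n} a_{i,j}\xi^j \in A^\dagger_{n,0}(K)$. By Lemma~\ref{lem:basic-analyticII}(3)--(4), each partial derivative $\partial_k g_i$ is again analytic, with underlying power series $\partial_k f_i = \sum_{j'\in \N^n}(j'_k+1)\,a_{i,j'+e_k}\,\xi^{j'}$; in particular the entries of $Dg(0)$ are precisely the linear coefficients $a_{i,e_k}$.

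For part~(1), the assumption that $\rv(g(x))$ is constant gives $v(g_i(x)-g_i(0))<v(g(0))$ for every $x\in \valring^n$ and every $i$ (the case $g(0)=0$ forces $g\equiv 0$, for which there is nothing to show). I then apply Lemma~\ref{lem:basic-analyticII}(1) to the analytic function $g_i-g_i(0)$, whose power-series coefficients are exactly the $a_{i,j}$ with $|j|\geqslant 1$; this yields $\max_{|j|\geqslant 1} v(a_{i,j}) < v(g(0))$, and restricting to $|j|=1$ bounds every entry of $Dg(0)$, so $v(Dg(0))<v(g(0))$.

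For part~(2), Lemma~\ref{lem:basic-analyticII}(2) applied coordinate-wise with $r=2$ gives
\[
v\bigl(g_i(x)-g_i(0)-(Dg(0)\cdot x)_i\bigr) \;\leqslant\; v(x)^2 \cdot \max\{v(a_{i,j}) : |j|\geqslant 2\},
\]
so, at least for $x\neq 0$, it suffices to show that $\max\{v(a_{i,j}) : |j|\geqslant 2\} < 1$ for each $i$. I specialise the hypothesis to $x'=0$ to get $v(\partial_k g_i(x)-\partial_k g_i(0))<1$ for all $i,k$ and all $x$, and then apply Lemma~\ref{lem:basic-analyticII}(1) to $\partial_k g_i-\partial_k g_i(0)$: its power-series coefficients are $(j'_k+1)\,a_{i,j'+e_k}$ for $|j'|\geqslant 1$, and since equi-characteristic zero makes every positive integer a unit, the resulting maximum equals $\max_{|j|\geqslant 2,\,j_k\geqslant 1} v(a_{i,j})$. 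Letting $k$ vary covers every multi-index of norm at least two, since each such multi-index has some positive coordinate. The only genuine step is this passage through partial derivatives in order to reach the quadratic Taylor coefficients; beyond that the argument is routine coefficient bookkeeping, the essential input being that equi-characteristic zero lets us discard the integer factors produced by differentiation.
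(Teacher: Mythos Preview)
Your proof is correct and follows essentially the same route as the paper: reduce to scalar-valued $g$, apply Lemma~\ref{lem:basic-analyticII}(1) to $g-g(0)$ for part~(1), and for part~(2) combine the Taylor estimate of Lemma~\ref{lem:basic-analyticII}(2) with Lemma~\ref{lem:basic-analyticII}(1) applied to each $\partial_k g - \partial_k g(0)$. Your treatment is in fact slightly more explicit than the paper's, in particular your observation that the integer factors $(j'_k+1)$ arising from differentiation are units in equi-characteristic~$0$, and your remark that the case $x=0$ is vacuous.
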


\begin{proof}
Both statements can be proved for each coordinate funciton separately, so we may without loss assume $d = 1$. Write $g = f^\sigma$ for $f = \sum a_ix^i$.

(1)
Set $h(x) := g(x) - g(0)$. Since $\rv(g(x)) = \rv(g(0))$ (by assumption), we have $v(h(x)) < v(g(0))$ for all $x$.
Applying Lemma~\ref{lem:basic-analyticII} (1) to $h$ yields
$v(a_i) < v(g(0))$ for all $i \ne \underline{0}$. By Lemma~\ref{lem:basic-analyticII} (3), the valuations of the $a_i$ for $|i| = 1$ are equal to the valuations of the first partial derivatives of $h$. Thus
$v(\nabla g(0)) = v(\nabla h(0)) < v(g(0))$.

(2)
By Lemma~\ref{lem:basic-analyticII} (3), it suffices to show that $v(a_i) < 1$ for $|i| \geqslant 2$.
Each of those $a_i$ appears as a non-constant coefficient of the series corresponding to one of the first partial derivatives $g_j$ of $g$.
By assumption, $v(g_j(x) - g_j(0)) < 1$, so applying Lemma~\ref{lem:basic-analyticII} (1) to $g_j$ yields the desired result.
\end{proof}

\begin{lem}\label{lem:analytic-inverse}
Suppose that $g\colon B \to B'$ is an analytic bijection between two open or between two closed balls in $K^n$ of the same radius, such that $v(Dg) = 1$ on all of $B$ and
$\res(Dg)$ is constant on $B$.
Then the inverse of $g$ is also analytic.
\end{lem}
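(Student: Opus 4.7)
The plan is to reduce to a normalized power-series setup and then invoke the henselian implicit function theorem that is standard in the Cluckers--Lipshitz analytic framework.

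My first step would be to exploit that analyticity is preserved under suitable analytic changes of coordinates: translations, scalar multiplication by a fixed element of $K^\times$, and the action of $\GL_n(\valring)$, all of which have analytic inverses. By pre- and post-composing $g$ with such maps, I would reduce to the case $B = B' = \valring^n$ (in the closed case) or $B = B' = \maxid^n$ (in the open case), $g(0) = 0$, and $\res(Dg(x)) = I_n$ for every $x \in B$. Writing then $g = f^\sigma$ with $f$ in the appropriate ring ($A^{\dagger}_{n,0}(K)$ or $A^{\dagger}_{0,n}(K)$), parts (3) and (4) of Lemma~\ref{lem:basic-analyticII} identify the linear part of $f$ with the Jacobian at $0$, hence with the identity. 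Applying part (1) to each coordinate of $Dg - I_n$ then forces every coefficient $a_i$ of $f$ with $|i| \geqslant 2$ to satisfy $v(a_i) < 1$, so that $f(x) = x + h(x)$ with $h$ vanishing to order $\geqslant 2$ and all coefficients of valuation strictly less than $1$.

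Next, I would construct the inverse. Setting $F(x,y) \coloneqq f(y) - x$, I view $F$ as a tuple in $A^{\dagger}_{2n,0}(K)^n$; we have $F(0,0) = 0$ and the Jacobian of $F$ with respect to $y$ at $(0,0)$ equals $I_n$, putting $F$ in henselian form in $y$. The rings $A^{\dagger}_{n,0}(K)$ built into an analytic structure satisfy a Henselian implicit function theorem (obtained by iterating Weierstrass preparation and division one coordinate at a time), yielding a unique $\tilde h \in A^{\dagger}_{n,0}(K)^n$ with $\tilde h(0) = 0$ and $F(x, \tilde h(x)) = 0$ identically on $\valring^n$. Applying $\sigma$ gives $g(\tilde h^\sigma(x)) = x$ throughout $B$, and since $g$ is assumed to be a bijection this forces $\tilde h^\sigma = g^{-1}$, which is therefore analytic.

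The main obstacle is verifying that the implicit function theorem really is available within the analytic framework $\mathcal A$ under consideration; this is standard but is not made explicit in the excerpt. As a purely internal alternative that avoids invoking external properties of $A^{\dagger}_{n,0}$, one can construct $g^{-1}(x)$ as the unique fixed point (parametrised by $x$) of the contracting iteration $y \mapsto x - h(y)$, using the Banach fixed-point Lemma~\ref{lem:banach} for existence and Lemma~\ref{lem:basic-analyticII} to package the uniform limit of the iterates into a convergent element of $A^{\dagger}_{n,0}(K)$.
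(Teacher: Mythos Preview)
Your approach is essentially the same as the paper's: identical normalization (translate, rescale, compose with $\GL_n(\valring)$ to reduce to $\valring^n$ or $\maxid^n$ with $g(0)=0$ and Jacobian residually the identity), followed by the same coefficient estimate $v(a_i)<1$ for $|i|\ge 2$ via Lemma~\ref{lem:basic-analyticII}(1) applied to the partial derivatives. The only difference is packaging: you invoke a Henselian implicit function theorem for $F(x,y)=f(y)-x$ as a black box, while the paper spells out exactly that argument by applying Weierstrass division to $f_1(\rho)-\rho'_1$ (regular of degree~$1$ in $\rho_1$), substituting the resulting $r_1$ into the remaining coordinates, and iterating through the variables one at a time. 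Your parenthetical ``obtained by iterating Weierstrass preparation and division one coordinate at a time'' is precisely the content of the paper's proof, so there is no genuine divergence.

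One minor point: your normalization yields $\res(Dg)=I_n$, not $Dg(0)=I_n$ exactly, so the linear part of $f$ is only congruent to the identity modulo $\maxid$; a further composition with $(Dg(0))^{-1}\in\GL_n(\valring)$ (as the paper does) makes it exact and slightly cleans up the regularity check. Your fixed-point alternative is less self-contained: Lemma~\ref{lem:banach} gives a pointwise inverse, but promoting the limit of the Picard iterates to an element of $A^{\dagger}_{n,0}(K)$ requires completeness or closure properties of that ring not recorded in the paper, so the Weierstrass route is the safer one here.
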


\begin{proof}
First assume that $B$ and $B'$ are open.
Without loss, $B = B' = \maxid^n$ and $g(0) = 0$. By composing with a matrix from $\GL_n(\valring)$, we may moreover assume that $Dg(0)$ is the identity matrix.
In particular, the $k$-th coordinate $g_k$ of $g$ is of the form $g_k = f^{\sigma}_k$, where
$f_k$ is of the form $f_k(\rho) = \rho_k + \sum_{|i| \geqslant 2} a_i \rho^i$ for some $a_i \in \valring$ (where $\rho = (\rho_1, \dots, \rho_n))$.
In particular, $f_1(\rho) - \rho'_1 \in \valring[[\rho, \rho'_1]]$ is regular in $\rho_1$ of degree $1$ in the sense of \cite[Definition~4.1.1 (ii)]{CLip}, so
by Weierstraß division,
we obtain $\rho_1 = q\cdot (f - \rho'_1) + r_1$ for some $q \in \valring[[\rho'_1, \rho]]$ and $r_1 \in \valring[[\rho'_1, \rho_2, \dots, \rho_n]]$.
For $k \geqslant 2$, set $r_k := f_k(r, \rho_2, \dots, \rho_n)$. Then the analytic function
$\tilde g := (r_1^\sigma, \dots, r_n^\sigma)\colon \maxid^n \to \maxid^n$ has the property that for any $x \in \maxid^n$ and for $y = g(x) \in \maxid^n$, we have $\tilde g(y_1, x_2, \dots, x_n) = (x_1, y_2, \dots, y_n)$. Since $D\tilde g(0)$ is again the idendity matrix (as an easy computation shows), we can repeat the same procedure for all the other variables, so that in the end, we obtain the desired inverse of $g$.

The case where $B$ and $B'$ are closed is very similar, but requires one additional step. We assume that $B = B' = \valring^n$, that $g(0) = 0$ and that $Dg(0)$ is the identity matrix.
As before, we obtain $g_k = f_k^\sigma$ for 
$f_k(\xi) = \xi_k + \sum_{|i| \geqslant 2} a_i \xi^i$ with $a_i \in \valring$ and we want to apply 
Weierstraß division to $f_1(\xi) - \xi'_1$. 
The additional difficulty is that to have regularity in $\xi_1$ of degree $1$, we now need
to have $v(a_i) < 1$ for all $i$ (since $\xi_1$ runs over the valuation ring). So suppose that we have $v(a_i) = 1$ for some $i$. Then also one of the first partial derivatives  $f' \coloneqq \partial f_1/\partial \xi_\ell$
has a coefficient of valuation $1$, so that Lemma~\ref{lem:basic-analyticII} (1) contradicts the assumption that $\res((f')^\sigma)$ is constant.

Now we can apply Weierstraß division and the proof continues as in the open ball case.
\end{proof}

\subsection{Analytic rainbows and arc-wise analytic $t$-stratifcations}
\label{sec:an_exist}

As announced, we assume in this section that $K$ is algebraically closed and endowed with an analytic structure.
We start by definining the notion of arc-wise analytic t-stratification. As usual $\mathbf B \subseteq K^n$ is an open ball, possibly equal to $K^n$.

\begin{defn}\label{def:arc-wise-strats}
An \emph{arc-wise analytic t-stratification} of $\mathbf B$ is a
partition $\mathcal{S}=(S_i)_{0 \leqslant i \leqslant n}$ of $\mathbf B$ into definable sets $S_i$ of dimension at most $i$ such that for every $d\in \{0,\ldots,n\}$ and every ball $B \subseteq S_{\geqslant d}$, there are a risometry $\varphi\colon B\to B$ and a $d$-dimensional sub-vector space $U \subseteq K^n$ such that 
\begin{enumerate}
    \item $\cS$ is $U$-trivialized by $\varphi$ on $B$, and 
    \item for every $b \in B/U$,
    there exists an analytic map $g\colon \pi(B) \to B$, for some coordinate projection $\pi\colon K^n \to K^d$,
    such that the image $g(\pi(B))$ is equal to $\Arc_b(U, \varphi)$, such that $v(Dg(x)) = 1$ for all $x \in \pi(B)$ and such that $\res(Dg(x)) \in \mathrm{Mat}_{n\times d}(\bar K)$ does not depend on $x$.
\end{enumerate}
\end{defn}

In the second condition, the coordinate projection plays no role at all; ``$\pi(B)$'' is just a quick way to write ``some $d$-dimensional ball which is open if and only if $B$ is, and which has the same radius as $B$''.

The first condition above simply states that $(S_0,\ldots, S_n)$ is $d$-trivial on $B$, and therefore, as the name suggests, arc-wise analytic t-stratifications are t-stratifications. In fact, we will later show in Theorem \ref{thm:aats-to-lipts} that they are also \lt-stratifications. 

There are two natural ways to define what it means that an arc-wise analytic t-stratification reflects a map $\chi\colon \mathbf B \to \RV\eq$: On the one hand, we could use the definition coming from t-stratifications, namely, that for every $d$ and every $B \subseteq S_{\geqslant d}$, $(\cS, \chi)$ has to be $d$-trivial on $B$; On the other hand, we could additionally impose that the map $\varphi$ $d$-trivializing $(\cS, \chi)$ satisfies the above Condition~(2). As in the case of \lt-stratification, by Remark~\ref{rem:refl}, these the two definitions are equivalent.  

The following lemma yields an alternative characterization of Condition~(2) of Definition~\ref{def:arc-wise-strats}. It has the disadvantage of being (seemingly) coordinate-dependent, but it is often more handy to work with.

\begin{lem}\label{lem:arc-wise-strats}
Suppose that $\varphi\colon B \to B$ is a definable risometry and $U \subseteq K^n$ is a $d$-dimensional sub-vector space. Let $\pi\colon K^n \to K^d$ be an exhibition of $\bar U := \res(U)$. Then the following are equivalent, for each $b \in B/U$:
\begin{enumerate}
    \item There exists an analytic map $g\colon \pi(B) \to K^n$ whose image $g(\pi(B))$ is equal to $\Arc_b(U, \varphi)$, such that $v(Dg) = 1$ on all of $\pi(B)$ and such that $\res(Dg)$ is constant on $\pi(B)$.
    \item The map $\arc_b\colon \pi(B)\to K^{n-d}$ from Definition~\ref{def:g_b} (whose graph is $\Arc_b(U, \varphi)$) is analytic.
\end{enumerate}
\end{lem}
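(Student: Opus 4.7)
The plan is to prove the equivalence by constructing explicit bridges between $g$ and $\arc_b$, using that $\Arc_b(U,\varphi)$ is, up to the coordinate identification provided by $\pi$ and $\pi^\vee$, the graph of $\arc_b$ over $\pi(B)$. A preliminary observation that I will use in both directions is that $\affdir(\Arc_b(U,\varphi)) = \bar U$: indeed $\Arc_b(U,\varphi) = \varphi(\pi_U^{-1}(b) \cap B)$, and the latter set is a translate of a sub-ball of $U$, so its affine direction is $\bar U$; since $\varphi$ is a risometry, Fact~\ref{fact:riso-affdir} transports this to $\Arc_b(U,\varphi)$.

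For $(2) \Rightarrow (1)$, I would take $g$ to be the parametrization sending $x \in \pi(B)$ to the unique element of $\Arc_b(U,\varphi) \cap \pi^{-1}(x)$; in coordinates where $\pi$ selects the first $d$ entries, this is just $g(x) = (x, \arc_b(x))$. This is analytic by hypothesis, with image $\Arc_b(U,\varphi)$ by construction, and the relation $\pi \circ g = \id_{\pi(B)}$ endows $Dg(x)$ with an identity block in the $\pi$-rows, so $v(Dg) = 1$. For constancy of $\res(Dg)$, I would observe that each partial derivative $\partial g/\partial x_j(x)$, being a limit of difference quotients $(g(x + h e_j) - g(x))/h$ whose numerators lie in $\Arc_b(U,\varphi) - \Arc_b(U,\varphi)$, has direction contained in $\affdir(\Arc_b(U,\varphi)) = \bar U$; combined with $\pi(\res(\partial g/\partial x_j(x))) = e_j$ and the fact that $\pi$ restricts to an isomorphism $\bar U \to \bar K^d$, this forces each column of $\res(Dg(x))$ to be the unique preimage of $e_j$ inside $\bar U$, which is independent of $x$.

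For $(1) \Rightarrow (2)$, I would set $\tilde g \coloneqq \pi \circ g\colon \pi(B) \to \pi(B)$. This is a bijection because $g$ bijects onto $\Arc_b(U,\varphi)$ and Fact~\ref{fact:exhibition_val} gives that $\pi$ restricts to a bijection $\Arc_b(U,\varphi) \to \pi(B)$. Once I know $\tilde g^{-1}$ is analytic, I recover $\arc_b = \pi^\vee \circ g \circ \tilde g^{-1}$, which is then analytic; so the goal reduces to applying Lemma~\ref{lem:analytic-inverse} to $\tilde g$ (noting that $\pi(B)$ is a ball of the same radius and same openness as $B$). Constancy of $\res(D\tilde g) = \pi \circ \res(Dg)$ is automatic from the assumption; the remaining conditions $v(D\tilde g) = 1$ and invertibility of this residue both reduce to showing that $\res(Dg(x))$, viewed as a map $\bar K^d \to \bar K^n$, has full rank $d$.

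The main obstacle is precisely this rank statement. By the same direction argument as above, the image of $\res(Dg(x))$ lies inside $\bar U$, so its rank is $\leqslant d$ and, by constancy, independent of $x$. If this rank were some $k < d$ at every point, I would argue that the image of the strictly $C^1$ map $g$ (see Remark~\ref{rem:an-to-strict}) would have topological dimension at most $k$, using the well-behavedness of definable dimension in any $1$-h-minimal structure noted in Section~\ref{sec:modth} (more concretely, a constant-rank argument locally factors $g$ through a coordinate projection to $K^k$). However $\Arc_b(U,\varphi)$ has dimension $d$, since $\pi$ restricts to a bijection onto the $d$-dimensional ball $\pi(B)$, yielding a contradiction. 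Hence the rank is $d$, Lemma~\ref{lem:analytic-inverse} applies to $\tilde g$, and $\arc_b$ is analytic as desired.
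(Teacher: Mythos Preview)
Your overall strategy coincides with the paper's: $(2)\Rightarrow(1)$ via $g(x)=(x,\arc_b(x))$, with the same direction-in-$\bar U$ argument for constancy of $\res(Dg)$; and $(1)\Rightarrow(2)$ by setting $\tilde g=\pi\circ g$, inverting it via Lemma~\ref{lem:analytic-inverse}, and writing $\arc_b=\pi^\vee\circ g\circ\tilde g^{-1}$. The direction $(2)\Rightarrow(1)$ is fine and matches the paper.

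In $(1)\Rightarrow(2)$ there is a genuine gap. Your argument that $\res(Dg)$ has full rank $d$ invokes a ``constant-rank argument'' to conclude that if $\res(Dg)$ had rank $k<d$ everywhere then the image $g(\pi(B))$ would have dimension at most $k$. This inference is not valid: the hypothesis only controls the rank of the \emph{residue} of $Dg$, not of $Dg$ itself, and the latter can be larger and vary with the point. For instance $g\colon\maxid^2\to K^3$, $g(x,y)=(x,xy,0)$ has $v(Dg)=1$ and $\res(Dg)$ constant of rank $1$, yet its image $\{(a,b,0):v(b)<v(a)<1\}$ is $2$-dimensional. So the dimension route does not yield the rank statement.

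The paper bypasses this entirely. It uses the exhibition property on the arc: since $v(\pi(z-z'))=v(z-z')$ for all $z,z'\in\Arc_b(U,\varphi)$ (Fact~\ref{fact:exhibition_val}), one obtains $v(D\tilde g)=v(Dg)=1$ directly, and $\res(D\tilde g)$ is constant as a submatrix of the constant $\res(Dg)$; then Lemma~\ref{lem:analytic-inverse} applies. In your own language this is even shorter: you already showed that each column $c_j$ of $Dg(x)$ has $\dir(c_j)\subseteq\bar U$; since $\pi$ is an exhibition of $\bar U$, this forces $v(\pi(c_j))=v(c_j)$ for every column, hence $v(D\tilde g)=v(Dg)=1$ with no detour through dimension theory. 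A minor further point: you assert that ``$g$ bijects onto $\Arc_b(U,\varphi)$'', but Condition~(1) only specifies the image of $g$, not injectivity; surjectivity of $\tilde g$ onto $\pi(B)$ is immediate, but injectivity is not part of the hypothesis.
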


\begin{proof}
If $\arc_b$ is analytic, we can take $g(x) := (x, \arc_b(x))$.
Indeed, using that $\varphi$ is a risometry, one obtains that for every $x \in \pi(B)$, $v(Dg(x)) = 1$ and $\res(Dg(x))$ is inverse of the restricted projection $\pi_{|\bar U}\colon \bar U \to \bar K^d$.

For the other direction, let $g$ be given. Then the composition $h := \pi \circ g\colon \pi(B) \to \pi(B)$ is analytic.
Using that $v(Dg) = 1$ everywhere and
that $v(\pi(x - x'))= v(x - x')$ for $x, x' \in \arc_b(U, \varphi)$, we deduce that $v(Dh) = 1$ everywhere.
Moreover, since $Dh$ is a sub-matrix of $Dg$, we obtain that $\res(Dh)$ is constant (since $\res(Dg)$ is constant).
We can therefore apply
Lemma~\ref{lem:analytic-inverse}, to obtain that the inverse of $h$ is also analytic.
This implies that $\arc_b = \pi^\vee \circ g\circ h\1$ is analytic.
\end{proof}

In order to show the existence of arc-wise analytic t-stratifications on algebraically closed valued fields with analytic structure, we will exploit that in \cite{halupczok2014a}, a very closely related construction has already been done:

\begin{defn}[Analytic rainbow]\label{def:an-t-strat} We say that a t-stratification $\mathcal{S}=(S_i)_i$ of $\mathbf{B}$ has an \emph{analytic rainbow} if for any $d\in \{0,\ldots,n\}$, for any fiber $C$ of the rainbow $\rho_{\cS}$ of $\mathcal{S}$ with $C \subseteq S_d$ and for any exhibition $\pi\colon K^n \to K^d$ of $\affdir(C)$, there exists an open domain $X\subseteq K^d$
containing $\pi(C)$ such that
$C$ is the graph of the restriction to $\pi(C)$ of an analytic function from $X$ to $K^{n-d}$.  
\end{defn}

The following is essentially Part (1) of \cite[Lemma 5.9]{halupczok2014a}:

\begin{lem}\label{lem:an-t-strat}
Let
$\chi\colon \mathcal B \to \RV\eq$ be an $A$-definable map, for some set of parameters $A \subseteq K \cup \RV\eq$. Then there exists an $A$-definable t-stratification $\mathcal{S}=(S_i)_i$ with analytic rainbow and reflecting $\chi$.
\end{lem}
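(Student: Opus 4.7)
The plan is to invoke Part (1) of \cite[Lemma 5.9]{halupczok2014a} essentially directly, after verifying that its hypotheses are available in our present framework. That earlier lemma was proved in a setting compatible with analytic structures in the sense of Cluckers--Lipshitz \cite{CLip}, so the main point is to check that the two ingredients used there are in place here: (a) the existence of $A$-definable t-stratifications reflecting a given $A$-definable map $\chi$, which follows from \cite[Theorem~4.12]{halupczok2014a} combined with the $1$-h-minimality of analytic expansions \cite[Theorem~5.5.3]{clu-hal-rid}; and (b) piecewise analyticity of definable functions, which is provided by the analytic cell decomposition of fields with analytic structure from \cite{CLip}. Algebraic closedness of $K$ makes Weierstra{\ss} preparation and division available in the form used in Section~\ref{sec:an_struct} and simplifies the bookkeeping.

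If I had to reproduce the argument, I would proceed by induction on $d$, exploiting the recursive structure of rainbow fibers described in Fact~\ref{fact:rfiber_induction}: starting from an $A$-definable t-stratification $\mathcal S$ reflecting $\chi$, any rainbow fiber $C\subseteq S_d$ with $d\geqslant 1$ is contained in a maximal ball $B\subseteq S_{\geqslant 1}$, and for an exhibition $\pi'$ of $\bar U'=\tsp_B(\mathcal S)$ (with lift $U'$), the intersection $\varphi\1(C)\cap \mathbf F$ with a $\pi'$-fibre $\mathbf F$ is a rainbow fiber of the induced t-stratification of $\mathbf F$ (Fact~\ref{fact:induced-tst}), where $\varphi\colon B\to B$ is any risometry $U'$-trivialising $\mathcal S$. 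Applying the induction hypothesis inside $\mathbf F$ gives an analytic graph presentation of that lower-dimensional rainbow fiber; the task is then to transport this description along $\pi'(B)$ using the risometry $\varphi$, so that $C$ itself becomes the graph of an analytic function on $\pi(C)$ for any exhibition $\pi$ of $\affdir(C)$ (cf.\ Fact~\ref{fact:rainbow}).

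The main obstacle, exactly as in \cite{halupczok2014a}, lies in ensuring that the trivialising risometry $\varphi$ on $B$ can be chosen piecewise analytically in the parameters defining the fibre, so that the transported arcs are analytic graphs rather than merely definable ones. This is where analytic cell decomposition enters: after possibly refining $\mathcal S$ finitely many times (which does not harm $A$-definability, since the refinement is canonical once $\mathcal S$ and $\chi$ are fixed), the pieces of $\varphi$ can be arranged to be given by analytic data on open domains, and Lemma~\ref{lem:arc-wise-strats}-type manipulations together with Lemma~\ref{lem:analytic-inverse} turn this into an analytic description of the arcs. Since the entire construction is uniform in $A$, the resulting strengthened t-stratification remains $A$-definable, which concludes the proof.
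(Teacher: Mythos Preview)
Your proposal is correct and takes essentially the same approach as the paper: both reduce the lemma to \cite[Lemma~5.9]{halupczok2014a} after checking its hypotheses, with the paper noting specifically that \cite[Hypothesis~2.21]{halupczok2014a} holds by \cite[Proposition~5.12]{halupczok2014a}, and that the analytic rainbow property is established in Part~1 of the \emph{proof} of \cite[Lemma~5.9]{halupczok2014a} rather than in its statement. Your inductive sketch in the second and third paragraphs is extra detail not present in the paper's proof (which simply cites the earlier result), but it is consistent with how that lemma is actually proved.
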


\begin{proof}
\def\someindex{$_{n-1}$}
We wish to apply \cite[Lemma 5.9]{halupczok2014a}. To this end,
first note that the assumption
\cite[Hypothesis~2.21\someindex]{halupczok2014a} in that lemma is satisfied by
\cite[Proposition 5.12]{halupczok2014a}.
Choose an $A$-definable t-stratification $\cS$ reflecting $\chi$ (using \cite[Theorem~4.12]{halupczok2014a}) and apply \cite[Lemma 5.9]{halupczok2014a} to find an ``improved'' t-stratification 
$\cS'$. Since $\cS'$ reflects $\cS$, it also reflects $\chi$ (by \cite[Corollary 4.19]{halupczok2014a}).
The statement of \cite[Lemma 5.9]{halupczok2014a} does not say that $\cS'$ has an analytic rainbow, but note that Part 1 of its proof does.
\end{proof}

To obtain the existence of arc-wise analytic t-stratifications, we will show:

\begin{prop}\label{prop:an-t-to-arc-t} Every t-stratification with analytic rainbow is an arc-wise analytic t-stratification. 
\end{prop}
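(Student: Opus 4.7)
My plan is to reduce via Fact~\ref{fact:max-B} to the case that $B$ is the maximal open ball $B^*$ in $S_{\geqslant d}$ around some chosen point (any risometry and $d$-dimensional sub-vector space witnessing Definition~\ref{def:arc-wise-strats} on $B^*$ restrict to ones on $B$), and then to construct $\varphi$ using the translater machinery of Proposition~\ref{prop:EC-translater} combined with the analytic rainbow hypothesis. Set $\bar V \coloneqq \tsp_{B^*}(\mathcal{S})$ of dimension $d^*\geqslant d$; fix a $d$-dimensional subspace $\bar U \subseteq \bar V$ with lift $U$, an exhibition $\pi\colon K^n \to K^d$ of $\bar U$, and an extension $\pi^* = (\pi, \pi'')$ of $\pi$ to an exhibition of $\bar V$.

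The main step is the transverse case $d=d^*$. Pick any rainbow fiber $C\subseteq B^*\cap S_d$, so that $\affdir(C) = \bar V$ by Fact~\ref{fact:rainbow}(2). The maximality of $B^*$ in $S_{\geqslant d}$ gives $\lambda_C \geqslant \rado(B^*)$ and hence $\Tub_C \supseteq B^*$. Apply Proposition~\ref{prop:EC-translater} to get a $\bar V$-translater on $\Tub_C$ reflecting $\rho_{\mathcal{S}}$, and convert it via Lemma~\ref{lem:translater} to a definable risometry $\varphi\colon B^*\to B^*$ that $U$-trivializes $\mathcal{S}$, giving Condition (1) of Definition~\ref{def:arc-wise-strats}. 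By Remark~\ref{rem:triv-rain} each $\varphi$-arc lies in a single rainbow fiber $C'$: if $C' \subseteq S_d$ then the $\varphi$-arc equals the full $d$-dimensional $C'$ (dimensions agree and both project bijectively onto $\pi(B^*)$); if $C' \subseteq S_{d'}$ for $d'>d$, then $\affdir(C') \supseteq \bar V$ (by the monotonicity $\tsp_{B'}(\mathcal{S}) \supseteq \tsp_{B^*}(\mathcal{S})$ when $B' \subseteq B^*$), and the $\varphi$-arc is the $d$-dimensional slice of $C'$ cut out by fixing the extra coordinates of an exhibition of $\affdir(C')$ extending $\pi$. In both cases the arc is an analytic graph over $\pi(B^*)$ with $v(Dg)=1$ and $\res(Dg)$ constant, giving Condition (2) of Definition~\ref{def:arc-wise-strats} via Lemma~\ref{lem:arc-wise-strats}. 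The general case $d<d^*$ follows immediately: apply the transverse case with parameter $d^*$ to get analytic $d^*$-dimensional $V$-arcs, note the same $\varphi$ still $U$-trivializes $\mathcal{S}$ (trivially, since $U\subseteq V$), and observe that each $U$-arc is a $d$-dimensional affine slice of a $V$-arc, still an analytic graph of the required form.

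The central obstacle is showing that the translater-arcs from Proposition~\ref{prop:EC-translater} actually coincide with the analytic slices of the rainbow fibers, i.e.\ that there is no conflict between the definable structure of the translater and the analytic parametrizations coming from the analytic rainbow. This relies on the canonicity of the translater mentioned at the start of Subsection~\ref{sec:translaters}: the translater on $\Tub_C$ reflecting $\rho_{\mathcal{S}}$ is uniquely determined by the $A$-definable data, and the analytic rainbow hypothesis provides a canonical $A$-definable family of analytic parametrizations of all rainbow fibers meeting $B^*$; uniqueness then forces the translater-arcs to be these analytic slices. Strict differentiability of analytic functions (Remark~\ref{rem:an-to-strict}) finally upgrades this agreement to the derivative conditions $v(Dg)=1$ and constant $\res(Dg)$ required by Definition~\ref{def:arc-wise-strats}.
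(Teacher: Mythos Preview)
There is a genuine gap in your argument, concentrated in the last paragraph and in the case $C' \subseteq S_{d'}$ with $d' > d$.

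Your claim that ``the translater on $\Tub_C$ reflecting $\rho_{\mathcal{S}}$ is uniquely determined by the $A$-definable data'' is false. The canonicity alluded to at the start of Subsection~\ref{sec:translaters} (coming from \cite[Proposition~3.19~(3')]{halupczok2014a}) only says that \emph{some} $A$-definable translater exists; it does not say translaters are unique. Inside a rainbow fiber $C' \subseteq S_{d'}$ with $d' > d$, the arcs of a $\bar U$-translater are $d$-dimensional subsets of the $d'$-dimensional $C'$, and different translaters reflecting $\rho_{\mathcal S}$ can cut $C'$ along completely different $d$-dimensional slices. There is no reason the translater produced by Proposition~\ref{prop:EC-translater} cuts $C'$ along the coordinate slices of the analytic parametrization of $C'$ supplied by the analytic-rainbow hypothesis; that proposition is proved by a recursive averaging/composition procedure (via Lemma~\ref{lem:moving_translater}) that makes no reference to analyticity. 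So the step ``uniqueness then forces the translater-arcs to be these analytic slices'' does not go through.

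This is exactly why the paper does \emph{not} invoke Proposition~\ref{prop:EC-translater} but instead proves the separate Lemma~\ref{lem:analytic-3.19}: an analytic upgrade of the translater construction. Its inductive proof rebuilds the translater so that, at each step, the map $g_x$ defining an arc is obtained by composing analytic maps (the analytic parametrization of $C$, an analytic average $q \mapsto a_q$ of nearby points of $S_d$, and the inductively-obtained analytic $\widetilde g_x$), guaranteeing the arcs are analytic by construction. The paper's proof of Proposition~\ref{prop:an-t-to-arc-t} then just restricts this analytic translater from $\Tub_C$ to the maximal ball and applies Lemma~\ref{lem:translater}. Your outline would become correct if you replaced the appeal to Proposition~\ref{prop:EC-translater} plus ``canonicity'' by an appeal to Lemma~\ref{lem:analytic-3.19}; but that lemma is the real content, and your proposal does not supply it.
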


The proof of Proposition~\ref{prop:an-t-to-arc-t} relies on the following analytic variant of \cite[Lemma 3.19]{halupczok2014a}. recall that $\Tub_C$ is the natural tubular neighbourhood of a rainbow fiber $C$; see Definition~\ref{def:EC} and its characerization in Remark~\ref{rem:Tub_C}.

\begin{lem}\label{lem:analytic-3.19} Let $\mathcal{S}=(S_i)_i$ be a t-stratification of $\mathbf{B}$ with analytic rainbow. Let $C\subseteq S_{d}$ be a $\rho_\mathcal{S}$-fiber for some $d\leqslant n$. Let $\bar{U}=\mathrm{affdir}(C)$ (so $\dim(\bar{U})=d$), $\pi\colon K^n\to K^d$ be an exhibition of $\bar{U}$. Then there is a $\code{\Tub_C}$-definable $\bar{U}$-translater $(\alpha_{q,q'})_{q,q'\in \pi(\Tub_C)}$ on $\Tub_C$ reflecting $\rho_\mathcal{S}$ such that, in addition, for each $x\in \Tub_C$, the map $g_x\colon \pi(C)\to K^{n-d}$ defined by
\[
q\mapsto \pi^\vee(\alpha_{\pi(x),q}(x))
\]
is analytic. 
\end{lem}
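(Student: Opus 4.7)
We proceed by induction on $d$, following the structure of the proof of Proposition~\ref{prop:EC-translater} while tracking analyticity at every step. The base case $d=0$ is immediate: $\bar U=0$, so the translater is the identity on $\Tub_C$ and each map $g_x$ is a constant, hence trivially analytic.

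For the inductive step ($d\geqslant 1$), we set up $B$, $\bar U'=\tsp_B(\cS)$, $d'=\dim(\bar U')$, a lift $U'$, an exhibition $\pi'=\theta\circ\pi$, and a fiber $\mathbf{F}=(\pi')\1(t)\cap B$ exactly as in the proof of Proposition~\ref{prop:EC-translater}. The key additional ingredient is to choose the $U'$-trivializing risometry $\varphi\colon B\to B$ (respecting $\pi'$) so that moreover $\varphi_{|\mathbf{F}}=\id_{\mathbf{F}}$. Starting from any risometry $\varphi_1$ provided by Lemma~\ref{lem:respect-fibers}, decompose every $x\in B$ uniquely as $x=x_{\mathbf{F}}+u_x$ with $x_{\mathbf{F}}\in\mathbf{F}$ and $u_x\in U'$ satisfying $\pi'(u_x)=\pi'(x)-t$ (this uses that $\pi'$ exhibits $\bar U'$, so $U'$ is transverse to $\mathbf{F}$), set $\Psi(x):=(\varphi_1{}_{|\mathbf{F}})\1(x_{\mathbf{F}})+u_x$, and put $\varphi:=\varphi_1\circ\Psi$. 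A direct check shows $\Psi$ is a risometry respecting $\pi'$ and commuting with $U'$-translations, so $\varphi$ still respects $\pi'$ and $U'$-trivializes $\rho_{\cS}$, and moreover $\varphi_{|\mathbf{F}}=\id$.

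With this choice, the induced t-stratification on $\mathbf{F}$ is the honest restriction $\cS'=(S_{i+d'}\cap\mathbf{F})_i$ from Fact~\ref{fact:induced-tst}, and its rainbow fibers are intersections of rainbow fibers of $\cS$ with the linear subspace $\mathbf{F}$; hence $\cS'$ inherits the analytic rainbow from $\cS$. By induction applied to $\cS'$ and to the rainbow fiber $\tilde C=C\cap\mathbf{F}$ (of dimension $d-d'<d$), we obtain a $\code{\Tub_{\tilde C}}$-definable analytic translater $\tilde\alpha_{q,q'}$ on $\Tub_{\tilde C}\subseteq\mathbf{F}$ reflecting $\rho_{\cS'}$. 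Extending $\tilde\alpha$ trivially in the $U'$-direction yields a translater $\beta\family$ on $\varphi\1(\Tub_C)$ reflecting $\rho_{\cS}\circ\varphi$; Lemma~\ref{lem:moving_translater} then transfers $\beta\family$ to a $\bar U$-translater $\alpha\family$ on $\Tub_C$ reflecting $\rho_{\cS}$, following the explicit formula of Proposition~\ref{prop:EC-translater}.

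To verify the analyticity of $g_x$, observe first that because $\pi$ exhibits $\bar U\supseteq\bar U'$, the projection $\pi^\vee$ vanishes identically on $U'$; hence the $U'$-translation summands introduced in the extension-by-$U'$ step contribute nothing to $\pi^\vee(\beta_{\pi(x),q''}(x))$, and the $q''$-dependence reduces to the composition of the affine-linear map $q''\mapsto q''+(t-\theta(q''))$ with the inductively analytic map $\tilde g_{x+(t-\theta(\pi(x)))}$. Passing from $\beta$ to $\alpha$ via Lemma~\ref{lem:moving_translater}, the condition $\varphi_{|\mathbf{F}}=\id$ ensures that $\varphi\1(C)$ is an analytic ``$U'$-cylinder'' over $\tilde C$ and, more generally, that $\varphi$ intertwines the $U'$-extension used to define $\beta$ with the analytic parametrization coming from the rainbow fibers of $\cS$; so the analyticity of each $\beta$-arc is preserved to the corresponding $\alpha$-arc. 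The $\code{\Tub_C}$-definability is obtained exactly as in Proposition~\ref{prop:EC-translater}, tracking that each choice ($\pi'$, $\mathbf{F}$, the canonically constructed $\varphi$, and the inductively $\code{\Tub_{\tilde C}}$-definable $\tilde\alpha$) is $\code{\Tub_C}$-definable. The main technical hurdle is the verification that $\varphi$ from Lemma~\ref{lem:moving_translater} transports analyticity; this is where the normalization $\varphi_{|\mathbf{F}}=\id$ together with the analytic-rainbow hypothesis (applied to the rainbow fiber through $x$, which parametrizes the arc one is lifting through $\varphi$) does the work.
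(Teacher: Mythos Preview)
Your reduction to the fiber $\mathbf F$ and the observation that $\cS'$ inherits an analytic rainbow are fine, but the argument breaks at the point where you invoke Lemma~\ref{lem:moving_translater} to push the translater back through $\varphi$. The map $\varphi$ is only a definable risometry, not an analytic one, and your normalization $\varphi_{|\mathbf F}=\id$ controls it on a single $\pi'$-fiber only; on the remaining fibers nothing prevents $\varphi$ from being non-analytic. Concretely, take $x\in\Tub_C\cap S_{d''}$ with $d''>d$: the $\alpha$-arc $D$ through $x$ is a $d$-dimensional piece of the $d''$-dimensional rainbow fiber $C_x$, obtained as $D=\varphi(\tilde D)$ for the analytic $\beta$-arc $\tilde D$. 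Knowing that $C_x$ is the graph of an analytic function tells you nothing about \emph{which} $d$-dimensional slice of $C_x$ you land on---that is dictated entirely by $\varphi_{|\tilde D}$, and there is no reason this restriction should be analytic. Your final sentence flags this as ``the main technical hurdle'' but does not actually resolve it.

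The paper sidesteps this obstacle by running the induction in the opposite direction, on $n-d$ rather than on $d$, so that analyticity is never transported through an abstract risometry. One first observes that on $S_d\cap\Tub_C$ the arcs coincide with (restrictions of) rainbow fibers in $S_d$ and are hence analytic directly from the hypothesis. To extend into a maximal ball $D_q\subseteq F_q\cap S_{>d}$, one passes to a rainbow fiber $\tilde C\subseteq S_{\tilde d}$ of \emph{strictly larger} dimension $\tilde d>d$ meeting $D_q$, applies the induction hypothesis to $\tilde C$ (whose codimension $n-\tilde d$ is smaller), and glues via the explicit formula $\alpha_{q,q'}(x)=\tilde\alpha_{\tilde\pi(x),\,\tilde\pi(x-a_q+a_{q'})}(x)$, where $a_q$ is the average of the finitely many points of $S_d\cap F_q$ at distance $\tilde\lambda$ from $D_q$. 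Each such point lies on a rainbow fiber in $S_d$, so $q\mapsto a_q$ is a finite average of analytic functions and therefore analytic; consequently $g_x$ is visibly a composition of analytic ingredients, and no unspecified $\varphi$ enters the final formula.
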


In other words, the lemma imposes that the arcs of $\alpha\family$ are graphs of analytic functions.

Before proving that lemma, we show how it implies Proposition~\ref{prop:an-t-to-arc-t}.

\begin{proof}[Proof of Proposition~\ref{prop:an-t-to-arc-t} using Lemma~\ref{lem:analytic-3.19}]
Let $\mathcal{S}=(S_i)_i$ be a t-stratification with analytic rainbow on $\mathbf{B}$. Let $B\subseteq \mathbf{B}$ be a maximal ball disjoint from $S_{<d}$ and such that $B\cap S_d\neq\emptyset$, set $\bar{U}\coloneqq \tsp_B(\cS)$, and let $\pi\colon K^n\to K^d$ be an exhibition of $\bar{U}$.
Pick a $\rho_{\mathcal{S}}$-fiber $C \subseteq S_d$ satisfying $C \cap B \ne \emptyset$. 
Since the radius of $B$ is equal to $\lambda_C$, we obtain $B = \Tub_C \cap \pi^{-1}(\pi(B))$. 
By Lemma \ref{lem:analytic-3.19} applied to $C$, there exist a $\bar{U}$-translater $(\alpha_{q,q'})_{q,q'\in \pi(C)}$ on $\Tub_C$ reflecting $\rho_{\mathcal S}$ whose
arcs are graphs of analytic functions. Since for $q \in \pi(B)$, $\pi^{-1}(q) \cap B = \pi^{-1}(q) \cap \Tub_C$,
the family $(\alpha_{q,q'})_{q,q'\in \pi(B)}$ is a $\bar U$-translater on $B$.

Finally, we choose a lift $U\subseteq K^n$ of $\bar U$ and we use Lemma~\ref{lem:translater}
to turn the translater $(\alpha_{q,q'})_{q,q'\in \pi(B)}$ into a risometry $\psi\colon B\to B$
such that the $(U,\psi)$-arcs are exactly the arcs of $(\alpha_{q,q'})_{q,q'\in \pi(B)}$.
This in partiular implies that $\psi$ $U$-trivializes $\rho_{\mathcal{S}}$ and that its arcs are graphs of analytic functions, as 
required by Definition~\ref{def:arc-wise-strats}.
\end{proof}

As a direct consequence of Lemma \ref{lem:an-t-strat} and Proposition~\ref{prop:an-t-to-arc-t}, we now obtain the existence of arc-wise analytic t-stratifications:

\begin{thm}\label{thm:arc-existence} 
Suppose $K$ is an algebracally closed fields of equi-characteristic $0$ with analytic structure in the sense of \cite{CLip}. Let
$\chi\colon \mathbf B \to \RV\eq$
be an $A$-definable map, for some open ball $\mathbf{B}\subseteq K^n$ (possibly equal to $K^n$) and some set of parameters $A \subseteq K \cup \RV\eq$. Then there exists an $A$-definable arc-wise analytic t-stratification $\mathcal{S}=(S_i)_i$ reflecting $\chi$. \qed 
\end{thm}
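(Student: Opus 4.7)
The plan is to combine the two main ingredients already established in the excerpt: Lemma~\ref{lem:an-t-strat} (existence of $A$-definable t-stratifications with analytic rainbow reflecting a given map) and Proposition~\ref{prop:an-t-to-arc-t} (every t-stratification with analytic rainbow is an arc-wise analytic t-stratification).

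\medskip

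First, I would apply Lemma~\ref{lem:an-t-strat} to the given $A$-definable map $\chi\colon \mathbf{B}\to \RV\eq$ to obtain an $A$-definable t-stratification $\mathcal{S}=(S_i)_{0\leqslant i\leqslant n}$ of $\mathbf{B}$ which has analytic rainbow and reflects $\chi$. Here one uses that the ambient setting (algebraically closed valued field of equi-characteristic zero with analytic structure) is exactly the framework in which Lemma~\ref{lem:an-t-strat} applies; in particular, the hypothesis \cite[Hypothesis~2.21]{halupczok2014a} needed inside the proof of Lemma~\ref{lem:an-t-strat} is provided by the analytic structure (as noted in that proof via \cite[Proposition~5.12]{halupczok2014a}).

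\medskip

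Then I would invoke Proposition~\ref{prop:an-t-to-arc-t} to conclude that this very $\mathcal{S}$ is an arc-wise analytic t-stratification of $\mathbf{B}$, since having analytic rainbow is precisely the property that, after passing through Lemma~\ref{lem:analytic-3.19} and Lemma~\ref{lem:translater}, guarantees a risometry trivializing $\cS$ on each maximal ball $B\subseteq S_{\geqslant d}$ whose arcs are graphs of analytic functions (hence by Lemma~\ref{lem:arc-wise-strats} satisfy Condition~(2) of Definition~\ref{def:arc-wise-strats}). Note that on arbitrary balls $B\subseteq S_{\geqslant d}$ (not necessarily maximal), Definition~\ref{def:arc-wise-strats} follows from the maximal case by restricting the trivializing risometry: any ball $B\subseteq S_{\geqslant d}$ is contained in a maximal such ball $B^{\max}$ (Fact~\ref{fact:max-B}), and restriction of the risometry and of the analytic arcs preserves Condition~(2).

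\medskip

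Finally, reflection of $\chi$ is automatic: $\mathcal{S}$ reflects $\chi$ in the sense of t-stratifications (by Lemma~\ref{lem:an-t-strat}), and, as pointed out in the paragraph following Definition~\ref{def:arc-wise-strats} (using Remark~\ref{rem:refl}), this is equivalent to reflection in the arc-wise analytic sense — any risometry trivializing $\cS$ on a ball $B\subseteq S_{\geqslant d}$ automatically also trivializes $\chi$ there. The $A$-definability of $\cS$ is preserved throughout, as it is built into Lemma~\ref{lem:an-t-strat}. There is no real obstacle here; all the difficulty has been absorbed into Lemma~\ref{lem:an-t-strat} (whose proof is referred to \cite{halupczok2014a}) and Proposition~\ref{prop:an-t-to-arc-t}, whose key technical input is Lemma~\ref{lem:analytic-3.19}.
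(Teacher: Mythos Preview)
Your proposal is correct and matches the paper's approach exactly: the paper states the theorem as a direct consequence of Lemma~\ref{lem:an-t-strat} and Proposition~\ref{prop:an-t-to-arc-t} and marks it with a \qed. Your additional remarks (on passing from maximal balls to arbitrary balls, and on reflection via Remark~\ref{rem:refl}) are helpful elaborations that the paper leaves implicit.
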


It remains to prove Lemma~\ref{lem:analytic-3.19}. Its proof closely follows the proof of \cite[Lemma 3.19]{halupczok2014a}; one only needs to additionally verify that analyticity of the rainbow of $\cS$ implies analyticity of the map $g_x$ for each $x\in \Tub_C$, and this verification is straightforward. More precisely,
the translation between our lemma and
\cite[Lemma 3.19]{halupczok2014a} is as follows:
Our $n-d$ corresponds to $n$ in \cite[Lemma 3.19]{halupczok2014a}; our sets $S_i\cap \Tub_C$ correspond to $S_{i-d}$ in \cite[Lemma 3.19]{halupczok2014a}); the sets $Q$ and $Q'$ of \cite[Lemma 3.19]{halupczok2014a} coincide in our case and correspond simply to $\pi(C)$. 
We nevertheless give a full proof of Lemma~\ref{lem:analytic-3.19}.

\begin{proof}[Proof of Lemma~\ref{lem:analytic-3.19}]
We do an induction on $n-d$.
When $n=d$ (the base case of the induction), $\pi$ is the identity map on $K^n$. In particular, $\Tub_C=\pi(\Tub_C) = \pi(C) = C$ and each map $\alpha_{q,q'}$ is defined as the map sending $q$ to $q'$. Given $x\in C$, the map $g_x\colon \pi(C)\to K^0$ (where $K^0$ is identified to a point) is trivially analytic. 

For the inductive case, given $q\in\pi(C)$, let $F_q\coloneqq \pi^{-1}(q)\cap \Tub_C$.
By Proposition~\ref{prop:EC-translater}, there exists a translater $(\beta_{q,q'})_{q,q'}$ on $\Tub_C$ reflecting $\rho_{\mathcal{S}}$.
For $q,q'\in \pi(C)$, define $\alpha_{q,q'}\colon F_q\cap S_{d} \to F_{q'}\cap S_{d}$ as the restriction of $\beta_{q,q'}$. Since each arc of $\alpha\family$ is just a rainbow fiber (contained in $S_d$), those arcs are graphs of analytic functions, so it remains to extend the family $\alpha\family$ to  
maps $F_q \to F_{q'}$ and show the desired properties.

Fix some $q_0 \in \pi(C)$, fix a ball $B_{q_0}$ maximally contained in $\pi^\vee(F_{q_0}\cap S_{>d})$ and set $D_{q_0}\coloneqq (\pi^\vee)\1(B_{q_0})\cap F_{q_0}$. For any $q\in \pi(C)$, let 
\[
D_{q}\coloneqq \beta_{q_0,q}(D_{q_0}) \text{ and } B_{q} \coloneqq  \pi^\vee(D_q). 
\]
Since $\beta_{q_0,q}$ is a risometry, and $\pi^\vee$ is injective on $F_q$, $B_q$ is also a ball maximally contained in $\pi^\vee(F_{q}\cap S_{>d})$.
Let $\widetilde \lambda$ be the open radius of $B_q$. Note that it does not depend on $q$,
and moreover, if $v(q - q') < \widetilde \lambda$, then $B_q = B_{q'}$.
Indeed, for $x \in D_q$ and $x' \coloneqq \beta_{q, q'}(x)$,
we have that $D_q$ and $D_{q'}$ are $\pi$-fibers of $B(x, <\widetilde\lambda)$ and $B(x',<\widetilde\lambda)$, respectively; now
$v(x-x') = v(q - q')$
(since $\pi$ is an exhibition of $\affdir C \supseteq \dir(x - x')$),
so that $B(x,<\widetilde\lambda) = B(x',<\widetilde\lambda)$.
In particular, 
\[
B(x,<\widetilde\lambda) = \bigcup_{q' \in B(q,<\widetilde\lambda)} D_{q'}.
\]
Let $\widetilde{d}>d$ be minimal such that $D_q \cap S_{\widetilde{d}} \ne \emptyset$.
This does not depend on $q$.
Choose any rainbow fiber $\widetilde C \subseteq S_{\widetilde d}$ which contains some $x \in D_q$, for some $q$ and set $\widetilde{\bar U} \coloneqq \affdir \widetilde C$. 

Let us first show that $\bar{U}$ is contained in $\widetilde{\bar U}$. Let $W$ be a one dimensional subspace of $\bar{U}$. By the definition of $\affdir(\widetilde{C})$, it suffices to find $c,c'\in \widetilde{C}$ such that $W= \dir(c-c')$. Fix any $c\in \widetilde{C}$ and set $q=\pi(c)$. Choose $q'\in\pi(\bar{U})$ such that $\dir(q-q')=\pi(W)$ and set $c'\coloneqq \beta_{q,q'}(c)$. Note that $c'\in \widetilde{C}$ since $\beta_{q,q'}$ reflects the rainbow $\rho_{\mathcal{S}}$. Using that $v(c-c')=v(q-q')$, Lemma \ref{lem:dir2} implies that
\[
\pi(\dir(c-c'))=\dir(\pi(c-c'))=\dir(q-q')=\pi(W). 
\]
By condition (3) of Definition \ref{def:translater}, $\dir(c-c')\subseteq \bar{U}$, and since $\pi_{|\bar{U}}$ is bijective, we may conclude that $\dir(c-c')=W$.

Let $\widetilde \pi\colon \bar K^n \to \bar K^{\widetilde d}$ be an exhibition of $\widetilde{\bar U}$ such that $\pi = \theta \circ \widetilde \pi$ for some coordinate projection $\theta\colon \bar K^{\widetilde d} \to \bar K^d$.

Let $\lambda_{\widetilde C}$ and $\Tub_{\widetilde C}$ be defined as in Definition~\ref{def:EC}. Note that $\lambda_{\widetilde C} = \widetilde\lambda$. Indeed, $\lambda_{\widetilde C} \leqslant \widetilde\lambda$ by the choice of $\widetilde \lambda$ and $D_q$ being maximal disjoint from $S_d$. For the other direction, pick $x \in \widetilde C \cap D_q$, and use $d$-triviality on
$B(x,<\lambda_C)$ to argue that the distance from $x$ to $S_d$ can be computed within the fiber $F_q$. 

Also note that
\[
\Tub_{\widetilde C}
\supseteq \bigcup_{q\in \pi(C)} \{q\}\times D_q
.
\]
(Actually, one even has equality.)

By induction, there is a $\code{\Tub_{\widetilde C}}$-definable translater $(\widetilde{\alpha}_{\widetilde{q},\widetilde{q}'})_{\widetilde{q},\widetilde{q}' \in \widetilde\pi(\widetilde C)}$
on $\Tub_{\widetilde C}$ such that for $x\in \Tub_{\widetilde{C}}$, the function
$\widetilde g_x\colon \widetilde q\mapsto \widetilde\pi^\vee(\widetilde\alpha_{\widetilde\pi(x),\widetilde q}(x))$
is analytic.

We now define $\alpha_{q,q'}(x)$ for $x \in D_q$ as follows. For $q \in \pi(C)$, denote by $a_q$ the average of all elements of $S_d \cap F_q$ with distance $\widetilde\lambda$ to $D_q$.
Note that the set $a_q - D_q$ does not depend on $q$, i.e., given any $q,q'$ and $x \in D_q$, we have $x - a_q + a_{q'} \in D_{q'}$. Thus we can define
\[
\alpha_{q,q'}(x)
\coloneqq \widetilde\alpha_{\widetilde\pi(x),\widetilde\pi(x - a_q + a_{q'})}(x).
\]
Note that for fixed $x$, the map $h_x\colon q \mapsto \widetilde\pi(x - a_{\pi(x)} + a_{q})$ is analytic. Therefore, so is the composition
\[
g_x\colon q
\mapsto
\pi^\vee( \widetilde\alpha_{\widetilde\pi(x),\widetilde\pi(x - a_{\pi(x)} + a_{q})}(x))
= \pi^\vee (h_x(q) , \widetilde g_x(h_x(q))).
\]
It remains to show that the family $(\alpha_{q,q'})_{q,q'\in\pi(C)}$ is a translater reflecting $\rho_{\mathcal{S}}$. Condition (1) of Definition~\ref{def:translater} follows directly from the definition of $\alpha_{q,q'}$ and the fact that $(\widetilde{\alpha}_{\widetilde{q},\widetilde{q}'})_{q,q' \in \widetilde\pi(\widetilde C)}$ is a translater. Condition (2) is a straight forward computation, namely as follows. Let $q,q'$ and $q''$ be elements in $\pi(C)$, let $x$ be in $F_q$ and set $x'=\alpha_{q,q'}(x)$. Then 
\begin{align*}
\alpha_{q',q''}(\alpha_{q,q'}(x)) & = \widetilde{\alpha}_{\widetilde{\pi}(x'), \widetilde{\pi}(x'-a_{q'}+a_{q''})}(x') \\
 & =  \widetilde{\alpha}_{\widetilde\pi(x - a_q + a_{q'}), \widetilde{\pi}(x'-a_{q'}+a_{q''})}(x')  \\
 & = \widetilde{\alpha}_{\widetilde\pi(x - a_q + a_{q'}), \widetilde{\pi}(x-a_{q}+a_{q''})}(x') \\
  & = \widetilde{\alpha}_{\widetilde\pi(x - a_q + a_{q'}), \widetilde{\pi}(x-a_{q}+a_{q''})}(\alpha_{q,q'}(x)) \\
& = \widetilde{\alpha}_{\widetilde\pi(x - a_q + a_{q'}), \widetilde{\pi}(x-a_{q}+a_{q''})}(\widetilde{\alpha}_{\widetilde{\pi}(x),\widetilde\pi(x - a_q + a_{q'})}(x)) \\
& \overset{(\star)}{=} \widetilde{\alpha}_{\widetilde{\pi}(x),\widetilde\pi(x - a_q + a_{q''})}(x) = \alpha_{q,q''}(x), 
\end{align*}
where the identity $(\star)$ follows from Condition (2) for the family $\widetilde{\alpha}\family$. For condition (3), let $q,q'$ be distinct elements in $\pi(C)$, $x$ be in $F_q$ and set $x'=\alpha_{q,q'}(x)$. Then by Condition (3) for the family $\widetilde{\alpha}\family$
\[
\dir(x'-x)=\dir(\widetilde{\alpha}_{\widetilde{\pi}(x),\widetilde\pi(x - a_q + a_{q'})}(x)-x) \subseteq \widetilde{\bar U}. 
\]
Since $\widetilde{\pi}_{|\widetilde{\bar U}}$ is a bijection and $\bar{U}\subseteq \widetilde{\bar U}$, to obtain $\dir(x' - x) \subseteq \bar U$, it suffices to show that $\widetilde{\pi}(\dir(x'-x))\subseteq \widetilde{\pi}(\bar{U})$. First, note that 
\[
\widetilde{\pi}(\dir(x'-x)) = \dir(\widetilde{\pi}(x'-x)) = \dir(\widetilde{\pi}(a_{q'}-a_q))) =
\widetilde{\pi}(\dir(a_{q'}-a_q)), 
\]
where the first and the last identity follow from Lemma \ref{lem:dir2}, and the second one holds
since $\widetilde{\pi}(x'-x) = \widetilde{\pi}(a_{q'}-a_q)$ by definition of $\alpha_{q,q'}$. It is thus enough to show that $\dir(a_{q'} - a_q) \subseteq \bar U$. Suppose $a_q$ (resp.\ $a_{q'}$) is the average of $r$ points $c_1,\ldots,c_r\in F_q$ (resp.\ $c_1',\ldots, c_r'\in F_{q'}$). Since $\pi(c_i'-c_i)=q-q'$ and $\pi$ is an exhibition of $\bar{U}$, $v(c_i'-c_i)=v(q'-q)$
for all $1\leqslant i\leqslant r$. Moreover, by condition (3) for $\beta_{q',q}$, we have that $\dir(c_i'-c_i)\subseteq \bar{U}$ for each $1\leqslant i\leqslant r$. Thus, by Lemma \ref{lem:dir}, we have that $\rv(c_1'-c_1)=\rv(c_i'-c_i)$ for all $1\leqslant i\leqslant r$. Now Lemma~\ref{lem:average} yields $\rv(c_1'-c_1)=\rv(a_{q'}-a_q)$, and 
by Fact \ref{fact:res-vs-rv}, we have
\[
\dir(a_{q'}-a_q) = \dir(c_1'-c_1)\subseteq \bar{U}.  \qedhere
\]
\end{proof}

\subsection{Arc-wise analytic t-stratifications are \lt-statifications}
\label{sec:aats-to-lipts}

We continue assuming that $K$ is algebraically closed and that it is equipped with an analytic structure in the sense of \cite{CLip}.
As the title indicates, the purpose of this section is to show the following result.  

\begin{thm}\label{thm:aats-to-lipts} Let $\mathcal{S}$ be an arc-wise analytic t-stratification of an open ball $\mathbf{B}\subseteq K^n$. Then $\mathcal{S}$ is a \lt-stratification. 
\end{thm}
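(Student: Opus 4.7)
The plan is to verify condition~(ii) of Proposition~\ref{prop:SU-equivalence}, which by that proposition implies $\varphi$ is 2-Taylor along $U$ and hence that $\mathcal{S}$ is a \lt-stratification. Fix $d$ and a ball $B\subseteq S_{\geqslant d}$, and let $\varphi$ and $U$ be as in Definition~\ref{def:arc-wise-strats}. By Lemma~\ref{lem:respect-fibers} we may assume $\varphi$ respects a fixed exhibition $\pi$ of $\bar U := \res U$; by Lemma~\ref{lem:arc-wise-strats}, each $\arc_b\colon \pi(B)\to K^{n-d}$ is then analytic. After composing with a matrix in $\GL_n(\valring)$ (which preserves both risometries and analyticity), we may further assume $U = K^d \times \{0\}^{n-d}$ with $\pi$ the standard projection, so that $B/U$ is canonically identified with $\pi^\vee(B)\subseteq K^{n-d}$. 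Because $\affdir(\Arc_b(U,\varphi)) = \bar U$ at every point, the residue $\res(D\arc_b(u))$ is the same constant matrix for every $b$ and $u$.

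Fix now $b_0,b \in B/U$ and $u_0,u_1\in \pi(B)$, and split the error as
\begin{align*}
\arc_b(u_1) - \arc_b(u_0) - D\arc_{b_0}(u_0)(u_1 - u_0)
&= \bigl[\arc_b(u_1) - \arc_b(u_0) - D\arc_b(u_0)(u_1 - u_0)\bigr] \\
&\quad + \bigl(D\arc_b(u_0) - D\arc_{b_0}(u_0)\bigr)(u_1 - u_0).
\end{align*}
For the first (\emph{single-arc}) term, after rescaling $B$ to a unit polydisc, Lemma~\ref{lem:our5.8-multi}~(2) applies to $\arc_b$ (the constant-residue property yields $v(D\arc_b(x) - D\arc_b(x')) < 1$ on the rescaled domain), and unscaling gives a bound $<^* v(u_1-u_0)^2/\radc^*(B)$. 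For the second (\emph{cross-arc}) term, the key observation is that, because $\varphi$ is a risometry respecting $\pi$, the map $b \mapsto \arc_b(u)$ is itself a risometry $B/U \to B/U$ for every fixed $u$. Hence the analytic function $\Delta(u) := \arc_b(u) - \arc_{b_0}(u) - (b - b_0)$ satisfies $v(\Delta(u)) <^* v(b - b_0)$ uniformly in $u\in\pi(B)$. Applying (the coefficient-bound content of) Lemma~\ref{lem:basic-analyticII}~(1) to the power-series expansion of $\Delta$ at any base point of the rescaled polydisc bounds every first partial derivative by $v(b-b_0)$; unscaling gives $v(D\arc_b(u_0) - D\arc_{b_0}(u_0)) <^* v(b - b_0)/\radc^*(B)$, and multiplying by $v(u_1 - u_0)$ bounds the cross-arc term.

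Combining the two estimates by the ultrametric inequality yields exactly condition~(ii) of Proposition~\ref{prop:SU-equivalence}, so $\varphi$ is 2-Taylor along $U$. The main technical obstacle is the careful transfer of Lemmas~\ref{lem:basic-analyticII} and~\ref{lem:our5.8-multi} from their stated form over $\valring^n$ (respectively $\maxid^n$ in the open-ball case) to a ball of arbitrary radius in a way that preserves the strict inequalities; this relies in particular on the fact that analytic functions admit power-series expansions around any interior point of the standard polydisc, a standard property of $A^{\dagger}_{n,0}(K)$ used implicitly at several points.
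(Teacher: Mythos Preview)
Your proof is correct and follows essentially the same route as the paper's: verify condition~(ii) of Proposition~\ref{prop:SU-equivalence} by splitting the error into a single-arc Taylor remainder (handled via Lemma~\ref{lem:our5.8-multi}~(2) after rescaling) and a cross-arc derivative difference (handled via the constant-$\rv$ property of $\arc_b-\arc_{b_0}$ together with Lemma~\ref{lem:basic-analyticII}~(1), which the paper packages as Lemma~\ref{lem:our5.8-multi}~(1)). The only organisational difference is that the paper first reduces to the closed-ball case (its Step~1), which lets it work entirely over $\valring^n$ with strict inequalities and avoids the parallel open/closed bookkeeping you carry through; your treatment is equivalent but requires the $\maxid^n$-variant of Lemma~\ref{lem:basic-analyticII} that the paper only mentions in passing.
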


\begin{proof}
Fix an open or closed ball $B\subseteq \mathbf{B}$ disjoint from $S_{<d}$.
Through this proof we follow once more
Convention \ref{conv:balls} to treat the open and closed case in parallel; in particular, $B$ is closed$^*$. Using Lemma \ref{lem:arc-wise-strats}, let $\bar U$ be a $d$-dimensional~$\bar K$-vector space, $U$ be a lift of $U$ and $\varphi\colon B\to B$ be a risometry such that
\begin{enumerate}
    \item $(S_0,\ldots, S_n)$ is $U$-trivialized by $\varphi$, and 
    \item for an exhibition $\pi\colon K^n\to K^d$ of $U$, and for each $b\in B/U$, the map $\arc_b\colon \pi(B)\to K^{n-d}$ is analytic. 
\end{enumerate}
Remark \ref{rem:an-to-strict} ensures that $\arc_b$ is strictly $C^1$. By Proposition \ref{prop:SU-equivalence}, it remains to show 
\begin{equation}\label{eq:taylor2}
v(\arc_b(u_1)-\arc_b(u_0)-D\arc_{b_0}(u_0)(u_1-u_0))\lsnew\frac{v(u_1-u_0)\cdot\max\{v(u_0-u_1), v(b_0 - b)\}}{\radc^*(B)},
\end{equation}
for $b,b_0\in B/U$ and $u_0,u_1\in \pi(B)$, with $u_1\neq u_0$. We will achieve this by a couple of reductions: 

\medskip

\emph{Step  1:} We may suppose that $B$ is closed. Indeed, suppose for contradiction that \eqref{eq:taylor2} fails for some open ball $B$, i.e., for certain $b_0, b, u_1, u_0$, the left hand side of
\eqref{eq:taylor2} is strictly bigger than the right hand side; let $\delta$ be the quotient of the left hand side by the right hand side. Pick a closed ball $B' \subseteq B$ big enough such that $b_0, b \in B'/U$ and $u_1, u_2 \in \pi(B')$, and such that moreover, $\delta\cdot \radc B' > \rado B$.
Now the assumption that \eqref{eq:taylor2} holds in $B'$ (for the same $b_0, b, u_1, u_0$) yields a contradiction.

\medskip

\emph{Step  2:} We may suppose $B=\valring^n$ and $u_0=0$. Indeed, by scaling and translating, we may send $u_0$ to $0$ and $B$ to $\valring^n$. Apply this transformation to the entire setting (i.e., to $\mathbf B$ and $\mathcal S$),
use \eqref{eq:taylor2} on $\valring^n$ on the transformed setting; then 
the inverse transformation gives back the result on $B$.

\medskip

\emph{Step  3:} We may suppose $b_0=b$. Consider the analytic function $h(u)\coloneqq \arc_b(u)-\arc_{b_0}(u)$. By the definition of $\arc_b$ (see Definition \ref{def:g_b}), since $\varphi$ is a risometry, $\rv(h(u))$ is constant. In particular, 
\[
v(h(u))=v(\arc_b(u)-\arc_{b_0}(u))=v(b-b_0).
\]
By Lemma~\ref{lem:our5.8-multi} (1), we obtain $v(D h(0))<v(b-b_0)$. Assuming the result holds for $b$ alone, we have 
\begin{equation}\label{eq:step4.1}
    v(\arc_b(u_1)-\arc_b(0)-D\arc_{b}(0)(u_1))\lsnew v(u_1)^2.
\end{equation}
Thus, for $b_0\neq b$
\begin{align*}
    &v(\arc_b(u_1)-\arc_b(0)-D\arc_{b_0}(0)(u_1)) \\
    = & v(\arc_b(u_1)-\arc_b(0)-D\arc_{b}(0)(u_1) - (D\arc_{b_0}(0)-D\arc_{b}(0))(u_1)) \\
    = & v(\arc_b(u_1)-\arc_b(0)-D\arc_{b}(0)(u_1) - D h(0)(u_1))\\
    \leqslant & \max\{v(u_1)^2, v(D h(0))\cdot v(u_1)\}
    \lsnew v(u_1)\cdot\max\{v(u_1), v(b-b_0)\},
\end{align*}
which completes this step. 

\medskip

\emph{Step  4:} By the previous steps, \eqref{eq:taylor2} corresponds to 
\begin{equation}\label{eq:simplified-SUT1}
v(\arc_b(u_1)-\arc_b(0)-D\arc_b(0)(u_1))\lsnew v(u_1)^2.     
\end{equation}
Using that $\pi$ exhibits $\bar U$
and that $\varphi$ is a risometry, we obtain that the matrix $D\arc_b$ has coefficients in $\valring$ and its residue
$\res(D\arc_b)$ is constant on $\pi(B)$.
Thus the inequality \eqref{eq:simplified-SUT1} follows directly from Lemma \ref{lem:our5.8-multi} (2).   
\end{proof}

As a corollary of Theorems \ref{thm:arc-existence} and \ref{thm:aats-to-lipts}, we obtain that \lt-stratifications exist:

\begin{cor}\label{cor:t2-exist} 
Suppose $K$ is an algebraically closed valued field of equi-characteristic $0$ with analytic structure in the sense of \cite{CLip}. Let
$\chi\colon \mathbf B \to \RV\eq$ be an $A$-definable map, for some open ball $\mathbf{B}\subseteq K^n$ (possibly equal to $K^n$) and some set of parameters $A \subseteq K \cup \RV\eq$. Then there exists an $A$-definable \lt-stratification $\mathcal{S}=(S_i)_i$ reflecting $\chi$. \qed 
\end{cor}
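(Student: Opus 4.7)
The plan is to deduce this corollary by a direct combination of the two existence/comparison theorems established earlier in Section~\ref{sec:analytic}. First, I would apply Theorem~\ref{thm:arc-existence} to the given $A$-definable map $\chi\colon \mathbf{B}\to \RV\eq$. This produces an $A$-definable arc-wise analytic t-stratification $\mathcal{S}=(S_i)_i$ of $\mathbf{B}$ that reflects $\chi$. Then I would invoke Theorem~\ref{thm:aats-to-lipts}, which asserts that any arc-wise analytic t-stratification is in particular a \lt-stratification; applied to $\mathcal{S}$, this delivers the required \lt-stratification.

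The one point that deserves a brief check is that the notion of \emph{reflection} transfers correctly between the two flavors of stratification. As discussed in Remark~\ref{rem:refl} (and formalized by the equivalence in Lemma~\ref{lem:refl-charac}), the property ``$\mathcal{S}$ reflects $\chi$'' is intrinsic to the underlying t-stratification: whenever a definable risometry $\varphi\colon B\to B$ $U$-trivializes $\mathcal{S}$ on a ball $B\subseteq S_{\geqslant d}$, the same $\varphi$ automatically $U$-trivializes the pair $(\mathcal{S},\chi)$. Since both notions—arc-wise analytic t-stratification and \lt-stratification—are defined by strengthening the class of allowed trivializing risometries (respectively, by arc-analyticity and by the 2-Taylor estimate), the single $\varphi$ witnessing trivialization for $\mathcal{S}$ in either sense is also a trivializer for the pair $(\mathcal{S},\chi)$. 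So reflection is preserved for free.

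There is no genuine obstacle at this stage: the substantive work—constructing analytic rainbows (Lemma~\ref{lem:an-t-strat}), lifting them to arc-wise analytic t-stratifications (Proposition~\ref{prop:an-t-to-arc-t}), and verifying the quadratic Taylor estimate via Lemma~\ref{lem:our5.8-multi} in the proof of Theorem~\ref{thm:aats-to-lipts}—has already been carried out. The corollary is thus a one-line synthesis: pick $\mathcal{S}$ from Theorem~\ref{thm:arc-existence}, then read it as a \lt-stratification via Theorem~\ref{thm:aats-to-lipts}.
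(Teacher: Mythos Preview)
Your proposal is correct and matches the paper's approach exactly: the corollary is stated with a \qed\ immediately after the two theorems, and the paper explicitly notes just before it that existence of \lt-stratifications follows ``as a corollary of Theorems~\ref{thm:arc-existence} and \ref{thm:aats-to-lipts}.'' Your additional remark about reflection transferring via Remark~\ref{rem:refl} is accurate and is precisely the point the paper makes earlier (below Definition~\ref{def:arc-wise-strats}) to justify that no separate argument is needed.
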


\section{The critical value function of a t-stratificaton}\label{sec:exponent}

We will now associate an invariant to every t-stratification which we call the ``critical value function'', and which captures all kinds of asymptotic distances near singularities. As an example, consider the cusp curve, given by $x^3 = y^2$. When $x$ is close to $0$, the valuation of the vertical distance between the two branches is $v(x)^{3/2}$. This function $\lambda \mapsto \lambda^{3/2}$ is part of the critical value function (see Example~\ref{ex:contact}).
More generally, in the case of curves, it captures all such asymptotic distances between any two branches near every singularity. In higher dimension, one cannot speak of branches anymore, but it still captures similar kind of information, like the asymptotic behaviour of the radius of the ``trumpet'' defined by $z^2+x^2=x^3$; see Figure~\ref{fig:trumpet}.

The main result of this section (Theorem~\ref{thm:crit} and Corollary~\ref{cor:crit}) is that, under some reasonable assumptions about the language, all this asymptotic behaviour together is still described by a finite amount of information. More precisely,
the critical value function, which contains all that information, is a ``piecewise monomial geometric object'' in (a cartesian power of) the value group, i.e., it can be described by finitely many monomial functions
with rational exponents.

We start by defining the critical value function, which, in reality, is a ``multi-function'', i.e., potentially taking several values.
For the moment, we assume that we are in the setting of Section~\ref{sec:setting}; we will impose stronger conditions on the language only where necessary.
In the entire Section~\ref{sec:exponent}, let $\mathbf B \subseteq K^n$ be an open ball (possibly equal to $K^n$).

\begin{conv}
We let $\Gamma_\infty$ be $\Gamma\cup\{\infty\}$ where $\infty$ is an element bigger than every element in $\Gamma$. As a convention, for a subset $X\subseteq \mathbf B$, we set $v(X-\emptyset)=\infty$. In what follows we let 
\[
\Delta_n\coloneqq\{(\lambda_0,\ldots, \lambda_{n-1})\in \Gamma_\infty^n: \lambda_0\geqslant \lambda_1\geqslant\cdots\geqslant \lambda_{n-1}\}.
\]
\end{conv}

\begin{defn}\label{def:critpt} Let $\mathcal{S}$ be a t-stratification of $\mathbf B$.
\begin{enumerate}
    \item 
Given $c \in \mathbf B$, we define the set of \emph{critical values of $\cS$ at $c$} as 
\[
\Crit_{\cS}(c) \coloneqq \{v(x - c) :
x \in K^n \setminus \{c\}\text{ and $\cS$ is not $\dir(x-c)$-trivial on $B(x, <v(x-c))$}\}.
\]
(We will sometimes say that the critical value $v(x-c)$ is \emph{witnessed} by $x$.)
\item
The \emph{critical value function} of $\cS$ is the function $f^{\cS}\colon \Delta_n \to \mathcal{P}(\Gamma^\times)$ (taking values on the power set of $\Gamma^\times$) given by
\[
f^{\cS}(\lambda_0, \dots, \lambda_{n-1})\coloneqq
\bigcup_{\substack{c \in K^n\\\forall i< n\colon v(c-S_{\leqslant i})=\lambda_i}} \Crit_{\cS}(c)
\]
\end{enumerate}
\end{defn}

\begin{figure}
    \centering
    \includegraphics{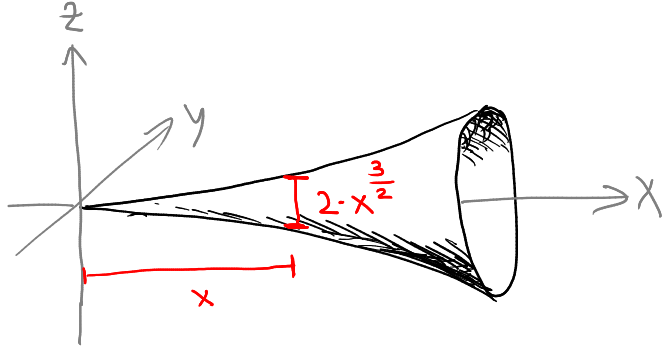}
    \caption{The critical value function of the the trumpet, defined by $y^2+z^2=x^3$, captures the (asymptotic) valuation of the diameter, as a function of the valuation of $x$: $\lambda \mapsto \lambda^{3/2}$}
    \label{fig:trumpet}
\end{figure}

Note that (1) has some flavour of Whitney's Condition (b): That condition imposes that if $c$ is a point in some stratum $S_d$ and $x$ is nearby, in some higher dimensional stratum $S_{d'}$, then the tangent space to $S_{d'}$ at $x$ should nearly contain the one-dimensional space spanned by $x-c$. The critical values of $c$ are those values $v(x-c)$ where this fails.

\begin{rem}\label{rem:all-points}
For every $c \in \mathbf B$, we have $v(c - S_0) \geqslant v(c - S_{\leqslant1}) \geqslant \dots \geqslant v(c - S_{\leqslant n-1})$,
so every critical value of every point appears as some value of $f^{\cS}$.
\end{rem}

\begin{rem}
There are some ``trivial'' critical values:
If, given $(\lambda_0, \dots, \lambda_{n-1}) \in \Delta_n$,
there exists any point $c \in \mathbf B$ satisfying $v(c - S_{\leqslant i}) = \lambda_i$ for each $i$, then
$f^{\cS}(\lambda_0, \dots, \lambda_{n-1})$ contains 
$\{\lambda_0, \dots, \lambda_{n-1}\} \cap \Gamma^\times$.
More precisely, if some point $c \in \mathbf B$ has distance $\lambda_d \in \Gamma^\times$ to $S_{\leqslant d}$, then $\lambda_d \in \Crit_{\cS}(c)$. Indeed,
set $B \coloneqq B(c, \leqslant \lambda_d)$,
set $d' \coloneqq \dim(\tsp_B(\cS))$
let $\pi\colon K^n \to K^{d'}$ be an exhibition of $\tsp_B(\cS)$, let $F \subseteq B$ be the $\pi$-fiber (intersected with $B$) containing $c$ and pick $x \in S_{d'} \cap F$. Then $v(c - x) = \lambda_d$, and using that $S_{d'} \cap F$ is finite, one obtains that $\cS$ is not $\dir(c-x)$-trivial on $B(x, <\lambda_d)$.
\end{rem}

\begin{defn}\label{def:orthogonality} Recall that one says that the value group $\Gamma$ and the residue field $\bar K$ are \emph{orthogonal}, if every definable subset $X\subseteq \bar K^m\times \Gamma^n$ is a finite union of sets of the form $A\times B$ for $A\subseteq \bar K^m$ and $B\subseteq \Gamma^n$ definable sets. 
\end{defn}

\begin{rem}\label{rem:orthogonality}
If $\Gamma$ and $\bar K$ are orthogonal, then every definable function $f\colon X\subseteq \bar K^m\to \Gamma^n$ has finite image. Assuming in addition that $\Lx$ contains an angular component map $\ac$, both $\Gamma$ and $\bar{K}$ are stably embedded. Indeed, using $\ac$ there is a definable bijection $\RV^\times \to \bar K^\times \times \Gamma^\times$, and the result follows from orthogonality and the fact that $\RV$ is stably embedded (see Section~\ref{sec:modth}). 

Note that in $\Lval$, $\Gamma$ and $\bar K$ are orthogonal, and assuming the existence of an angular component is harmless by passing to an elementary extension.
\end{rem}

We can now precisely state the main result of this section:

\begin{thm}\label{thm:crit}
Suppose that we are in the setting from Section~\ref{sec:setting}.
Suppose moreover that the language $\Lx$ we work with contains an angular component map $\ac$ and that the value group and the residue field are orthogonal. Let $\mathbf B \subseteq K^n$ be an open ball (possibly equal to $K^n$) and let $\cS$ be a $t$-stratification of $\mathbf B$. Then, there is a positive integer $N=N(\cS)$ such that for every $\lambda \in \Delta_n$, the cardinality of $f^{\cS}(\lambda)$ is bounded by $N$.
\end{thm}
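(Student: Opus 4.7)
The strategy combines the definability of $f^{\cS}$, the orthogonality of $\bar K$ and $\Gamma$, and the structural properties of the t-stratification. First I would establish that $\Crit_{\cS}(c) \subseteq \Gamma^\times$ is uniformly definable in $c$: by Definition~\ref{def:straighened}~(3) and the characterization of $\tsp_B(\cS)$ as the maximal translation subspace, the condition ``$\cS$ is $\dir(x-c)$-trivial on $B(x,<v(x-c))$'' is equivalent to ``$\dir(x-c) \subseteq \tsp_{B(x,<v(x-c))}(\cS)$'', which is a definable condition in $(x,c)$. Hence $f^{\cS}$ is a definable multi-function $\Delta_n \to \mathcal{P}(\Gamma^\times)$.

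Second, I would show that $\Crit_{\cS}(c)$ is finite for each individual $c$. The geometric point is that $\tsp_B(\cS)$, viewed as a function of the ball $B$, is strongly discrete: as the radius $\gamma$ varies and the center $x_\gamma$ moves within the $\gamma$-annulus around $c$, the subspace $\tsp_{B(x_\gamma,<\gamma)}(\cS) \subseteq \bar K^n$ takes only finitely many values, jumping at finitely many critical radii governed by the combinatorial structure of $\cS$ near $c$. This can be proved by induction on $n$, using Fact~\ref{fact:induced-tst} to reduce to t-stratifications on lower-dimensional fibers and Fact~\ref{fact:rfiber_induction} to pinpoint the finitely many scales at which the structure changes.

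Third, I would invoke orthogonality to handle the variation of $\Crit_{\cS}(c)$ as $c$ varies in $E_\lambda \coloneqq \{c \in \mathbf{B} : v(c-S_{\leqslant i}) = \lambda_i \text{ for all } i\}$. By Lemma~\ref{lem:rainbow-charac}, two points in the same rainbow fiber yield the same critical value set. Using the angular component map $\ac$ to split $\RV^\times \cong \bar K^\times \times \Gamma^\times$, the rainbow value $\rho_{\cS}(c)$ for $c \in E_\lambda$ is encoded by $\lambda$ together with $\bar K$-valued angular data, so $\Crit_{\cS}$ restricted to $E_\lambda$ factors through a $\bar K$-definable parameter space. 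By Remark~\ref{rem:orthogonality}, the graph of this function in $\bar K^m \times \Gamma^\times$ is a finite union of rectangles $A_i \times B_i$; hence only finitely many distinct values $\Crit_{\cS}(c)$ appear. Combined with individual finiteness, $f^{\cS}(\lambda)$ is finite, and the uniform bound $N(\cS)$ follows because $\lambda \mapsto |f^{\cS}(\lambda)|$ is definable on $\Gamma^n$ (using stable embeddedness of $\Gamma$, which follows from orthogonality plus $\ac$) and, in the tame induced structure on $\Gamma$ coming from $1$-h-minimality, any definable family of finite subsets has uniformly bounded cardinality.

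The hardest step is the individual finiteness of $\Crit_{\cS}(c)$: a priori, $\Crit_{\cS}(c)$ could be any definable subset of $\Gamma^\times$, potentially containing intervals, and one must show it is discrete. The induction sketched above is the main content and requires care; roughly, once a ball is small enough to lie inside the stratum $S_d$ containing the relevant rainbow-fiber representative, all smaller balls inside it inherit the $\tsp$ forced by Fact~\ref{fact:induced-tst}, so jumps in $\tsp$ are confined to the finitely many radii where the ball first meets a new lower-dimensional stratum.
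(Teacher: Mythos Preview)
Your overall architecture matches the paper's proof exactly: (i) individual finiteness of $\Crit_{\cS}(c)$, (ii) dependence only on the rainbow fiber, (iii) a residue-field parametrization of the rainbow fibers at fixed $\lambda$, (iv) orthogonality to conclude finiteness of $f^{\cS}(\lambda)$, and (v) a uniform bound. Step (i) is indeed Lemma~\ref{lem:finim} (which the paper imports from \cite{halupczok2014a}), step (ii) is Remark~\ref{rem:critrfiber}, and the uniform bound is just compactness (your appeal to ``tame induced structure on $\Gamma$'' is unnecessary and in fact not available in this generality; compactness is what does the work).

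The one genuine gap is in step (iii). You write that, using $\ac$ to split $\RV^\times \cong \bar K^\times \times \Gamma^\times$, the rainbow value $\rho_{\cS}(c)$ for $c \in E_\lambda$ is ``encoded by $\lambda$ together with $\bar K$-valued angular data''. This does not follow from the $\ac$-splitting alone. Recall that $\rho_{\cS}(c)$ is the tuple of codes $\code{\rv(c-S_i)}$, and each $\rv(c-S_i)$ is an \emph{infinite} subset of $\RV^{(n)}$ whose elements have many different valuations, not just $\lambda_i$; the datum $\lambda = (v(c-S_{\leqslant i}))_i$ records only the minimal distances. So the $\Gamma$-content of $\code{\rv(c-S_i)}$ is not exhausted by $\lambda$, and there is no reason a priori why the residual information should live in $\bar K\eq$. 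The paper supplies exactly this missing piece as Lemma~\ref{lem:RF-paramtrization}, whose proof is an induction on $n$: one first handles the finite set $S_{q,0}$ via repeated averaging (Lemma~\ref{lem:average-RF}), then uses $1$-triviality on the maximal balls in $S_{\geqslant 1}$ to pass to induced t-stratifications on $(n-1)$-dimensional fibers, carrying along a ``risometry-invariance'' clause to ensure the parametrization is constant along translater arcs. This inductive reduction is the real work in the proof; your sketch correctly identifies that something like it is needed but does not supply it.
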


To obtain that the critical value function is a piecewise monomial geometric object as promised at the beginning of this section, one needs to further assume that the value group $\Gamma^\times$ is a pure divisible ordered abelian group. Since the ``graph'' $G$ of $f^{\cS}$ is a definable subset of $\Delta_n \times \Gamma^\times$, $G$ is a union of finitely many graphs of piecewise monomial functions with rational exponents, as promised. Definition \ref{def:monomial} and Corollary \ref{cor:crit} make this claim precise.

\begin{defn}\label{def:monomial}
A \emph{monomial function} from a subset $A  \subseteq \Gamma_\infty^n$ to $\Gamma^\times$
is a function of the form $(\lambda_0, \dots, \lambda_{n-1}) \mapsto \mu\cdot \prod_{i \in I} \lambda_i^{r_i}$, for some $\mu \in \Gamma^\times$ and some $r_i \in \Q$, and where $I$ is the set of all those coordinates $k$ such that the projection of $A$ to the $k$-th coordinate is contained in $\Gamma^\times$ (so that the product is always well-defined).
\end{defn}

\begin{cor}\label{cor:crit}
Suppose that we are in the setting of Theorem~\ref{thm:crit} and that additionally, the value group $\Gamma^\times$ is a pure divisible ordered abelian group. (In particular, this is true if we work in a pure algebraically closed valued field or more generally in an algebraically closed valued field with analytic structure in the sense of Section~ \ref{sec:analytic}.)

The graph of $f^{\cS}$ can be written as a union of graphs of finitely many monomial functions
$f\colon A \to \Gamma^\times$,
where $A \subseteq \Delta_n$ is
a subset of $B := \{\infty\}^{n_1} \times (\Gamma^\times)^{n_2} \times \{0\}^{n_3}$ (where $n_1 + n_2 + n_3 = n$)\footnote{No permutation of the coordinates of $B$ is needed, since we are only looking at subsets of $\Delta_n$, where coordinates are ordered.}
given by a finite conjunction of conditions of the form
$g_{j}(\lambda) \leqslant 1$ and $g_{j}(\lambda) < 1$ for monomial functions $g_{j}$ on $B$.
\end{cor}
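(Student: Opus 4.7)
The graph $G \subseteq \Delta_n \times \Gamma^\times$ of $f^{\cS}$ is $\Lx(K)$-definable (with the parameters used to define $\cS$), since the predicate ``$\lambda$ is a critical value at $c$'' and the conditions $v(c - S_{\leqslant i}) = \lambda_i$ can both be written as first-order formulas. By Theorem~\ref{thm:crit} each fiber of the projection $G \to \Delta_n$ has cardinality at most some fixed $N$. I would first partition $\Delta_n$ according to which coordinates $\lambda_i$ are $\infty$, which lie in $\Gamma^\times$, and which are $0$; since the entries of $\lambda$ are weakly decreasing, each part is of the form $B = \{\infty\}^{n_1} \times (\Gamma^\times)^{n_2} \times \{0\}^{n_3}$. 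It then suffices to analyze $G_B \coloneqq G \cap (B \times \Gamma^\times)$, which is naturally a definable subset of $(\Gamma^\times)^{n_2+1}$ with bounded fibers over $(\Gamma^\times)^{n_2}$.

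Next, since $\Lx$ contains an angular component map and $\Gamma$ is orthogonal to $\bar K$, Remark~\ref{rem:orthogonality} gives that $\Gamma$ is stably embedded, so $G_B$ is definable in the pure ordered abelian group $(\Gamma^\times,\cdot,1,<)$ with parameters from $\Gamma^\times$. By the assumption that $\Gamma^\times$ is a pure divisible ordered abelian group, its theory admits quantifier elimination in the language of ordered abelian groups (possibly with constants for these parameters), and in multiplicative notation the atomic formulas are precisely monomial conditions $\prod_i \lambda_i^{r_i} \star \mu$ with $r_i \in \Q$, $\mu \in \Gamma^\times$ and $\star \in \{<,\leqslant,=\}$. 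Thus $G_B$ is a finite Boolean combination of such monomial conditions.

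Finally, since $G_B$ has at most $N$ points in each fiber over the first $n_2$ coordinates, I would sort these points in ascending order in $\Gamma^\times$ and decompose $G_B$ as a union of at most $N$ definable graphs of functions $f_j \colon A_j \to \Gamma^\times$ with $A_j \subseteq B$ definable. A standard consequence of quantifier elimination in divisible ordered abelian groups is that every such definable function is piecewise monomial: $A_j$ admits a finite definable partition into pieces (each cut out by finitely many monomial conditions of the form $g(\lambda) < 1$ or $g(\lambda) \leqslant 1$) on which $f_j$ is of the form $\lambda \mapsto \mu \cdot \prod_i \lambda_i^{r_i}$. Collecting the resulting graphs across all finitely many cells $B$ yields the decomposition claimed in the statement. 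The only step requiring any real care is the ``piecewise monomial'' principle for definable functions in a divisible ordered abelian group; it is proved by a short induction on the complexity of the defining formula, but making that inductive step explicit (in particular ensuring that the defining conditions on each piece can themselves be taken monomial) will be the one place where attention to the shape of atomic formulas is needed.
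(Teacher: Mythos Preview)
Your proposal is correct and follows essentially the same approach as the paper's proof: use stable embeddedness of $\Gamma$ (via Remark~\ref{rem:orthogonality}) to reduce to the pure ordered abelian group structure, then apply quantifier elimination for divisible ordered abelian groups. The paper's proof is just two sentences and leaves implicit the steps you spell out (partitioning according to which coordinates are $0$, in $\Gamma^\times$, or $\infty$; using the fiber bound from Theorem~\ref{thm:crit} to split the graph into finitely many function graphs; invoking the piecewise monomial form of definable functions in DOAG), but the underlying argument is the same.
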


\begin{proof} 
Since $\Gamma$ is stably embedded (Remark \ref{rem:orthogonality}), the function $f^{\cS}$ is definable in the pure language of ordered groups, and the result
follows by quantifier elimination of divisible ordered abelian groups. 
\end{proof}

Before proving Theorem~\ref{thm:crit}, we give an example:

\begin{exam}\label{ex:contact} Let $K$ be an algebraically closed valued field,
fix some co-prime natural numbers $a > b>1$, and consider the definable set $X\subseteq K^2$ given by
\[
X\coloneqq \{(x,y) \in K^2 \mid x^a = y^b\}.
\]
Set $S_0\coloneqq\{(0,0)\}$, $S_1\coloneqq X \setminus S_0$ and $S_2\coloneqq K^2\setminus (S_0\cup S_1)$.
We let the reader check that $\mathcal{S}\coloneqq\{S_0,S_1, S_2\}$ is a t-stratification reflecting $X$ (even an arc-wise analytic one). Let us compute the valuative invariants $f_d^{\cS}$ for $d=0,1,2$. As a preliminary computation, note that for $(x,y) \in S_1$, we have $v(y) = v(x)^{a/b}$. From this, one deduces that
$v((x,y)) = v(x)$ if $v((x,y)) \leqslant 1$ and $v((x,y)) = v(y)$ if $v((x,y)) \geqslant 1$. 

We now start by computing $f^{\cS}(0,0)$.
Since $S_0 = \{(0,0)\}$, this is equal to $\Crit_{\cS}(0,0)$, so we need to determine all $\mu$ for which there exists $(x,y)\in K^2$ such that $\mu=v((x,y))$ and $S_1$ is not $\dir((x,y))$-trivial on $B((x,y), <\mu)$.

The point $(1,1)$ lies in $S_1$, and
using $a \ne b$, one can check that $S_1$ is not $\dir((1,1))$-trivial on $B((1,1)$, so $1$ is a critical value of $(0,0)$. We claim that this is the only critical value at $(0,0)$. To see this, we do not need to consider points $(x,y)$ for which $B((x,y),<\mu)$ is disjoint from $S_1$, since those are trivial in every direction. We can therefore restrict our attention to
$(x,y) \in S_1$ with $v((x,y)) = \mu \ne 1$. We then either have $\mu < 1$ and $\dir((x,y)) = \bar K \times \{0\}$ or $\mu > 1$ and $\dir((x,y)) = \{0\} \times \bar K$, and in both cases, one can verify that $S_1$ is $\dir((x,y))$-trivial on the ball. Thus indeed $f^{\cS}(0,0) = \{1\}$.

The most interesting part of the critical value function is $f^{\cS}(\lambda_0, 0)$ for $\lambda_0 > 0$, i.e., we wish to determine the (non-trivial) critical values of points $(x, y) \in S_1$;
satisfying $v(x,y) = \lambda_0$.
As suggested by the preliminary computation, we do a corresponding case distinction on whether $\lambda_0$ is less than $1$ or bigger than $1$.

Suppose that $\lambda_0 \leqslant 1$. Then $S_1$ also contains the point $(x, \zeta_b y)$, for some $b$-th root of unity $\zeta_b \ne 1$, and this point witnesses that $v(\zeta_b y - y) = v(y) =  \lambda_0^{a/b}$ is a critical value of $(x,y)$. Moreover, $1$ is also a critical value,
witnessed e.g.\ by the point $(1,1)$ as in the case $\lambda_0 = 0$ when $\res(x) \ne 1$.
If $\lambda_0 > 1$, then in a similar way, $(\zeta_a x, y)$ (for $\zeta_a \ne 1$ an $a$-th root of unity) witnesses that $v(x) = \lambda_0^{b/a}$ is a critical value of $(x,y)$. (In this case, $1$ is not a critical value.)
We leave it to the reader to verify that these are all critical values.

Finally, one should compute $f^{\cS}(\lambda_0, \lambda_1)$ for $\lambda_0 \geqslant \lambda_1 > 0$. This is a bit cumbersome but contains no interesting new information: There may be non-trivial critical values, but those are ``inherited'' from a nearby point on $S_1$. We therefore do not present the computation.

In total, we get, for $\lambda_0 \geqslant \lambda_1$:
\[
f^{\mathcal S}(\lambda_0,\lambda_1) =
\begin{cases}
\{1\}
&
\lambda_0 = \lambda_1 = 0
\\
\{1, \lambda_0, \lambda_0^{a/b}\}
&
0 < \lambda_0 < 1,
\lambda_1 = 0
\\
\{1,\lambda_0, \lambda_0^{a/b}, \lambda_1\}
&
0 < \lambda_0 < 1,
0 < \lambda_1 < \lambda_0^{a/b}
\\
\{1,\lambda_0,  \lambda_1\}
&
0 < \lambda_0 < 1,
\lambda_0^{a/b}\leqslant \lambda_1 \leqslant \lambda_0
\\
\{\lambda_0, \lambda_0^{b/a}\}
&
1 \leqslant \lambda_0 < \infty,
\lambda_1 = 0
\\
\{\lambda_0,  \lambda_0^{b/a}, \lambda_1\}
&
1 \leqslant \lambda_0 < \infty,
0 < \lambda_1<  \lambda_0^{b/a}
\\
\{\lambda_0,   \lambda_1\}
&
1 \leqslant \lambda_0 < \infty,
\lambda_0^{b/a}\leqslant \lambda_1 \leqslant \lambda_0
\\
\emptyset
&
\lambda_0 = \infty.
\end{cases}
\]
\end{exam}

\begin{exam}
Let $X= \{(x,y,z) \in K^3 : y^2+z^2=x^{3}\}$ be the trumpet from Figure~\ref{fig:trumpet}. The intersection of $X$ with the translate of the $yz$-plane by some $x \in \valring \setminus \{0\}$ is a circle of diameter $2x^{3/2}$; we claim that the function $v(x) \mapsto v(2x^{3/2})$ is captured by the critical value function, no matter which t-stratification we choose. Indeed,
if we pick one point on this circle, say $p_x := (x, 0, x^{3/2})$ then the opposite point $(x, 0, -x^{3/2})$ witnesses that $v(2x^{3/2})$ is a critical value at $p_x$. Any t-stratification reflecting $X$ has $(0,0,0) \in S_0$, and since $S_0$ is finite, for $x$ sufficiently small, we have $v(p_x - S_0) = v(x)$. Thus, 
$(\lambda_0,\lambda_1,0) \mapsto \lambda_0^{3/2}$ is part of the critical value function at least for small $\lambda_0$ (and for some suitable $\lambda_1$).
\end{exam}

Let us now get to the proof of Theorem~\ref{thm:crit}. We first need to do some preliminary work. Since some of the intermediate results might be of independent interest, we will not impose the assumptions of the theorem all the way through, but only assume what-ever is needed. (We do however always assume that we are in the setting of Section~\ref{sec:setting}.)

The first thing to note is that the set of critical values at a single point is finite. This is already known (and not very difficult):

\begin{lem}[{\cite[Theorem 7.4]{halupczok2014a}}]\label{lem:finim} Suppose $\bar K$ and $\Gamma$ are orthogonal. Then for any t-stratification $\cS$ of $\mathbf B$ and for any $c \in \mathbf B$, the set $\Crit_{\cS}(c)$ is finite.
\end{lem}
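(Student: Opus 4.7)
The plan is to express $\Crit_{\cS}(c)$ as the image under the valuation map $v\colon \RV^{(n)} \to \Gamma$ of a $c$-definable set in $\RV\eq$, split this set according to directions, and then combine a one-dimensional finiteness statement with orthogonality to conclude.

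First I would observe that whether $\mu = v(x-c)$ is a critical value witnessed by $x$ depends only on the leading term $\rv(x-c) \in \RV^{(n)}\setminus\{0\}$. Indeed, $\rv(x-c)$ determines both $\dir(x-c)$ and the open ball $B(x,<v(x-c))$: for any $x'$ with $\rv(x'-c) = \rv(x-c)$, one has $v(x-x') < v(x-c)$, whence $B(x,<v(x-c)) = B(x',<v(x-c))$ and $\dir(x-c) = \dir(x'-c)$. Using Remark~\ref{rem:triv-rain} to rephrase the triviality condition on $\cS$ in terms of the definable rainbow $\rho_{\cS}$, this yields a $c$-definable set $Z \subseteq \RV^{(n)}\setminus\{0\}$ with $\Crit_{\cS}(c) = v(Z)$. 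After using $\ac$ to split $\RV^\times \cong \bar{K}^\times \times \Gamma^\times$, $Z$ corresponds to a definable subset of $\bar{K}^n \times \Gamma$ (with $v$ becoming the projection to $\Gamma$).

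Next, I would decompose $Z$ by directions: for each $\bar{u} \in \bar{K}^n\setminus\{0\}$, let $C_{\bar{u}} \subseteq \Gamma^\times$ be the set of radii $\mu$ for which some (equivalently, any) $x$ with $\dir(x-c)=\bar{u}$ and $v(x-c)=\mu$ witnesses $\mu$ as a critical value at $c$. The family $(C_{\bar{u}})_{\bar{u}}$ is $c$-definable and parametrized by $\bar{K}^n$. The crucial intermediate claim is that each $C_{\bar{u}}$ is finite. The simplest case is the one-dimensional one: here $S_0$ is finite (dimension zero, by $1$-h-minimality), and a direct computation shows $\Crit_{\cS}(c) \subseteq \{v(s-c) : s \in S_0\}$, so the claim is immediate. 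In higher dimension, one reduces to the one-dimensional picture by covering the line through $c$ in direction $\bar{u}$ by sufficiently small balls $B$ with $\bar{u} \subseteq \tsp_{B}(\cS)$, applying Fact~\ref{fact:induced-tst} on each such $B$ to induce a lower-dimensional stratification on a transversal fiber, and inducting on $n$.

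Once each $C_{\bar{u}}$ is known to be finite, the conclusion follows from orthogonality: the map $\bar{u} \mapsto C_{\bar{u}}$ is definable from $\bar{K}^n$ to the set of finite subsets of $\Gamma$ (encoding size and elements), and Remark~\ref{rem:orthogonality} forces its image to be finite. Thus $\Crit_{\cS}(c) = \bigcup_{\bar{u}} C_{\bar{u}}$ is a finite union of finite sets.

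The main obstacle is the finiteness of $C_{\bar{u}}$ in the higher-dimensional case. The difficulty is that an arbitrary line $L_{\bar{u}}$ is not in general a fibre of an exhibition of $\tsp_{B}(\cS)$ for large $B$, so Fact~\ref{fact:induced-tst} does not apply globally; one must instead organise the local finiteness statements on small balls covering $L_{\bar u}$ and show that only finitely many distinct local contributions occur. I expect this to follow from a careful inductive argument using the rainbow structure of $\cS$ together with the combinatorial finiteness ($n+1$ strata, and finitely many direction-spaces $\tsp_{B}(\cS)$ compatible with a given point $c$), but the bookkeeping is the part that will require real care.
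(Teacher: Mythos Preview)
The paper does not give its own proof of this lemma; it simply cites \cite[Theorem~7.4]{halupczok2014a} as an already-known result. So there is no in-paper argument to compare against, and the question is whether your outline stands on its own.

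Your reduction in the first two paragraphs is sound: the condition ``$\cS$ is not $\dir(x-c)$-trivial on $B(x,<v(x-c))$'' depends only on $\rv(x-c)$, and via the definability of $\tsp_B(\cS)$ (or equivalently the rainbow) this gives a $c$-definable $Z \subseteq \RV^{(n)}$ with $\Crit_{\cS}(c) = v(Z)$. One small point: you invoke an angular component map, which the lemma does not assume; this is harmless for the conclusion (finiteness is preserved under elementary extension, and one can always find a model where such a section exists), but you should say so explicitly rather than silently assume $\ac$ is in the language.

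The real gap is exactly where you locate it: the claim that each $C_{\bar u}$ is finite. Your inductive sketch does not go through as written. You propose to cover the line $L_{\bar u}$ through $c$ by balls $B$ with $\bar u \subseteq \tsp_B(\cS)$ and then invoke Fact~\ref{fact:induced-tst} on a transversal fibre. But there is no reason such balls exist at every scale: if, say, $c \in S_0$ and $\bar u$ happens to be transverse to $\tsp_{B(c,\leqslant\mu)}(\cS)$ for a whole interval of $\mu$, then on those scales $\bar u$ is not contained in the trivializing direction of the closed ball around $c$, and the relevant open balls $B(x,<\mu)$ (which do \emph{not} contain $c$) are not governed by any induced stratification on a line through $c$. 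So neither the covering nor the reduction to a one-dimensional fibre is available in general, and your ``bookkeeping'' problem is not merely organisational but hides a genuine missing argument. The proof in \cite{halupczok2014a} uses the structure of the rainbow more directly rather than attempting a direction-by-direction induction; if you want to complete your approach you will need a different mechanism for bounding $C_{\bar u}$ than restricting to a fibre.
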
 

Next, note that in Definition~\ref{def:critpt} (2), the union over all $c \in K^n$ in reality only runs over all rainbow fibers:

\begin{rem}\label{rem:critrfiber}
If $C$ is a rainbow fiber of a t-stratification $\cS$, then $\Crit_{\cS}(c)$ is the same for all $c \in C$:
By Lemma~\ref{lem:rainbow-charac}, given $c, c' \in C$ there exists a risometry $\varphi$ fixing each $S_i$ setwise and sending $c$ to $c'$. This implies that if $\mu$ is a critical value for $c$, witnessed by $x$ (with $v(x-c) = \mu$), then $\varphi(x)$ witnesses that $\mu$ is also a critical value for $c'$.
The same argument also shows that for every $i$, we have $v(c - S_{\leqslant i}) = v(c' - S_{\leqslant i})$, and this is therefore also equal to $v(C - S_{\leqslant i})$.
\end{rem}

This motivates the following definition.

\begin{defn}\label{def:fiber-d} Let $\mathcal{S}$ be a t-stratification of $\mathbf{B}$. For $\lambda=(\lambda_0,\ldots, \lambda_{n-1})\in\Delta_n$, we let $R_\lambda^\mathcal{S}$ be the collection of all rainbow fibers $C$ such that $v(C-S_{\leqslant i})=\lambda_i$ for each $i< n$.
\end{defn}

As in Remark~\ref{rem:all-points}, one sees that 
the sets $R_\lambda^{\mathcal{S}}$ form a partition of the set of all rainbow fibers, when $\lambda$ runs over $\Delta_n$.

As the last missing ingredient to prove Theorem~\ref{thm:crit}, we will show that each set $R_\lambda^{\mathcal{S}}$ can be parametrized by the residue field. For this ingredient, we will need the following slightly modified version of \cite[Lemma 2.1.3]{clu-hal-rid}.

\begin{lem}\label{lem:average-RF} Suppose the language includes an angular component map $\ac$. Let $X_q\subseteq K^n$ be a $\emptyset$-definable family of finite sets, where $q$ runs over some $\emptyset$-definable set $Q$ in an arbitrary possibly imaginary sort. Then there exists a $\emptyset$-definable family of injective maps $f_q\colon X_q \to \bar K^m$ (for some $m$) which moreover has the following property: if, for some $q,q'\in Q$ there exists a risometry $\varphi\colon K^n\to K^n$ sending $X_{q}$ to $X_{q'}$, then for every $x\in X_{q}$
\[
f_q(x) = f_{q'}(\varphi(x)). 
\]
\end{lem}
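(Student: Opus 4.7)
The proof will follow that of \cite[Lemma 2.1.3]{clu-hal-rid} closely, with a careful adjustment to arrange risometry-equivariance. I would start by reducing to the case where the cardinality $k \coloneqq |X_q|$ is constant on $Q$: since $1$-h-minimality eliminates the $\exists^\infty$ quantifier on $K$, the cardinality $|X_q|$ is a $\emptyset$-definable function of $q$ into $\mathbb{N}$, and so one can partition $Q$ into finitely many definable pieces of constant cardinality and treat each separately.

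The main construction is then an iterated multiset invariant in the residue field, in the spirit of Weisfeiler--Leman colour refinement. Set $f_q^{(0)}(x) = 0 \in \bar K^0$, and recursively define $f_q^{(t+1)}(x)$ to be the canonical $\emptyset$-definable encoding (via elementary symmetric functions applied coordinatewise) of the multiset
\[
\bigl\{ \bigl( f_q^{(t)}(y),\, \ac(x-y) \bigr) : y \in X_q\setminus\{x\} \bigr\}.
\]
Risometry-equivariance of $f_q^{(t)}$ follows by induction on $t$: any risometry $\varphi\colon K^n\to K^n$ sending $X_q$ to $X_{q'}$ preserves $\rv$, and hence $\ac$, of differences, so the multisets at level $t+1$ match under $\varphi$ as soon as the levels $t$ do. Taking $f_q \coloneqq f_q^{(k)}$ yields a $\emptyset$-definable family of risometry-equivariant maps into some $\bar K^m$.

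The main difficulty is to show that $f_q$ is injective. The equivalence relation induced on $X_q$ by equality of $f_q^{(t)}(x)$ can only refine as $t$ increases, so it stabilises after at most $k$ steps. If the stable partition were non-trivial, then the tree-like structure of the ultrametric on $X_q$ together with the compatibility of multisets at each level would allow one to construct a non-trivial self-bijection of $X_q$ preserving $\ac$ of all pairwise differences; the valuations $v(x-y)$ being determined by the stable block structure alone, this self-bijection would actually be a self-risometry of $X_q$. This contradicts the uniqueness of risometries between finite sets (Remark \ref{rem:finite_riso}), forcing $f_q$ to be injective.

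The delicate point in the last step is justifying that $\ac$-preservation together with the recorded iterated multiset structure is strong enough to also recover $v$-preservation. If this does not go through directly from the tree structure, I would enrich the iterated invariant by additionally encoding $\ac$ of monomial expressions such as $(x-y_1)^a/(x-y_2)^b$, whose residues carry information about the valuation ratios $v(x-y_1)/v(x-y_2)$; since the valuation structure of $X_q$ is entirely determined by the parameter $q$, this enrichment stays within a residue-field tuple of uniformly bounded length and preserves risometry-equivariance.
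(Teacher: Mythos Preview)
Your overall plan has two genuine gaps in the injectivity argument, and the ``delicate point'' you flag is not repaired by your suggested enrichment.

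First, the passage from a non-trivial stable colour partition to a non-trivial self-bijection of $X_q$ preserving all $\ac(x-y)$ is exactly the step where colour refinement is known to fail: even for edge-coloured complete directed graphs, $1$-WL stabilising with non-singleton classes does not produce an automorphism. You invoke the ultrametric tree structure to bridge this, but that tree is encoded in the valuations $v(x-y)$, which are precisely the data your invariant does \emph{not} record; your assertion that ``the valuations $v(x-y)$ are determined by the stable block structure alone'' is therefore circular.

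Second, your fallback enrichment by $\ac\bigl((x-y_1)^a/(x-y_2)^b\bigr)$ adds nothing: the angular component is a multiplicative homomorphism $K^\times \to \bar K^\times$, so this equals $\ac(x-y_1)^a\,\ac(x-y_2)^{-b}$, which is already determined by the pairwise $\ac$-values you have recorded. It carries no information about $v(x-y_1)/v(x-y_2)$.

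The paper avoids both issues by a different mechanism: induction on $\#X_q$ using the \emph{average} $a_q = \tfrac{1}{\#X_q}\sum_{y\in X_q} y$ and the maximal pairwise distance $\lambda_q$, defining $\hat f_q(x) = \ac(x - a_q)$ when $v(x-a_q)=\lambda_q$ and $0$ otherwise. This map is never constant when $\#X_q \ge 2$ (any two points realising $\lambda_q$ are separated), so its fibres are strictly smaller and induction applies. Risometry-equivariance holds for a reason your scheme cannot access: after rescaling so $\lambda_q = 1$ and translating so $a_q = a_{q'} = 0$, a risometry $\varphi$ induces a \emph{translation} on $\bar K^n$, and since translations preserve averages, that translation must be the identity, giving $\res(x) = \res(\varphi(x))$ directly. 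The average is the missing anchor that turns residue-field data into something both injective and risometry-invariant.
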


\begin{proof}
Using elimination of $\exists^\infty$ (see point (3) of Section \ref{sec:modth}), we can bound the cardinality $\#X_q$ and then assume that it is constant.
We do an induction over $\#X_q$.

If $X_q$ is always a singleton or empty, we can define $f_q$ to be always constant. Otherwise,
the lemma is obtained by repeatedly taking averages of the elements of $X_q$ and subtracting. More precisely, setting $a_q \coloneqq \frac1{\#X_q}\sum_{x \in X_q} x$, and $\lambda_q\coloneqq \max\{v(x-x') : x,x'\in X_q\}$, we get that the map
\[
\hat f_q\colon X_q\to \bar K, x \mapsto
\begin{cases}
\ac(x - a_q) & \text{if } v(x - a_q) = \lambda_q\\
0 & \text{if } v(x - a_q) < \lambda_q
\end{cases}
\]
is not constant on $X_q$.
Therefore, each fiber $\hat f^{-1}_q(\xi)$ of $\hat f_q$ (for $\xi \in \hat f_q(X_q)$) has cardinality less than $X_q$, so
by induction, we obtain a definable family of
injective maps $g_{q,\xi}\colon \hat f^{-1}_q(\xi) \to \bar K^m$. Now set $f_q(x) \coloneqq (\hat f_q(x), g_{q,\hat f_q(x)}(x))$. To show the last statement of the lemma, let $\varphi\colon K^n\to K^n$ be a risometry sending $X_{q}$ to $X_{q'}$, for $q,q'\in Q$. Without loss of generality, by scaling and translating, we may suppose that $\lambda_q=1$ and $a_q=a_{q'}=0$. Since $\varphi$ is a risometry, $\lambda_{q'}=1$. This implies that $X_q$ and $X_{q'}$ are contained in $\valring^n$, and that the restriction $\varphi_{|\valring^n}$ is a risometry on $\valring^n$. In particular, it induces a well-defined function $\bar\varphi\colon \bar K^n\to \bar K^n$ sending $\res(x)\to \res(\varphi(x))$, which in addition has to be a translation. 
An easy computation shows that $\bar\varphi$ sends $\res(a_q) = 0$ to $\res(a_{q'}) = 0$ so that it must be the identity; indeed:
\begin{align*}
\bar\varphi(\res(a_q))  & = \bar\varphi(\res(\frac1{\#X_q}\sum_{y \in X_q} y))) = \bar\varphi(\frac1{\#X_q}\sum_{y \in X_q} \res(y)) \\
                        & = \frac1{\#X_q}\sum_{y \in X_q} \bar\varphi(\res(y)) =\frac1{\#X_q} \sum_{y \in X_q} \res(\varphi(y)) \\
                        & = \frac1{\#X_{q'}}\sum_{y \in X_{q'}} \res(y) = \res(a_{q'}).  
\end{align*}
Therefore, for $x\in X_q$, 
\[
\hat f_q(x) = \res(x) = \bar\varphi(\res(x)) = \res(\varphi(x)) =  \hat f_{q'}(\varphi(x)).
\]
This shows moreover, that given $\xi\in \bar K^n$, $\varphi$ restricted to $\hat f_q^{-1}(\xi)$ is a bijection onto $\hat f_{q'}^{-1}(\xi)$. Thus, by induction, $g_{q,\hat f_q(x)}(x) = g_{q',\hat f_{q'}(\varphi(x))}(\varphi(x))$, which shows that $f_q(x) =f_{q'}(\varphi(x))$. 
\end{proof}

The following lemma is the promised statement about $R_\lambda^{\cS}$ being residue-field parametrized. We need to formulate a somewhat stronger statement to make the inductive proof work.
In the statement, we write ``$\bar K\eq$'' for all those sorts from $\RV\eq$ which are quotients of $\emptyset$-definable subsets of $\bar K^m$ by $\emptyset$-definable equivalence relations (for any $m$).

\begin{lem}\label{lem:RF-paramtrization} Suppose the language includes an angular component map $\ac$
and that $\bar K$ is stably embedded.
Let $\mathcal{S}_q$ be a $\emptyset$-definable family of t-stratifications of a $\emptyset$-definable family of balls $\mathbf{B}_q \subseteq K^n$ (where $q$ ranges over a $\emptyset$-definable subset $Q$ of a -possibly imaginary- sort). Let $\lambda=(\lambda_0,\ldots, \lambda_{n-1})\in \Delta_n$. Then there is a $\lambda$-definable family $E_q\subseteq \bar K\eq$ and a $\lambda$-definable family of definable bijections $f_q\colon R_\lambda^{\mathcal{S}_q}\to E_q$ which moreover satisfies the following:
if, for some $q,q'\in Q$, there exists a definable risometry $\varphi\colon K^n\to K^n$ sending $S_{q,i}$ to $S_{q',i}$ for each $i$, then for every $C \in R_\lambda^{\cS_q}$
\[
f_q(C) = f_{q'}(\varphi(C)). 
\]
\end{lem}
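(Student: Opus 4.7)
The plan is induction on the ambient dimension $n$; the case $n = 0$ is trivial. For the inductive step, observe first that all rainbow fibers in $R_\lambda^{\cS_q}$ share a common stratum index $d = \min\{i : \lambda_i = 0\}$ (with $d = n$ if $\lambda_{n-1} > 0$), because $v(C - S_{q,\leqslant i}) = 0$ iff $C \cap S_{q,\leqslant i} \neq \emptyset$, which (since a rainbow fiber lies in a single stratum) means $C \subseteq S_{q,\leqslant i}$. If $d = 0$, every $C \in R_\lambda^{\cS_q}$ is a singleton in $S_{q,0}$, and $S_{q,0} \cap \mathbf{B}_q$ is uniformly finite by dimension theory and elimination of $\exists^\infty$; Lemma~\ref{lem:average-RF} applied to this family directly produces the desired $\lambda$-definable injections into $\bar K\eq$ with the prescribed risometry compatibility.

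Assume now $d \geqslant 1$. By Fact~\ref{fact:rfiber_induction}, each $C \in R_\lambda^{\cS_q}$ is contained in a unique maximal ball $B_C \subseteq S_{q, \geqslant 1}$, and since the rainbow is constant on $C$, this ball has radius exactly $\lambda_0$. The first step is to parameterize the set of relevant maximal balls (those containing some $C \in R_\lambda^{\cS_q}$) by $\bar K\eq$. If $\lambda_0 = \infty$ then $S_{q,0} = \emptyset$ and $\mathbf{B}_q$ is the only such ball; otherwise, each such $B$ has a uniformly finite set of nearest $S_0$-points $X_{q,B} = \{s \in S_{q,0} : v(s - B) = \lambda_0\}$ on its boundary sphere, and applying Lemma~\ref{lem:average-RF} to this family, together with the ``direction data'' $\ac((c - s)/t) \in \bar K^n$ for $s \in X_{q,B}$, $c \in B$, and $t \in K^\times$ with $v(t) = \lambda_0$, yields a $\lambda$-definable injection from relevant maximal balls to $\bar K\eq$. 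Compatibility with risometries follows from Lemma~\ref{lem:average-RF} together with invariance of $\ac$ under risometries.

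The second step is to parameterize $\{C \in R_\lambda^{\cS_q} : B_C = B\}$ within each such $B$ by induction. Pick a $\code{B}$-definable exhibition $\pi_B$ of $\tsp_B(\cS_q)$ (available by stable embeddedness of $\RV$, since the set of possible exhibitions lives in $\RV\eq$) and any fiber $\mathbf{F}_B$ of $\pi_B$ in $B$. By Fact~\ref{fact:induced-tst}, $\mathbf{F}_B$ carries an induced t-stratification $\cS_B$ of ambient dimension $n - \dim\tsp_B(\cS_q) < n$, and by Fact~\ref{fact:rfiber_induction}, the assignment $C \mapsto C \cap \mathbf{F}_B$ is a bijection $\{C : B_C = B\} \to R_{\lambda'}^{\cS_B}$ for an appropriate truncation $\lambda'$ of $\lambda$ derived from the indexing shift in Fact~\ref{fact:induced-tst}. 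Applying the inductive hypothesis to the family $(\cS_B)$ parameterized by the previously parameterized set of balls yields the remaining residue-field data, and concatenation gives the desired $f_q$.

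The main technical obstacle will be propagating risometry compatibility through the second step: a definable risometry $\varphi$ sending $\cS_q$ to $\cS_{q'}$ restricts to a risometry $B \to \varphi(B)$ sending $\tsp_B(\cS_q)$ to $\tsp_{\varphi(B)}(\cS_{q'})$, but may fail to send $\mathbf{F}_B$ to $\mathbf{F}_{\varphi(B)}$. To handle this, pre- or post-compose with a $\cS_{q'}$-preserving risometry of $\varphi(B)$ moving $\varphi(\mathbf{F}_B)$ to $\mathbf{F}_{\varphi(B)}$; such a risometry exists by Lemma~\ref{lem:translater}, since any trivializing risometry of $\cS_{q'}$ on $\varphi(B)$ induces a translater swapping fibers of $\pi_{\varphi(B)}$ while preserving the stratification. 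The resulting composite is a risometry $\mathbf{F}_B \to \mathbf{F}_{\varphi(B)}$ sending $\cS_B$ to $\cS_{\varphi(B)}$, so the inductive risometry compatibility closes the argument.
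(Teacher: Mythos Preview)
Your proposal is correct and follows essentially the same inductive strategy as the paper: parametrize the maximal balls in $S_{q,\geqslant 1}$ at distance $\lambda_0$ from $S_{q,0}$ by residue-field data via Lemma~\ref{lem:average-RF} and $\ac$, then within each ball pass to a fiber of an exhibition and apply induction, using a translater-induced risometry (the paper cites Remark~\ref{rem:fibers_riso}) to show independence of the chosen fiber and to propagate the ``moreover'' clause. The only minor variation is that the paper reduces the ambient dimension by exactly one at each step (choosing a one-dimensional $\bar U \subseteq \tsp_B(\cS_q)$ rather than all of $\tsp_B(\cS_q)$), which slightly simplifies the index shift $\lambda \mapsto \lambda'$.
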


\begin{proof}
If $\lambda_0 = 0$, then $R_\lambda^{\mathcal{S}_q}$ consists of singletons of elements of $S_{q,0}$, so the present lemma holds by Lemma \ref{lem:average-RF}. So from now on suppose that $\lambda_0 > 0$.

Let $B_{\mathcal{S}_q}({\lambda_0})$ be the set of all maximal balls $B\subseteq S_{q,\geqslant 1}$ such that $v(B-S_{q,0})=\lambda_0$.
(Each rainbow fiber in $R^{\cS_q}_\lambda$ is contained in one of those balls.)
Let us show that there are a $\lambda$-definable family $D_q\subseteq \bar K\eq$ and a  $\lambda$-definable family of definable bijections $h_q\colon B_{\mathcal{S}_q}({\lambda_0})\to D_q$.
If $S_{q,0}$ is empty, then $B_{\mathcal{S}_q}({\lambda_0})$ is either empty or only contains $\mathbf{B}$ (depending on whether $\lambda_0 = \infty$), so now assume that $S_{q,0}$ is non-empty.
Using Lemma \ref{lem:average-RF}, let $e_q\colon S_{q,0}\to H_q\subseteq \bar K^m$ be a definable family of definable bijections (for some $m$). Define for $x\in \bigcup B_{\mathcal{S}_q}({\lambda_0})$ a map $f_x\colon H_{q} \to \bar K^n$ given by 
$f_x(e_q(y)) = \ac(x-y)$ for $y \in S_{q,0}$. Next, define $g\colon \bigcup B_{\mathcal{S}_q}({\lambda_0}) \to \bar K\eq$ sending $x$ to the code of $f_x$. (The code can indeed be take in in $\bar K\eq$, since $\bar K$ is stably embedded.) Each ball $B \in B_{\mathcal{S}_q}({\lambda_0})$ is a union of fibers of $g$. Quotient the image of $g$ by the equivalence relation relating two fibers if there is a ball in $B_{\mathcal{S}_q}({\lambda_0})$ containing both; we obtain the desired definable family $h_q$. Note that if $\varphi\colon K^n\to K^n$ is a risometry sending $\mathcal S_{q}$ to $\mathcal S_{q'}$, then $\varphi$ induces a bijection between the sets $B_{\mathcal{S}_q}({\lambda_0})$ and $B_{\mathcal{S}_{q'}}({\lambda_0})$, and moreover, $h_{q}(B)=h_{q'}(\varphi(B))$ for every $B\in B_{\mathcal{S}_q}({\lambda_0})$. Indeed, the last part of Lemma \ref{lem:average-RF} implies both that $H_q=H_{q'}$ and that for any $x\in \bigcup B_{\mathcal{S}_q}({\lambda_0})$, the functions $f_x$ and $f_{\varphi(x)}$ are equal. This shows that the codes of $f_x$ and $f_{\varphi(x)}$ are equal, which implies that $h_{q}(B)=h_{q'}(\varphi(B))$ for any $B\in B_{\mathcal{S}_q}({\lambda_0})$. 

If $n = 1$, then $R^{\cS_q}_\lambda = B_{\cS_q}(\lambda_0)$ (by $1$-triviality on each of those balls) and we are done. Otherwise, we continue using an induction on $n$, as follows:

For $B\in B_{\mathcal{S}_q}(\lambda_0)$, let
$(R_\lambda^{\mathcal{S}_q})_{|B}=\{C\in R_\lambda^{\mathcal{S}_q} : C\subseteq B\}$. Let us build a definable family of definable bijections $f_{q,B}\colon (R_\lambda^{\mathcal{S}_q})_{|B}\to G_{q,B}$ for a definable family of sets $G_{q,B}\subseteq \bar K\eq$.
So fix $B\in B_{\mathcal{S}_q}(\lambda_0)$. 
We may assume that $(R_\lambda^{\mathcal{S}_q})_{|B}$ is non-empty.
By 1-triviality on $B$,
there exists a coordinate projection $\pi = \pi_{q,B}\colon K^n \to K$ such that $\bar\pi(\tsp_B(\cS_q)) = \bar K$. If several such $\pi$ are possible, we pick the first one (so that $\pi_{q,B}$ is definable uniformly in $q,B$).
Let $\bar{U} \subseteq \tsp_B(\cS_q)$ be a one-dimensional sub-vector space exhibited by $\pi$. For $t\in \pi(B)$, set $\mathbf{F}_t \coloneqq \pi^{-1}(t) \cap B$ and $C_t \coloneqq C \cap \mathbf{F}_t$ for $C\in(R_\lambda^\mathcal{S})_{|B}$.
By $\bar U$-triviality, 
on each fiber $\mathbf{F}_t$, we have an induced t-stratification $\mathcal{S}_{q,B,t}$, and
the map $C \mapsto C_t$ is a bijection
$(R_\lambda^{\mathcal{S}_q})_{|B} \to R_{\lambda'}^{\mathcal{S}_{q,B,t}}$ for
some suitable $\lambda'=(\lambda'_1,\ldots, \lambda'_{n-1})\in \Delta_{n-1}$, namely
$\lambda'_i =  \lambda_i$ if $\lambda_i < \lambda_0$
and $\lambda'_i = \infty$ otherwise.

By induction, we have a definable family of definable bijections $g_{q,B,t}\colon R_{\lambda'}^{\mathcal{S}_{q,B,t}}\to E_{q,B,t}$ for a definable family of subsets $E_{q,B,t}\subseteq \bar K\eq$. 
Given any two $t, t' \in \pi(B)$, a translater witnessing $\bar U$-triviality on $B$ yields a risometry
$\alpha_{t,t'}\colon F_t\to F_{t'}$ sending $S_{q,B,t,i}$ to $\mathcal{S}_{q,B,t',i}$ for each $i$, so by the ``moreover'' part of the statement, given $C\in R_{\lambda}^{\mathcal{S}_q}$, $g_{q,B,t}(C_t)$ does not depend on $t$. Set  
\[
G_{q,B} \coloneqq E_{q,B,t} \text{ and }
f_{q,B}(C) \coloneqq g_{q,B,t}(C_t) \text{ for some (any) $t\in \pi(B)$ }.  
\]
To conclude, for $C\in R_\lambda^{\mathcal{S}_q}$, let $B_{C}$ be the unique element in $B_{\mathcal{S}_q}({\lambda_0})$ such that $C\subseteq B_C$ and set $E_q\coloneqq \bigcup_{\xi\in D_q} \{\xi\}\times G_{q,h_q^{-1}(\xi)}$, and $f_q\colon R_\lambda^{\mathcal{S}_q}\to E_q$ by 
\[
C \mapsto (h_q(B_C),f_{q,B_C}(C)).  
\]
By construction, $f_q$ is a bijection. It remains to show the last statement, so let $\varphi\colon K^n\to K^n$ be a risometry sending $S_{q,i}$ to $S_{q',i}$ for each $i$ and let $C \in R_\lambda^{\mathcal S_q}$ and $C' \coloneqq \varphi(C)$. By the paragraph above, $h_q(C)=h_{q'}(C')$.
Using that $\varphi$ restricts to a risometry from $B_C$ to $B_{C'}$, we obtain that the coordinate projections $\pi_{q,B_C}$ and $\pi_{q',B_{C'}}$ project to the same coordinate. After that,
Remark~\ref{rem:fibers_riso} ensures 
that the ``moreover'' part of the induction hypothesis (as used to define $g_{q,B_C,t}$) can be applied to
$(q,B_C,t)$ and $(q', B_{C'}, t')$ (for any $t, t'$ in the right balls),
so that we obtain
\[
f_{q,B_C}(C)=
g_{q,B_C,t}(C_t) = g_{q',B_{C'},t'}(C'_{t'})
=f_{q',B_{C'}}(C'), 
\]
and hence $f_q(C) = f_{q'}(C')$.  
\end{proof}

\begin{proof}[Proof of Theorem~\ref{thm:crit}]
By Lemma~\ref{lem:finim},
$\Crit_{\cS}(c)$ is finite for every $c$. By
Remark~\ref{rem:critrfiber}, $\Crit_{\cS}(c)$ only depends on the rainbow fiber containing $c$, so for each rainbow fiber $C$, we have a well-defined (finite) set $\Crit_{\cS}(C)$
(equal to $\Crit_{\cS}(c)$ for any $c \in C$), and
we have
\[
f^{\cS}(\lambda) = 
\bigcup_{C \in R_{\lambda}^{\mathcal{S}}} \Crit_{\cS}(C).
\]
Using that $\RV$ is stably embedded (see Section~\ref{sec:modth}), that we have an $\ac$-map and that $\bar K$ and $\Gamma$ are orthogonal, one deduces that $\bar K$ is stably embedded (see Remark~\ref{rem:orthogonality}), so that we can apply Lemma~\ref{lem:RF-paramtrization} and obtain that $R_{\lambda}^{\mathcal{S}}$ is in definable bijection to a definable set in $\bar K\eq$. Now orthogonality of $\bar K$ and $\Gamma$ implies that for $C$ running over $R_{\lambda}^{\mathcal{S}}$, there are only finitely many different sets $\Crit_{\cS}(C)$; hence the union is finite. The uniform bound follows by compactness. 
\end{proof}

\section{Future directions and conjectures}\label{sec:discussion}

In this section we discuss various directions for further work. Of particular interest for us is the potential relation with the Nash-Semple conjecture which we discuss in Section \ref{sec:nash}. 

\subsection{A hierarchy of examples}\label{sec:hierarchy}

The definition of \lt-stratification provided in Section \ref{sec:lipschitz}, involves a condition about approximations by the first Taylor polynomial with an error term of degree 2. This can best be seen in the equivalent form given by 
part (i) of Proposition \ref{prop:SU-equivalence}:
If we restrict Condition \eqref{eq:SUT1} to
the case $b = b_0$ and $u_2 = u_0$, that condition can be rewritten as  
\begin{equation}\label{eq:SUT1new}
v(\arc_b(u_1)-T^{\leqslant 1}_{\arc_b,u_0}(u_1-u_0))\lsnew\frac{v(u_1-u_0)^2}{\radc^*(B)},
\end{equation}
where $T^{\leqslant 1}_{\arc_b,u_0}(x) = \arc_b(u_0) + D\arc_b(u_0)(x)$ denotes the first Taylor polynomial of $\arc_b$ around $u_0$. It seems natural that one could impose similar conditions of higher order: For any $r \geqslant 2$, a 
t$^r$-stratification should impose that the functions $\arc_b$ satisfy 
\begin{equation}\label{eq:SUT1newr}
v(\arc_b(u_1)-T^{\leqslant r-1}_{\arc_b,u_0}(u_1-u_0))\lsnew\frac{v(u_1-u_0)^r}{\radc^*(B)}.
\end{equation}
We do not try to give a full definition of t$^r$-stratifications (for that, one would need to find the right condition when $b_0 \ne b$), but we provide some examples giving some evidence that these stratifications form a strict hierarchy.
Note also that an arc-wise analytic t-stratification should be a t$^r$-stratification for every $r$, essentially by Lemma~\ref{lem:basic-analyticII} (2).

\begin{exam}
Suppose $K$ is an algebraically closed valued field.
Fix $a_0 \in \maxid$ and
a ball $B_0 \coloneqq B(0, < \lambda_0) \subsetneq \maxid$ (with $0 < \lambda_0 < 1$).
Let $X \subseteq K^2$ be the graph of $f\colon K \to K$ given by
\[
f(x) = \begin{cases}
a_0x & x \in B_0\\
0 & x \notin B_0
\end{cases}
\]
We consider the t-stratification
$\mathcal{S}=\{S_0,S_1,S_2\}$ with $S_0 \coloneqq \{(1,0)\}$, $S_1 \coloneqq X \setminus S_0$ and $S_2 \coloneqq K^2 \setminus S_{\leqslant 1}$ and ask ourselves for which $r \geqslant 2$ it should be a t$^r$-stratification.

It suffices to consider what kind of $1$-triviality we have on the ball $B := \maxid^2$. As $1$-trivializing map, we can choose $\varphi\colon B \to B, (x,y) \mapsto (x,y+f(x))$, so that $\arc_0(x) = f(x)$ for $x \in \maxid$ (where we take $\pi\colon K^2 \to K$ to be the projection to the first coordinate). Let us verify for which $r$ this map satisfies \eqref{eq:SUT1newr}. Since $B$ is an open ball of radius $1$, the condition becomes
\[
v(f(u_1)-T^{\leqslant r-1}_{f,u_0}(u_1-u_0))\leqslant v(u_1-u_0)^r.
\]
Clearly, this can fail only if $u_0 \in B_0$ and $u_1 \in \maxid \setminus B_0$ or vice versa.
We have
\[
f(u_1)-T^{\leqslant r-1}_{f,u_0}(u_1-u_0)
=
\begin{cases}
0 - a_0u_1 & \text{in the case $u_0  \in B_0$}\\
a_0u_1 - 0 & \text{in the case $u_1  \in B_0$},
\end{cases}
\]
so in both cases, the condition becomes $v(a_0)v(u_1) \leqslant v(u_1-u_0)^r$.
Using that $v(u_1 - u_0) \geqslant \max\{v(u_1), \lambda_0\}$, we obtain that this is most difficult to satisfy when $v(u_1) = \lambda_0$, and in that case, so that the condition holds for all $u_0$ and $u_1$ if and only if $v(a_0)\leqslant \lambda_0^{r-1}$. In particular, for any $r$, we can pick $a_0$ and $\lambda_0$ is such a way that $\cS$ should be a t$^r$-stratification but not a t$^{r+1}$-stratification.
Note also that in the case $r = 2$, the condition $v(a_0)\leqslant \lambda_0$ implies the full condition \eqref{eq:SUT1} from Proposition~\ref{prop:SU-equivalence}, so that $\cS$ is really an \lt-stratification.
\end{exam}

In the above example, the stratification is definable only using the valuation. We give a second example with a ring-definable stratification. 

\begin{exam} Let $K$ be the real closed valued field $\R(\!(t^{\Q})\!)$ and consider $f(x) \coloneqq \frac1{1+x^2}$. It satisfies: 
\begin{enumerate}
    \item $v(f(x)) \leqslant 1$ for all $x \in K$; 
    \item $\max_{x \in \valring} v(f(x)) = 1$ (since there exists an $x \in \R$ with $f(x) \ne 0$).
\end{enumerate}
The $r$-th derivative $f^{(r)}$ is of the form $\frac{h(x)}{(1+x^2)^{r+1}}$ for some polynomial $h$ of degree at most $r$, so that one obtains the above two properties for $f^{(r)}$ in the same way as for $f$.

Fix some positive $\alpha \in \Q$ and define $g\colon K \to K, x \mapsto  t^\alpha f(t^{-1}x)$. Now set $S_0 = \{(0,1)\}$, let $S_1$ be the graph of $g$, and let $S_2$ be the remainder of $K^2$. Again, we want to determine for which $r$ this is a t$^r$-stratification. As in the previous example, the only relevant ball is $B = \maxid^2$ and we can pick $\varphi\colon B \to B, (x,y) \mapsto (x, y+g(x))$, so that $\arc_0(x) = g(x)$ for $x \in \maxid$. Hence we need to verify whether for all $u_0, u_1 \in \maxid$, we have
\[
v(g(u_1)-T^{\leqslant r-1}_{g,u_0}(u_1-u_0))\leqslant v(u_1-u_0)^r.
\]
We claim that this condition is equivalent to the condition that $v(g^{(r)}(u_0)) \leqslant 1$ holds for every $u_0 \in \maxid$. Indeed, $\Leftarrow$ holds by the usual Taylor approximation theorem (in the mean value form, where the error term is expressed in terms of $g^{(r)}$ at some point between $u_0$ and $u_1$), and $\Rightarrow$ holds sine for any $u_0$, and for $u_1$ sufficiently close to $u_1$, we have $v(g(u_1)-T^{\leqslant r-1}_{g,u_0}(u_1-u_0)) = v(g^{(r)}(u_0))v(u_1 - u_0)^r$.

Now notice that using the above properties (1) and (2) of $f^{(r)}$, and using the chain rule repeatedly to express $g^{(r)}$ in terms of $f^{(r)}$, we obtain that $\max_{x \in \maxid} v(g^{(r)}(x)) = v(t^{\alpha - r})$. Thus the condition of t$^r$-stratifications is satisfied if and only if $\alpha \geqslant r$.

Note that for $\alpha < 1$, $\varphi$ is not even a risometry, and indeed, $S_1$ is not $1$-trivial on $B$. On the other hand,
it is a risometry on $B$ if $\alpha \geqslant 1$ (so that we do obtain a t-stratification), and if $\alpha \geqslant 2$ then as in the previous example, one really obtains a \lt-stratification.
\end{exam}

\subsection{The Nash-Semple conjecture and t-stratifications}\label{sec:nash}

An important motivation for searching for stronger notions of stratifications is that they might help proving the Nash-Semple conjecture. We now sketch a program how arc-analytic t-stratifications might help. We first recall the conjecture.

Let $K$ be an algebraically closed field of characteristic 0. Let $X$ be a closed subscheme (reduced, separated, of finite type) of $\mathbb{A}_K^n$ of pure dimension $r$, let $X_{\mathrm{sing}}$ be its singular locus and let $X_{\mathrm{reg}}$ be the complement of $X_{\mathrm{sing}}$ in $X$. Write $G_{n,r}$ for the Grassmanian, i.e., $G_{n,r}(K)$ parametrizes the $r$-dimensional sub-vector spaces of $K^n$. 
Let $\eta\colon X_{\mathrm{reg}}\to X\times G_{n,r}$ be the morphism sending a closed point $x\in X_{\mathrm{reg}}$ to the pair $(x,T_{x}(X))$ where $T_{x}(X) \in G_{n,r}$ is the tangent space of $X$ at $x$. Let $X^*$ be the closure of $\eta(X_{\mathrm{reg}})$ in $X\times G_{n,r}$ and $\pi\colon X^*\to X$ be the coordinate projection to $X$. The pair $(X^*, \pi)$ is called the \emph{Nash modification} (or \emph{Nash blow up}) of $X$.

One can show that the construction of the pair $(X^*, \pi)$ is actually independent of the chosen immersion $X \subseteq \mathbb{A}_K^n$, so it naturally generalizes to general reduced separated schemes of finite type of pure dimension $r$.

For $n>0$, we inductively define $X^{(*n)}$, the $n$-th iteration of Nash modifications, as $X^{(*1)}\coloneqq X^*$ and $X^{(*(n+1))}\coloneqq (X^{(*n)})^*$. The Nash-Semple conjecture states that the iterative procedure of taking Nash modifications eventually provides a resolution of $X$, that is, there is some $n$ such that $X^{(*n)}$ is smooth.

Early results of A. Nobile \cite{nobile} show that $\pi\colon X^*\to X$ is an isomorphism if and only if $X$ is non-singular and that the conjecture holds for curves. 
Already for surfaces, the situation is a lot more difficult. Spivakovsky \cite{spivakovsky} proved (building on results of Hironaka \cite{Hir.nash}) that a variant of the conjecture holds, where one alternatingly applies Nash modifications and normalizations. For toric varieties, there is a reduction of the conjecture to a combinatorial problem about semi-groups by P. Gonz\'{a}lez P\'{e}rez and B. Teissier in \cite{perez-teissier}. Besides these partial results, the Nash-Semple conjecture remains widely open. 

To make a link between this conjecture and arc-wise analytic t-stratifications, the first observation one has to make is that one can always replace $K$ by a larger algebraically closed field (by extending scalars).
In this way, we may assume that $K$ admits a non-archimedean valuation $v\colon K\to \Gamma$, so that the notions of stratifications from this paper make sense.

Our feeling is that from the critical value function (Definition~\ref{def:critpt}) of an arc-wise analytic t-stratification of $K^n$ reflecting $X(K)$, one should be able to extract an upper bound for the number of Nash modifications needed to make $X$ smooth.
We did not yet check enough examples to specify this bound concretely, but let us at least formulate a weak statement precisely: We conjecture that for any function $f\colon \Delta_n \to \mathcal{P}(\Gamma^\times)$, there exists a bound $N(f) \in \N$ such that if $\cS$ is any arc-wise analytic t-stratification whose critical value function $f^{\cS}$ is equal to $f$, and if $\cS$ reflects $X(K)$, then 
the number of Nash modifications needed for $X$ is bounded by $N(f)$.

Note that for the cusp curves from Example~\ref{ex:contact} defined by $x^a = y^b$,
the critical value function ``knows'' the rational number $\frac ab$, and from that number, one can indeed determine the number of blow-ups needed to resolve the cusp singularity. Something similar seems to also work in higher dimension.

Assuming that one has a good candidate for $N(f^{\cS})$, proving that it works should be reasonably doable, e.g.\ as follows.

Firstly, one should show that an arc-wise analytic t-stratification $\cS$ reflecting $X(K)$ induces an arc-wise analtyic t-stratification $\cS^*$ reflecting $X^*(K)$. (Note that this needs generalizing the notion of arc-wise analytic t-stratifications to non-affine $X^*$.)
This crucially uses that we work with arc-wise analytic t-stratifications and not just t- or \lt-stratifications.
Indeed, one can think of the Nash modification as locally introducing derivatives as new coordinates.
Replacing a function $\arc_b$ by $(\arc_b, D\arc_b)$ preserves neither $d$-triviality nor 2-Taylor $d$-triviality, but it does preserve analyticity. 

Once $\cS^*$ is defined, one should be able to relate the critical value functions $f^{\cS}$ and $f^{\cS^*}$, and thereby prove $N(f^{\cS^*}) < N(f^{\cS})$, unless $N(f^{\cS})$ was already $0$.
Assuming that $N(f^{\cS}) = 0$ implies that $X$ is smooth, this completes the proof.\footnote{A technical issue is that the critical value function does not seem to detect normal crossing singularities, so what we really expect is that one can prove $N(f^{\cS^*}) < N(f^{\cS})$ only if $N(f^{\cS}) \geqslant 2$; to complete the proof, one would then prove that $N(f^{\cS}) \leqslant 1$ implies that one Nash modification suffices.}

\subsection{Open questions}
\label{seq:ques}

Currently, the only way in which we obtain the existence of \lt-stratifications is via the arc-wise analytic t-stratifications. Such a proof can only work in fields with analytic structure, and currently, we even additionally need that $K$ is algebraically closed.

\begin{ques}\label{q:hensel} Suppose that the theory of $K$ is $1$-h-minimal as defined in \cite{clu-hal-rid} (see Section~\ref{sec:modth}). Let $\chi\colon K^n\to \RV\eq$ be a definable map. Do \lt-stratifications reflecting $\chi$ always exist?
\end{ques}

We believe that the answer to this question is yes, and more generally that the t$^r$-stratifications from Section~\ref{sec:hierarchy} should exists in this generality for every integer $r \geqslant 1$.

\begin{ques}\label{q:converse} Does the converse of Theorem \ref{thm:sts-to-Lipstrats} hold, namely, are valuative Lipschitz stratifications always Lipschitz t-stratifications?
\end{ques}

Also here, we believe that the answer is yes. Note that this would imply that we get a truly equivalent way to define Lipschitz stratifications over $\C$ and $\R$: A ring-definable partition $\cS$ of $\C^n$ or $\R^n$ is a Lipschitz stratification if and only if the partition defined by the same formula in a non-standard model is a \lt-stratification.

\begin{ques}\label{q:arx-wise} The current definition of arc-wise analytic t-stratification only applies when the underlying valued field $K$ is algebraically closed. Can one extend this definition to other fields with analytic structure as defined in \cite{CLip} and prove their existence? Of special interest to us is the case of real closed valued fields with analytic structure (see also \cite{cubi-haskell}).  
\end{ques}

Note that adapting Definition~\ref{def:arc-wise-strats} in a too naive way would not yield a good notion; in particular, the implication to \lt-stratifications might fail.

\medskip

In Section~\ref{sec:hierarchy}, we suggested that there exists an entire hierarchy of notions of stratifications and gave two examples providing evidence that this hierarchy should be strict: one in an algebraically closed valued field, and the other one with a ring-definable stratification. However, we were not able to combine these two features.
This raises the following somewhat surprising question:

\begin{ques}
If $\cS$ is a ring-definable t-stratification in an algebraically closed valued field, is it then automatically a t$^r$-stratification for some $r \geqslant 2$, or for every $r \geqslant 2$, or even an arc-wise analytic t-stratification?
\end{ques}

\bibliographystyle{siam}

\end{document}